\documentclass[10pt; a4paper]{amsart} %\documentclass[Ñ¡Ïî±í]{Àà}[°æ±ŸºÅyyyy/mm/dd]
\usepackage[left=2.50cm, right=2.50cm, top=2.50cm, bottom=2.50cm]{geometry} 
\usepackage{amscd, amsfonts, amsmath, amssymb, amsthm, arydshln, fixmath, graphicx, mathrsfs, overpic, tikz, tikz-cd}
\usepackage{hyperref}
\usepackage[all]{xy} %\usepackage[Ñ¡Ïî±í]{ºê°ü}[°æ±ŸºÅyyyy/mm/dd] ÊýÑ§×ÖÌåºê°ü:mathrsfs, amsfonts, amssymb
\usepackage{tikz}
\usepackage{pgfplots}
\usepackage{pgflibraryarrows}
\usepackage{pgflibrarysnakes}
\usepackage{dsfont}
\makeindex

\theoremstyle{definition} %±êÌâÓë±àºÅÎªºÚÌå, ÕýÎÄÎªÕý³£×ÖÌå
\newtheorem{Unity}{Unity}[section] %\newtheorem{¶šÀí»·Ÿ³Ãû}{±êÌâ}[Ö÷ŒÆÊýÆ÷Ãû]
\newtheorem*{Definition*}{Definition} %\newtheorem*{¶šÀí»·Ÿ³Ãû}[ÒÑ¶šÒå¶šÀí»·Ÿ³Ãû]{±êÌâ} ÊÖ¶¯±àºÅ, ²»×Ô¶¯±àºÅ
\newtheorem{Definition}[Unity]{Definition} %\newtheorem{¶šÀí»·Ÿ³Ãû}[ÒÑ¶šÒå¶šÀí»·Ÿ³Ãû]{±êÌâ} Óëµ±Ç°»·Ÿ³¹²ÓÃÍ¬Ò»žöÐòºÅŒÆÊýÆ÷

\theoremstyle{plain} %±êÌâÓë±àºÅÎªºÚÌå, ÕýÎÄÎªÐ±Ìå
\newtheorem*{Theorem*}{Theorem}
\newtheorem{Theorem}[Unity]{Theorem}
\newtheorem{Proposition}[Unity]{Proposition}
\newtheorem{Corollary}[Unity]{Corollary}
\newtheorem{Lemma}[Unity]{Lemma}
\newtheorem{Conjecture}[Unity]{Conjecture}

\theoremstyle{remark} %±êÌâÓë±àºÅÎªÐ±Ìå, ÕýÎÄÎªÕý³£×ÖÌå
\newtheorem*{Remark*}{Remark}
\newtheorem{Remark}[Unity]{Remark}

\numberwithin{Unity}{section}%\numberwithin{ŒÆÊýÆ÷}{Ö÷ŒÆÊýÆ÷}

\newcommand{\rk}{\mathrm{rk\,}}

\newcommand{\Ima}{\mathrm{Im\,}}

\newcommand{\End}{\mathrm{End}}

\newcommand{\Hom}{\mathrm{Hom}}

\newcommand{\Rep}{\mathrm{Rep}}

\newcommand{\Spec}{\mathrm{Spec\,}}

\newcommand{\Qcoh}{\mathfrak{Qcoh}}
\newcommand{\Vect}{\mathfrak{Vect}}

\newcommand{\coker}{\mathrm{coker\,}}
\newcommand{\Gal}{\mathrm{Gal}}
\newcommand{\Ext}{\mathrm{Ext}}

\newcommand{\sep}{\mathrm{sep}}

\begin{document}

\title{The Base Change Of Fundamental Group Schemes}
\author{Lingguang Li}
\address{School of Mathematical Sciences,
Key Laboratory of Intelligent Computing and Applications (Tongji University), Ministry of Education, Shanghai 200092, CHINA}
\email{LiLg@tongji.edu.cn}
\author{Niantao Tian}
\address{School of Mathematical Sciences,
Key Laboratory of Intelligent Computing and Applications (Tongji University), Ministry of Education, Shanghai 200092, CHINA}
\email{tianniantao@tongji.edu.cn}
\begin{abstract}
Let $k$ be a field, $K/k$ a field extension, $X$ a connected scheme proper over $k$, $x_K\in X_K(K)$ lying over $x\in X(k)$, $\mathcal{C}_X$ and $\mathcal{C}_{X_K}$ the Tannakian categories whose objects consist of vector bundles on $X$ and $X_K$ respectively, $\pi(\mathcal{C}_X,x)$ and $\pi(\mathcal{C}_{X_K},x_K)$ the corresponding Tannaka group schemes respectively. We establish a unified criterion determining when the base change homomorphism $\pi(\mathcal{C}_{X_K},x_K)\rightarrow \pi(\mathcal{C}_X,x)_K$ is faithfully flat or an isomorphism. As applications, we recover and generalize base change results for the S, Nori, EN, F, EF, \'et, E\'et, Loc, ELoc, and unipotent-fundamental group schemes under different types of field extensions (e.g., separable, finite Galois, and algebraically closed extensions). Moreover, our approach provides a unified explanation for both positive and negative results, including previously known counterexamples.
\end{abstract}
\maketitle
\tableofcontents

\section{Introduction}

In the development of algebraic geometry and arithmetic geometry, the theory of fundamental group schemes has consistently been one of the central topics connecting geometry, topology and number theory. By the Tannakian duality theory of Saavedra Rivano \cite{Saa72} and Deligne--Milne \cite{DeMi82}, a neutral Tannakian category together with a fibre functor gives rise to an affine group scheme. In algebraic geometry, many fundamental group schemes arise from suitable Tannakian categories of vector bundles on a pointed scheme \((X,x)\). The base change problem asks how these group schemes behave after extending the ground field. More precisely, for a field extension \(K/k\), one wants to understand when the natural homomorphism $\pi(\mathcal{C}_{X_K},x_K)\rightarrow \pi(\mathcal{C}_X,x)_K$ is faithfully flat or an isomorphism.

The earliest example is Grothendieck's \'etale fundamental group. In \cite{Gro60}, Grothendieck defined \(\pi_1^{\acute{e}t}(X,\bar{x})\) using finite \'etale coverings of \(X\). He proved in \cite{Gro61} that if \(K/k\) is an extension of algebraically closed fields and \(X\) is proper over \(k\), then the natural map $\pi_1^{\acute{e}t}(X_K,\bar{x}_K)\rightarrow \pi_1^{\acute{e}t}(X,\bar{x})$ is an isomorphism. Thus the \'etale fundamental group gives a fundamental positive example of base change invariance.

Nori \cite{Nor82} introduced the Nori fundamental group scheme \(\pi^N(X,x)\), which refines the profinite \'etale fundamental group by allowing finite group schemes rather than only finite \'etale groups. It is defined as the projective limit of pointed finite torsors over \(X\). When \(X\) is geometrically reduced connected scheme proper over a field \(k\), the category $\Rep_k^f(\pi^N(X,x))$ is equivalent to the Tannakian category of essentially finite vector bundles on $X$. Nori \cite{Nor82} proved that if \(K/k\) is a separable extension, then $\pi^N(X_K,x_K)\rightarrow \pi^N(X,x)_K$ is an isomorphism, and asked whether the same holds for arbitrary field extensions. This expectation is false in general. Mehta and Subramanian \cite{MeSu02} showed that for an integral projective curve with a cuspidal singularity over an algebraically closed field, the base change homomorphism for Nori's fundamental group scheme need not be an isomorphism under an extension of algebraically closed fields. Pauly \cite{Pau07} later gave a smooth counterexample. These examples show that, unlike the \'etale fundamental group, Nori fundamental group scheme may acquire new objects after extension of algebraically closed fields.

Nori \cite{Nor82} also introduced the unipotent fundamental group scheme \(\pi^{uni}(X,x)\), defined by the Tannakian category of unipotent vector bundles on $X$. In contrast, he proved that if \(X\) is a scheme of finite type over \(k\) and \(H^0(X,\mathcal{O}_X)=k\), then $\pi^{uni}(X_K,x_K)\rightarrow \pi^{uni}(X,x)_K$ is an isomorphism for any field extension \(K/k\). This shows that base change may hold for some Tannakian categories for reasons unrelated to separability or algebraic closedness.

In positive characteristic, further fundamental group schemes were introduced to capture Frobenius phenomena. Mehta and Subramanian \cite{MeSu08} defined the local fundamental group scheme \(\pi^{Loc}(X,x)\) using Frobenius trivial vector bundles, and proved that its base change may fail under extensions of algebraically closed fields, while also giving equivalent conditions for the base change homomorphism to be an isomorphism. Amrutiya and Biswas \cite{AmBi10} introduced the F-fundamental group scheme, associated with the Tannakian category generated by Frobenius finite bundles, and showed that it also exhibits nontrivial base change behaviour.

Langer \cite{Lan11} introduced the S-fundamental group scheme \(\pi^S(X,x)\), whose Tannakian category consists of numerically flat vector bundles on $X$. Langer \cite{Lan11} showed that its base change homomorphism is not always an isomorphism under extensions of algebraically closed fields. Otabe \cite{Ota17} introduced the EN-fundamental group scheme in characteristic zero, defined by semi-finite vector bundles, and proved a base change isomorphism after extending the base field to its algebraic closure.

These results reveal a mixed picture. For \'etale and unipotent fundamental group schemes, base change behaves well in broad generality. For Nori, local, F- and S-fundamental group schemes, it may hold under separability or other additional hypotheses, but can fail under extensions of algebraically closed fields. The purpose of this paper is to explain these phenomena from a unified Tannakian viewpoint. Our main result gives a unified criterion detecting when the base change homomorphism is faithfully flat or an isomorphism.

The main results of this paper are the following Theorems and Propositions:

\begin{Theorem}[Theorem~\ref{fieldgeneral}]
    Let $k$ be a field, $K/k$ a field extension, $X$ a connected scheme proper over $k$, $x_K\in X_K(K)$ lying over $x\in X(k)$, $\mathcal{C}_X$ and $\mathcal{C}_{X_K}$ the Tannakian categories whose objects consist of vector bundles on $X$ and $X_K$ respectively, $\phi:P\rightarrow X$ the universal principal $\pi(\mathcal{C}_X,x)$-bundle.
    \begin{enumerate}
        \item Define the functor
    $$\begin{aligned}
        \eta_K^{P_K}:\Rep_{K}^f(\pi(\mathcal{C}_X,x)_K)&\rightarrow \Vect(X_K), V\mapsto((\mathcal{O}_{P}\otimes_k K)\otimes_{K} V)^{\pi(\mathcal{C}_X,x)_K}.
    \end{aligned}$$
    Then $\eta_K^{P_K}$ is fully faithful. Moreover, we have
    \[\eta_K^{P_K}(\Rep_{K}^f(\pi(\mathcal{C}_X,x)_K))=\left\{\mathcal{E}\in \Vect(X_K)\middle| \exists E_1,E_2\in\mathcal{C}_X, \text{ s.t. }E_1\otimes_k K\twoheadrightarrow\mathcal{E}\hookrightarrow E_2\otimes_k K\right\}.\]
        \item If for any $E\in\mathcal{C}_X$, we have $E\otimes_k K \in \mathcal{C}_{X_K}$, then it induces a natural functor
        $$\begin{aligned}
        \eta_K^{P_K}:\Rep_{K}^f(\pi(\mathcal{C}_X,x)_K)&\rightarrow \mathcal{C}_{X_K}, V\mapsto((\mathcal{O}_{P}\otimes_k K)\otimes_{K} V)^{\pi(\mathcal{C}_X,x)_K}.
    \end{aligned}$$
        This functor corresponds to a natural homomorphism of group schemes over $K$
        $$\pi(\mathcal{C}_{X_K},x_K)\rightarrow \pi(\mathcal{C}_X,x)_K.$$
        \item If for any $E\in\mathcal{C}_X$, $E\otimes_k K \in \mathcal{C}_{X_K}$, then the following conditions are equivalent:
        \begin{enumerate}
            \item The natural homomorphism $\pi(\mathcal{C}_{X_K},x_K)\rightarrow \pi(\mathcal{C}_X,x)_K$ is faithfully flat.
            \item The functor $\eta_K^{P_K}$ is observable.
            \item $\eta_K^{P_K}(\Rep_{K}^f(\pi(\mathcal{C}_X,x)_K))=\langle \{E\otimes_k K\}_{E\in\mathcal{C}_X}\rangle_{\mathcal{C}_{X_K}}$.
            \item $\eta_K^{P_K}(\Rep_{K}^f(\pi(\mathcal{C}_X,x)_K))=\{\mathcal{E}\in\mathcal{C}_{X_K}|\exists E\in\mathcal{C}_X,\text{ s.t. }\mathcal{E}\hookrightarrow E\otimes_k K\}$.
            \item For any $E\in \mathcal{C}_X$ and any $\mathcal{E}\hookrightarrow E\otimes_k K\in\mathcal{C}_{X_k}$, there exists $E'\in\mathcal{C}_X$ such that $\mathcal{E}^\vee\hookrightarrow E'\otimes_k K$.
        \end{enumerate}
        \item If for any $E\in\mathcal{C}_X$, $E\otimes_k K \in \mathcal{C}_{X_K}$, then the following conditions are equivalent:
        \begin{enumerate}
            \item The natural homomorphism $\pi(\mathcal{C}_{X_K},x_K)\rightarrow \pi(\mathcal{C}_X,x)_K$ is an isomorphism.
            \item $\mathcal{C}_{X_K}=\{\mathcal{E}\in\mathcal{C}_{X_K}|\exists E\in\mathcal{C}_X,\text{ s.t. }\mathcal{E}\hookrightarrow E\otimes_k K\}$.
        \end{enumerate}
        Moreover, if the above equivalent conditions hold, then we have
        $$\mathcal{C}_{X_K}=\langle \{E\otimes_k K\}_{E\in\mathcal{C}_X}\rangle_{\mathcal{C}_{X_K}}=\{\mathcal{E}\in\mathcal{C}_{X_K}|\exists E\in\mathcal{C}_X,\text{ s.t. }\mathcal{E}\hookrightarrow E\otimes_k K\}.$$
    \end{enumerate}
\end{Theorem}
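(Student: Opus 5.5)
We begin with the universal torsor. Write $G=\pi(\mathcal{C}_X,x)$. The standard Nori-type construction (the equivalence $\Rep_k^f(G)\simeq\mathcal{C}_X$, $V\mapsto(\mathcal{O}_P\otimes_k V)^G$) gives a $G$-torsor $\phi:P\to X$ whose associated-bundle functor realizes this equivalence. Because taking $G$-invariants commutes with flat base change, the base change $P_K\to X_K$ is a $G_K$-torsor. Hence $\eta_K^{P_K}$ is the associated-bundle functor of a torsor. It is therefore an exact tensor functor, and composing it with restriction to $x_K$ recovers the forgetful functor.

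For (1), full faithfulness reduces to an isomorphism on invariants. Since $\Hom(V,W)=(V^\vee\otimes W)^{G_K}$, it suffices to show $H^0(X_K,\eta_K^{P_K}(V))=V^{G_K}$. Every $V\in\Rep_K^f(G_K)$ is a subquotient of $V_0\otimes_k K$ for some $V_0\in\Rep_k^f(G)$. More precisely, $V$ embeds into $W_0\otimes K$ and is a quotient of $W_1\otimes K$, using finite subcomodules of $\mathcal{O}(G)\otimes K$ and duality. We then use the identity
\[H^0(X_K,E\otimes_k K)=H^0(X,E)\otimes_k K=V_0^G\otimes_k K=(V_0\otimes_k K)^{G_K},\]
which holds by flat base change and properness of $X$. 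Together with left exactness of invariants and of $H^0$, applied to the embedding $V\hookrightarrow W_0\otimes K$, this yields the invariants isomorphism. The description of the essential image then follows from two facts. First, exactness of $\eta$ sends the presentation $W_1\otimes K\twoheadrightarrow V\hookrightarrow W_0\otimes K$ to $E_1\otimes K\twoheadrightarrow \mathcal{E}\hookrightarrow E_2\otimes K$. Second, a full faithfulness argument shows that the image is closed under such subquotients. Concretely, a subbundle $\mathcal{E}\subset E_2\otimes K$ that is also a quotient of $E_1\otimes K$ equals the image of a morphism $E_1\otimes K\to E_2\otimes K$. By full faithfulness this morphism comes from a $G_K$-map $W_1\otimes K\to W_0\otimes K$, and its image $V$ satisfies $\eta(V)=\mathcal{E}$, by exactness of $\eta$ on $\Rep_K^f$.

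For (2), the image of $\eta$ consists of subquotients of the $E\otimes K$. These must lie in $\mathcal{C}_{X_K}$, and I would argue this using the hypothesis together with the fact that $\mathcal{C}_{X_K}$ is a Tannakian category of vector bundles closed under such subquotients: the image of a map between objects of $\mathcal{C}_{X_K}$ is an object of $\mathcal{C}_{X_K}$. Then $\eta$ is an exact tensor functor $\Rep_K^f(G_K)\to\mathcal{C}_{X_K}$ compatible with the fibre functors at $x_K$. Tannakian duality converts such a functor into a homomorphism $\pi(\mathcal{C}_{X_K},x_K)\to G_K$.

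For (3) and (4), we apply the Deligne--Milne criterion. The homomorphism is faithfully flat iff $\eta$ is fully faithful and its essential image is closed under subobjects; we say $\eta$ is observable in this case. Full faithfulness is already known from (1), so (a)$\Leftrightarrow$(b) is formal. Any subcategory closed under subobjects and duals inside a rigid category is closed under quotients. Hence (b)$\Leftrightarrow$(c): the subobject-closed tensor subcategory generated by the $E\otimes K$ is the smallest full subcategory containing them that is closed under subquotients, and the image from (1) is exactly this set of subquotients once it is closed under subobjects. For (c)$\Leftrightarrow$(d), note that a quotient of $E_1\otimes K$ is dual to a subobject of $E_1^\vee\otimes K$. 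Closure under subobjects therefore makes "subquotient" equivalent to "subobject". Condition (e) is the statement that the class in (d) is closed under duals; a class of subobjects that is closed under duals is also closed under quotients, so it equals the set of subquotients. I expect the main obstacle to be this step, (e)$\Rightarrow$(d). Given $\mathcal{E}\hookrightarrow E\otimes K$, we need to realize $\mathcal{E}$ as a quotient of some $E_1\otimes K$, and the plan is to take $E_1=E'^\vee$ from the embedding $\mathcal{E}^\vee\hookrightarrow E'\otimes K$; this must then be checked to give a quotient in $\mathcal{C}_{X_K}$, meaning a strict epimorphism of vector bundles. Finally, (4) follows because the homomorphism is an isomorphism iff it is faithfully flat and a closed immersion. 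By Deligne--Milne, being a closed immersion means every object of $\mathcal{C}_{X_K}$ is a subquotient of something in the image. Combined with (3)(d), this gives (4)(b), and the final equalities follow immediately from it.
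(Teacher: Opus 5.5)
\textbf{Gap in (3): condition (b).} You declare $\eta_K^{P_K}$ ``observable'' when its essential image is closed under subobjects. That makes (a)$\Leftrightarrow$(b) a mere restatement of the Deligne--Milne criterion. The statement, however, uses the paper's definition, which is a rank-one condition: for every rank-one subobject $L\hookrightarrow\eta_K^{P_K}(V)$ there exist $F$ and $n>0$ with $(L^{\otimes n})^\vee\hookrightarrow\eta_K^{P_K}(F)$.

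With this definition, (a)$\Rightarrow$(b) is trivial: if the image is closed under subobjects, then $L=\eta_K^{P_K}(M)$ and $L^\vee=\eta_K^{P_K}(M^\vee)$. The real content is (b)$\Rightarrow$(a), which the paper takes from Lemma~\ref{Observablesurjective}. Nothing in your argument shows that a condition on line subobjects controls arbitrary subobjects.

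The missing idea is the determinant trick. Let $\mathcal{E}\hookrightarrow\eta_K^{P_K}(V)$ have rank $r$. Apply (b) to $\det\mathcal{E}\hookrightarrow\eta_K^{P_K}(\wedge^rV)$ to get $((\det\mathcal{E})^{\otimes n})^\vee\hookrightarrow\eta_K^{P_K}(F)$. Then
\[
\mathcal{E}^\vee\cong\wedge^{r-1}\mathcal{E}\otimes(\det\mathcal{E})^{\otimes(n-1)}\otimes\bigl((\det\mathcal{E})^{\otimes n}\bigr)^\vee\hookrightarrow\eta_K^{P_K}\bigl(\wedge^{r-1}V\otimes(\wedge^rV)^{\otimes(n-1)}\otimes F\bigr).
\]
The right-hand side embeds into some $E'\otimes_kK$ by Lemma~\ref{basechangeembedding} and exactness of $\eta_K^{P_K}$. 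This gives (e), and your own chain (e)$\Rightarrow$(d)$\Rightarrow$(a) then closes the loop.

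\textbf{The rest.} Your remaining plan is sound and in places differs from the paper.

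\emph{Full faithfulness in (1).} You reduce to $H^0(X_K,\eta_K^{P_K}(V))=V^{G_K}$ (with $G=\pi(\mathcal{C}_X,x)$) instead of invoking Lemma~\ref{fullyfaithful}. Left exactness alone only embeds both sides into $(W_0\otimes_kK)^{G_K}$. To see that they coincide, you must use that evaluation at $x_K$ identifies $\eta_K^{P_K}(V)|_{x_K}$ with $V\subseteq W_0\otimes_kK$; equivalently, use faithfulness of $\eta_K^{P_K}$. Then a global section of $\eta_K^{P_K}(V)$, viewed as an invariant vector of $W_0\otimes_kK$, really lies in $V$.

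\emph{(e)$\Rightarrow$(d) in (3).} Proving this directly is correct: dualize $\mathcal{E}^\vee\hookrightarrow E'\otimes_kK$ and apply the image description from (1). This avoids the paper's detour through (b), so Lemma~\ref{Observablesurjective} is only needed for (b) itself. The obstacle you anticipate is not one. Duality is exact on the rigid abelian category $\mathcal{C}_{X_K}$, so the dual of a monomorphism there is an epimorphism, i.e.\ a surjection of sheaves. This is exactly the step the paper uses in (4)(b)$\Rightarrow$(a).

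\emph{Part (4).} You only sketch (a)$\Rightarrow$(b). The converse follows because $\mathcal{C}_{X_K}$ is closed under duals. Hence (4)(b) gives (3)(e), so the image of $\eta_K^{P_K}$ is all of $\mathcal{C}_{X_K}$.
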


\begin{Theorem}[Theorem~\ref{fieldfinitegalois}]
    Let $k$ be a field, $K/k$ a finite Galois field extension, $X$ a connected scheme proper over $k$, $x_K\in X_K(K)$ lying over $x\in X(k)$, $p:X_K\rightarrow X$ the projection, $\mathcal{C}_X$ and $\mathcal{C}_{X_K}$ the Tannakian categories whose objects consist of vector bundles on $X$ and $X_K$ respectively. Suppose for any $E\in\mathcal{C}_X$, $E\otimes_k K\in\mathcal{C}_{X_K}$, then the following conditions are equivalent:
    \begin{enumerate}
        \item For any $\mathcal{E}\in\mathcal{C}_{X_K}$, $p_*\mathcal{E}\in\mathcal{C}_X$.
        \item The natural homomorphism $\pi(\mathcal{C}_{X_K},x_K)\rightarrow \pi(\mathcal{C}_X,x)_K$ is an isomorphism.
        \item \begin{enumerate}
            \item For any $E\in\Vect(X)$, $E\in \mathcal{C}_X$ iff $E\otimes_k K\in\mathcal{C}_{X_K}$;
            \item For any $\mathcal{E}\in\mathcal{C}_{X_K}$ and any automorphism $\tilde{g}:X_K\rightarrow X_K$ induced by $g\in\Gal(K/k)$, $\tilde{g}^*\mathcal{E}\in\mathcal{C}_{X_K}$.
        \end{enumerate}
    \end{enumerate}
\end{Theorem}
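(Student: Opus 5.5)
The plan is to combine part (4) of Theorem~\ref{fieldgeneral} with two standard facts about the finite Galois cover $p:X_K\to X$. Write $G=\Gal(K/k)$. Since $K/k$ is finite Galois, we have $K\otimes_k K\cong\prod_{g\in G}K$. From this:
\begin{itemize}
\item[(A)] For any vector bundle $E$ on $X$, $p_*p^*E=E\otimes_k K\cong E^{\oplus[K:k]}$.
\item[(B)] For any vector bundle $\mathcal{E}$ on $X_K$, $p^*p_*\mathcal{E}\cong\bigoplus_{g\in G}\tilde g^*\mathcal{E}$, by flat base change along $X_K\times_X X_K\cong\coprod_{g\in G}X_K$.
\end{itemize}
I will also use that $\mathcal{E}$ is a direct summand of $p^*p_*\mathcal{E}$ (the $g=1$ component), and that Tannakian subcategories are closed under direct sums and summands, i.e.\ under subquotients of semisimple type.

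\textbf{(1)$\Rightarrow$(2).} Let $\mathcal{E}\in\mathcal{C}_{X_K}$ and set $E=p_*\mathcal{E}$; by (1), $E\in\mathcal{C}_X$. By (B), $\mathcal{E}$ is a direct summand, hence a subbundle, of $p^*E=E\otimes_kK$. This is exactly condition (4)(b) of Theorem~\ref{fieldgeneral}, so the base change homomorphism is an isomorphism.

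\textbf{(2)$\Rightarrow$(3).}
\begin{itemize}
\item[(3a)] One direction is the standing hypothesis. Conversely, suppose $E\otimes_kK\in\mathcal{C}_{X_K}$. By (2), $\eta_K^{P_K}$ is an equivalence onto $\mathcal{C}_{X_K}$, so $E\otimes_kK$ lies in the image described in part (1) of Theorem~\ref{fieldgeneral}. That gives $E_2\in\mathcal{C}_X$ with $E\otimes_kK\hookrightarrow E_2\otimes_kK$, and similarly for duals. The cleaner route is Galois descent: $E\otimes_kK$ carries its canonical descent datum, and the functor $\Rep_k(\pi)\to\Rep_K(\pi_K)$ with descent data is an equivalence. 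Since $\mathcal{C}_{X_K}\simeq\Rep_K(\pi_K)$ is compatible with the $G$-action, the descent datum on $E\otimes_kK$ corresponds to a $G$-semilinear structure on a $\pi_K$-representation. Such a representation comes from a $k$-representation $V$, and then $E\cong\eta(V)\in\mathcal{C}_X$ by fpqc descent of vector bundles.

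Alternatively, and more elementarily: by (A), $E^{\oplus[K:k]}=p_*(E\otimes_kK)$. Hence it suffices to show $p_*\mathcal{F}\in\mathcal{C}_X$ for $\mathcal{F}\in\mathcal{C}_{X_K}$, which makes (3a) a consequence of proving (2)$\Rightarrow$(1) directly. I will do exactly that. For $\mathcal{F}\cong\eta_K^{P_K}(W)$, transitivity of pushforward gives
\[
p_*\mathcal{F}=\big(\mathcal{O}_P\otimes_k W_{k}\big)^{\pi}=\eta(\mathrm{Res}_{K/k}W),
\]
where $W_k$ is $W$ regarded as a $k$-vector space, a finite-dimensional $k$-representation of $\pi$. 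The identification holds because invariants under $\pi_K$ of a $K$-module equal its invariants under $\pi$. This lies in $\mathcal{C}_X$, giving (2)$\Rightarrow$(1) and therefore (3a).
\item[(3b)] If $\mathcal{E}\in\mathcal{C}_{X_K}$, then by (2) and the above, $p_*\mathcal{E}\in\mathcal{C}_X$. By the standing hypothesis, $p^*p_*\mathcal{E}\in\mathcal{C}_{X_K}$. By (B), $\tilde g^*\mathcal{E}$ is a direct summand of this bundle, so $\tilde g^*\mathcal{E}\in\mathcal{C}_{X_K}$.
\end{itemize}

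\textbf{(3)$\Rightarrow$(1).} Given $\mathcal{E}\in\mathcal{C}_{X_K}$, (B) gives $p^*p_*\mathcal{E}=\bigoplus_g\tilde g^*\mathcal{E}$. Each summand lies in $\mathcal{C}_{X_K}$ by (3b), so the sum does too. Since $p^*p_*\mathcal{E}=(p_*\mathcal{E})\otimes_kK$, (3a) then yields $p_*\mathcal{E}\in\mathcal{C}_X$.

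\textbf{Main obstacle.} The delicate step is the identification $p_*\eta_K^{P_K}(W)=\eta(W_k)$ used in (2)$\Rightarrow$(1). It requires checking that $\pi_K$-invariants of $\mathcal{O}_{P_K}\otimes_KW$ computed on $X_K$ agree with $\pi$-invariants of $\mathcal{O}_P\otimes_kW_k$ computed on $X$. I would verify this by writing invariants as the equalizer of the two coaction maps and noting that $\mathcal{O}_{P_K}\otimes_KW=\mathcal{O}_P\otimes_kW_k$ as quasi-coherent sheaves on $X$, with compatible coactions.
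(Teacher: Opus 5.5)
Your proof is correct. Your (1)$\Rightarrow$(2) and (3)$\Rightarrow$(1) match the paper's, but for (2)$\Rightarrow$(3) you take a genuinely different route.

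The paper runs the cycle (1)$\Rightarrow$(2)$\Rightarrow$(3)$\Rightarrow$(1) entirely inside the bundle-theoretic criterion of Theorem~\ref{fieldgeneral}:
\begin{itemize}
\item for (3a) it invokes Proposition~\ref{pushisofinitesep}(2), pushing forward the sandwich $E_1\otimes_kK\twoheadrightarrow E\otimes_kK\hookrightarrow E_2\otimes_kK$ to exhibit $E^{\oplus[K:k]}$ as the image of a morphism in $\mathcal{C}_X$;
\item for (3b) it observes that $\tilde g^*$ fixes $F\otimes_kK$ and $E\otimes_kK$, so it carries a sandwich $F\otimes_kK\twoheadrightarrow\mathcal{E}\hookrightarrow E\otimes_kK$ to one for $\tilde g^*\mathcal{E}$.
\end{itemize}
You instead prove (2)$\Rightarrow$(1) directly from the identity $p_*\eta_K^{P_K}(W)\cong\eta_k^P(W_k)$, i.e.\ $p_*$ corresponds to restriction of scalars of representations. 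You then obtain (3a) and (3b) from (1) via (A) and (B). Your check of the identity is sound: on $X$ both sides have underlying sheaf $\mathcal{O}_P\otimes_kW_k$, the coactions agree once $W\otimes_KK[\pi(\mathcal{C}_X,x)_K]$ is identified with $W\otimes_kk[\pi(\mathcal{C}_X,x)]$, and so the equalizers agree.

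The paper's route never touches the torsor or invariants. Yours is more structural, and it uses only that $K/k$ is finite, not separable or Galois. So it actually proves (2)$\Rightarrow$(1) for every finite extension, which is one direction of the Conjecture at the end of the paper.

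When you write it up, delete the Galois-descent sketch in (3a). There you assert without justification that $\mathcal{C}_{X_K}\simeq\Rep_K^f(\pi(\mathcal{C}_X,x)_K)$ is compatible with the Galois action, and nothing you ultimately use depends on it.
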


\begin{Theorem}[Theorem~\ref{fieldalgclosed}]
    Let $K/k$ be an extension of algebraically closed fields, $X$ a connected scheme proper over $k$, $x_K\in X_K(K)$ lying over $x\in X(k)$, $\mathcal{C}_X$ and $\mathcal{C}_{X_K}$ the Tannakian categories whose objects consist of vector bundles on $X$ and $X_K$ respectively. If for any $E\in\mathcal{C}_X$, $E\otimes_k K\in\mathcal{C}_{X_K}$. Then
    \begin{enumerate}
        \item The natural homomorphism $\pi(\mathcal{C}_{X_K},x_K)\rightarrow \pi(\mathcal{C}_X,x)$ is faithfully flat.
        \item If the natural homomorphism $\pi(\mathcal{C}_{X_K},x_K)\rightarrow \pi(\mathcal{C}_X,x)_K$ is an isomorphism, then for any irreducible object $\mathcal{E}\in\mathcal{C}_{X_K}$, there exists $E\in\mathcal{C}_X$ such that $\mathcal{E}\cong E\otimes_k K$.
        \item If $\mathcal{C}_X$ and $\mathcal{C}_{X_K}$ are saturated, then the following conditions are equivalent: 
        \begin{enumerate}
            \item The natural homomorphism $\pi(\mathcal{C}_{X_K},x_K)\rightarrow \pi(\mathcal{C}_X,x)_K$ is an isomorphism.
            \item For any irreducible object $\mathcal{E}\in\mathcal{C}_{X_K}$, there exists $E\in\mathcal{C}_X$ such that $\mathcal{E}\cong E\otimes_k K$.
        \end{enumerate} 
    \end{enumerate}
\end{Theorem}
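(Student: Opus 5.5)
The plan is to deduce all three parts from the general criterion of Theorem~\ref{fieldgeneral}, using as the extra input the fact that, for an extension of algebraically closed fields, subobjects of base-changed bundles descend up to deformation.

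\textbf{Part (1).} I will verify condition (3)(d) of Theorem~\ref{fieldgeneral} (equivalently (3)(e)). Take $E\in\mathcal{C}_X$ and a subobject $\mathcal{E}\hookrightarrow E\otimes_k K$ in $\mathcal{C}_{X_K}$. I want some $E'\in\mathcal{C}_X$ with $\mathcal{E}^\vee\hookrightarrow E'\otimes_k K$. The natural choice is $E'=E^\vee$. Dualizing $\mathcal{E}\hookrightarrow E\otimes_kK$ gives a surjection $E^\vee\otimes_k K\twoheadrightarrow\mathcal{E}^\vee$ in $\mathcal{C}_{X_K}$, but what we need is an injection. So I will use a Quot-scheme spreading argument instead:
\begin{enumerate}
\item Since $X$ is proper, the subsheaf $\mathcal{E}\subset E\otimes_kK$ is a $K$-point of the Quot scheme $\mathrm{Quot}(E)$, which is locally of finite type over $k$.
\item Let $Z$ be the closure of this point. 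Because $k=\bar k$, $Z$ has $k$-points, and these specialize $\mathcal{E}$ to subsheaves $F\subset E$ defined over $k$.
\item To land in $\mathcal{C}_X$, I would apply the equivalence (a)$\Leftrightarrow$(e) at the level of a finitely generated $k$-subalgebra $A\subset K$ over which $\mathcal{E}$ is defined, then specialize along a $k$-point of $\Spec A$.
\end{enumerate}
This specialization step is where I expect the main obstacle: the property of being in $\mathcal{C}$ is not obviously preserved under specialization. An alternative route is Tannakian: faithful flatness is equivalent to $\pi(\mathcal{C}_{X_K},x_K)\to\pi(\mathcal{C}_X,x)_K$ being surjective on $K$-points for every algebraic quotient. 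Since $K$ is algebraically closed, this reduces to showing that the image of the Zariski-dense set of points is closed. In practice, one would show that for each $E$, the monodromy group of $E\otimes K$ in $\mathcal{C}_{X_K}$ equals the base change of the monodromy group of $E$, using that a $\pi(\mathcal{C}_X,x)_K$-invariant line in a tensor construction is already defined over $k$. The invariant-line statement follows by Chevalley's characterization together with rigidity of $H^0$: sections of $E\otimes_k K$ equal $H^0(X,E)\otimes_k K$ by flat base change. I will adopt this second route. Observability in the sense of (3)(b) reduces precisely to this descent of sections and lines, because the subobjects used by Chevalley's theorem are line subbundles of some $T\otimes_kK$ with $T\in\mathcal{C}_X$.

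\textbf{Part (2).} Assume the homomorphism is an isomorphism and let $\mathcal{E}$ be irreducible. By Theorem~\ref{fieldgeneral}(4), $\mathcal{E}\cong\eta_K^{P_K}(V)$ for an irreducible representation $V$ of $G_K$, where $G=\pi(\mathcal{C}_X,x)$. Since $k$ is algebraically closed, the irreducible representations of $G_K$ are exactly the base changes of irreducible representations of $G$. This holds because $V$ factors through an algebraic quotient and $K/k$ is an extension of algebraically closed fields. Hence $\mathcal{E}\cong E\otimes_kK$ with $E$ the image of the corresponding representation of $G$.

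\textbf{Part (3).} The implication (a)$\Rightarrow$(b) is Part (2). For (b)$\Rightarrow$(a), saturation means that $\mathcal{C}$ is closed under extensions in $\Vect$. So every object of $\mathcal{C}_{X_K}$ is an iterated extension of irreducibles of the form $E_i\otimes K$. I will check condition (4)(b) of Theorem~\ref{fieldgeneral} by induction on length. Given $0\to\mathcal{E}'\to\mathcal{E}\to E''\otimes K\to0$ with $\mathcal{E}'\hookrightarrow E'\otimes K$ already established, I want to embed $\mathcal{E}$ into the base change of an object of $\mathcal{C}_X$. The extension class lies in
\[
\Ext^1(E''\otimes K,\mathcal{E}')\to\Ext^1(E''\otimes K,E'\otimes K)=\Ext^1(E'',E')\otimes_kK
\]
by flat base change, which uses properness of $X$. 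This class is a finite $K$-combination $\sum a_i\xi_i$ of classes $\xi_i\in\Ext^1(E'',E')$. The extension $\mathcal{E}$ then embeds into the pushout, which is a subquotient of the base change of $\tilde E=$ the extension of $E''$ by $E'^{\oplus n}$ given by $(\xi_i)_i$. This $\tilde E$ lies in $\mathcal{C}_X$ by saturation. The pushout is a quotient of $\tilde E\otimes K$ under the map $E'^{\oplus n}\to E'$ given by $(a_i)$, and a quotient does not give an embedding directly. I will therefore dualize and use self-duality of the conditions: apply the same argument to $\mathcal{E}^\vee$ via the characterization (3)(e), which together with faithful flatness from Part (1) yields the subobject condition (4)(b).
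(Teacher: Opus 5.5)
\textbf{Part (1) has a genuine gap.} The route you adopt only uses flat base change $H^0(X_K,T\otimes_kK)=H^0(X,T)\otimes_kK$. This says that $\pi(\mathcal{C}_{X_K},x_K)$ and $\pi(\mathcal{C}_X,x)_K$ have the same invariant vectors in every $W\otimes_kK$. That is just the full faithfulness already in Theorem~\ref{fieldgeneral}(1), and it does not give surjectivity. For example, a Borel $B\subset SL_2$ has the same invariants as $SL_2$ on every $SL_2$-module, because $SL_2/B$ is proper.

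Observability (and Chevalley's criterion) concerns rank-one subobjects $\mathcal{L}\subset T\otimes_kK$ in $\mathcal{C}_{X_K}$ with nontrivial character. You would need every line stable under $\pi(\mathcal{C}_{X_K},x_K)$ to be stable under $\pi(\mathcal{C}_X,x)_K$. The statement you invoke is instead about lines already stable under $\pi(\mathcal{C}_X,x)_K$. Since $\mathcal{L}$ is not a base change, $H^0$-rigidity gives no control over it. (Separately, surjectivity on $K$-points does not detect faithful flatness in characteristic $p$; consider $1\to\alpha_p$.)

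In fact no argument can close this gap at the stated generality. Let $k=\bar k$ have characteristic $0$, let $X$ be a curve of genus $\ge 2$, take $L\in\mathrm{Pic}^0(X)$ non-torsion, and let $E$ be a non-split extension of $L^{-1}$ by $L$.
\begin{itemize}
\item The Tannaka group of $\langle E\rangle_{\mathcal{C}^{NF}(X)}$ is a Borel $B\subset SL_2$.
\item Restriction $\Rep(SL_2)\to\Rep(B)$ is fully faithful.
\item Its essential image is therefore a Tannakian category $\mathcal{C}_X$ of vector bundles (not closed under subobjects in $\mathcal{C}^{NF}(X)$) with $\pi(\mathcal{C}_X,x)=SL_2$, and $E$ is irreducible in it.
\end{itemize}
Take $\mathcal{C}_{X_K}=\mathcal{C}^{NF}(X_K)$; the hypothesis holds. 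But $L\otimes_kK\subset E\otimes_kK$ is a subobject in $\mathcal{C}_{X_K}$ that is not in the image of $\eta_K^{P_K}$. So the map to $SL_{2,K}$ lands in a Borel and is not faithfully flat.

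\textbf{What the paper does.} It reduces to the rank-one subobject $\det\mathcal{E}\subset\wedge^rE\otimes_kK$ and identifies it with a Jordan--H\"older factor $(E_{i_0+1}/E_{i_0})\otimes_kK$ of $\wedge^rE\otimes_kK$. It then concludes from $\mathcal{E}^\vee\cong\wedge^{r-1}\mathcal{E}\otimes(\det\mathcal{E})^\vee\hookrightarrow(\wedge^{r-1}E\otimes(E_{i_0+1}/E_{i_0})^\vee)\otimes_kK$. The identification requires that base changes of irreducible objects of $\mathcal{C}_X$ stay irreducible in $\mathcal{C}_{X_K}$. The paper infers this from $\End=K$, and the example shows that inference is invalid: $\End(E\otimes_kK)=K$, yet $E\otimes_kK$ is reducible. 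So the idea missing from your sketch is this persistence of irreducibility. Any correct version of (1) must either assume it or verify it for the concrete categories; granted it, the determinant trick finishes.

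\textbf{Parts (2) and (3).} Your Part (2) is correct and takes a different, cleaner route than the paper. You use that for $k=\bar k$ every simple $G_K$-module is $W\otimes_kK$ with $W$ simple. To justify this, pass to a finite-dimensional subcoalgebra $D\subset k[G]$: since $D^*/\mathrm{rad}\,D^*$ is a product of matrix algebras over $k$, the radical and the simple modules commute with $\otimes_kK$. The paper instead embeds $\mathcal{E}$ in some $E\otimes_kK$ and reads it off a Jordan--H\"older factor. Your route avoids the $\End$ issue entirely.

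Your Part (3) is the paper's argument; the single extension of $E''$ by $(E')^{\oplus n}$ is a tidier form of Proposition~\ref{subquotient}. The dualization step is unnecessary: once $\mathcal{E}$ is a subquotient of $\tilde E\otimes_kK$, faithful flatness and Theorem~\ref{fieldgeneral}(3)(c)$\Leftrightarrow$(d) already give $\mathcal{E}\hookrightarrow E\otimes_kK$.

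Both your proof and the paper's take that faithful flatness from Part (1), but hypothesis (b) supplies exactly the missing ingredient. Let $E\in\mathcal{C}_X$ be irreducible and $\mathcal{S}\subset E\otimes_kK$ an irreducible subobject. By (b), $\mathcal{S}\cong E_1\otimes_kK$ with $E_1$ irreducible in $\mathcal{C}_X$. Then $0\neq\Hom(E_1\otimes_kK,E\otimes_kK)=\Hom(E_1,E)\otimes_kK$ forces $E_1\cong E$, hence $\mathcal{S}=E\otimes_kK$. So under (b) the determinant argument yields faithful flatness, and your induction goes through.
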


Applying the above theorems, we recover and generalize base change results for the S, Nori, EN, F, EF, \'et, E\'et, Loc, ELoc, and unipotent-fundamental group schemes under different types of field extensions:
\begin{Proposition}[Proposition~\ref{separableS}, Proposition~\ref{separableN}, Proposition~\ref{separableloc}, Proposition~\ref{separableet}, Proposition~\ref{generaluni}, Proposition~\ref{separablesaturated}, Proposition~\ref{algfaith}, Proposition~\ref{etalealgclosed}, Proposition~\ref{Eetalealgclosed}]
Let $k$ be a field, $K/k$ a field extension, $X$ a geometrically reduced connected scheme proper over $k$, $x_K\in X_K(K)$ lying over $x\in X(k)$. Then
    \begin{enumerate}
    \item The natural homomorphism $\pi^{uni}(X_K,x_K)\rightarrow \pi^{uni}(X,x)_K$ is an isomorphism.
    \item Let $K/k$ be a separable extension, then
        \begin{enumerate}
            \item For $*\in \{S,N,Loc,\acute{e}t,EN,ELoc,E\acute{e}t\}$, the natural homomorphism $\pi^{*}(X_K,x_K)\rightarrow \pi^{*}(X,x)_K$ is an isomorphism.
            \item If $K/k$ is finite separable and $X$ is regular, then the homomorphism $\pi^{*}(X_K,x_K)\rightarrow \pi^{*}(X,x)_K$ is an isomorphism for $*\in\{F,EF\}$.
        \end{enumerate}
        \item Let $K/k$ be an extension of algebraically closed fields, then
        \begin{enumerate}
            \item For $*\in \{S,N,F,Loc,EN,EF,ELoc\}$, the natural homomorphism $\pi^{*}(X_K,x_K)\rightarrow \pi^{*}(X,x)_K$ is faithfully flat.
            \item For $*\in \{\acute{e}t,E\acute{e}t\}$, the natural homomorphism $\pi^{*}(X_K,x_K)\rightarrow \pi^{*}(X,x)_K$ is an isomorphism.
        \end{enumerate}
    \end{enumerate}
\end{Proposition}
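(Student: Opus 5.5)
This proposition is a collection of applications, so the plan is to feed each item through the three general theorems. The common first step is the hypothesis those theorems need: for each $*$, pulling back along $X_K\to X$ preserves the defining property. Numerical flatness (a vector bundle is numerically flat when it and its dual are nef) is preserved by flat base change. Essential finiteness, F-/Frobenius triviality and \'etale trivialization are all preserved, because the torsors or Frobenius isomorphisms trivializing a bundle simply base change. Unipotence is preserved because successive extensions of $\mathcal{O}$ base change to successive extensions of $\mathcal{O}$. For the ``E'' variants (EN, EF, ELoc, E\'et), each is defined as the subquotient closure of the corresponding category. Base change sends a subquotient of a bundle to a subquotient of its base change, so the E-variants are preserved as well.

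For the unipotent case (1) I will verify criterion (4)(b) of Theorem~\ref{fieldgeneral}, by induction on the length of a filtration with trivial graded pieces. Since $H^0(X,\mathcal{O}_X)=k$ (connected, proper and geometrically reduced) and $H^1$ commutes with flat base change, every extension of $\mathcal{O}_{X_K}$ by a bundle of the form $E\otimes_k K$ descends up to a subspace. So every unipotent $\mathcal{E}$ on $X_K$ embeds into some $E\otimes_k K$ with $E$ unipotent. This is Nori's argument recast in the language of (4)(b).

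For separable $K/k$ (2)(a), I first reduce to $K/k$ finitely generated: a bundle and its finitely many structure maps are defined over a finitely generated subextension, so the relevant categories are direct limits. A finitely generated separable extension is separably generated, so it is a finite separable extension of a purely transcendental one. For the finite part, pass to the Galois closure and apply Theorem~\ref{fieldfinitegalois}. Its condition (1), that $p_*\mathcal{E}\in\mathcal{C}_X$, holds because $p$ is finite \'etale: $p^*p_*\mathcal{E}\cong\bigoplus_g\tilde g^*\mathcal{E}$, each $\tilde g^*\mathcal{E}$ lies in $\mathcal{C}_{X_K}$, and membership descends along finite \'etale covers for numerically flat, essentially finite, Loc and \'etale-trivial bundles. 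For a purely transcendental $K=k(t)$, I will spread $\mathcal{E}$ out over an open subset $U\subset\mathbb{A}^n_k$ and specialize at points. Rigidity of the subobjects of $E\otimes K$ then verifies criterion (4)(b) of Theorem~\ref{fieldgeneral}. I expect this transcendental step to be the main obstacle. The fallback is to argue that the group is insensitive to extension from $k$ to $k(t)$ via semisimplicity of the graded pieces, using that $\mathrm{Ext}^1$ and $\mathrm{Hom}$ commute with flat base change.

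For (2)(b), $X$ is regular and $K/k$ is finite separable. Here $p_*$ commutes with Frobenius pullback because $p$ is \'etale, so $F^{n*}p_*\mathcal{E}\cong p_*F^{n*}\mathcal{E}$, and Theorem~\ref{fieldfinitegalois}(1) follows; regularity is used for the Frobenius-finiteness descent. For algebraically closed $K/k$ (3)(a), faithful flatness is exactly Theorem~\ref{fieldalgclosed}(1) once the base change hypothesis is known. For (3)(b), the categories are saturated and finite \'etale covers of a proper scheme are invariant under extension of algebraically closed fields (Grothendieck). Hence every irreducible object is $E\otimes_k K$ for some $E\in\mathcal{C}_X$, and Theorem~\ref{fieldalgclosed}(3) concludes.
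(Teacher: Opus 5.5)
\textbf{Main gap: the transcendental step in (2)(a).} This step cannot be carried out, because base change genuinely fails there.

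In characteristic $0$ every extension is separable. Proposition~\ref{containSnobasechange} shows that $\pi^S$ violates base change for $\overline{k(t)}/k$, with $k$ algebraically closed and $X$ a curve of genus $\geq 2$. Base change does hold for $\overline{k(t)}/k(t)$, which is separable algebraic. So a proof for $k(t)/k$ would compose with it to give a contradiction.

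Concretely, take a non-isotrivial family of stable rank-$2$ degree-$0$ bundles over an open $U\subset\mathbb{A}^1_k$. Its generic fibre $\mathcal{E}$ is numerically flat on $X_{k(t)}$. If $\mathcal{E}\hookrightarrow E\otimes_k k(t)$, then over $\overline{k(t)}$ it would be isomorphic to the base change of a Jordan--H\"older factor of $E$. Semicontinuity would then make the family isotrivial.

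So there is no ``rigidity of subobjects'' under specialization. Your fallback fails for the same reason: $\Hom$ and $\Ext^1$ commuting with base change only control extensions among objects already of the form $E\otimes_k K$. They say nothing about new simple objects appearing over $K$. The same failure occurs for $N$ and $Loc$ (Mehta--Subramanian, Pauly), where $k=\bar k$ is perfect, so every extension is separable.

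Hence ``separable'' must be read as separable \emph{algebraic}, and the paper's proof covers only that case:
\begin{enumerate}
\item $K$ is a directed union of finite separable subextensions $L$.
\item Any $\mathcal{E}\in\mathcal{C}_{X_K}$ is $E\otimes_L K$ with $E\in\mathcal{C}_{X_L}$, by descent of membership.
\item The finite case and Theorem~\ref{fieldgeneral}(4) give $E\hookrightarrow F\otimes_k L$ with $F\in\mathcal{C}_X$.
\item Base change to $K$ then verifies (4)(b).
\end{enumerate}
Your reduction should stop at finite subextensions. In the non-Galois finite case you still need to come down from the Galois closure $M$ to $K$. The paper does this via $(p_*\mathcal{E})^{\oplus[M:K]}\cong p_*\bigl({p_M^K}_*{p_M^K}^*\mathcal{E}\bigr)$ and Proposition~\ref{pushisofinitesep}(3).

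\textbf{Second error: (3)(b) for $*=\acute{e}t$.} You apply Theorem~\ref{fieldalgclosed}(3) on the grounds that ``the categories are saturated''. But $\mathcal{C}^{\acute{e}t}(X)$ is not saturated in general, which is exactly why $E\acute{e}t$ is introduced. For example, in characteristic $0$ \'etale-trivializable bundles come from representations of finite groups and are therefore semisimple. So the non-split self-extension of $\mathcal{O}$ on an elliptic curve lies in $\overline{\mathcal{C}^{\acute{e}t}(X)}\setminus\mathcal{C}^{\acute{e}t}(X)$.

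Without saturation, the implication (b)$\Rightarrow$(a) of that theorem is unavailable. It goes through Proposition~\ref{subquotient}, which needs the descended extensions to lie in $\mathcal{C}_X$.

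Your argument does work for $E\acute{e}t$. Irreducible objects of the saturation are irreducible \'etale-trivializable bundles, and these descend by Grothendieck's theorem together with absolute irreducibility of finite-group representations over $k=\bar k$. For $\acute{e}t$ itself you need another route:
\begin{itemize}
\item The paper compares $K$-points of the profinite reduced group schemes with $\pi_1^{\acute{e}t}$, and then obtains $E\acute{e}t$ from Proposition~\ref{saturationisomorphism}.
\item Alternatively, descend a trivializing Galois cover to $Q\to X$ with group $G$. Embed the corresponding representation into $K[G]^{\oplus n}=k[G]^{\oplus n}\otimes_k K$; this gives Theorem~\ref{fieldgeneral}(4)(b) directly.
\end{itemize}

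\textbf{Smaller points.} The $E$-variants are saturations (closure under extensions), not subquotient closures. Base change still preserves them via filtrations. For (2)(a) they follow from the non-extended cases by Proposition~\ref{saturationisomorphism}, not from ``descent of membership''.

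Your treatment of (1) via the universal extension attached to a finite-dimensional subspace of $H^1(X,E)$ is sound. It is a bit more direct than the paper's route through Proposition~\ref{subquotient} and faithful flatness.
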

In general, base change of fundamental group schemes do not behave well under extensions of algebraically closed fields. This phenomenon is illustrated in Proposition~\ref{containlocnobasechange}, Corollary~\ref{5.47} and Proposition~\ref{containSnobasechange}.

In Section~2, we collect preliminary material on Tannakian categories, observable functors, affine group schemes, universal principal bundles and basic properties of base change. In Section~3, we study the saturation of Tannakian categories and prove that the saturation of a Tannakian category is again Tannakian. In Section~4, we establish a unified criterion for the base change homomorphism $\pi(\mathcal{C}_{X_K},x_K)\longrightarrow \pi(\mathcal{C}_X,x)_K$ to be faithfully flat or an isomorphism. We also specialize this criterion to finite Galois extensions and to extensions of algebraically closed fields. In Section~5, we apply these results to the S, Nori, EN, F, EF, \'etale, extended \'{e}tale, local, extended local and unipotent fundamental group schemes. We also discuss counterexamples showing that base change need not be an isomorphism in general. In Section~6, we formulate a conjectural criterion for purely inseparable extensions.

\section{Preliminaries}

Let $k$ be a field, $K/k$ a field extension, $X$ a scheme over $k$, $G$ an affine group scheme over $k$ and $x \in X(k)$, we denote by $\Qcoh(X)$ the category of quasi-coherent sheaves on $X$, $\Vect(X)$ the category of vector bundles on $X$. Consider the following Cartesian diagrams
\[
    \begin{aligned}
        \begin{tikzcd}
            X\times_{\Spec k}\Spec K\arrow[r,"p"]\arrow[d]& X\arrow[d]\\
            \Spec K\arrow[r]&\Spec k
        \end{tikzcd}&\quad\quad&\begin{tikzcd}
            G\times_{\Spec k}\Spec K\arrow[r,"p"]\arrow[d]& G\arrow[d]\\
            \Spec K\arrow[r]&\Spec k
        \end{tikzcd}
    \end{aligned}
\]
We denote $X_K := X \times_{\Spec k} \Spec K$ and $G_K := G \times_{\Spec k}\Spec K$ for the base changes,
and $p: X_K \to X$ is the canonical projection and $x_K \in X_K(K)$ is a $K$-valued point lying over $x\in X(k)$. For a coherent sheaf $E$ on $X$, we denote its pullback $p^*E$ simply by $E \otimes_k K$.

\begin{Definition}
    Let $k$ be a field, $\mathcal{C},\mathcal{D}$ Tannakian categories over $k$, $\Phi:\mathcal{C}\rightarrow \mathcal{D}$ an exact tensor functor. The functor $\Phi$ is said to be \textit{observable} if for any $E\in\mathcal{C}$ and any subobject $L\hookrightarrow \Phi(E)\in\mathcal{D}$ of rank 1, there exists $F\in\mathcal{C}$ and integer $n>0$ such that $(L^{\otimes n})^{\vee}\hookrightarrow \Phi(F)$ is a subobject in $\mathcal{D}$.
\end{Definition}

\begin{Lemma}\label{Observable}
    Let $k$ be a field, $\mathcal{C},\mathcal{D}$ neutral Tannakian categories over $k$ with fibre functor $\omega_x$, $\Phi:\mathcal{C}\rightarrow \mathcal{D}$ an exact tensor functor. 
Then the following conditions are equivalent:
    \begin{enumerate}
        \item The functor $\Phi$ is observable.
        \item For any $E\in\mathcal{C}$ and any subobject $L\hookrightarrow \Phi(E)\in\mathcal{D}$ of rank 1, there exists $F\in\mathcal{C}$, s.t. $L^{\vee}\hookrightarrow \Phi(F)\in\mathcal{D}$.
        \item For any $E\in\mathcal{C}$ and any subobject $E'\hookrightarrow\Phi(E)\in\mathcal{D}$, there exists $F\in \mathcal{C}$, s.t. ${E'}^\vee\hookrightarrow \Phi(F)\in\mathcal{D}$.
        \item For any $E\in\mathcal{C}$ and any quotient $\Phi(E)\twoheadrightarrow E'\in\mathcal{D}$, there exists $F\in \mathcal{C}$, s.t. $E'\hookrightarrow \Phi(F)\in\mathcal{D}$.
        \item For any $E\in\mathcal{C}$ and any subobject $E'\hookrightarrow\Phi(E)\in\mathcal{D}$, there exists $F\in \mathcal{C}$, s.t. $\Phi(F)\twoheadrightarrow E' \in\mathcal{D}$.
    \end{enumerate}
\end{Lemma}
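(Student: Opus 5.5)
The plan is to prove the cycle of implications $(3)\Rightarrow(2)\Rightarrow(1)\Rightarrow(3)$, and separately to show $(3)\Leftrightarrow(4)\Leftrightarrow(5)$ using duality in $\mathcal{D}$. Two basic facts will be used throughout. First, $\Phi$ commutes with duals, $\Phi(F)^\vee\cong\Phi(F^\vee)$, since it is an exact tensor functor between rigid categories. Second, dualizing exchanges monomorphisms and epimorphisms, because dualization is an exact anti-equivalence of $\mathcal{D}$.

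The easy implications come first.
\begin{itemize}
\item $(3)\Rightarrow(2)$: take $E'=L$.
\item $(2)\Rightarrow(1)$: take $n=1$.
\item $(3)\Leftrightarrow(5)$: dualize $E'^\vee\hookrightarrow\Phi(F)$ to get $\Phi(F^\vee)\twoheadrightarrow E'$, and conversely.
\item $(4)\Leftrightarrow(5)$: a quotient $\Phi(E)\twoheadrightarrow E'$ dualizes to a subobject $E'^\vee\hookrightarrow\Phi(E^\vee)$. Applying (5) gives $\Phi(F)\twoheadrightarrow E'^\vee$, and dualizing again gives $E'\hookrightarrow\Phi(F^\vee)$. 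The reverse direction is symmetric.
\end{itemize}

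The main step is $(1)\Rightarrow(3)$. Let $E'\hookrightarrow\Phi(E)$ with $r=\rk E'$, and put $L=\wedge^r E'$. This $L$ has rank 1 and is a subobject of $\wedge^r\Phi(E)=\Phi(\wedge^rE)$; exterior powers exist in a Tannakian category over $k$ of characteristic $0$, and in general $\wedge^r$ is realised as the image of antisymmetrization inside $E^{\otimes r}$, which is preserved by exact tensor functors. By (1) there are $F$ and $n>0$ with $(L^{\otimes n})^\vee\hookrightarrow\Phi(F)$.

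Next, use the perfect pairing $E'\otimes\wedge^{r-1}E'\to\wedge^rE'=L$, which gives $E'^\vee\cong\wedge^{r-1}E'\otimes L^\vee$. Write
\[E'^\vee\cong \wedge^{r-1}E'\otimes L^{\otimes(n-1)}\otimes (L^{\otimes n})^\vee .\]
Here $\wedge^{r-1}E'\otimes L^{\otimes (n-1)}$ is a subobject of $\Phi(\wedge^{r-1}E\otimes(\wedge^rE)^{\otimes(n-1)})$, since tensor products of monomorphisms stay monomorphisms (tensor is exact over a field), and $(L^{\otimes n})^\vee$ is a subobject of $\Phi(F)$. Tensoring these gives $E'^\vee\hookrightarrow\Phi(\wedge^{r-1}E\otimes(\wedge^rE)^{\otimes(n-1)}\otimes F)$, which is (3).

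I expect this step to be the main obstacle, in particular justifying in an abstract neutral Tannakian category that the exterior powers and the isomorphism $E'^\vee\cong\wedge^{r-1}E'\otimes L^\vee$ exist and are compatible with $\Phi$. I would settle this by passing through the fibre functor $\omega_x$ to representations of the Tannaka group schemes. There, $\wedge^r$ is the usual quotient of the tensor power, defined functorially in any characteristic, and the pairing isomorphism is $G$-equivariant linear algebra. An exact tensor functor $\Phi$ corresponds to restriction of representations along a homomorphism, so it commutes with $\wedge^r$.

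Alternatively, if the Tannaka group of $\mathcal D$ is $H\to G$, one can use the standard characterization of observable subgroups: condition (1) says that characters of stabilizers extend suitably, and the classical theorem that observability of $H$ is equivalent to every finite-dimensional $H$-module embedding into a restricted $G$-module gives (3) directly. I would use the direct exterior-power argument as the primary route.
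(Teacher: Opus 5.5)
Your proof is correct, but it handles the one nontrivial step differently from the paper. The paper proves $(1)\Leftrightarrow(2)$ from $L\otimes L^\vee\cong\mathds{1}$ and $(1)\Leftrightarrow(5)$ by citing \cite[Proposition~A.3]{DaEs22}. It then gets $(3)\Leftrightarrow(4)\Leftrightarrow(5)$ by duality. So the passage from rank-one subobjects to arbitrary subobjects is delegated to the literature on observable homomorphisms.

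You prove $(1)\Rightarrow(3)$ directly, using $L=\wedge^rE'\hookrightarrow\Phi(\wedge^rE)$ and $E'^\vee\cong\wedge^{r-1}E'\otimes L^{\otimes(n-1)}\otimes(L^{\otimes n})^\vee$. This argument is self-contained and works in every characteristic. Your description of $\wedge^r$ as the image of the antisymmetrizer is valid in positive characteristic too, and the pairing $E'\otimes\wedge^{r-1}E'\to\wedge^rE'$ is equivariant. The one thing to state explicitly is that $\Phi$ respects the commutativity constraint, which is part of being a tensor functor; that is what gives $\Phi(\wedge^rE)\cong\wedge^r\Phi(E)$.

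The case $r=1$ of your computation is exactly the paper's $(1)\Rightarrow(2)$. The same determinant trick is what the paper itself uses later, in Proposition~\ref{equivsurjective}, Theorem~\ref{fieldalgclosed}(1) and Proposition~\ref{generaluni}(2). Your route therefore makes the lemma independent of the external reference. The paper's citation buys brevity and ties the lemma to the group-theoretic notion of observability used in Lemma~\ref{Observablesurjective}. The alternative via observable subgroups that you sketch at the end is not needed.
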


\begin{proof}
    $(1)\Leftrightarrow (2)$ By the isomorphism $L\otimes L^\vee\cong \mathds{1}_{\mathcal{D}}$, it follows immediately.

    $(1)\Leftrightarrow (5)$ It follows by \cite[Proposition~A.3]{DaEs22}.
    
    $(3)\Leftrightarrow (4)$ Taking duality, it follows immediately.

    $(4)\Leftrightarrow (5)$ Taking duality, it follows immediately.
\end{proof}

\begin{Definition}
    Let $k$ be a field, $X$ a connected scheme proper over $k$, $x\in X(k)$, $\mathcal{C}_X$ a Tannakian category over $X$ whose objects consist of vector bundles on $X$ with neutral fibre functor $|_x:E\mapsto E|_x$ and its Tannaka group scheme denote by $\pi(\mathcal{C}_X,x)$. A vector bundle $E\in\mathcal{C}_X$ is said to be an \textit{irreducible object} if it has no proper subobject in $\mathcal{C}_X$.
\end{Definition}

Let $k$ be a field, $X$ a connected scheme proper over $k$, $x\in X(k)$, $\mathcal{C}_X$ a Tannakian category whose objects consist of vector bundles on $X$ with neutral fibre functor $|_x$, and $\pi(\mathcal{C}_X,x)$ its corresponding Tannaka group scheme. For any subset $M$ of $\mathcal{C}_X$, we denote $$M_1:=\{E~|~E\in M\text{ or }E^\vee\in M\}\text{, }M_2:=\{~E_1\otimes E_2\otimes\cdots \otimes E_m~|~E_i\in M_1, 1\leq i\leq m, m\in\mathbb{N}\},$$
$$\langle M\rangle_{\mathcal{C}_X}:=
\left\{ 
    E\in \mathcal{C}_X \left\vert 
    \begin{split}
        & \exists F_i\in M_2,1\leq i\leq t, \text{ and } E_1,E_2\in \mathcal{C}_X\\
        & s.t. \text{ }E_1\hookrightarrow E_2\hookrightarrow\bigoplus^{t}_{i=1}F_i,\text{ and } E\cong E_2/E_1
    \end{split}\right.
\right\}.
$$
Then $(\langle M\rangle_{\mathcal{C}_X},\otimes,|_x,\mathcal{O}_X)$ is a Tannakian subcategory of $(\mathcal{C}_X,\otimes,|_x,\mathcal{O}_X)$ and we denote its Tannaka group scheme by $\pi(\langle M\rangle_{\mathcal{C}_X},x)$. Then we have:
\begin{itemize}
      \item[(a)] For any subsets $N\subseteq M\subseteq \mathcal{C}_X$,
          there is a natural surjective homomorphism of affine group schemes $$\rho_{N}^M: \pi(\langle M\rangle_{\mathcal{C}_X},x)\twoheadrightarrow \pi(\langle N\rangle_{\mathcal{C}_X},x).$$
      \item[(b)] $\pi(\mathcal{C}_X,x)$ is the inverse limit of $\pi(\langle M\rangle_{\mathcal{C}_X},x)$, where $M$ runs through all subsets of $\mathcal{C}_X$.
\end{itemize}

Let $k$ be a field, $G$ an affine group scheme over $k$, $X$ a scheme proper over $k$. Denote the finite dimensional representation category of $G$ by $\Rep_k^f(G)$. The regular representation of $G$ on $k[G]$ is, for any $g,h\in G$ and $f\in k[G]$, we have $(g\cdot f)(h)=f(gh)$. Given a principal $G$-bundle $\phi:P\rightarrow X$, define an exact tensor functor 
$$\begin{aligned}
    \eta_k^P:\Rep_k^f(G)\rightarrow \Qcoh(X),&V\mapsto (\mathcal{O}_P\otimes_k V)^G, 
\end{aligned}$$
where the action is the diagonal action. Conversely, given an exact tensor functor $\eta_k: \Rep_k^f(G)\rightarrow \Qcoh(X)$, one can define a principal $G$-bundle $\pi:P\rightarrow X$ by $P:=\Spec (\eta_k(k[G]))$,
$$\eta_k(k[G]):=\varinjlim\limits_{\alpha\in I}\eta_k(V_\alpha),$$
where $V_\alpha$ runs through all $G$-invariant finite dimensional subspaces of $k[G]$.

Consequently, there is a one-to-one correspondence between principal $G$-bundles $\phi:P\rightarrow X$ and exact tensor functors $\eta_k^P:\Rep_k^f(G)\rightarrow \Vect(X)$. In particular, consider a Tannakian subcategory $\mathcal{C}_X$ of $\Vect(X)$, $\pi(\mathcal{C}_X,x)$ its Tannaka group scheme, then the embedding functor $\eta_k:\Rep_k^f(\pi(\mathcal{C}_X,x)\cong \mathcal{C}_X\hookrightarrow \Vect(X)$ is a fully faithful exact tensor functor and corresponds to a universal principal $\pi(\mathcal{C}_X,x)$-bundle $\phi:P\rightarrow X$.

\begin{Lemma}[{\cite[Proposition~A.6]{DaEs22}}]\label{Observablesurjective}
    Let $k$ be a field, $f:G\rightarrow H$ a homomorphism of affine group schemes over $k$, $f^*:\Rep_k^f(H)\rightarrow \Rep_k^f(G)$ the functor induced by $f$. Then the following conditions are equivalent:
    \begin{enumerate}
        \item $f$ is faithfully flat.
        \item $f^*$ is fully faithful and observable.
        \item $f^*(\Rep_k^f(H))\subseteq \Rep_k^f(G)$ is a full subcategory stable under the operation of taking subobjects.
        \item For every $V \in \Rep_k^f(H)$, the inclusion $\mathbb{P}(V)^H(k) \subseteq \mathbb{P}(V)^G(k)$ is an equality.
    \end{enumerate}
\end{Lemma}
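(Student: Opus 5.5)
The plan is to prove the equivalences along the cycle $(1)\Leftrightarrow(3)$, $(3)\Rightarrow(2)\Rightarrow(3)$ and $(3)\Leftrightarrow(4)$. Throughout I use that $f^*$ is exact and faithful and does not change underlying vector spaces. Hence a subobject of $f^*V$ is just a $G$-stable subspace of $V$, and a morphism $f^*A\to f^*B$ is a $G$-equivariant linear map.

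\emph{Step 1: $(1)\Leftrightarrow(3)$.} I take this as the classical criterion of Deligne--Milne (Proposition~2.21(a) of \cite{DeMi82}). A homomorphism of affine group schemes is faithfully flat if and only if $f^*$ is fully faithful and every subobject of an object in the essential image is isomorphic to the image of a subobject. Fullness is automatic from subobject stability. Indeed, a $G$-morphism $u:f^*A\to f^*B$ has graph $\Gamma_u\subseteq f^*(A\oplus B)$, which is a $G$-subobject. By (3) it is an $H$-subobject, and so $u$ is $H$-equivariant. This is the one step I would import rather than reprove; the converse direction (flatness from (3)) needs the Hopf-algebra argument and is the genuine content.

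\emph{Step 2: $(3)\Rightarrow(2)\Rightarrow(3)$.} For $(3)\Rightarrow(2)$, full faithfulness is as in Step~1. For observability, a rank one subobject $L\hookrightarrow f^*E$ is by (3) of the form $f^*L_H$, so $L^\vee\cong f^*(L_H^\vee)$, which is criterion (2) of Lemma~\ref{Observable}. For $(2)\Rightarrow(3)$, I use Lemma~\ref{Observable}(5). Given a subobject $E'\hookrightarrow f^*E$, observability gives $F\in\Rep_k^f(H)$ with $f^*F\twoheadrightarrow E'$. The composite $f^*F\twoheadrightarrow E'\hookrightarrow f^*E$ is a $G$-morphism, hence an $H$-morphism by full faithfulness. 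Its image is $E'$, because images are computed on underlying vector spaces. Thus $E'$ is $H$-stable, i.e. it lies in the essential image.

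\emph{Step 3: $(3)\Leftrightarrow(4)$.} The inclusion $\mathbb{P}(V)^H(k)\subseteq\mathbb{P}(V)^G(k)$ always holds. Under (3), a $G$-fixed $k$-point of $\mathbb{P}(V)$ is a $G$-stable line, hence $H$-stable, so equality holds. Conversely, assume (4) and let $W\subseteq V$ be $G$-stable of dimension $d$. Then $\wedge^dW$ is a $G$-stable line in $\wedge^dV$, i.e. a $G$-fixed $k$-point of $\mathbb{P}(\wedge^dV)$, so it is $H$-fixed. To return to $W$ I use the Pl\"ucker embedding $\mathrm{Gr}(d,V)\hookrightarrow\mathbb{P}(\wedge^dV)$, a $GL(V)$-equivariant closed immersion. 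The stabilizer subgroup scheme of the point $[W]\in\mathrm{Gr}(d,V)$ therefore equals that of $[\wedge^dW]$. Hence $H$ factors through $\mathrm{Stab}_{GL(V)}(W)$, so $W$ is $H$-stable, giving (3).

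The only delicate point here is to argue scheme-theoretically, via stabilizer subgroup schemes, rather than with points. I expect the main obstacle overall to be Step~1's faithfully-flat criterion, which I would cite rather than reprove.
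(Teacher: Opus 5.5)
Your argument is correct, but it cannot follow the paper's proof, because the paper gives none: the lemma is imported verbatim from D'Addezio--Esnault \cite[Proposition~A.6]{DaEs22}.

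What your proposal shows is that, inside the paper's own framework, the only non-formal input is the Deligne--Milne criterion for $(1)\Leftrightarrow(3)$. The paper already has that criterion as Theorem~\ref{Thm1}(1). The other two equivalences are formal:
\begin{enumerate}
\item Your $(2)\Rightarrow(3)$ is the same ``image of a morphism between objects of the essential image'' trick the paper uses in the proof of Theorem~\ref{fieldgeneral}(1), combined with Lemma~\ref{Observable}(5). That lemma is cited by the paper, but it is elementary via ${E'}^{\vee}\cong\wedge^{d-1}E'\otimes(\wedge^{d}E')^{\vee}$, the determinant trick of Proposition~\ref{equivsurjective}.
\item Your $(4)\Rightarrow(3)$ is Chevalley's reduction to lines via the Pl\"ucker embedding. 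You do it correctly with stabilizer subgroup functors rather than $k$-points, so it also works for non-reduced $H$.
\end{enumerate}

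Two small points to tidy:
\begin{itemize}
\item In $(4)\Rightarrow(3)$ you only obtain that every $G$-stable subspace is $H$-stable. Condition (3) also demands fullness, so you should say explicitly that your graph argument from Step~1 supplies it.
\item You read (3) as ``every $G$-subobject of $f^*V$ is an $H$-subobject of $V$,'' not as ``the essential image is closed under subobjects up to isomorphism.'' The two agree once fullness is known, by the same image argument as in your Step~2. That costs one sentence and should be stated.
\end{itemize}

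The trade-off: the citation is shorter, while your version makes clear that (2) and (4) are formal reformulations of (3). It also isolates the one genuinely hard implication, $(3)\Rightarrow(1)$, in a single classical reference.
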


\begin{Lemma}[{\cite[Chapter~VII, Proposition~2.3 \& Proposition~7.2]{Mil12}}]\label{isofield}
    Let $k$ be a field, $K/k$ a field extension, $f:G\rightarrow H$ a homomorphism of affine group schemes over $k$. Then $f$ is an isomorphism iff $f_K:G_K\rightarrow H_K$ is an isomorphism.
\end{Lemma}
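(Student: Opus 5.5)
The statement to prove is Lemma~\ref{isofield}: $f$ is an isomorphism iff $f_K$ is. The plan is to reduce everything to statements about coordinate rings and use that $K$ is faithfully flat over $k$. Write $G=\Spec A$ and $H=\Spec B$. Then $f$ corresponds to a Hopf algebra map $f^\sharp:B\to A$, and $f_K$ corresponds to $f^\sharp\otimes_k K:B\otimes_k K\to A\otimes_k K$. Being an isomorphism of affine group schemes is the same as $f^\sharp$ being bijective. The inverse of a bijective Hopf algebra map is automatically a Hopf algebra map, since compatibility with multiplication, comultiplication, counit and antipode transports along a bijection. So it suffices to show that $f^\sharp$ is bijective iff $f^\sharp\otimes_k K$ is.

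For the forward direction, base change of an isomorphism is an isomorphism, by functoriality of $-\otimes_k K$ or of $-\times_{\Spec k}\Spec K$.

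For the converse, consider the exact sequence of $k$-vector spaces
\[
0\to \ker f^\sharp\to B\to A\to \coker f^\sharp\to 0.
\]
Since $K$ is a free, hence faithfully flat, $k$-module, tensoring gives an exact sequence whose middle map is $f^\sharp\otimes_k K$. We get $\ker(f^\sharp\otimes_k K)=\ker(f^\sharp)\otimes_k K$, and the same for cokernels. If $f_K$ is an isomorphism, both vanish. A $k$-vector space $V$ with $V\otimes_k K=0$ is zero, because $\dim_K(V\otimes_k K)=\dim_k V$. Hence $f^\sharp$ is bijective, and $f$ is an isomorphism.

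There is no real obstacle. The only points needing care are the identification of the kernel and cokernel after base change, which uses flatness of $K$ over $k$, and the remark that bijectivity at the level of Hopf algebras suffices. The cited references in \cite{Mil12} phrase this via faithfully flat descent, but the elementary linear-algebra argument above is self-contained.
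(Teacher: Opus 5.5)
Your proof is correct, and it takes a different route from the paper. The paper gives no argument for this lemma; it only cites two propositions from Milne's notes, so the statement is imported from the general theory of homomorphisms of affine groups under extension of the base field.

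You instead reduce directly to the coordinate-ring map $f^\sharp\colon B\to A$. Since $-\otimes_k K$ is exact, $\ker(f^\sharp\otimes_k K)\cong\ker(f^\sharp)\otimes_k K$ and $\coker(f^\sharp\otimes_k K)\cong\coker(f^\sharp)\otimes_k K$. A $k$-vector space $V$ with $V\otimes_k K=0$ is zero, so both vanish.

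This argument is self-contained. It uses the group structure only through your (correct) remark that the inverse of a bijective Hopf algebra map is again one, so it actually proves the statement for arbitrary morphisms of affine $k$-schemes. It also imposes no finiteness condition on $A$ or $B$. That matters here, because the paper applies the lemma in Proposition~\ref{iffdescent} to Tannaka group schemes that are in general not of finite type.

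If you also wanted the finer facts that a homomorphism being a closed immersion, or being faithfully flat, can each be tested after base change, the same kernel/cokernel argument gives them. Combine it with the standard characterizations: $f$ is a closed immersion iff $f^\sharp$ is surjective, and faithfully flat iff $f^\sharp$ is injective. For the lemma as stated, these are not needed.
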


\begin{Lemma}\label{NoetherDeuring}
    Let $k$ be a field, $K/k$ a field extension, $G$ an affine group scheme over $k$. For any $V,W\in\Rep_k^f(G)$, if $V\otimes_k K\cong W\otimes_k K$ as finite dimensional $G_K$ representations, then $V\cong W$ as finite dimensional $G$ representations.
\end{Lemma}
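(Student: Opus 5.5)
This is a Noether--Deuring type statement, and I plan to reduce it to the classical argument via Hom-spaces. First, base change commutes with Hom for representations of affine group schemes: for $V,W\in\Rep_k^f(G)$,
$$\Hom_{G_K}(V\otimes_k K,W\otimes_k K)\cong \Hom_G(V,W)\otimes_k K.$$
To see this, write $\Hom_G(V,W)$ as the kernel of the $k$-linear map
$$\Hom_k(V,W)\rightarrow \Hom_k(V,W\otimes_k k[G]),\qquad f\mapsto \rho_W\circ f-(f\otimes\mathrm{id})\circ\rho_V.$$
Since $-\otimes_k K$ is exact and $K[G_K]=k[G]\otimes_k K$, kernels commute with this base change. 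Let $H:=\Hom_G(V,W)$ and $H':=\Hom_G(W,V)$; these are finite-dimensional $k$-spaces.

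Next, choose $K$-bases and use a determinant polynomial. Pick a $k$-basis $f_1,\dots,f_r$ of $H$. By the first step, any $G_K$-homomorphism $V_K\to W_K$ has the form $\sum \lambda_i f_i$ with $\lambda_i\in K$. If $V_K\cong W_K$, then $\dim V=\dim W=:n$. Fix $k$-bases of $V$ and $W$ and set
$$P(t_1,\dots,t_r):=\det\Bigl(\sum t_i f_i\Bigr)\in k[t_1,\dots,t_r].$$
Because an isomorphism exists over $K$, $P$ has a nonzero value at some point of $K^r$, so $P$ is a nonzero polynomial.

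If $k$ is infinite, a nonzero polynomial has a nonvanishing point $c\in k^r$. Then $\sum c_i f_i\in\Hom_G(V,W)$ is a bijective $G$-equivariant map, hence an isomorphism of $G$-representations. This case is immediate.

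The main obstacle is a finite field $k$, where a nonzero polynomial may vanish on all of $k^r$. Here I would follow the classical Noether--Deuring proof via Krull--Schmidt, applied inside the abelian category $\Rep_k^f(G)$, where every object has finite length. Decompose $V=\bigoplus V_j^{m_j}$ and $W=\bigoplus V_j^{n_j}$ into indecomposables with local endomorphism rings. It then suffices to show that the multiplicity of each indecomposable $U$ is determined after base change. I would do this with the standard invariant
$$\dim_k\Hom_G(U,V)\big/\mathrm{rad},$$
namely the Hom modulo non-split maps, computed over the division ring $D=\End(U)/\mathrm{rad}$. The difficulty is that this radical quotient need not commute with base change. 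The alternative route is to check that the $k$-algebra $A$ given by the image of the distribution or coalgebra action on $V\oplus W$ is a finite-dimensional $k$-algebra $A$, so that $\Rep$ restricted to these objects is a category of $A$-modules. One then invokes the classical Noether--Deuring theorem for modules over a finite-dimensional algebra; that theorem holds for all fields, including finite ones, by counting multiplicities via $\dim\Hom_A(U,-)$ for the finitely many indecomposables.

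Concretely, the reduction goes as follows. $V\oplus W$ is a comodule over a finite-dimensional subcoalgebra $C\subseteq k[G]$. $G$-representations with coefficients in $C$ are exactly modules over $A=C^*$, and this persists after $\otimes_k K$. So the lemma reduces to the module statement $V\otimes_k K\cong W\otimes_k K$ as $A_K$-modules implying $V\cong W$ as $A$-modules, which is the classical Noether--Deuring theorem that I would cite.
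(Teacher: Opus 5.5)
Your proposal is correct and takes essentially the paper's approach: the paper's proof is just a citation of the Noether--Deuring theorem, and your final route reduces to that theorem and cites it. The difference is that you make explicit the reduction the citation needs. Namely, $V\oplus W$ is a comodule over a finite-dimensional subcoalgebra $C\subseteq k[G]$, $G$-equivariant maps between such comodules are exactly $C^*$-module maps, and $(C\otimes_k K)^*=C^*\otimes_k K$, so the statement becomes the classical one for the finite-dimensional algebra $A=C^*$. Since that theorem holds over every field, the determinant argument for infinite $k$ and the abandoned Krull--Schmidt/radical discussion are not needed.
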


\begin{proof}
    It follows by Noether-Deuring theorem (cf. \cite[Theorem~2.2]{BeRe19}).
\end{proof}

\begin{Lemma}[{\cite[Chapter VIII, Proposition 10.2]{Mil12}}]\label{regularrepresentation}
    Let $k$ be a field, $G$ an affine group scheme over $k$, $(V, \rho)\in \Rep_k^f(G)$. Then $V$ embeds into a finite sum of copies of the regular representation, i.e. there exists integer $n>0$ and a $G$-invariant homomorphism $V\hookrightarrow k[G]^{\oplus n}$ where $k[G]$ is the regular representation.
\end{Lemma}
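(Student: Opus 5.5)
The statement to prove is Lemma~\ref{regularrepresentation}: every finite dimensional representation $V$ of $G$ embeds $G$-equivariantly into $k[G]^{\oplus n}$ for some $n>0$. The plan is to use the comodule structure of $V$ and the orbit map. Write the coaction as $\rho:V\to V\otimes_k k[G]$. Then the coassociativity identity $(\rho\otimes\mathrm{id})\circ\rho=(\mathrm{id}\otimes\Delta)\circ\rho$ says exactly that $\rho$ is a homomorphism of $G$-representations. Here the source is $V$ with its given action. The target $V\otimes_k k[G]$ carries the action coming only from the regular representation on the second factor, with $V$ treated as a trivial vector space.

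Next we check that $\rho$ is injective. By the counit axiom, $(\mathrm{id}\otimes\varepsilon)\circ\rho=\mathrm{id}_V$. So $\rho$ has a linear left inverse, and hence $\rho$ is injective.

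Finally we identify the target. Choose a basis $e_1,\dots,e_n$ of $V$ with $n=\dim V$. This gives an isomorphism of representations $V\otimes_k k[G]\cong k[G]^{\oplus n}$, since $G$ acts only on the $k[G]$ factor. Composing with $\rho$ yields the desired $G$-equivariant embedding $V\hookrightarrow k[G]^{\oplus n}$. (If $V=0$ the conclusion holds trivially with $n=1$.)

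The only point needing care is the convention for the regular representation. The paper uses $(g\cdot f)(h)=f(hg)$-type versus $f(gh)$ conventions, i.e.\ right versus left translation. One must check that the comodule structure on $k[G]$ induced by $\Delta$ matches the stated action $(g\cdot f)(h)=f(gh)$. If it does not, compose with the inversion (antipode) $S$ on $k[G]$, which intertwines left and right regular actions. Then redo the equivariance check with $\rho$ replaced by $(\mathrm{id}\otimes S)\circ\rho$ or an analogous twist. This bookkeeping is the main, though minor, obstacle; injectivity is unaffected, since $\varepsilon\circ S=\varepsilon$.
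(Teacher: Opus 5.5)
Your argument is correct and is the standard proof of the result the paper cites. The paper itself gives no proof and only refers to Milne, where the argument is exactly yours: the coaction $\rho\colon V\to V_0\otimes_k k[G]\cong k[G]^{\oplus n}$ is a comodule map by coassociativity and injective by the counit axiom. Your convention caveat is also warranted, since the paper's formula $(g\cdot f)(h)=f(gh)$ is literally a right action, and composing with the antipode $S$ fixes this as you describe.
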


\begin{Lemma}\label{basechangeembedding}
    Let $k$ be a field, $K/k$ a field extension, $G$ an affine group scheme over $k$. For any $V\in\Rep_K^f(G_K)$, there exists $W\in\Rep_k^f(G)$ such that $V\hookrightarrow W\otimes_k K$ in $\Rep_K^f(G_K)$.
\end{Lemma}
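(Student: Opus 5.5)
The approach is to realize $V$ inside a regular representation of $G_K$ and then find a finite-dimensional $G$-stable subspace of $k[G]$ whose base change captures it. Since $G_K$ is an affine group scheme over $K$ with coordinate ring $K[G_K]=k[G]\otimes_k K$, Lemma~\ref{regularrepresentation} applied to $G_K$ gives an integer $n>0$ and a $G_K$-equivariant embedding
$$V\hookrightarrow K[G_K]^{\oplus n}=(k[G]\otimes_k K)^{\oplus n}.$$
The task is then to replace $k[G]$ by a finite-dimensional $G$-stable subspace.

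First, note that the regular representation of $G_K$ on $k[G]\otimes_k K$ is the base change of the regular representation of $G$ on $k[G]$. The comodule structure is $\Delta\otimes \mathrm{id}_K$, where $\Delta$ is the comultiplication of $k[G]$, so this is formal. Next, $V$ is finite dimensional over $K$. Choose a $K$-basis $v_1,\dots,v_r$ of its image in $(k[G]\otimes_k K)^{\oplus n}$. Each component of each $v_j$ is a finite sum $\sum_l f_l\otimes a_l$ with $f_l\in k[G]$ and $a_l\in K$. Let $S\subseteq k[G]$ be the finite set of all $f_l$ that occur.

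By local finiteness of comodules, every finite subset of $k[G]$ lies in a finite-dimensional $G$-stable subspace. Let $W_0\subseteq k[G]$ be such a subspace containing $S$, and set $W:=W_0^{\oplus n}\in\Rep_k^f(G)$. Then $W\otimes_k K=(W_0\otimes_k K)^{\oplus n}$ is a $G_K$-subrepresentation of $(k[G]\otimes_k K)^{\oplus n}$. Here flatness of $K/k$ ensures that $W_0\otimes_k K\to k[G]\otimes_k K$ is injective. By construction, every $v_j$ lies in $W\otimes_k K$.

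Finally, the image of $V$ is a $G_K$-subrepresentation of $(k[G]\otimes_kK)^{\oplus n}$ contained in the subrepresentation $W\otimes_kK$. Hence $V\hookrightarrow W\otimes_k K$ is an injective $G_K$-equivariant map, as required. The only point needing care is that the regular representation of $G_K$ is the base change of that of $G$, compatibly with $G_K$-stability, and this is immediate from the comodule description.
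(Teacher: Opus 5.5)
Your proposal is correct and follows essentially the same route as the paper. Like the paper, you embed $V$ into $K[G_K]^{\oplus n}\cong k[G]^{\oplus n}\otimes_k K$ via Lemma~\ref{regularrepresentation}, collect the finitely many $k[G]$-coefficients of a basis of $V$, and enclose them in a finite-dimensional $G$-stable subspace $W$ by local finiteness. You also make explicit two points the paper leaves implicit: the regular representation of $G_K$ is the base change of that of $G$, and the resulting inclusion $V\hookrightarrow W\otimes_k K$ is $G_K$-equivariant.
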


\begin{proof}
    For any $V\in \Rep_{K}^f\Rep_K^f(G_K)$, by Lemma~\ref{regularrepresentation} we have $$V\hookrightarrow K[G_K]^{\oplus n}\cong k[G]^{\oplus n}\otimes_k K.$$
    Then there exists a finite dimensional $k$-subspace $W'\hookrightarrow k[G]^{\oplus n}$ such that $V\hookrightarrow W'\otimes_k K$ as $K$-vector space. There exists $W\in\Rep_k^f(G)$ and a $G$-invariant homomorphism $W\hookrightarrow k[G]^{\oplus n}$ such that $W$ contains $W'$, so $V\hookrightarrow W\otimes_k K$ as finite dimensional $K$-representations of $G_K$.
\end{proof}

\begin{Theorem}[{\cite[Theorem~A.1]{EHS07}}]\label{Thm1}
Let $L\xrightarrow{q}G\xrightarrow{p} A$ be a sequence of homomorphisms of affine group schemes over a field $k$. It induces a sequence of functors:
$$\Rep_k^f(A)\xrightarrow{p^*}\Rep_k^f(G)\xrightarrow{q^*}\Rep_k^f(L),$$
where $\Rep_k^f$ denotes the category of finite dimensional representations over $k$. Then we have the following:
\begin{enumerate}
    \item[(1)] The group homomorphism $p: G\rightarrow A$ is surjective $($faithfully flat$)$ iff  $p^*\Rep_k^f(A)$ is a full subcategory of $\Rep_k^f(G)$ and closed under taking subobjects $($subquotients$)$.
    \item[(2)] The group homomorphism $q:L\rightarrow G$ is injective $($a closed immersion$)$ iff any object of $\Rep_k^f(L)$ is a subquotient of an object of the form $q^*(V)$ for some $V\in \Rep_k^f(G)$.
    \item[(3)] If $p$ is faithfully flat. Then the sequence $L\xrightarrow{q}G\xrightarrow{p} A$ is exact iff the following conditions are fulfilled:
    \begin{enumerate}
        \item[(a)] For an object $V\in \Rep_k^f(G)$,  $q^* V\in\Rep_k^f(L)$ is trivial iff $V\cong p^*U$ for some $U\in\Rep_k^f(A)$.\label{3a}
        \item[(b)]\label{3b} Let $W_0$ be the maximal trivial subobject of $q^*V$ in $\Rep_k^f(L)$. Then there exists $V_0\hookrightarrow V$ in $\Rep_k^f(G)$ such that $q^*V_0\cong W_0$.
        \item[(c)] For any $W\in\Rep_k^f(G)$ and any quotient $q^*W\twoheadrightarrow W'\in\Rep_k^f(L)$, there exists $V\in \Rep_k^f(G)$ and an embedding $W'\hookrightarrow q^*V$.
    \end{enumerate}
\end{enumerate}
\end{Theorem}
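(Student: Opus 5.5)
The statement is \cite[Theorem~A.1]{EHS07}. I would reprove it from Tannakian duality and Chevalley's theorem, as follows.

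\emph{Parts (1) and (2).} These are the standard criteria of Deligne--Milne \cite{DeMi82} (Proposition~2.21). For (1) I would use Lemma~\ref{Observablesurjective}. If $p$ is faithfully flat, then $k[A]\hookrightarrow k[G]$ is injective and $k[G]$ is faithfully flat over it, so $p^*$ is fully faithful. Moreover a $G$-stable subspace of $p^*U$ is automatically $A$-stable: the coaction lands in $W\otimes k[G]\cap U\otimes k[A]=W\otimes k[A]$ by flatness. Conversely, closure of the image under subobjects is condition (3) of Lemma~\ref{Observablesurjective}. For (2), $q$ is a closed immersion iff $k[G]\to k[L]$ is surjective. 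Since $k[L]$ is the union of coefficient spaces of finite-dimensional representations, this holds iff every $L$-representation is a subquotient of some $q^*V$. One direction uses that subquotients of $q^*V$ have coefficients in the image; the other uses Lemma~\ref{regularrepresentation} applied to $L$.

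\emph{Part (3), set-up and necessity.} Let $N=\ker p$, a normal closed subgroup of $G$. Since $p$ is faithfully flat, $A\cong G/N$, so $p^*\Rep_k^f(A)$ is exactly the category of $G$-representations on which $N$ acts trivially. Write $H=q(L)\subseteq G$ for the scheme-theoretic image; restriction to $L$ and to $H$ have the same invariants. If the sequence is exact, then $H=N$. Condition (a) follows directly. For (b), $W_0=V^N$ is $G$-stable because $N$ is normal. For (c), since $N\subseteq G$ is normal, every quotient of $q^*W$ embeds into a restriction $q^*V$; the standard route is to embed into $\mathrm{Ind}_N^G$ and use that $G/N$ is affine, hence $N$ is observable.

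\emph{Part (3), sufficiency, which I expect to be the main obstacle.} The "if" direction of (a) gives $H\subseteq N$. It remains to show $N\subseteq H$, and I would argue by contradiction. By Chevalley's theorem there is $V\in\Rep_k^f(G)$ and a line $\ell\subseteq V$ whose stabiliser is exactly $H$. Then $H$ acts on $\ell$ by a character $\chi$.
\begin{itemize}
\item Condition (c), in the dual form of Lemma~\ref{Observable}(4)$\Leftrightarrow$(3), gives $W\in\Rep_k^f(G)$ with $\ell^\vee\hookrightarrow q^*W$.
\item Then $\ell\otimes\ell^\vee\subseteq V\otimes W$ produces an $H$-invariant vector $v$.
\item By (b), the maximal trivial $L$-subobject of $V\otimes W$ is $q^*V_0$ for some $G$-subrepresentation $V_0$. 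By (a), $V_0\cong p^*U$, so $N$ acts trivially on $V_0\ni v$.
\end{itemize}
This shows only that $N$ fixes $v$, whereas we need $N$ to stabilise $\ell$. To close the gap I would arrange the choice of $V$ so that the stabiliser of $v$ is still exactly $H$. One option is to replace $(V,\ell)$ by a Chevalley pair adapted to semi-invariants; another is to run the argument over $\bar k$ so as to argue with points. Then $N\subseteq\mathrm{Stab}(v)=H$, which gives $H=N$.
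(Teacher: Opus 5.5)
The paper does not prove this statement; it quotes it from \cite{EHS07}, so your argument has to stand on its own. Parts (1), (2) and the necessity half of (3) work as sketched. Your sufficiency argument is essentially right, but it has one false alarm and one real gap.

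\textbf{The step you flag is not an obstacle.} The vector $v$ spans $\ell\otimes\ell^\vee$, so $v=e\otimes f$ is a \emph{pure} tensor with $\ell=ke$. Choose bases $e=e_1,\dots,e_a$ of $V$ and $f=f_1,\dots,f_b$ of $W$. For a $k$-algebra $R$ and $n\in N(R)$, write $ne=\sum_i a_ie_i$ and $nf=\sum_j b_jf_j$. Then $n(e\otimes f)=e\otimes f$ means $a_ib_j=\delta_{i1}\delta_{j1}$. Hence $b_1\in R^\times$ and $a_i=0$ for $i\neq 1$, so $ne\in R^\times e$. Thus $N\subseteq \mathrm{Stab}_G(\ell)$. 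Moreover $H\subseteq\mathrm{Stab}_G(v)\subseteq\mathrm{Stab}_G(\ell)$, so $v$ already has the stabiliser you wanted. Neither proposed remedy is needed. The one via $\bar k$-points would actually be inadequate: for non-reduced groups such as $\mu_p$ or Frobenius kernels, which are exactly the groups behind $\pi^{Loc}$ and $\pi^N$ in this paper, inclusion on $\bar k$-points does not give inclusion of subgroup schemes. The functorial computation above is what is required.

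\textbf{The appeal to Chevalley's theorem needs repair.} Chevalley requires $G$ of finite type, but the groups here are only pro-algebraic. In that setting a closed subgroup need not be the stabiliser of a line in any finite-dimensional representation. For $G=\prod_{\mathbb{N}}\mathbb{G}_m$, every finite-dimensional representation has kernel containing all but finitely many factors, so the trivial subgroup is no such stabiliser.

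The fix is a limit argument. Write $G=\varprojlim G_i$ over its algebraic quotients and let $H_i$ be the image of $H$ in $G_i$. Choose $(V,\ell)$ with $\mathrm{Stab}_{G_i}(\ell)=H_i$ and view $V$ as a $G$-representation; $L$ still stabilises $\ell$. Your argument then shows that the image of $N$ in $G_i$ lies in $H_i$, for every $i$. Since $H$ is closed, $H=\varprojlim H_i$, and $N\subseteq H$ follows.

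The same limit reduction is needed for the observability of the normal subgroup $N$ in the necessity of (c). There you should also record that, because $k[H]\hookrightarrow k[L]$, the $L$-stable subspaces of $q^*W$ are exactly the $H$-stable ones. Hence $W'$ is a quotient of $W|_N$.

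With these two points added your proof is complete. It is the standard argument: (c) makes $H$ observable in $G$; an observable subgroup is the common stabiliser of its invariant vectors; and (a) together with (b) force those vectors to be $N$-invariant.
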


\section{The saturation of Tannakian categories}
\begin{Definition}
    Let $k$ be a field, $X$ a connected scheme proper over $k$, $\mathcal{C}_X$ the Tannakian category over $X$. Define the \textit{saturation category} $\overline{\mathcal{C}}_X$ of $\mathcal{C}_X$ as the full subcategory of $\Vect(X)$ whose objects are those $E$ for which there exists a filtration
    $$0\hookrightarrow E_1 \hookrightarrow \cdots\hookrightarrow E_n=E,$$
    such that $E^i=E_{i+1}/E_i\in\mathcal{C}_X$ for any $i$.
\end{Definition}

\begin{Remark}
    If $E\in\overline{\mathcal{C}}_X$ is an irreducible object, we have that $E\in\mathcal{C}_X$ is also an irreducible object.
\end{Remark}

\begin{Proposition}\label{saturatedtannakian}
    Let $k$ be a field, $X$ a connected scheme proper over $k$, $x\in X(k)$, $\mathcal{C}_X$ the Tannakian category over $X$ with the neutral fibre functor 
    $$|_x:\mathcal{C}_X\rightarrow \mathrm{Vec}_k,E\mapsto E|_x.$$
    Then $\overline{\mathcal{C}}_X$ is a $k$-linear abelian rigid tensor category with the neutral fibre functor 
    $$\begin{aligned}
        |_x: \overline{\mathcal{C}}_X\rightarrow \mathrm{Vec}_k, E\mapsto E|_x.
    \end{aligned}$$
\end{Proposition}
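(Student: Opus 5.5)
The plan is to check the axioms of a neutral Tannakian category for $\overline{\mathcal{C}}_X$ one at a time, arguing by induction on the length of the filtration. The closure properties come from their validity in $\mathcal{C}_X$. The one genuinely non-formal point is that kernels and cokernels, computed in $\Qcoh(X)$, are again vector bundles. The main tool is the following standard fact for objects of a Tannakian category $\mathcal{C}_X$ of vector bundles on a connected proper $X$: any morphism $f:E\to F$ in $\mathcal{C}_X$ has locally constant rank, so $\ker f$, $\Ima f$ and $\coker f$ are subbundles and quotient bundles. Consequently the fibre functor $|_x$ is exact and faithful on $\mathcal{C}_X$. I would also use the degree-zero type property: a nonzero morphism between objects of $\mathcal{C}_X$ that is injective at $x$ is a subbundle inclusion.

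\emph{Tensor structure and duals.} If $E$ carries a filtration with graded pieces $E^i\in\mathcal{C}_X$ and $F$ one with pieces $F^j\in\mathcal{C}_X$, then $E\otimes F$ has the product filtration, whose graded pieces are the $E^i\otimes F^j\in\mathcal{C}_X$. The dual filtration $(E/E_i)^\vee$ on $E^\vee$ has graded pieces $(E^i)^\vee\in\mathcal{C}_X$. Hence $\overline{\mathcal{C}}_X$ is closed under $\otimes$ and $\vee$, contains $\mathcal{O}_X$, and rigidity is inherited from $\Vect(X)$. Since all objects are vector bundles, $|_x$ is an exact $k$-linear tensor functor on short exact sequences of bundles. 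Direct sums are handled by concatenating filtrations, so $\overline{\mathcal{C}}_X$ is additive and $k$-linear.

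\emph{Abelian structure: the main obstacle.} Let $f:E\to F$ be a morphism in $\overline{\mathcal{C}}_X$. I must show that $\ker f$ and $\coker f$ are vector bundles lying in $\overline{\mathcal{C}}_X$. The approach is a double induction on the lengths of the filtrations of $E$ and $F$.

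\begin{enumerate}
\item Base case: if $E,F\in\mathcal{C}_X$, the claim holds because $\mathcal{C}_X$ is abelian with kernels and cokernels computed in $\Vect(X)$.
\item Reduction step: write $0\to E_{n-1}\to E\to E^{n-1}\to 0$ and treat $F$ similarly. Compose $f$ with these sequences and apply the snake lemma in $\Qcoh(X)$. The aim is to show that $\ker f$ and $\coker f$ are extensions of subquotients of morphisms between shorter objects.
\end{enumerate}

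The difficulty is that the snake lemma yields a connecting map, and a priori its kernel and cokernel might fail to be locally free. To control this I would first prove a lemma: every morphism $f$ in $\overline{\mathcal{C}}_X$ has constant rank on fibres. The intended proof uses the fact that objects of $\mathcal{C}_X$ are "semistable of degree zero" in the Tannakian sense. Concretely, any rank-one subquotient $L$ occurring has $H^0(L)\neq 0\Rightarrow L\cong\mathcal{O}_X$. Passing to determinants, this forces the image sheaf of $f$ to be saturated. Specifically, $\det(\Ima f)$ maps nonzero into $\det$ of a subquotient built from $\mathcal{C}_X$, and the degree-zero-type property makes this map an isomorphism, so $\Ima f$ is a subbundle.

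Once constant rank is established, kernels and cokernels are vector bundles. They inherit filtrations by intersecting with, and projecting, the given filtrations. The graded pieces are then subquotients of morphisms between objects of $\mathcal{C}_X$, and these lie in $\mathcal{C}_X$ by the base case. Finally, faithfulness of $|_x$ follows from exactness together with the observation that a bundle with zero fibre at $x$ is zero on connected $X$. This completes the verification that $\overline{\mathcal{C}}_X$ is a neutral Tannakian category with fibre functor $|_x$.
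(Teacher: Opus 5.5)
There is a genuine gap in the abelian part. Your closing step does not work. That step gives $\ker f$ and $\coker f$ the filtrations induced from $E_\bullet$ and $F_\bullet$ and asserts that the graded pieces are subquotients of morphisms between objects of $\mathcal{C}_X$. But $f$ is not a filtered morphism: nothing forces $f(E_i)$ into any particular $F_j$. For instance, the $i$-th graded piece of $\ker f\cap E_\bullet$ is $\ker\bigl(E^i\to F/f(E_{i-1})\bigr)$. This is the kernel of a map from an object of $\mathcal{C}_X$ into a cokernel that is at best, inductively, in $\overline{\mathcal{C}}_X$; it is not the kernel of a morphism in $\mathcal{C}_X$. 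Knowing that it is a subbundle of $E^i$ does not help, since subbundles of objects of $\mathcal{C}_X$ need not lie in $\mathcal{C}_X$ (think of $\mathcal{O}(-1)\subset\mathcal{O}^{\oplus 2}$ on $\mathbb{P}^1$ for numerically flat bundles). So a constant-rank lemma, even if correct, only yields local freeness, not membership in $\overline{\mathcal{C}}_X$, which is the real content of the claim.

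The same problem affects your reduction step. If you peel off both filtrations at once, there is no morphism of short exact sequences to which the snake lemma applies, so the obstacle is compatibility, not the connecting map. The constant-rank lemma is also not proved as sketched. Writing $\det(\Ima f)$ presupposes that $\Ima f$ is locally free, and there is no subquotient built from $\mathcal{C}_X$ into which it naturally maps. Moreover, constant fibre rank gives locally free kernel and cokernel only on reduced $X$ (consider multiplication by $\epsilon$ on $\Spec k[\epsilon]/(\epsilon^2)$), and $X$ is not assumed reduced. A workable version would take these steps:
\begin{enumerate}
\item Show that a nonzero map $\mathcal{O}_X\to G$ with $G\in\overline{\mathcal{C}}_X$ is a subbundle.
\item Apply this to $\wedge^r f$ and $\wedge^{r+1}f$, viewed as sections of $\wedge^rE^\vee\otimes\wedge^rF$ and $\wedge^{r+1}E^\vee\otimes\wedge^{r+1}F$.
\item Conclude with Fitting ideals.
\end{enumerate}
Even so, the membership problem above would remain.

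The paper's proof needs no such lemma. It inducts on $\rk E+\rk F$ and peels off only the target. Choose $0\to F'\to F\xrightarrow{j}F''\to 0$ with $F''$ irreducible in $\mathcal{C}_X$. By induction, $j\circ f$ has kernel and cokernel in $\overline{\mathcal{C}}_X$, so by irreducibility $j\circ f$ is either $0$ or surjective.
\begin{itemize}
\item If $j\circ f=0$, then $f$ factors through some $g:E\to F'$.
\item If $j\circ f$ is surjective, then $f$ restricts to $h:\ker(j\circ f)\to F'$.
\end{itemize}
Either way you get an honest morphism of short exact sequences whose right-hand vertical map is injective. The snake lemma then gives $\ker f\cong\ker g$ (resp.\ $\ker h$). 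It also gives $\coker f$ as an extension of $F''$ (resp.\ $0$) by $\coker g$ (resp.\ $\coker h$). Both lie in $\overline{\mathcal{C}}_X$ by induction and closure under extensions. The case where only $E$ is reducible is handled by dualizing. This argument tracks membership in $\overline{\mathcal{C}}_X$ at every step, which is exactly what your plan lacks.
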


\begin{proof}
    We claim that the category $\overline{\mathcal{C}}_X$ is a rigid tensor category, i.e. for any $E,F\in\overline{\mathcal{C}}_X$, $E^\vee ,E\otimes_{\mathcal{O}_X} F\in\overline{\mathcal{C}}_X$. 
    
    Consider the filtration of $E$:
    $$0\hookrightarrow E_1 \hookrightarrow \cdots\hookrightarrow E_m=E,$$
    such that $E_{i+1}/E_i\in\mathcal{C}_X$ for any $i$. Note that $E^\vee$ has the following filtration:
    $$0\hookrightarrow (E/E_{n-1})^\vee\hookrightarrow \cdots \hookrightarrow(E/0)^\vee= E^\vee,$$
    so that $$((E/E_{i})^\vee)/(E/E_{i-1})^\vee\cong (E_{i-1}/E_i)^\vee\in\mathcal{C}_X.$$ Hence $E^\vee\in\overline{\mathcal{C}}_X$.
    
    Tensoring the filtration of $E$ with $F$, we obtain a filtration of $E\otimes_{\mathcal{O}_X} F$:
    $$0\hookrightarrow E_1\otimes_{\mathcal{O}_X} F \hookrightarrow \cdots\hookrightarrow E_m\otimes_{\mathcal{O}_X} F=E\otimes_{\mathcal{O}_X} F.$$
    Consider the filtration of $F$:
    $$0\hookrightarrow F_1 \hookrightarrow \cdots\hookrightarrow F_n=F,$$
    such that $F_{j+1}/F_j\in\mathcal{C}_X$ for any $j$. For any $0\leq i\leq m-1$, it yields a filtration of $(E_{i+1}/E_i)\otimes_{\mathcal{O}_X} F$:
    $$0\hookrightarrow (E_{i+1}/E_i)\otimes_{\mathcal{O}_X} F_1 \hookrightarrow\cdots\hookrightarrow (E_{i+1}/E_i)\otimes_{\mathcal{O}_X} F_n=(E_{i+1}/E_i)\otimes_{\mathcal{O}_X} F,$$
    where $$((E_{i+1}/E_i)\otimes_{\mathcal{O}_X} F_{j+1})/((E_{i+1}/E_i)\otimes_{\mathcal{O}_X} F_{j})\cong (E_{i+1}/E_i)\otimes_{\mathcal{O}_X} (F_{j+1}/F_{j})\in \mathcal{C}_X$$ for any $j$. So for any $0\leq i\leq m-1$, we have $$(E_{i+1}/E_i)\otimes_{\mathcal{O}_X} F\cong(E_{i+1}\otimes_{\mathcal{O}_X} F)/(E_i\otimes_{\mathcal{O}_X} F)\in \overline{\mathcal{C}}_X.$$ Then $E\otimes_{\mathcal{O}_X} F\in\overline{\mathcal{C}}_X$ because $\overline{\mathcal{C}}_X$ is closed under taking extensions.

    Now we show that $\overline{\mathcal{C}}_X$ is an abelian category. We claim that for any homomorphism $f:E\rightarrow F$ in $\overline{\mathcal{C}}_X$, $\ker f$ and $\coker f$ are in $\overline{\mathcal{C}}_X$. We verify this by induction on $\rk(E)+\rk(F)$.
    
    If $\rk(E)+\rk(F)=1$, then this is obvious since either $\ker= E$ and $\coker =0$, or $\ker=0$ and $\coker=F$.
    
    Assume the claim is true for the sum of ranks less than $n$. Suppose $\rk E+\rk F=n$. If $E$ and $F$ are irreducible objects in $\overline{\mathcal{C}}_X$, we have $E$ and $F$ are irreducible objects in $\mathcal{C}_X$, then this claim follows immediately since $\mathcal{C}_X$ is abelian. If $F$ is not irreducible, consider the following exact sequence
    $$0\rightarrow F'\xrightarrow{i} F\xrightarrow{j} F''\rightarrow 0,$$
    where $F',F''\in \overline{\mathcal{C}}_X$ and $F''$ is irreducible. Note that $$\rk(E)+\rk(F'')< \rk(E)+\rk(F),$$ by inductive hypothesis, the kernel and coker of the composite morphism $E\xrightarrow{f} F\xrightarrow{j} F''$ should be in $\overline{\mathcal{C}}_X$, so that $\Ima(j\circ f)$ is a subobject of $F''$ in $\overline{\mathcal{C}}_X$. Since $F''$ is irreducible, so that $\Ima(j\circ f)=0$ or $\Ima(j\circ f)=F''$. For the case $\Ima(j\circ f)=0$, then $f$ factors through $F'$, i.e. we have the following commutative diagram
    \[
        \begin{tikzcd}
            0\arrow[r]&E\arrow[r,equal]\arrow[d,"g"]&E\arrow[d,"f"]\arrow[r]&0\arrow[d]\\
            0\arrow[r]&F'\arrow[r,"i"]&F\arrow[r,"j"]&F''\arrow[r]&0.
        \end{tikzcd}
    \]
    Since $$\rk(E)+\rk(F')< \rk(E)+\rk(F),$$ we have that $\ker g,\coker g\in\overline{\mathcal{C}}_X$ by inductive hypothesis. By the snake lemma, we have $\ker f\cong\ker g$ and an exact sequence $$0\rightarrow \coker g\rightarrow \coker f\rightarrow F''\rightarrow 0,$$ which implies that $\ker f$ and $\coker f\in\overline{\mathcal{C}}_X$. For the case $\Ima(j\circ f)=F''$, we have the following commutative diagram of vector bundles
    \[
        \begin{tikzcd}
            0\arrow[r]&\ker(j\circ f)\arrow[r]\arrow[d,"h"]&E\arrow[d,"f"]\arrow[r,"{j\circ f}"]&F''\arrow[d,equal]\arrow[r]&0\\
            0\arrow[r]&F'\arrow[r,"i"]&F\arrow[r,"j"]&F''\arrow[r]&0
        \end{tikzcd}
    \]
    Applying again the snake lemma, we have an exact sequence
    $$0\rightarrow \ker h\rightarrow \ker f\rightarrow 0\rightarrow \coker h\rightarrow \coker f\rightarrow 0,$$
    so $\ker f\cong \ker h$ and $\coker f\cong \coker h$. Since $$\rk( \ker(j\circ f))+\rk (F')<\rk(E)+\rk(F),$$ by inductive hypothesis $\ker h,\coker h\in\overline{\mathcal{C}}_X$, so that $\ker f,\coker f\in\overline{\mathcal{C}}_X$. If $E$ is not irreducible, consider the duality $f^\vee: F^\vee\rightarrow E^\vee$, applying the same argument, we have that $\ker f^\vee,\coker f^\vee\in\overline{\mathcal{C}}_X$, thus $\ker f,\coker f\in\overline{\mathcal{C}}_X$. Hence, for any homomorphism $f:E\rightarrow F$ in $\overline{\mathcal{C}}_X$, we have $\ker f$ and $\coker f$ are in $\overline{\mathcal{C}}_X$.
    
    Consequently, $\overline{\mathcal{C}}_X$ is a $k$-linear abelian rigid tensor category with fibre functor $|_x$, i.e. $\overline{\mathcal{C}}_X$ is a neutral Tannakian category.
\end{proof}

\begin{Definition}
    Let $k$ be a field, $X$ a connected scheme proper over $k$, $x\in X(k)$, $\mathcal{C}_X$ a Tannakian category over $X$. The Tannaka group scheme $\pi(\overline{\mathcal{C}}_X,x)$ of $\overline{\mathcal{C}}_X$ is called the \textit{saturation} of $\pi(\mathcal{C}_X,x)$. If $\overline{\mathcal{C}}_X=\mathcal{C}_X$, i.e. $\pi(\overline{\mathcal{C}}_X,x)= \pi(\mathcal{C}_X,x)$, then $\mathcal{C}_X$ and $\pi(\mathcal{C}_X,x)$ are said to be \textit{saturated}.
\end{Definition}

\begin{Proposition}
    Let $k$ be a field, $X$ a connected scheme proper over $k$, $x\in X(k)$, $\mathcal{C}_X$ a Tannakian categories over $X$. Then the natural homomorphism $\pi(\overline{\mathcal{C}}_X,x)\rightarrow \pi(\mathcal{C}_X,x)$ is faithfully flat.
\end{Proposition}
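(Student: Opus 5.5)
The natural homomorphism $\pi(\overline{\mathcal{C}}_X,x)\rightarrow \pi(\mathcal{C}_X,x)$ comes from the inclusion functor $\iota:\mathcal{C}_X\hookrightarrow\overline{\mathcal{C}}_X$. Here $\mathcal{C}_X\subseteq\overline{\mathcal{C}}_X$ via the trivial filtration, and $\iota$ is an exact tensor functor compatible with the fibre functors $|_x$. We have $\overline{\mathcal{C}}_X$ is neutral Tannakian by Proposition~\ref{saturatedtannakian}. So by Tannakian duality, $\iota$ is identified with $f^*:\Rep_k^f(\pi(\mathcal{C}_X,x))\rightarrow\Rep_k^f(\pi(\overline{\mathcal{C}}_X,x))$ for the induced homomorphism $f$. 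I will verify criterion (3) of Lemma~\ref{Observablesurjective} (equivalently Theorem~\ref{Thm1}(1)): the essential image of $\iota$ is a full subcategory of $\overline{\mathcal{C}}_X$ stable under taking subobjects.

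Fullness is immediate. Both $\mathcal{C}_X$ and $\overline{\mathcal{C}}_X$ are full subcategories of $\Vect(X)$, so morphisms in $\overline{\mathcal{C}}_X$ between objects of $\mathcal{C}_X$ are exactly morphisms in $\mathcal{C}_X$.

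The substantive step is stability under subobjects. Let $E\in\mathcal{C}_X$ and let $E'\hookrightarrow E$ be a subobject in $\overline{\mathcal{C}}_X$; I must show $E'\in\mathcal{C}_X$. I will argue by induction on $\rk E'$ using a filtration $0\hookrightarrow E'_1\hookrightarrow\cdots\hookrightarrow E'_n=E'$ with graded pieces in $\mathcal{C}_X$.

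For the base step, $E'_1\in\mathcal{C}_X$ and $E'_1\hookrightarrow E'\hookrightarrow E$ is a morphism of objects of $\mathcal{C}_X$. It is a monomorphism in $\overline{\mathcal{C}}_X$ with kernel computed in vector bundles, hence zero, so it is a monomorphism in $\mathcal{C}_X$. Thus $E'_1$ is a subobject of $E$ in $\mathcal{C}_X$, and $E/E'_1\in\mathcal{C}_X$ because $\mathcal{C}_X$ is abelian with kernels and cokernels computed as vector bundles (this is the paper's standing convention for such Tannakian categories of bundles, as also used in the proof of Proposition~\ref{saturatedtannakian}).

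For the inductive step, $E'/E'_1\hookrightarrow E/E'_1$ is a subobject in $\overline{\mathcal{C}}_X$ of an object of $\mathcal{C}_X$. It has a filtration of shorter length with graded pieces in $\mathcal{C}_X$, so by induction $E'/E'_1\in\mathcal{C}_X$. Then $E'=\ker\big(E\rightarrow (E/E'_1)/(E'/E'_1)\big)$ is a kernel of a morphism in $\mathcal{C}_X$, hence lies in $\mathcal{C}_X$. The claim follows.

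The main obstacle is making sure that kernels and cokernels in $\mathcal{C}_X$ and in $\overline{\mathcal{C}}_X$ agree with those in $\Vect(X)$ (or at least that the inclusion is exact). Without this, the statements "$E'_1$ is a subobject in $\mathcal{C}_X$" and "the kernel is $E'$" do not follow. I would justify this from the fact that both categories are abelian full subcategories of vector bundles with exact fibre functor $|_x$: a morphism of bundles whose kernel and cokernel are bundles has rank computable fibrewise. Once this is settled, Lemma~\ref{Observablesurjective} yields that $\pi(\overline{\mathcal{C}}_X,x)\rightarrow\pi(\mathcal{C}_X,x)$ is faithfully flat.
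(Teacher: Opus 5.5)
Your proof is correct and is essentially the paper's argument: the paper also shows that $\mathcal{C}_X$ is closed under subobjects in $\overline{\mathcal{C}}_X$ by walking up the filtration of a subobject $F\hookrightarrow E$, deducing in turn that $E/F_i$, $E/F_{i+1}\cong (E/F_i)/(F_{i+1}/F_i)$ and $F_{i+1}=\ker(E\to E/F_{i+1})$ lie in $\mathcal{C}_X$, which is your induction unrolled, and then it invokes Theorem~\ref{Thm1}(1). The compatibility of kernels and cokernels with the sheaf-theoretic ones that you flag is the same standing convention the paper uses without comment, so nothing beyond it is needed; your induction is also cleanest when phrased on the length of the filtration rather than on $\rk E'$.
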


\begin{proof}
    We have a natural fully faithful functor $f:\mathcal{C}_X\rightarrow \overline{\mathcal{C}}_X$ sending $E\in \mathcal{C}_X$ to $E\in \overline{\mathcal{C}}_X$. For any $E\in\mathcal{C}_X$ and any subobject $F\hookrightarrow E\in\overline{\mathcal{C}}_X$, we claim $F\in\mathcal{C}_X$. Consider the filtration of $F$:
    $$0\hookrightarrow F_1 \hookrightarrow \cdots\hookrightarrow F_n=F,$$
    such that $F_{i+1}/F_i\in\mathcal{C}_X$ for any $i$. Note that $F_1\in\mathcal{C}_X$, so we have $E/F_1\in\mathcal{C}_X$ and the isomorphism $E/F_2\cong (E/F_1)/(F_2/F_1)$ implies that $E/F_2\in\mathcal{C}_X$. Consider the exact sequence 
    $$0\rightarrow F_2\rightarrow E\rightarrow E/F_2\rightarrow 0,$$
    we have that $F_2\in\mathcal{C}_X$ since $\mathcal{C}_X$ is abelian. Repeating this method, we have $F_i\in\mathcal{C}_X$ for any $i$. Hence the functor $f$ is closed under taking subobjects. Therefore, the natural homomorphism $\pi(\overline{\mathcal{C}}_X,x)\rightarrow \pi(\mathcal{C}_X,x)$ is faithfully flat by Theorem~\ref{Thm1}.
\end{proof}

\section{Base change of Tannaka group schemes}

\subsection{Base change of Tannaka group schemes under arbitrary extension}

\begin{Lemma}[{\cite[Proposition A.1]{Ota17}}]\label{fullyfaithful}
    Let $\mathcal{C}$ be a $k$-linear abelian rigid tensor category, $\mathcal{D}$ a $k$-linear abelian tensor category, $F : C \rightarrow D$ an exact tensor functor. Suppose there exists a family $A \subseteq Ob(C)$ satisfying the following conditions:
    \begin{enumerate}
        \item The unit object $I$ belongs to $A$.
        \item For any $W_1,W_2 \in A$, the map $F : \Hom_C(W_1, W_2) \rightarrow  \Hom_D(F(W_1), F(W_2))$ is bijective.
        \item For any $V \in Ob(C)$, there exists an $W \in A$ such that $V \hookrightarrow W$.
    \end{enumerate}
Then the functor $F$ is fully faithful.
\end{Lemma}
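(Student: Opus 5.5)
The plan is to reduce everything to morphisms out of the unit object $I$, using rigidity of $\mathcal{C}$, and then to do a diagram chase based on hypotheses (2) and (3).

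\emph{Reduction to the unit.} For $V,V'\in\mathcal{C}$, rigidity gives a natural isomorphism $\Hom_{\mathcal{C}}(V,V')\cong\Hom_{\mathcal{C}}(I,V^\vee\otimes V')$. Since $F$ is a tensor functor, we have $F(I)\cong I_{\mathcal{D}}$. Moreover $F$ carries the duality data (evaluation and coevaluation) of $V$ to duality data for $F(V)$, so $F(V)$ is dualizable with $F(V)^\vee\cong F(V^\vee)$. Hence
\[\Hom_{\mathcal{D}}(F(V),F(V'))\cong\Hom_{\mathcal{D}}(F(I),F(V^\vee\otimes V')),\]
and I will check that these identifications are compatible with the maps induced by $F$. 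This compatibility is a routine verification using the coherence constraints of the tensor functor. It reduces the lemma to showing that
\[F:\Hom_{\mathcal{C}}(I,U)\to\Hom_{\mathcal{D}}(F(I),F(U))\]
is bijective for every $U\in\mathcal{C}$.

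\emph{Injectivity.} By (3), choose a monomorphism $\iota:U\hookrightarrow W$ with $W\in A$. The map $\iota_*:\Hom_{\mathcal{C}}(I,U)\to\Hom_{\mathcal{C}}(I,W)$ is injective. Since $F$ is exact, $F(\iota)$ is again a monomorphism, so $F(\iota)_*$ is injective as well, and the square
\[F\circ\iota_*=F(\iota)_*\circ F\]
commutes. The composite $F\circ\iota_*$ lands in $\Hom_{\mathcal{D}}(F(I),F(W))$. On $\Hom_{\mathcal{C}}(I,W)$ the functor $F$ is bijective by (1) and (2), because $I,W\in A$. Therefore $F$ is injective on $\Hom_{\mathcal{C}}(I,U)$.

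\emph{Surjectivity.} Again by (3), embed the cokernel $W/U$ into some $W'\in A$. This gives an exact sequence $0\to U\to W\to W'$ in $\mathcal{C}$, and by exactness of $F$ also an exact sequence $0\to F(U)\to F(W)\to F(W')$ in $\mathcal{D}$. Applying the left exact functors $\Hom(I,-)$ and $\Hom(F(I),-)$ yields two left exact rows connected by $F$. On the middle and right terms the vertical maps are bijective by (2).

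Now take $\varphi\in\Hom_{\mathcal{D}}(F(I),F(U))$. Its image in $\Hom(F(I),F(W))$ equals $F(\psi)$ for some $\psi:I\to W$. The composite $I\xrightarrow{\psi}W\to W'$ is sent by $F$ to $0$, because $\varphi$ lands in $F(U)$. By injectivity of $F$ on $\Hom(I,W')$, this composite is already $0$, so $\psi$ factors as $\iota\circ\psi_0$ with $\psi_0:I\to U$. Then $F(\iota)\circ F(\psi_0)=F(\iota)\circ\varphi$, and since $F(\iota)$ is a monomorphism we get $F(\psi_0)=\varphi$.

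\emph{Main obstacle.} The diagram chase itself is straightforward. The step I expect to need the most care is the compatibility in the first paragraph: that $F$ preserves duals and commutes with the adjunction isomorphisms. This matters because $\mathcal{D}$ is not assumed rigid, so the argument must work only with the dualizable objects $F(V)$ and the standard coherence properties of tensor functors.
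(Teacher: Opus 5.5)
Your proof is correct and is essentially the argument behind Otabe's Proposition~A.1, which the paper only cites without proving: reduce via rigidity to $\Hom(I,-)$ (this is why hypothesis (1) is there), then compare the left exact rows coming from $0\to U\to W\to W'$ with $W,W'\in A$. Your remark that $F(V)$ is dualizable in $\mathcal{D}$ with dual $F(V^\vee)$, even though $\mathcal{D}$ is not assumed rigid, is exactly the point that makes this reduction legitimate.
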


\begin{Theorem}\label{fieldgeneral}
    Let $k$ be a field, $K/k$ a field extension, $X$ a connected scheme proper over $k$, $x_K\in X_K(K)$ lying over $x\in X(k)$, $\mathcal{C}_X$ and $\mathcal{C}_{X_K}$ the Tannakian categories whose objects consist of vector bundles on $X$ and $X_K$ respectively, $\phi:P\rightarrow X$ the universal principal $\pi(\mathcal{C}_X,x)$-bundle.
    \begin{enumerate}
        \item Define the functor
    $$\begin{aligned}
        \eta_K^{P_K}:\Rep_{K}^f(\pi(\mathcal{C}_X,x)_K)&\rightarrow \Vect(X_K), V\mapsto((\mathcal{O}_{P}\otimes_k K)\otimes_{K} V)^{\pi(\mathcal{C}_X,x)_K}.
    \end{aligned}$$
    Then $\eta_K^{P_K}$ is fully faithful. Moreover, we have
    \[\eta_K^{P_K}(\Rep_{K}^f(\pi(\mathcal{C}_X,x)_K))=\left\{\mathcal{E}\in \Vect(X_K)\middle| \exists E_1,E_2\in\mathcal{C}_X, \text{ s.t. }E_1\otimes_k K\twoheadrightarrow\mathcal{E}\hookrightarrow E_2\otimes_k K\right\}.\]
        \item If for any $E\in\mathcal{C}_X$, we have $E\otimes_k K \in \mathcal{C}_{X_K}$, then it induces a natural functor
        $$\begin{aligned}
        \eta_K^{P_K}:\Rep_{K}^f(\pi(\mathcal{C}_X,x)_K)&\rightarrow \mathcal{C}_{X_K}, V\mapsto((\mathcal{O}_{P}\otimes_k K)\otimes_{K} V)^{\pi(\mathcal{C}_X,x)_K}.
    \end{aligned}$$
        This functor corresponds to a natural homomorphism of group schemes over $K$
        $$\pi(\mathcal{C}_{X_K},x_K)\rightarrow \pi(\mathcal{C}_X,x)_K.$$
        \item If for any $E\in\mathcal{C}_X$, $E\otimes_k K \in \mathcal{C}_{X_K}$, then the following conditions are equivalent:
        \begin{enumerate}
            \item The natural homomorphism $\pi(\mathcal{C}_{X_K},x_K)\rightarrow \pi(\mathcal{C}_X,x)_K$ is faithfully flat.
            \item The functor $\eta_K^{P_K}$ is observable.
            \item $\eta_K^{P_K}(\Rep_{K}^f(\pi(\mathcal{C}_X,x)_K))=\langle \{E\otimes_k K\}_{E\in\mathcal{C}_X}\rangle_{\mathcal{C}_{X_K}}$.
            \item $\eta_K^{P_K}(\Rep_{K}^f(\pi(\mathcal{C}_X,x)_K))=\{\mathcal{E}\in\mathcal{C}_{X_K}|\exists E\in\mathcal{C}_X,\text{ s.t. }\mathcal{E}\hookrightarrow E\otimes_k K\}$.
            \item For any $E\in \mathcal{C}_X$ and any $\mathcal{E}\hookrightarrow E\otimes_k K\in\mathcal{C}_{X_k}$, there exists $E'\in\mathcal{C}_X$ such that $\mathcal{E}^\vee\hookrightarrow E'\otimes_k K$.
        \end{enumerate}
        \item If for any $E\in\mathcal{C}_X$, $E\otimes_k K \in \mathcal{C}_{X_K}$, then the following conditions are equivalent:
        \begin{enumerate}
            \item The natural homomorphism $\pi(\mathcal{C}_{X_K},x_K)\rightarrow \pi(\mathcal{C}_X,x)_K$ is an isomorphism.
            \item $\mathcal{C}_{X_K}=\{\mathcal{E}\in\mathcal{C}_{X_K}|\exists E\in\mathcal{C}_X,\text{ s.t. }\mathcal{E}\hookrightarrow E\otimes_k K\}$.
        \end{enumerate}
        Moreover, if the above equivalent conditions hold, then we have
        $$\mathcal{C}_{X_K}=\langle \{E\otimes_k K\}_{E\in\mathcal{C}_X}\rangle_{\mathcal{C}_{X_K}}=\{\mathcal{E}\in\mathcal{C}_{X_K}|\exists E\in\mathcal{C}_X,\text{ s.t. }\mathcal{E}\hookrightarrow E\otimes_k K\}.$$
    \end{enumerate}
\end{Theorem}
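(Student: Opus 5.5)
The plan is to reduce everything to the Tannakian dictionary (Lemma~\ref{Observablesurjective}, Theorem~\ref{Thm1}) after one computation: the functor $\eta_K^{P_K}$ restricted to base-changed representations. Write $G=\pi(\mathcal{C}_X,x)$ and let $\eta_k^P:\Rep_k^f(G)\xrightarrow{\sim}\mathcal{C}_X$ be the equivalence. Since $P_K\to X_K$ is a principal $G_K$-bundle and taking invariants commutes with flat base change $k\to K$, we get
\[\eta_K^{P_K}(W\otimes_k K)\cong \eta_k^P(W)\otimes_k K.\]
This computation is the key input and holds for every $W\in\Rep_k^f(G)$.

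\textbf{Part (1).} We apply Lemma~\ref{fullyfaithful} with $A=\{W\otimes_kK\}$; its hypothesis (3) is Lemma~\ref{basechangeembedding}. For hypothesis (2) we need
\[\Hom_{X_K}(E_1\otimes K,E_2\otimes K)=\Hom_X(E_1,E_2)\otimes_kK.\]
This holds by flat base change of $H^0$ of the coherent sheaf $E_1^\vee\otimes E_2$ on the proper scheme $X$. It also matches $\Hom_{G_K}(W_1\otimes K,W_2\otimes K)=\Hom_G(W_1,W_2)\otimes_kK$, because invariants commute with $\otimes_kK$. Full faithfulness of $\eta_k^P$ then gives bijectivity, so $\eta_K^{P_K}$ is fully faithful.

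For the description of the essential image, take any $V\in\Rep_K^f(G_K)$. Lemma~\ref{basechangeembedding} gives $V\hookrightarrow W_2\otimes K$. Applying it to $V^\vee$ and dualizing gives $W_1\otimes K\twoheadrightarrow V$. Exactness of $\eta_K^{P_K}$ then yields $E_1\otimes K\twoheadrightarrow \eta(V)\hookrightarrow E_2\otimes K$. For the converse, suppose $\mathcal{E}$ is such a vector bundle. It is the image of the composite $E_1\otimes K\to E_2\otimes K$. By full faithfulness this composite equals $\eta(\varphi)$ for some $G_K$-map $\varphi$. By exactness $\mathcal{E}=\eta(\Ima\varphi)$, where images are taken in $\Qcoh(X_K)$ and $\eta$ is exact into quasi-coherent sheaves. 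The point that needs care is that this image is indeed the given $\mathcal{E}$.

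\textbf{Part (2).} If $E\otimes K\in\mathcal{C}_{X_K}$ for all $E$, then by (1) every $\eta(V)$ lies in $\mathcal{C}_{X_K}$. The reason is that $\mathcal{C}_{X_K}$ is abelian and full in $\Vect(X_K)$, hence closed under kernels and images of maps between its objects. So $\eta$ lands in $\mathcal{C}_{X_K}$. It is an exact tensor functor compatible with the fibre functors, since $P$ comes with a trivialization at $x$. Tannaka duality then gives the homomorphism $\pi(\mathcal{C}_{X_K},x_K)\to G_K$.

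\textbf{Parts (3) and (4).} The homomorphism induces exactly $\eta$, which is fully faithful by (1). By Lemma~\ref{Observablesurjective}, (a) is equivalent to (b), and also to the essential image $\mathrm{Im}\,\eta$ being closed under subobjects in $\mathcal{C}_{X_K}$.

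We show (a)$\Rightarrow$(d) as follows. By (1), $\mathrm{Im}\,\eta$ is contained in the right-hand side of (d). Conversely, a subobject of some $E\otimes K=\eta(W\otimes K)$ lies in the image by subobject-closure. For (d)$\Rightarrow$(e), note $\mathrm{Im}\,\eta$ is closed under duals. For (e)$\Rightarrow$(b), every subobject of $\eta(V)\hookrightarrow E\otimes K$ is a subobject of $E\otimes K$, and (e) gives condition (3) of Lemma~\ref{Observable}.

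For (c), the right-hand side is the Tannakian subcategory generated by the objects $E\otimes K$, so it contains $\mathrm{Im}\,\eta$. Equality holds if and only if $\mathrm{Im}\,\eta$ is closed under subquotients in $\mathcal{C}_{X_K}$, which is equivalent to (a) by Theorem~\ref{Thm1}(1).

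For (4), the homomorphism is an isomorphism if and only if $\eta$ is an equivalence. That is, it is faithfully flat and essentially surjective onto $\mathcal{C}_{X_K}$. This is equivalent to (d) together with the statement that the right-hand side of (d) is all of $\mathcal{C}_{X_K}$. Conversely, (4)(b) forces this: every object of $\mathcal{C}_{X_K}$ is then a subobject of some $E\otimes K$, so condition (e) holds, because duals stay in $\mathcal{C}_{X_K}$ and hence embed again. Then (d) gives $\mathrm{Im}\,\eta=\mathcal{C}_{X_K}$.

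The main obstacle is the base-change compatibility $\eta_K^{P_K}(W\otimes K)\cong\eta_k^P(W)\otimes K$ together with the Hom base change. Everything else is formal manipulation of Lemmas~\ref{Observable}, \ref{Observablesurjective} and \ref{basechangeembedding}.
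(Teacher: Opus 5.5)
Your proposal is correct and follows essentially the same route as the paper. You use the base-change identity $\eta_K^{P_K}(W\otimes_k K)\cong\eta_k^P(W)\otimes_k K$ together with Hom base change in Lemma~\ref{fullyfaithful} (with Lemma~\ref{basechangeembedding}), the same image-of-a-morphism argument for the essential image and for (2), and Lemmas~\ref{Observable}, \ref{Observablesurjective} and Theorem~\ref{Thm1} for (3)--(4). The differences are only in arrangement: in (3) you prove (a)$\Leftrightarrow$(c) directly via closure under subquotients rather than through the paper's cycle, and in (4) you get (b)$\Rightarrow$(a) via condition (e) of (3) where the paper directly writes $E_2^\vee\otimes_k K\twoheadrightarrow\mathcal{E}\hookrightarrow E_1\otimes_k K$ and invokes (1).
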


\begin{proof}
    (1) For the principal $\pi(\mathcal{C}_X,x)$-bundle $\phi:P\rightarrow X$, we have a fully faithful exact tensor functor
     $$\begin{aligned}
        \eta_k^{P}:\Rep_{k}^f(\pi(\mathcal{C}_X,x))&\rightarrow \Vect(X),W\mapsto(\mathcal{O}_{P}\otimes_{k} W)^{\pi(\mathcal{C}_X,x)}.
    \end{aligned}$$
    After base change to $K$, we obtain a principal $\pi(\mathcal{C}_X,x)_K$-bundle $\phi_K:P_K\rightarrow X_K$ and an exact tensor functor
    $$\begin{aligned}
        \eta_K^{P_K}:\Rep_{K}^f(\pi(\mathcal{C}_X,x)_K)&\rightarrow \Vect(X_K), V\mapsto((\mathcal{O}_{P}\otimes_k K)\otimes_{K} V)^{\pi(\mathcal{C}_X,x)_K}.
    \end{aligned}$$
    For any $W\in\Rep_{k}^f(\pi(\mathcal{C}_X,x))$, we have $W\otimes_k K\in\Rep_{K}^f(\pi(\mathcal{C}_X,x)_K)$ and 
    $$\begin{aligned}
    \eta_K^{P_K}(W\otimes_k K)=((\mathcal{O}_{P}\otimes_k K)\otimes_{K} (W\otimes_k K))^{\pi(\mathcal{C}_X,x)_K}\cong (\mathcal{O}_P\otimes_k W)^{\pi(\mathcal{C}_X,x)}\otimes_k K=\eta_k^{P}(W)\otimes_k K.
    \end{aligned}$$
    For any $W_1,W_2\in\Rep_{k}^f(\pi(\mathcal{C}_X,x))$, by \cite[p.29]{Jan03} we have
    $$\begin{aligned}
        \Hom_{\pi(\mathcal{C}_X,x)_K}(W_1\otimes_k K,W_2\otimes_k K)&\cong \Hom_{\pi(\mathcal{C}_X,x)}(W_1,W_2)\otimes_k K\\
        &\cong\Hom_{\mathcal{O}_X}(\eta_k^{P}(W_1),\eta_k^{P}(W_2))\otimes_k K\\
        &\cong \Hom_{\mathcal{O}_{X_K}}(\eta_k^{P}(W_1)\otimes_k K,\eta_k^{P}(W_2)\otimes_k K)\\
        &\cong \Hom_{\mathcal{O}_{X_K}}(\eta_K^{P_K}(W_1\otimes_k K),\eta_K^{P_K}(W_2\otimes_k K)).
    \end{aligned}$$
    However, for any $V\in \Rep_{K}^f(\pi(\mathcal{C}_X,x)_K)$, by Lemma~\ref{basechangeembedding} there exists $W\in\Rep_k^f(\pi(\mathcal{C}_X,x))$  such that $V\hookrightarrow W\otimes_k K$ in $\Rep_{K}^f(\pi(\mathcal{C}_X,x)_K)$. Then $\eta_K^{P_K}$ is fully faithful according to Lemma~\ref{fullyfaithful}.

    For any $V\in\Rep_{K}^f(\pi(\mathcal{C}_X,x)_K)$, by Lemma~\ref{basechangeembedding} there exist $W_1,W_2\in\Rep_{k}^f(\pi(\mathcal{C}_X,x))$ such that
    $$W_1\otimes_k K\twoheadrightarrow V \hookrightarrow W_2\otimes_k K$$
    as $\pi(\mathcal{C}_{X},x)_K$-representations. Applying the functor $\eta_K^{P_K}$, we obtain
    $$\eta_k^{P}(W_1)\otimes_k K\twoheadrightarrow \eta_K^{P_K}(V) \hookrightarrow \eta_k^{P}(W_2)\otimes_k K.$$
    
    Conversely, given $\mathcal{E}\in \Vect(X_K)$ with
    $$f: E_1\otimes_k K\twoheadrightarrow\mathcal{E}\hookrightarrow E_2\otimes_k K,$$
    for some $E_1,E_2\in\mathcal{C}_X$, there exists $W_1,W_2\in\Rep_k^f(\pi(\mathcal{C}_X,x))$ such that $E_i=\eta_k^{P}(W_i)$ for $i=1,2$. Note that $f\in\Hom_{\mathcal{O}_{X_K}}(\eta_K^{P_K}(W_1\otimes_k K),\eta_K^{P_K}(W_2\otimes_k K))$ and $\mathcal{E}=\Ima(f)$. Since $\eta_K^{P_K}$ is fully faithful, it follows that there exists $\alpha\in\Hom_{\pi(\mathcal{C}_X,x)_K}(W_1\otimes_k K,W_2\otimes_k K)$ such that $f=\eta_K^{P_K}(\alpha)$. So $\mathcal{E}=\eta_K^{P_K}(\Ima(\alpha))$, where $\Ima(\alpha)\in \eta_K^{P_K}(\Rep_K^f(\pi(\mathcal{C}_X,x)_K))$.

    Consequently, we have
    \[\eta_K^{P_K}(\Rep_{K}^f(\pi(\mathcal{C}_X,x)_K))=\left\{\mathcal{E}\in \Vect(X_K)\middle| \exists E_1,E_2\in\mathcal{C}_X, \text{ s.t. }E_1\otimes_k K\twoheadrightarrow\mathcal{E}\hookrightarrow E_2\otimes_k K\right\}.\]
    
    (2) For any $V\in\Rep_{K}^f(\pi(\mathcal{C}_X,x)_K)$, by Lemma~\ref{basechangeembedding} there exists $W_1,W_2\in\Rep_{k}^f(\pi(\mathcal{C}_X,x))$ such that
    $$\alpha:W_1\otimes_k K\twoheadrightarrow V\hookrightarrow W_2\otimes_k K,$$
    and $V=\Ima(\alpha)$. Applying the functor $\eta_K^{P_K}$, we obtain
    $$\eta_K^{P_K}(\alpha):\eta_k^{P}(W_1)\otimes_k K\twoheadrightarrow \eta_K^{P_K}(V) \hookrightarrow \eta_k^{P}(W_2)\otimes_k K.$$
    Note that $\eta_k^{P}(W_i)\in\mathcal{C}_X$, then by assumption, $\eta_k^{P}(W_i)\otimes_k K\in \mathcal{C}_{X_K}$ for $i=1,2$. Since $\mathcal{C}_{X_K}$ is abelian, $\eta_K^{P_K}(V)=\Ima(\eta_K^{P_K}(\alpha))\in\mathcal{C}_{X_K}$. Hence $\eta_K^{P_K}$ induces a functor from $\Rep_K^f(\pi(\mathcal{C}_X,x)_K)$ to $\mathcal{C}_{X_K}$ which sends $V\in \Rep_K^f(\pi(\mathcal{C}_X,x)_K)$ to $\eta_K^{P_K}(V)\in\mathcal{C}_{X_K}$ and yields a natural homomorphism $\pi(\mathcal{C}_{X_K},x_K)\rightarrow \pi(\mathcal{C}_X,x)_K$.
    
    (3) In general, we have the following relationship
    $$\eta_K^{P_K}(\Rep_K^f(\pi(\mathcal{C}_X,x)_K))\subseteq \{\mathcal{E}\in\mathcal{C}_{X_K}|\exists E\in\mathcal{C}_X,\text{ s.t. }\mathcal{E}\hookrightarrow E\otimes_k K\}\subseteq \langle \{E\otimes_k K\}_{E\in\mathcal{C}_X}\rangle_{\mathcal{C}_{X_K}}.$$
    
    $(a)\Rightarrow (c)$ For any $\mathcal{E}\in\langle \{E\otimes_k K\}_{E\in\mathcal{C}_X}\rangle_{\mathcal{C}_{X_K}}$, there exist $E\in\mathcal{C}_X$ and subobjects $\mathcal{E}_1\hookrightarrow \mathcal{E}_2\hookrightarrow E\otimes_k K\in\mathcal{C}_{X_K}$ such that $\mathcal{E}\cong \mathcal{E}_2/\mathcal{E}_1$. Assume $E=\eta_k^{P}(W)$ for some $W\in\Rep_k^f(\pi(X,x))$, since $\pi(\mathcal{C}_{X_K},x_K)\rightarrow \pi(\mathcal{C}_X,x)_K$ is faithfully flat, the functor $\eta_K^{P_K}$ is closed under taking subobjects by Theorem~\ref{Thm1}, which implies that there exist $V_1\hookrightarrow V_2\hookrightarrow W\otimes_k K\in\Rep_K^f(\pi(\mathcal{C}_X,x)_K)$ such that $\mathcal{E}_i\cong \eta_K^{P_K}(V_i)$. It follows that
    $$\mathcal{E}\cong \mathcal{E}_2/\mathcal{E}_1\cong \eta_K^{P_K}(V_2/V_1)\in \eta_K^{P_K}(\Rep_K^f(\pi(\mathcal{C}_X,x)_K)).$$
    Then $\eta_K^{P_K}(\Rep_K^f(\pi(\mathcal{C}_X,x)_K))= \langle \{E\otimes_k K\}_{E\in\mathcal{C}_X}\rangle_{\mathcal{C}_{X_K}}$.
    
    $(c)\Rightarrow (d)$ According to the above general relationship, we immediatrly obtain that $$\eta_K^{P_K}(\Rep_K^f(\pi(\mathcal{C}_X,x)_K))= \{\mathcal{E}\in\mathcal{C}_{X_K}|\exists E\in\mathcal{C}_X,\text{ s.t. }\mathcal{E}\hookrightarrow E\otimes_k K\}.$$
    
    $(d)\Rightarrow (e)$ The equality $\eta_K^{P_K}(\Rep_K^f(\pi(\mathcal{C}_X,x)_K))= \{\mathcal{E}\in\mathcal{C}_{X_K}|\exists E\in\mathcal{C}_X,\text{ s.t. }\mathcal{E}\hookrightarrow E\otimes_k K\}$ implies that $\{\mathcal{E}\in\mathcal{C}_{X_K}|\exists E\in\mathcal{C}_X,\text{ s.t. }\mathcal{E}\hookrightarrow E\otimes_k K\}$ is a Tannakian category.
    
    $(e)\Rightarrow (b)$ It follows immediately by Lemma~\ref{Observable}.

    $(b)\Rightarrow (a)$ It immediately by (1) and Lemma~\ref{Observablesurjective}.

    (4) In general, we have the following relationship
    $$\eta_K^{P_K}(\Rep_K^f(\pi(\mathcal{C}_X,x)_K))\subseteq \{\mathcal{E}\in\mathcal{C}_{X_K}|\exists E\in\mathcal{C}_X,\text{ s.t. }\mathcal{E}\hookrightarrow E\otimes_k K\}\subseteq \langle \{E\otimes_k K\}_{E\in\mathcal{C}_X}\rangle_{\mathcal{C}_{X_K}}\subseteq \mathcal{C}_{X_K}.$$
    
    $(a)\Rightarrow (b)$ Since $\pi(\mathcal{C}_{X_K},x_K)\xrightarrow{\cong} \pi(\mathcal{C}_X,x)_K$, we have $\mathcal{C}_{X_K}=\eta_K^{P_K}(\Rep_K^f(\pi(\mathcal{C}_X,x)_K))$. Then it follows by the above relationship.
    
    $(b)\Rightarrow (a)$ For any $\mathcal{E}\in \mathcal{C}_{X_K}$, there exists $E_1\in\mathcal{C}_X$ such that $\mathcal{E}\hookrightarrow E_1\otimes_k K$. Similarly, there exists $E_2\in\mathcal{C}_X$ such that $\mathcal{E}^{\vee}\hookrightarrow E_2\otimes_k K$. It follows that $E_2^{\vee}\otimes_k K\twoheadrightarrow\mathcal{E}\hookrightarrow E_1\otimes_k K$, i.e. $\mathcal{E}\in \eta_K^{P_K}(\Rep_K^f(\pi(\mathcal{C}_X,x)_K))$. Then $\mathcal{C}_{X_K}=\eta_K^{P_K}(\Rep_K^f(\pi(\mathcal{C}_X,x)_K))$. Hence $\pi(\mathcal{C}_{X_K},x_K)\xrightarrow{\cong} \pi(\mathcal{C}_X,x)_K$.
\end{proof}

\begin{Proposition}\label{transurjective}
    Let $k$ be a field, $K/k$ a field extension, $X$ a connected scheme proper over $k$, $x_K\in X_K(K)$ lying over $x\in X(k)$, $\mathcal{C}_X$, $\mathcal{C}_{X_K}$ the Tannakian categories over $X$ and $X_K$ respectively, $\mathcal{C}_X\subseteq \mathcal{C}'_X$ and $\mathcal{C}_{X_K}\subseteq \mathcal{C}'_{X_K}$ the Tannakian subcategories such that for any $E\in\mathcal{C}_X$, $E\otimes_k K\in\mathcal{C}_{X_K}$ and for any $E'\in\mathcal{C}'_X$, $E'\otimes_k K\in\mathcal{C}'_{X_K}$. Suppose the natural homomorphism $\pi(\mathcal{C}'_{X_K},x_K)\twoheadrightarrow\pi(\mathcal{C}'_X,x)_K$ is faithfully flat, then the natural homomorphism $\pi(\mathcal{C}_{X_K},x_K)\rightarrow \pi(\mathcal{C}_X,x)_K$ is faithfully flat.
\end{Proposition}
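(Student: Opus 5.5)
We verify criterion (e) of Theorem~\ref{fieldgeneral}(3) for the pair $\mathcal{C}_X,\mathcal{C}_{X_K}$, using the fact that the same criterion holds for the pair $\mathcal{C}'_X,\mathcal{C}'_{X_K}$ by hypothesis. The hypotheses of Theorem~\ref{fieldgeneral}(2),(3) hold for both pairs, so both base change homomorphisms are defined and Theorem~\ref{fieldgeneral}(3) applies to each. Since $\pi(\mathcal{C}'_{X_K},x_K)\to\pi(\mathcal{C}'_X,x)_K$ is faithfully flat, (a)$\Rightarrow$(e) for the primed pair gives the following. For any $E'\in\mathcal{C}'_X$ and any subobject $\mathcal{E}\hookrightarrow E'\otimes_k K$ in $\mathcal{C}'_{X_K}$, there is $E''\in\mathcal{C}'_X$ with $\mathcal{E}^\vee\hookrightarrow E''\otimes_k K$. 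Equivalently, by (a)$\Rightarrow$(d) for the primed pair, every such $\mathcal{E}$ lies in the essential image of $\eta_K^{P'_K}$.

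Now let $E\in\mathcal{C}_X$ and let $\mathcal{E}\hookrightarrow E\otimes_k K$ be a subobject in $\mathcal{C}_{X_K}$. Since $\mathcal{C}_{X_K}\subseteq\mathcal{C}'_{X_K}$ is a Tannakian subcategory, $\mathcal{E}$ is also a subobject of $E\otimes_k K$ in $\mathcal{C}'_{X_K}$, and $E\in\mathcal{C}'_X$. By part (1) of Theorem~\ref{fieldgeneral} applied to the primed pair, there exist $E_1',E_2'\in\mathcal{C}'_X$ with $E_1'\otimes_k K\twoheadrightarrow\mathcal{E}^\vee\hookrightarrow E_2'\otimes_kK$. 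The remaining task is to produce such an embedding with target in $\mathcal{C}_X$ rather than $\mathcal{C}'_X$.

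To control this, I would use the description of $\mathcal{E}$ itself as the image of a morphism. By faithful flatness for the primed pair and Theorem~\ref{Thm1}(1), the image of $\eta_K^{P'_K}$ is closed under subobjects, so $\mathcal{E}=\eta_K^{P'_K}(V)$ with $V\subseteq W\otimes_k K$, where $E=\eta_k^{P'}(W)$. Since $E\in\mathcal{C}_X$, the representation $W$ of $\pi(\mathcal{C}'_X,x)$ factors through the quotient $\pi(\mathcal{C}'_X,x)\twoheadrightarrow\pi(\mathcal{C}_X,x)$. This quotient is faithfully flat by Theorem~\ref{Thm1}(1), because $\mathcal{C}_X$ is a full subcategory of $\mathcal{C}'_X$ closed under subobjects, being a Tannakian subcategory of vector bundles with the same abelian structure. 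The subrepresentation $V\subseteq W\otimes_kK$ is stable under $\pi(\mathcal{C}'_X,x)_K$, and hence under its image, which is all of $\pi(\mathcal{C}_X,x)_K$ by faithful flatness. Thus $V$ is a representation of $\pi(\mathcal{C}_X,x)_K$. By Lemma~\ref{basechangeembedding}, $V^\vee\hookrightarrow U\otimes_kK$ for some $U\in\Rep^f_k(\pi(\mathcal{C}_X,x))$. Applying $\eta_K^{P_K}$ and using compatibility of the two universal bundles gives $\mathcal{E}^\vee\hookrightarrow E''\otimes_kK$ with $E''=\eta^P_k(U)\in\mathcal{C}_X$. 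This is criterion (e), and Theorem~\ref{fieldgeneral}(3) then yields faithful flatness.

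The main obstacle is the compatibility step. The universal $\pi(\mathcal{C}_X,x)$-bundle $P$ should be the pushout of $P'$ along $\pi(\mathcal{C}'_X,x)\to\pi(\mathcal{C}_X,x)$, so that $\eta_K^{P_K}(V)=\eta_K^{P'_K}(V)$ for representations pulled back from $\pi(\mathcal{C}_X,x)_K$. I would verify this at the level of functors: $\eta_k^{P}$ is the composite of $\eta^{P'}_k$ with inflation, and base change preserves this. Alternatively, one can avoid the bundles by viewing the statement as a square of group homomorphisms. In the commutative square $\pi(\mathcal{C}'_{X_K},x_K)\to\pi(\mathcal{C}'_X,x)_K\to\pi(\mathcal{C}_X,x)_K$, the composite equals $\pi(\mathcal{C}'_{X_K},x_K)\to\pi(\mathcal{C}_{X_K},x_K)\to\pi(\mathcal{C}_X,x)_K$. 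The first route's composite is faithfully flat, so the last map is faithfully flat. This is likely the cleanest proof; the only thing to check is that the square commutes, which amounts to the inclusions of categories being compatible with the functors $\eta$.
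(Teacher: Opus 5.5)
Your proposal is correct, and the commutative-square argument in your last paragraph is exactly the paper's proof. The paper draws the square with $\pi(\mathcal{C}'_{X_K},x_K)\twoheadrightarrow\pi(\mathcal{C}'_X,x)_K$ on top and the two restriction maps as verticals. It concludes that the bottom map is faithfully flat because the composite through the top-right corner is.

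Your main route, verifying criterion (e) of Theorem~\ref{fieldgeneral}(3), is also sound; it is a by-hand version of the same square. You apply Theorem~\ref{Thm1}(1) to the primed pair and to $\pi(\mathcal{C}'_X,x)_K\twoheadrightarrow\pi(\mathcal{C}_X,x)_K$. This shows that every subobject $\mathcal{E}\hookrightarrow E\otimes_k K$ in $\mathcal{C}_{X_K}$ is $\eta_K^{P_K}(V)$ for a subrepresentation $V\subseteq W\otimes_k K$ of $\pi(\mathcal{C}_X,x)_K$. That already gives criterion (d), so the further step through $V^\vee$ and Lemma~\ref{basechangeembedding} is unnecessary.

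The square is shorter because it reduces everything to a formal fact: if a composite of homomorphisms of affine group schemes is faithfully flat, then so is its second factor (injectivity on coordinate rings). Your route gives an explicit description of the relevant subobjects, but no extra generality.

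Both versions rest on the compatibility you flagged: $\eta_K^{P_K}$ must agree with $\eta_K^{P'_K}$ composed with inflation along $\pi(\mathcal{C}'_X,x)_K\to\pi(\mathcal{C}_X,x)_K$. This is precisely the commutativity of the square. The paper uses it without comment, and your functor-level justification (the inclusion $\mathcal{C}_X\subseteq\mathcal{C}'_X$ defines the quotient map, and base change preserves the factorization) is the right way to supply it.
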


\begin{proof}
    Since for any $E\in\mathcal{C}_X$, $E\otimes_k K\in\mathcal{C}_{X_K}$ and for any $E'\in\mathcal{C}'_X$, $E'\otimes_k K\in\mathcal{C}'_{X_K}$, by [Theorem~\ref{fieldgeneral}, (2)], we have natural homomorhisms $$\pi(\mathcal{C}'_{X_K},x_K)\twoheadrightarrow\pi(\mathcal{C}'_X,x)_K~\text{and}~\pi(\mathcal{C}_{X_K},x_K)\rightarrow \pi(\mathcal{C}_X,x)_K.$$ Then consider the following commutative diagram
    \[
        \begin{tikzcd}
            \pi(\mathcal{C}'_{X_K},x_K)\arrow[r,two heads] \arrow[d,two heads]&\pi(\mathcal{C}'_X,x)_K\arrow[d,two heads]\\
            \pi(\mathcal{C}_{X_K},x_K)\arrow[r]&\pi(\mathcal{C}_X,x)_K
        \end{tikzcd}
    \]
    It follows immediately that the bottom is faithfully flat.
\end{proof}

\begin{Proposition}\label{iffdescent}
    Let $k$ be a field, $K/k$ a field extension, $X$ a connected scheme proper over $k$, $x_K\in X_K(K)$ lying over $x\in X(k)$, $\mathcal{C}'_X$, $\mathcal{C}'_{X_K}$ Tannakian categories over $X$ and $X_K$ respectively, $\mathcal{C}_X\subseteq \mathcal{C}'_X$ and $\mathcal{C}_{X_K}\subseteq \mathcal{C}'_{X_K}$ the Tannakian subcategories. Suppose the following conditions:
    \begin{enumerate}
        \item For any $E\in\mathcal{C}_X$, $E\otimes_k K\in\mathcal{C}_{X_K}$, the induced homomorphism $\pi(\mathcal{C}_{X_K},x_K)\rightarrow\pi(\mathcal{C}_X,x)_K$ is isomorphic.
        \item For any $E'\in\Vect(X)$, $E'\in\mathcal{C}'_X$ iff $E'\otimes_k K\in\mathcal{C}'_{X_K}$, the induced homomorphism $\pi(\mathcal{C}'_{X_K},x_K)\rightarrow\pi(\mathcal{C}'_X,x)_K$ is faithfully flat.
    \end{enumerate}
    Then for any $E\in\Vect(X)$, $E\in\mathcal{C}_X$ iff $E\otimes_k K\in\mathcal{C}_{X_K}$.
\end{Proposition}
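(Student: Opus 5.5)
The forward direction is hypothesis (1): if $E\in\mathcal{C}_X$ then $E\otimes_k K\in\mathcal{C}_{X_K}$. For the converse, we take $E\in\Vect(X)$ with $E\otimes_k K\in\mathcal{C}_{X_K}$ and aim to show $E\in\mathcal{C}_X$. Since $\mathcal{C}_{X_K}\subseteq\mathcal{C}'_{X_K}$, we have $E\otimes_k K\in\mathcal{C}'_{X_K}$. Hypothesis (2) then gives $E\in\mathcal{C}'_X$, so $E=\eta_k^{P'}(W)$ for some $W\in\Rep_k^f(\pi(\mathcal{C}'_X,x))$, where $P'$ is the universal bundle for $\mathcal{C}'_X$. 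The task is reduced to showing that $W$ comes from the quotient $\pi(\mathcal{C}'_X,x)\twoheadrightarrow\pi(\mathcal{C}_X,x)$. This quotient is faithfully flat because $\mathcal{C}_X\subseteq\mathcal{C}'_X$ is a full Tannakian subcategory, and $\mathcal{C}_X$ is closed under subquotients in $\mathcal{C}'_X$.

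We consider the commutative square of Proposition~\ref{transurjective}. Write $G'=\pi(\mathcal{C}'_X,x)$, $G=\pi(\mathcal{C}_X,x)$, $H'=\pi(\mathcal{C}'_{X_K},x_K)$, $H=\pi(\mathcal{C}_{X_K},x_K)$. The square has $H'\twoheadrightarrow G'_K$ faithfully flat by (2), $H\xrightarrow{\cong}G_K$ by (1), and the vertical maps $H'\twoheadrightarrow H$ and $G'_K\twoheadrightarrow G_K$. Under $\eta_K^{P'_K}$, the representation $W\otimes_k K$ of $G'_K$ corresponds to $E\otimes_k K$. Since $E\otimes_k K\in\mathcal{C}_{X_K}$, its pullback to $H'$ factors through $H\cong G_K$. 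So the $H'$-representation obtained from $W\otimes_k K$ through $G'_K$ is pulled back from $G_K$. Here we use that inflation along $H'\to H$ matches the inclusion $\mathcal{C}_{X_K}\subseteq\mathcal{C}'_{X_K}$, and that $\eta_K^{P_K}$ is compatible with $\eta_K^{P'_K}$ on $\Rep_K^f(G_K)$, both being built from the base change of the universal bundles.

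Next we descend along $H'\twoheadrightarrow G'_K$. Because this map is faithfully flat, $\Rep_K^f(G'_K)\to\Rep_K^f(H')$ is fully faithful, and its image is closed under subobjects by Lemma~\ref{Observablesurjective}. Hence $W\otimes_k K$, viewed as a $G'_K$-representation, is isomorphic to a representation $V$ of $G_K$ inflated to $G'_K$. Equivalently, the kernel $N_K$ of $G'_K\to G_K$ acts trivially on $W\otimes_k K$. Triviality of the action of the normal subgroup $N=\ker(G'\to G)$ can be checked after the field extension: the coaction map $W\to W\otimes k[N]$ agrees with $w\mapsto w\otimes1$ if and only if it does after $\otimes_k K$. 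So $N$ acts trivially on $W$. Then $W$ is a representation of $G'/N\cong G$, and therefore $E=\eta_k^{P'}(W)\in\mathcal{C}_X$.

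The main obstacle is the middle compatibility step. We must identify the $H'$-representation coming from $W\otimes_k K$ with the one coming from the $\mathcal{C}_{X_K}$-object $E\otimes_k K$, and conclude that its $G'_K$-structure factors through $G_K$. The key input is that a representation of $G'_K$ whose inflation to $H'$ factors through $H$ must factor through $G_K$. To prove it, I would argue at the level of kernels: the kernel of $H'\to G'_K\to G_K$ equals the kernel of $H'\to H$, and this kernel surjects onto $N_K$ because $H'\to G'_K$ is faithfully flat. Triviality on the kernel of $H'\to H$ then gives triviality on $N_K$.
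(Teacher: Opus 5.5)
Your argument is correct, but it takes a different route from the paper's proof.

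\textbf{The paper's route.} The paper argues by contradiction. Given $E_0\in\mathcal{C}'_X\setminus\mathcal{C}_X$ with $E_0\otimes_k K\in\mathcal{C}_{X_K}$, it enlarges $\mathcal{C}_X$ to $\langle\mathcal{C}_X\cup\{E_0\}\rangle_{\mathcal{C}'_X}$. It observes that $\pi(\langle\mathcal{C}_X\cup\{E_0\}\rangle_{\mathcal{C}'_X},x)\to\pi(\mathcal{C}_X,x)$ is not an isomorphism, hence by Lemma~\ref{isofield} neither is its base change to $K$. On the $K$-side, $\langle\{E\otimes_k K\}_{E\in\mathcal{C}_X\cup\{E_0\}}\rangle_{\mathcal{C}'_{X_K}}$ is still $\mathcal{C}_{X_K}$, by Theorem~\ref{fieldgeneral}(4). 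A three-column diagram then gives the contradiction. The chase is left implicit: the middle vertical map is faithfully flat by the left square, and its composite with the bottom non-isomorphism is an isomorphism, which is impossible.

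\textbf{Your route.} You work directly with $N=\ker(G'\to G)$. Commutativity $q\circ f=s\circ r$ with $s$ injective gives $\ker r=f^{-1}(N_K)$. Faithful flatness of $f$ makes $f^{-1}(N_K)\to N_K$ faithfully flat. So triviality of $\ker r$ on $f^*(W\otimes_k K)$ forces $N_K$, and hence $N$, to act trivially on $W$.

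\textbf{What each buys.} Your version makes the mechanism transparent. It needs neither the auxiliary generated category nor the description of $\mathcal{C}_{X_K}$ from Theorem~\ref{fieldgeneral}(4). It replaces Lemma~\ref{isofield} by the elementary fact that a coaction is trivial iff it is so after $\otimes_k K$. It also shows that only injectivity of $\pi(\mathcal{C}_{X_K},x_K)\to\pi(\mathcal{C}_X,x)_K$ is used. The paper's version stays at the level of ``isomorphism versus faithfully flat'' for maps of Tannaka groups and reuses earlier results, but leaves the final diagram chase to the reader. Both depend equally on commutativity of the square, i.e.\ the compatibility of $\eta_K^{P_K}$ with $\eta_K^{P'_K}$ (e.g.\ via $P\cong P'\times^{G'}G$). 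You rightly flag this.

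\textbf{A small correction.} In your ``descend along $H'\twoheadrightarrow G'_K$'' step, closure under subobjects is not what does the work. The clean justification is that $f^*$ is fully faithful, hence reflects isomorphisms. Then $f^*(W\otimes_k K)\cong r^*U=r^*s^*V=f^*q^*V$ yields $W\otimes_k K\cong q^*V$. Your closing kernel argument already covers this.
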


\begin{proof}
    For any $E\in\Vect(X)$ with $E\otimes_k K\in\mathcal{C}_{X_K}$, then by condition (2), we have $E\in\mathcal{C}'_X$. Suppose there exists $E_0\in\Vect(X)$ with $E_0\otimes_k K\in\mathcal{C}_{X_K}$ but $E_0\in\mathcal{C}'_{X}\setminus \mathcal{C}_X$. Then we have 
    $$\mathcal{C}_X\subsetneq \langle \mathcal{C}_X\cup \{E_0\}\rangle_{\mathcal{C}'_X},$$
    $$\langle \{E\otimes_k K\}_{E\in\mathcal{C}_X\cup \{E_0\}}\rangle_{\mathcal{C}'_{X_K}}=\mathcal{C}_{X_K}.$$
    So the natural homomorphism $$\pi(\langle \mathcal{C}_X\cup \{E_0\}\rangle_{\mathcal{C}'_X},x)\twoheadrightarrow\pi(\mathcal{C}_X,x)$$ is not isomorphic and $$\pi(\langle \{E\otimes_k K\}_{E\in\mathcal{C}_X\cup \{E_0\}}\rangle_{\mathcal{C}'_{X_K}},x_K)\cong \pi(\mathcal{C}_{X_K},x_K).$$ Then by Lemma~\ref{isofield}, the natural homomorphism $$\pi(\langle \mathcal{C}_X\cup \{E_0\}\rangle_{\mathcal{C}'_X},x)_K\twoheadrightarrow\pi(\langle \mathcal{C}_X\rangle_{\mathcal{C}'_X},x_K)_K$$ cannot be an isomorphism. Note that for any $F\in\langle \mathcal{C}_X\cup \{E_0\}\rangle_{\mathcal{C}'_X}$, we have $$F\otimes_k K\in\langle \{E\otimes_k K\}_{E\in\mathcal{C}_X\cup \{E_0\}}\rangle_{\mathcal{C}'_{X_K}}.$$ By [Theorem~\ref{fieldgeneral}, (2)] there is a natural homomorphism
    $$\pi(\langle \{E\otimes_k K\}_{E\in\mathcal{C}_X\cup \{E_0\}}\rangle_{\mathcal{C}'_{X_K}},x_K)\rightarrow \pi(\langle \mathcal{C}_X\cup \{E_0\}\rangle_{\mathcal{C}'_X},x)_K.$$
    Then we have the following commutative diagram
    \[
        \begin{tikzcd}
            \pi(\mathcal{C}'_{X_K},x_K)\arrow[r, two heads]\arrow[d,two heads]&\pi(\langle \{E\otimes_k K\}_{E\in\mathcal{C}_X\cup \{E_0\}}\rangle_{\mathcal{C}'_{X_K}},x_K)\arrow[d,two heads]\arrow[r,"\cong"]&\pi(\mathcal{C}_{X_K},x_K)\arrow[d,"\cong"]\\
            \pi(\mathcal{C}'_X,x)_K\arrow[r, two heads]&\pi(\langle \mathcal{C}_X\cup \{E_0\}\rangle_{\mathcal{C}'_X},x)_K\arrow[r, two heads, "\not\cong"]&\pi(\mathcal{C}_X,x)_K
        \end{tikzcd}
    \]
    This commutative diagram yields a contradiction. Hence for any $E\in\Vect(X)$, $E\in\mathcal{C}_X$ iff $E\otimes_k K\in\mathcal{C}_{X_K}$.
\end{proof}

\begin{Proposition}\label{saturatedtensor}
    Let $k$ be a field, $K/k$ a field extension, $X$ a connected scheme proper over $k$, $x_K\in X_K(K)$ lying over $x\in X(k)$, $\mathcal{C}_X$ and $\mathcal{C}_{X_K}$ the Tannakian categories over $X$ and $X_K$ respectively. If for any $E\in\mathcal{C}_X$, we have $E\otimes_k K\in\mathcal{C}_{X_K}$. Then for any $F\in\overline{\mathcal{C}}_X$, $F\otimes_k K\in\overline{\mathcal{C}}_{X_K}$. In particular, there is a natural homomorphism $\pi(\overline{\mathcal{C}}_{X_K},x_K)\rightarrow \pi(\overline{\mathcal{C}}_X,x)_K$.
\end{Proposition}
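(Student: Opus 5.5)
The plan is to pull back the defining filtration of $F$ along the flat projection $p:X_K\to X$, and then obtain the homomorphism from Theorem~\ref{fieldgeneral}.

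Let $F\in\overline{\mathcal{C}}_X$. By definition there is a filtration
$$0\hookrightarrow F_1\hookrightarrow\cdots\hookrightarrow F_n=F$$
by subbundles with $F_{i+1}/F_i\in\mathcal{C}_X$ for every $i$. The projection $p$ is flat, being a base change of $\Spec K\to\Spec k$. Hence $p^*=-\otimes_k K$ is exact on coherent sheaves. Applying it gives a filtration
$$0\hookrightarrow F_1\otimes_k K\hookrightarrow\cdots\hookrightarrow F_n\otimes_k K=F\otimes_k K.$$
Each $F_i\otimes_k K$ is a vector bundle. Exactness gives $(F_{i+1}\otimes_k K)/(F_i\otimes_k K)\cong (F_{i+1}/F_i)\otimes_k K$. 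Since $F_{i+1}/F_i\in\mathcal{C}_X$, the hypothesis puts each graded piece in $\mathcal{C}_{X_K}$. This is precisely the defining condition for $F\otimes_k K\in\overline{\mathcal{C}}_{X_K}$.

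For the \emph{in particular}, note that by Proposition~\ref{saturatedtannakian} both $\overline{\mathcal{C}}_X$ and $\overline{\mathcal{C}}_{X_K}$ are neutral Tannakian categories of vector bundles, with fibre functors $|_x$ and $|_{x_K}$. So Theorem~\ref{fieldgeneral}(2) applies with $\overline{\mathcal{C}}_X$ and $\overline{\mathcal{C}}_{X_K}$ in place of $\mathcal{C}_X$ and $\mathcal{C}_{X_K}$; its hypothesis is exactly the first part just proved. It yields the functor $\eta_K^{P_K}:\Rep_K^f(\pi(\overline{\mathcal{C}}_X,x)_K)\to\overline{\mathcal{C}}_{X_K}$, where $P$ is now the universal $\pi(\overline{\mathcal{C}}_X,x)$-bundle. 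This functor corresponds to the homomorphism $\pi(\overline{\mathcal{C}}_{X_K},x_K)\to\pi(\overline{\mathcal{C}}_X,x)_K$.

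I expect no real obstacle. The only point needing care is checking that the pulled-back filtration consists of subbundles with locally free quotients, and this is immediate from flatness together with the local freeness of the $F_i$ and of the quotients.
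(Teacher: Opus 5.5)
Your proof is correct and is essentially the paper's argument. You tensor the defining filtration of $F$ with $K$ and use exactness of $-\otimes_k K$ to identify the graded pieces with $(F_{i+1}/F_i)\otimes_k K\in\mathcal{C}_{X_K}$. Your derivation of the \emph{in particular} from Proposition~\ref{saturatedtannakian} and Theorem~\ref{fieldgeneral}(2), applied to the saturated categories, makes explicit a step the paper leaves implicit.
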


\begin{proof}
    Let $F\in \overline{\mathcal{C}}_X$, consider the filtration of $F$:
    $$0 \hookrightarrow F_1\hookrightarrow \cdots\hookrightarrow F_n= F,$$
    such that $F^i=F_{i+1}/F_{i}\in\mathcal{C}_X$ for any $i$. Tensoring with $K$, we obtain a filtration of $F\otimes_k K$: 
    $$0 \hookrightarrow F_1\otimes_k K\hookrightarrow \cdots\hookrightarrow F_n\otimes_k K= F\otimes_k K,$$
    where $(F_{i+1}\otimes_k K)/(F_{i}\otimes_k K)\cong (F_{i+1}/F_i)\otimes_k K\in\mathcal{C}_{X_K}$ for any $i$ by assumption. Then $F\otimes_k K\in\overline{\mathcal{C}}_{X_K}$.
\end{proof}

\begin{Proposition}\label{equivsurjective}
    Let $k$ be a field, $K/k$ a field extension, $X$ a connected scheme proper over $k$, $x_K\in X_K(K)$ lying over $x\in X(k)$, $\mathcal{C}_X$ and $\mathcal{C}_{X_K}$ the Tannakian categories over $X$ and $X_K$ respectively. Suppose for any $E\in\mathcal{C}_X$, $E\otimes_k K\in\mathcal{C}_{X_K}$, then the following conditions are equivalent:
    \begin{enumerate}
        \item The natural homomorphism $\pi(\mathcal{C}_{X_K},x_K)\rightarrow \pi(\mathcal{C}_X,x)_K$ is faithfully flat.
        \item The natural homomorphism $\pi(\overline{\mathcal{C}}_{X_K},x_K)\rightarrow \pi(\overline{\mathcal{C}}_X,x)_K$ is faithfully flat.
    \end{enumerate}
\end{Proposition}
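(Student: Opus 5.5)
The plan is to pass between $\mathcal{C}$ and its saturation $\overline{\mathcal{C}}$ using two tools. The first is the criterion of Theorem~\ref{fieldgeneral}(3), in its observability form (b), reduced to rank-one subobjects via Lemma~\ref{Observable}(2). The second is the fact, proved in the proposition just before this section, that $\mathcal{C}_X\subseteq\overline{\mathcal{C}}_X$ is closed under subobjects. By Proposition~\ref{saturatedtensor} the hypothesis of Theorem~\ref{fieldgeneral}(2) holds for both pairs $(\mathcal{C}_X,\mathcal{C}_{X_K})$ and $(\overline{\mathcal{C}}_X,\overline{\mathcal{C}}_{X_K})$, so both natural homomorphisms exist. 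They fit in a commutative square
\[
\begin{tikzcd}
\pi(\overline{\mathcal{C}}_{X_K},x_K)\arrow[r]\arrow[d,two heads]&\pi(\overline{\mathcal{C}}_X,x)_K\arrow[d,two heads]\\
\pi(\mathcal{C}_{X_K},x_K)\arrow[r]&\pi(\mathcal{C}_X,x)_K
\end{tikzcd}
\]
The square commutes because the functor $\eta_K^{P_K}$ for $\overline{\mathcal{C}}$, restricted to $\Rep_K^f(\pi(\mathcal{C}_X,x)_K)$, is the functor $\eta_K^{P_K}$ for $\mathcal{C}$: the universal bundle of $\mathcal{C}_X$ is the push-forward of the universal bundle of $\overline{\mathcal{C}}_X$. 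The vertical maps are faithfully flat by the proposition before this section; for the right-hand one we also use that faithful flatness is preserved under base change.

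For $(2)\Rightarrow(1)$, the composite $\pi(\overline{\mathcal{C}}_{X_K},x_K)\to\pi(\mathcal{C}_X,x)_K$ going through the top-right corner is faithfully flat. It therefore also equals the composite going through $\pi(\mathcal{C}_{X_K},x_K)$. If $g\circ f$ is faithfully flat, then $g$ is faithfully flat. To see this via Theorem~\ref{Thm1}(1): $g^*$ is fully faithful because $(g\circ f)^*$ is and $f^*$ is faithful. Any subobject of $g^*V$ pulls back under $f^*$ to a subobject of $(g f)^*V$, hence comes from $\Rep(\pi(\mathcal{C}_X,x)_K)$, and full faithfulness of $f^*$ lets us identify it with a subobject of $g^*V$. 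So the bottom arrow is faithfully flat.

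For $(1)\Rightarrow(2)$, I would show that the $\eta_K^{P_K}$ of the saturated categories is observable, using Lemma~\ref{Observable}(2). Let $F\in\overline{\mathcal{C}}_X$ with filtration $0=F_0\subset F_1\subset\cdots\subset F_n=F$ with graded pieces in $\mathcal{C}_X$. Let $L\hookrightarrow F\otimes_kK$ be a rank-one subobject in $\overline{\mathcal{C}}_{X_K}$. Choose the minimal $i$ with $L\subseteq F_i\otimes_kK$. Then the composite $L\to (F_i/F_{i-1})\otimes_kK$ is nonzero. Its kernel and image lie in the abelian category $\overline{\mathcal{C}}_{X_K}$ (Proposition~\ref{saturatedtannakian}). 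The kernel is a subbundle of rank $0$, hence zero, so $L$ is a subobject of $G\otimes_kK$ with $G:=F_i/F_{i-1}\in\mathcal{C}_X$. Since $G\otimes_kK\in\mathcal{C}_{X_K}$ and $\mathcal{C}_{X_K}$ is closed under subobjects in $\overline{\mathcal{C}}_{X_K}$, the subobject $L$ lies in $\mathcal{C}_{X_K}$. Hypothesis (1), via condition (e) of Theorem~\ref{fieldgeneral}(3), then gives $E'\in\mathcal{C}_X\subseteq\overline{\mathcal{C}}_X$ with $L^\vee\hookrightarrow E'\otimes_kK$. This is observability, and Theorem~\ref{fieldgeneral}(3) $(b)\Rightarrow(a)$ for the saturated categories gives (2).

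The main obstacle is the step identifying $L$ with a subobject of a single graded piece $G\otimes_kK$. This requires care that the composite map is injective as a map of bundles, not merely generically. I would handle it, as above, by working inside the abelian category $\overline{\mathcal{C}}_{X_K}$, where kernels are subbundles, rather than in $\Vect(X_K)$.
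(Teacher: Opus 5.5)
Your proposal is correct and follows the paper's route. For $(2)\Rightarrow(1)$ you use the same commutative-square argument as Proposition~\ref{transurjective}. For $(1)\Rightarrow(2)$ you reduce to rank-one subobjects (via Lemma~\ref{Observable}(2), where the paper uses $\mathcal{E}^\vee\cong\wedge^{r-1}\mathcal{E}\otimes(\det\mathcal{E})^\vee$) and then apply condition (e) of Theorem~\ref{fieldgeneral}(3) for $\mathcal{C}$. Your step embedding the line bundle into a single graded piece $G\otimes_k K$ with $G\in\mathcal{C}_X$ is genuinely needed. The paper leaves it implicit, applying (e) directly to $\det\mathcal{E}\hookrightarrow\wedge^rE\otimes_kK$ even though $\wedge^rE$ is only known to lie in $\overline{\mathcal{C}}_X$.
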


\begin{proof}
    $(1)\Rightarrow (2)$ Let $E\in \overline{\mathcal{C}}_{X}$, $\mathcal{E}\in\overline{\mathcal{C}}_{X_K}$ of rank $r$ and $\mathcal{E}\hookrightarrow E\otimes_k K\in\overline{\mathcal{C}}_{X_K}$. Note that $\wedge ^{r-1}\mathcal{E}\hookrightarrow \wedge ^{r-1}E\otimes_k K$ and $\det \mathcal{E}\cong \wedge ^{r}\mathcal{E}\hookrightarrow \wedge ^{r}E\otimes_k K$. Since $\det \mathcal{E}$ is of rank $1$, so irreducible in $\mathcal{C}_{X_K}$, by [Theorem~\ref{fieldgeneral}, (3)], there exists $E'\in\mathcal{C}_X$ such that $(\det \mathcal{E})^\vee\hookrightarrow E'\otimes_k K$. Then 
    $$\mathcal{E}^\vee\cong \wedge ^{r-1}\mathcal{E}\otimes_{\mathcal{O}_{X_K}} (\det \mathcal{E})^\vee\hookrightarrow ((\wedge ^{r-1}E)\otimes_{\mathcal{O}_X}E')\otimes_k K.$$
    Hence the natural homomorphism $\pi(\overline{\mathcal{C}}_{X_K},x_K)\rightarrow \pi(\overline{\mathcal{C}}_X,x)_K$ is faithfully flat according to [Theorem~\ref{fieldgeneral}, (3)].
    
    $(2)\Rightarrow (1)$ If the natural homomorphism $\pi(\overline{\mathcal{C}}_{X_K},x_K)\rightarrow \pi(\overline{\mathcal{C}}_X,x)_K$ is faithfully flat, it follows immediately by Proposition~\ref{transurjective}. 
\end{proof}

\begin{Proposition}\label{subquotient}
    Let $k$ be a field, $K/k$ a field extension, $X$ a connected scheme proper over $k$, $\mathcal{C}_X$ and $\mathcal{C}_{X_K}$ saturated Tannakian categories over $X$ and $X_K$ respectively, $E_1,E_2\in\mathcal{C}_X$, $0\rightarrow E_1\otimes_k K\rightarrow \mathcal{E}\rightarrow E_2\otimes_k K\rightarrow 0$ an extension. If for any $E\in\mathcal{C}_X$, $E\otimes_k K\in\mathcal{C}_{X_K}$, then there exists $F\in \mathcal{C}_X$ such that $\mathcal{E}$ is a subquotient of $E\otimes_k K$ in $\mathcal{C}_{X_K}$.
\end{Proposition}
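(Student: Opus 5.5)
The plan is to write the extension class of $\mathcal{E}$ as a $K$-linear combination of classes of extensions defined over $k$, and then realize $\mathcal{E}$ by a Baer sum, which is visibly a subquotient of a base-changed bundle.

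First I would identify the extension class. The category $\mathcal{C}_{X_K}$ is a full subcategory of $\Vect(X_K)$. An extension of vector bundles is again a vector bundle, so $\mathcal{E}$ is classified by an element
$$\xi\in \Ext^1_{\mathcal{O}_{X_K}}(E_2\otimes_k K,E_1\otimes_k K)\cong H^1(X_K,(E_2^\vee\otimes E_1)\otimes_k K).$$
Since $X$ is proper over $k$, flat base change along $\Spec K\to\Spec k$ gives
$$H^1(X_K,(E_2^\vee\otimes E_1)\otimes_k K)\cong H^1(X,E_2^\vee\otimes E_1)\otimes_k K\cong \Ext^1_{\mathcal{O}_X}(E_2,E_1)\otimes_k K,$$
and this isomorphism is compatible with the map that sends a $k$-extension to its pullback along $p$. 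So we can write $\xi=\sum_{i=1}^n a_i\, p^*\xi_i$ with $\xi_i\in\Ext^1_{\mathcal{O}_X}(E_2,E_1)$ and $a_i\in K^\times$; terms with $a_i=0$ are simply dropped, and if $n=0$ the extension splits. Each $\xi_i$ is represented by an extension $0\to E_1\to F_i\to E_2\to 0$ on $X$. By definition $F_i\in\overline{\mathcal{C}}_X$, and $\overline{\mathcal{C}}_X=\mathcal{C}_X$ because $\mathcal{C}_X$ is saturated, so $F_i\in\mathcal{C}_X$ and hence $F_i\otimes_k K\in\mathcal{C}_{X_K}$ by hypothesis.

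Next I would realize the $K$-linear combination. The class $a_i\,p^*\xi_i$ is the pushout of $F_i\otimes_k K$ along multiplication by $a_i$ on $E_1\otimes_k K$. Since $a_i$ is an automorphism, the middle term of this pushout is still isomorphic to $F_i\otimes_k K$; only the inclusion of $E_1\otimes_k K$ is rescaled. The Baer sum of the $n$ rescaled extensions is then constructed in two steps. First take $\mathcal{G}:=\ker\bigl(\bigoplus_i F_i\otimes_k K\to (E_2\otimes_k K)^{\oplus n}\to (E_2\otimes_k K)^{\oplus n}/\Delta\bigr)$, the fibre product over $E_2\otimes_k K$. Then divide by the subbundle $\{(e_1,\dots,e_n)\in (E_1\otimes_k K)^{\oplus n} : \sum_i e_i=0\}$. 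The result is isomorphic to $\mathcal{E}$ as an extension. Hence $\mathcal{E}$ is a quotient of a subobject of $F\otimes_k K$, where $F:=\bigoplus_i F_i\in\mathcal{C}_X$.

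Finally I would check that every intermediate object lives in $\mathcal{C}_{X_K}$, so that this is a subquotient in $\mathcal{C}_{X_K}$ and not merely in $\Vect(X_K)$. All maps used are morphisms between objects of $\mathcal{C}_{X_K}$, namely sums, projections and scalings among $F_i\otimes_k K$, $E_1\otimes_k K$ and $E_2\otimes_k K$. Because $\mathcal{C}_{X_K}$ is abelian and full in $\Vect(X_K)$, the relevant kernels, images and cokernels are computed there and are again vector bundles in $\mathcal{C}_{X_K}$. Moreover $\mathcal{E}$ itself lies in $\mathcal{C}_{X_K}$ by saturation.

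I expect the main obstacle to be the first step. One has to make sure that Yoneda $\Ext^1$ in the exact category of vector bundles agrees with sheaf $\Ext^1$, so that every class is represented by a vector bundle extension. One also has to make sure that the base-change isomorphism for $H^1$ really corresponds to pulling back extensions along $p$. Once those two points are settled, the rest is formal.
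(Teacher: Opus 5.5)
Your proof is correct and follows the paper's argument essentially step for step. You decompose the class via $\Ext^1_{\mathcal{O}_{X_K}}(E_2\otimes_k K,E_1\otimes_k K)\cong\Ext^1_{\mathcal{O}_X}(E_2,E_1)\otimes_k K$, use saturation of $\mathcal{C}_X$ to place the middle terms $F_i$ in $\mathcal{C}_X$, rescale by pushout, and realize $\mathcal{E}$ as a Baer sum built from $\bigoplus_i F_i\otimes_k K$. The only differences are cosmetic: you take the fibre product over the diagonal first and then divide by the antidiagonal, whereas the paper pushes out along the summation map first and then pulls back along the diagonal. You also spell out the split case and the Yoneda-versus-sheaf $\Ext^1$ compatibility, which the paper leaves implicit.
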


\begin{proof}
    Let $\sigma$ denote the corresponding element of the extension $$0\rightarrow E_1\otimes_k K\rightarrow \mathcal{E}\rightarrow E_2\otimes_k K\rightarrow 0$$ in $\Ext^1_{\mathcal{O}_{X_K}}(E_2\otimes_k K,E_1\otimes_k K)$. Note that
    $$\Ext^1_{\mathcal{O}_{X_K}}(E_2\otimes_k K,E_1\otimes_k K)\cong \Ext^1_{\mathcal{O}_X}(E_2,E_1)\otimes_k K,$$
    so there exist $\lambda_i\in K^*$ and $\sigma_i\in \Ext^1_{\mathcal{O}_X}(E_2,E_1)$ such that $\sigma\cong \sum\limits_{i=1}^n \sigma_i\otimes_k \lambda_i$. For any $\sigma_i\in \Ext^1_{\mathcal{O}_X}(E_2,E_1)$:
    $$\sigma_i:0\rightarrow E_1\rightarrow F_i\rightarrow E_2\rightarrow 0,$$
    then $F_i\in\mathcal{C}_X$ since $\mathcal{C}_X$ is saturated. Tensoring with $K$, we obtain a collection of extensions 
    $$\sigma_i\otimes_k 1:0\rightarrow E_1\otimes_k K\rightarrow F_i\otimes_k K\rightarrow E_2\otimes_k K\rightarrow 0.$$
    Consider the following morphisms for each $i$:
    $$\begin{aligned}
        \lambda_i: E_1\otimes_k K\rightarrow E_1\otimes_k K,e_1\mapsto \lambda_i e_1.
    \end{aligned}$$
    Then for each $i$, we have the following pushout diagram
    \[
        \begin{tikzcd}
            0\arrow[r]&E_1\otimes_k K\arrow[r]\arrow[d,"\lambda_i"]&F_i\otimes_k K\arrow[r]\arrow[d,"\cong"]&E_2\otimes_k K\arrow[r]\arrow[d,equal]&0\\
            0\arrow[r]&E_1\otimes_k K\arrow[r]&\arrow[ul, phantom, "\lrcorner", very near start]\mathcal{F}_i\arrow[r]&E_2\otimes_k K\arrow[r]&0
        \end{tikzcd}
    \]
    so the bottom extension corresponds to $\sigma_i\otimes_k \lambda_i\in\Ext^1_{\mathcal{O}_{X_K}}(E_2\otimes_k K,E_1\otimes_k K)$. Consider
    $$i: E_2\otimes_k K\rightarrow (E_2\otimes_k K)^{\oplus n},e_2\mapsto (e_2,\cdots,e_2),$$
    $$p: (E_1\otimes_k K)^{\oplus n}\rightarrow E_1\otimes_k K,(e_1^1,\cdots, e_1^n)\mapsto \sum_{i=1}^{n} e_1^i.$$
    They yield the following commutative diagram
    \[\begin{tikzcd}
        0\arrow[r]&E_1^{\oplus n}\otimes_k K\arrow[r]\arrow[d,"p"]&\oplus_{i=1}^n \mathcal{F}_i \arrow[r]\arrow[d, two heads]&E_2^{\oplus n}\otimes_k K\arrow[r]\arrow[d,equal]&0\\
        0\arrow[r]& E_1\otimes_k K\arrow[r]& U_K\arrow[ul, phantom, "\lrcorner", very near start] \arrow[r]& E_2^{\oplus n}\otimes_k K\arrow[r]&0\\
        0\arrow[r]& E_1\otimes_k K\arrow[r]\arrow[u,equal]& P_K\arrow[ur, phantom, "\llcorner", very near start]\arrow[r]\arrow[u, hook]&E_2\otimes_k K\arrow[r]\arrow[u,"i"]&0
    \end{tikzcd}\]
    where $U_K$ is the pushout, $P_K$ is the pullback. The extension $$0\rightarrow E_1\otimes_k K\rightarrow P_K\rightarrow E_2\otimes_k K\rightarrow 0$$ corresponds to $\sigma=\sum\limits_{i=1}^n f_i\otimes_k \lambda_i$ in $\Ext^1_{\mathcal{O}_{X_K}}(E_1\otimes_k K,E_2\otimes_k K)$ by definition of Baer sum, so we have $\mathcal{E}\cong P_K$. Since $\mathcal{C}_X$ and $\mathcal{C}_{X_K}$ are saturated, we have $P_K,U_k\in \mathcal{C}_{X_K}$. Since $\mathcal{E}\cong P_K$ is a subobject of $U_K$ and $U_K$ is a quotient object of $\oplus_{i=1}^n \mathcal{F}_i\cong \oplus_{i=1}^n F_i\otimes_k K$, we have $\mathcal{E}$ is a subquotient of $E\otimes_k K$ for some $E\in\overline{\mathcal{C}}_{X}$.
\end{proof}

\begin{Proposition}\label{saturationisomorphism}
    Let $k$ be a field, $K/k$ a field extension, $X$ a connected scheme proper over $k$, $x_K\in X_K(K)$ lying over $x\in X(k)$, $\mathcal{C}_X$ and $\mathcal{C}_{X_K}$ the Tannakian categories over $X$ and $X_K$ respectively. If for any $E\in\mathcal{C}_X$, $E\otimes_k K\in\mathcal{C}_{X_K}$ and the natural homomorphism $\pi(\mathcal{C}_{X_K},x_K)\rightarrow \pi(\mathcal{C}_X,x)_K$ is an isomorphism, then the natural homomorphism $\pi(\overline{\mathcal{C}}_{X_K},x_K)\rightarrow \pi(\overline{\mathcal{C}}_X,x)_K$ is an isomorphism.
\end{Proposition}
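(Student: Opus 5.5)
We deduce this from Theorem~\ref{fieldgeneral} applied to the pair of saturated categories $\overline{\mathcal{C}}_X$, $\overline{\mathcal{C}}_{X_K}$, using Proposition~\ref{saturatedtensor}, Proposition~\ref{equivsurjective} and Proposition~\ref{subquotient}.

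\emph{Setup.} By Proposition~\ref{saturatedtensor}, $F\otimes_k K\in\overline{\mathcal{C}}_{X_K}$ for every $F\in\overline{\mathcal{C}}_X$. Hence Theorem~\ref{fieldgeneral}(2) gives the functor $\overline{\eta}:=\eta_K^{\overline P_K}$ from $\Rep_K^f(\pi(\overline{\mathcal{C}}_X,x)_K)$ to $\overline{\mathcal{C}}_{X_K}$, where $\overline P$ is the universal $\pi(\overline{\mathcal{C}}_X,x)$-bundle. The hypothesis that $\pi(\mathcal{C}_{X_K},x_K)\to\pi(\mathcal{C}_X,x)_K$ is an isomorphism implies it is faithfully flat. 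So Proposition~\ref{equivsurjective} shows that $\pi(\overline{\mathcal{C}}_{X_K},x_K)\to\pi(\overline{\mathcal{C}}_X,x)_K$ is faithfully flat. By Theorem~\ref{fieldgeneral}(3)(c), the essential image of $\overline{\eta}$ is therefore $\mathcal{I}:=\langle\{F\otimes_k K\}_{F\in\overline{\mathcal{C}}_X}\rangle_{\overline{\mathcal{C}}_{X_K}}$, which is closed under subquotients. Since $\overline{\eta}$ is fully faithful by Theorem~\ref{fieldgeneral}(1), the homomorphism is an isomorphism as soon as $\overline{\mathcal{C}}_{X_K}\subseteq\mathcal{I}$. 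Moreover, by Theorem~\ref{fieldgeneral}(1) every object of $\mathcal{I}$ is both a subobject and a quotient of some $F\otimes_k K$ with $F\in\overline{\mathcal{C}}_X$.

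\emph{Induction.} We prove $\mathcal{G}\in\mathcal{I}$ for every $\mathcal{G}\in\overline{\mathcal{C}}_{X_K}$, by induction on the length of a filtration of $\mathcal{G}$ with graded pieces in $\mathcal{C}_{X_K}$. For length one, $\mathcal{G}\in\mathcal{C}_{X_K}$. By the isomorphism hypothesis and Theorem~\ref{fieldgeneral}(4), $\mathcal{G}$ lies in the image of $\eta_K^{P_K}$. It is therefore a subquotient of some $E\otimes_k K$ with $E\in\mathcal{C}_X\subseteq\overline{\mathcal{C}}_X$, so $\mathcal{G}\in\mathcal{I}$.

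For the inductive step, write $0\to\mathcal{G}'\to\mathcal{G}\to\mathcal{G}''\to 0$ with $\mathcal{G}',\mathcal{G}''\in\mathcal{I}$ by induction. Choose $A,B\in\overline{\mathcal{C}}_X$ with $\mathcal{G}'\hookrightarrow A\otimes_k K$ and $B\otimes_k K\twoheadrightarrow\mathcal{G}''$.
\begin{itemize}
\item Push the extension out along $\mathcal{G}'\hookrightarrow A\otimes_k K$. This gives an extension $\mathcal{U}$ of $\mathcal{G}''$ by $A\otimes_k K$, and the induced map $\mathcal{G}\to\mathcal{U}$ is injective.
\item Pull $\mathcal{U}$ back along $B\otimes_k K\twoheadrightarrow\mathcal{G}''$. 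This gives an extension $\mathcal{H}$ of $B\otimes_k K$ by $A\otimes_k K$, and $\mathcal{H}\to\mathcal{U}$ is surjective.
\end{itemize}
All of these objects lie in $\overline{\mathcal{C}}_{X_K}$, which is abelian and closed under extensions by Proposition~\ref{saturatedtannakian}. Thus $\mathcal{G}$ is a subquotient of $\mathcal{H}$. Now apply Proposition~\ref{subquotient} to the saturated categories $\overline{\mathcal{C}}_X$, $\overline{\mathcal{C}}_{X_K}$ with $E_1=A$, $E_2=B$. It shows that $\mathcal{H}$ is a subquotient of $F\otimes_k K$ for some $F\in\overline{\mathcal{C}}_X$. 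Hence $\mathcal{H}\in\mathcal{I}$, and so $\mathcal{G}\in\mathcal{I}$ because $\mathcal{I}$ is closed under subquotients.

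\emph{Main obstacle.} The delicate point is that the pushout and pullback steps must happen inside $\overline{\mathcal{C}}_{X_K}$. We need the maps to be morphisms there, and the injectivity and surjectivity claims must hold both as vector bundles and as subobjects and quotients in the Tannakian sense. This is where the abelianness from Proposition~\ref{saturatedtannakian} is used. One also needs the output of Proposition~\ref{subquotient} to be a subquotient taken in $\overline{\mathcal{C}}_{X_K}$ itself.
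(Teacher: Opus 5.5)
Your proposal is correct and follows essentially the same route as the paper: you induct along a filtration of $\mathcal{G}\in\overline{\mathcal{C}}_{X_K}$, push out along an embedding into some $A\otimes_k K$, pull back along a surjection from some $B\otimes_k K$, and apply Proposition~\ref{subquotient} to the saturated pair $\overline{\mathcal{C}}_X,\overline{\mathcal{C}}_{X_K}$; you then use the faithful flatness from Proposition~\ref{equivsurjective} with Theorem~\ref{fieldgeneral}(3) to place $\mathcal{G}$ in the essential image, and conclude with Theorem~\ref{fieldgeneral}(4). The differences are cosmetic: you use an arbitrary filtration rather than a Jordan--H\"older one, and you phrase the subquotient-to-subobject step as closure of $\mathcal{I}$ under subquotients rather than re-invoking Theorem~\ref{fieldgeneral}(3) at each stage.
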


\begin{proof}
    For any $\mathcal{E}\in\overline{\mathcal{C}}_{X_K}$, consider the Jordan H\"older filtration of $\mathcal{E}$ in $\overline{\mathcal{C}}_{X_K}$:
    $$0\hookrightarrow \mathcal{E}_1\hookrightarrow\cdots \hookrightarrow \mathcal{E}_n=\mathcal{E},$$
    where $\mathcal{E}_{i+1}/\mathcal{E}_i\in\mathcal{C}_{X_K}$ is irreducible for any $i$. Then we have an exact sequence
    $$0\rightarrow \mathcal{E}_{1}\rightarrow \mathcal{E}_2\rightarrow \mathcal{E}_2/\mathcal{E}_1\rightarrow 0,$$
    where $\mathcal{E}_1,\mathcal{E}_2/\mathcal{E}_1\in \mathcal{C}_{X_K}$. By [Theorem~\ref{fieldgeneral}, (4)], there exist $E_1,E'_2\in\mathcal{C}_X$ such that $\mathcal{E}_1\hookrightarrow E_1\otimes_k K$ and $E'_2\otimes_k K\twoheadrightarrow\mathcal{E}_2/\mathcal{E}_1$. Consider the following commutative diagram
    \[
        \begin{tikzcd}
            0\arrow[r]&\mathcal{E}_1\arrow[r]\arrow[d,hook]&\mathcal{E}_2 \arrow[r]\arrow[d,hook]&\mathcal{E}_2/\mathcal{E}_1\arrow[r]\arrow[d,equal]&0\\
        0\arrow[r]& E_1\otimes_k K\arrow[r]& \mathcal{E}'_2\arrow[ul, phantom, "\lrcorner", very near start] \arrow[r]& \mathcal{E}_2/\mathcal{E}_1\arrow[r]&0\\
        0\arrow[r]& E_1\otimes_k K\arrow[r]\arrow[u,equal]& \mathcal{E}''_2\arrow[ur, phantom, "\llcorner", very near start]\arrow[r]\arrow[u,two heads]&E'_2\otimes_k K\arrow[r]\arrow[u,two heads]&0
        \end{tikzcd}
    \]
    It follows that $\mathcal{E}_2$ is a subquotient of $\mathcal{E}''_2$ in $\overline{\mathcal{C}}_{X_K}$ since $\overline{\mathcal{C}}_{X_K}$ is saturated. Applying Proposition~\ref{subquotient}, $\mathcal{E}''_2$ is a subquotient of $E\otimes_k K$ for some $E\in\overline{\mathcal{C}}_X$, so $\mathcal{E}_2$ is a subquotient of $E\otimes_k K$ in $\overline{\mathcal{C}}_{X_K}$. By Proposition~\ref{equivsurjective}, the natural homomorphism $$\pi(\overline{\mathcal{C}}_{X_K},x_K)\rightarrow \pi(\overline{\mathcal{C}}_X,x)_K$$ is faithfully flat, so [Theorem~\ref{fieldgeneral}, (3)] implies that $\mathcal{E}_2\hookrightarrow E_2\otimes_k K$ for some $E_2\in\overline{\mathcal{C}}_X$. Repeating this method, for any $i$, there exists $E_i\in\overline{\mathcal{C}}_X$ such that $\mathcal{E}_i\hookrightarrow E_i\otimes_k K$ in $\overline{\mathcal{C}}_{X_K}$. In particular, $\mathcal{E}\hookrightarrow E\otimes_k K$ for some $E\in\overline{\mathcal{C}}_X$. Hence the natural homomorphism $$\pi(\overline{\mathcal{C}}_{X_K},x_K)\rightarrow \pi(\overline{\mathcal{C}}_X,x)_K$$ is an isomorphism by [Theorem~\ref{fieldgeneral}, (4)].
\end{proof}

\subsection{Base change of Tannaka group schemes under finite Galois extension}
\begin{Proposition}\label{pushisofinitesep}
    Let $k$ be a field, $K/k$ a finite field extension, $X$ a connected scheme proper over $k$, $x_K\in X_K(K)$ lying over $x\in X(k)$, $p:X_K\rightarrow X$ the projection, $\mathcal{C}_X$ and $\mathcal{C}_{X_K}$ the Tannakian categories over $X$ and $X_K$ respectively. If for any $E\in\mathcal{C}_X$, $E\otimes_k K\in\mathcal{C}_{X_K}$. Then
    \begin{enumerate}
        \item If for any $\mathcal{E}\in\mathcal{C}_{X_K}$, $p_*\mathcal{E}\in\mathcal{C}_X$, then for any $\mathcal{F}\in\overline{\mathcal{C}}_{X_K}$, we have $p_*\mathcal{F}\in\overline{\mathcal{C}}_X$.
        \item If the natural homomorphism $\pi(\mathcal{C}_{X_K},x_K)\rightarrow \pi(\mathcal{C}_X,x)_K$ is an isomorphism, then for any $E\in\Vect(X)$, $E\in\mathcal{C}_X$ iff $E\otimes_k K\in\mathcal{C}_{X_K}$.
        \item Let $K/k$ be finite separable. If for any $\mathcal{E}\in\mathcal{C}_{X_K}$, $p_*\mathcal{E}\in\mathcal{C}_X$, then the natural homomorphism $\pi(\mathcal{C}_{X_K},x_K)\rightarrow \pi(\mathcal{C}_X,x)_K$ is an isomorphism.
    \end{enumerate}
\end{Proposition}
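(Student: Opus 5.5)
Each of the three parts uses a different tool: exactness of $p_*$ for (1), the criteria of Theorem~\ref{fieldgeneral} together with Lemma~\ref{isofield} for (2), and the projection formula plus a Galois splitting for (3). Throughout, $p:X_K\to X$ is finite and flat (base change of $\Spec K\to\Spec k$), so $p_*$ is exact and sends vector bundles to vector bundles.

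\textbf{Part (1).} Take $\mathcal{F}\in\overline{\mathcal{C}}_{X_K}$ with a filtration $0\hookrightarrow\mathcal{F}_1\hookrightarrow\cdots\hookrightarrow\mathcal{F}_n=\mathcal{F}$ whose graded pieces lie in $\mathcal{C}_{X_K}$. Applying the exact functor $p_*$ gives a filtration of $p_*\mathcal{F}$ by subbundles. Its graded pieces are $p_*(\mathcal{F}_{i+1}/\mathcal{F}_i)$, which lie in $\mathcal{C}_X$ by hypothesis. Hence $p_*\mathcal{F}\in\overline{\mathcal{C}}_X$.

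\textbf{Part (2).} One direction is the standing hypothesis, so suppose $E\in\Vect(X)$ with $E\otimes_kK\in\mathcal{C}_{X_K}$. By Theorem~\ref{fieldgeneral}(4) there is $E_1\in\mathcal{C}_X$ with $E\otimes_kK\hookrightarrow E_1\otimes_kK$. Applying the same to $E^\vee\otimes_kK$ gives $E_2^\vee\otimes_kK\twoheadrightarrow E\otimes_kK$ for some $E_2\in\mathcal{C}_X$. I would then descend along the finite extension: the map $E\otimes_kK\hookrightarrow E_1\otimes_kK$ is a $K$-linear combination of maps $E\to E_1$, using $\Hom(E\otimes K,E_1\otimes K)=\Hom(E,E_1)\otimes_kK$.

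Direct descent of a single map may fail, so the cleaner route mimics Proposition~\ref{iffdescent}. Set $\mathcal{C}'=\langle\mathcal{C}_X\cup\{E\}\rangle$ inside the Tannakian category generated in $\Vect(X)$; via $\mathcal{O}_P$ this is the category of objects $\eta^{P'}$. Every object of $\mathcal{C}'$ base changes into $\mathcal{C}_{X_K}$, since $\mathcal{C}_{X_K}$ is Tannakian and $\otimes_kK$ is an exact tensor functor. The composite $\pi(\mathcal{C}_{X_K},x_K)\to\pi(\mathcal{C}',x)_K\to\pi(\mathcal{C}_X,x)_K$ is then an isomorphism, with the second arrow faithfully flat. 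The second arrow is therefore an isomorphism, so by Lemma~\ref{isofield} $\pi(\mathcal{C}',x)\to\pi(\mathcal{C}_X,x)$ is an isomorphism, and thus $E\in\mathcal{C}_X$.

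The subtle point here is that $\mathcal{C}'$ must be a genuine Tannakian category of vector bundles. To arrange this, I would take the full subcategory of $\Vect(X)$ consisting of those $F$ with $F\otimes_kK\in\eta_K^{P_K}(\ldots)$, and verify abelianness by faithfully flat descent of kernels and cokernels.

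\textbf{Part (3).} By Theorem~\ref{fieldgeneral}(4), it suffices to embed each $\mathcal{E}\in\mathcal{C}_{X_K}$ into some $E\otimes_kK$ with $E\in\mathcal{C}_X$. Take $E=p_*\mathcal{E}\in\mathcal{C}_X$. The counit $p^*p_*\mathcal{E}\to\mathcal{E}$ is surjective because $p$ is affine. Applying the same to $\mathcal{E}^\vee$ and dualizing gives
\[
\mathcal{E}\hookrightarrow (p^*p_*\mathcal{E}^\vee)^\vee\cong(p_*\mathcal{E}^\vee)^\vee\otimes_kK,
\]
with $(p_*\mathcal{E}^\vee)^\vee\in\mathcal{C}_X$. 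This is the required embedding, and separability is needed only to know the dual of the counit is injective as a map of bundles.

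As a cross-check, one can note that $K\otimes_kK\cong\prod_{g}K$, so $p^*p_*\mathcal{E}\cong\bigoplus_g\tilde g^*\mathcal{E}$ (after Galois closure). This makes $\mathcal{E}$ a direct summand, giving both the surjection and the injection at once.

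The main obstacle is part (2), namely ensuring that the intermediate category generated by $E$ is Tannakian inside $\Vect(X)$.
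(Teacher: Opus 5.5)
\textbf{Gap in part (2).} Part (1) is the same as the paper's argument. The gap is in part (2). You conclude that $\pi(\mathcal{C}',x)_K\to\pi(\mathcal{C}_X,x)_K$ is an isomorphism from two facts: the composite $\pi(\mathcal{C}_{X_K},x_K)\to\pi(\mathcal{C}',x)_K\to\pi(\mathcal{C}_X,x)_K$ is an isomorphism, and the second arrow is faithfully flat. Even granting the faithful flatness, this does not follow. For $H\hookrightarrow H\times N\twoheadrightarrow H$ the composite is the identity, yet the projection is faithfully flat and not an isomorphism. An isomorphic composite only makes the first arrow a closed immersion and gives the second arrow a section. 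What is missing is that the first arrow is faithfully flat.

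This can be supplied as follows.
\begin{itemize}
\item Take $\mathcal{C}'=\{F\in\Vect(X)\mid F\otimes_kK\in\mathcal{C}_{X_K}\}$. This is your fallback, and it is indeed Tannakian by flat base change and descent of local freeness along $X_K\to X$.
\item By the hypothesis and Theorem~\ref{fieldgeneral}(4), every $\mathcal{E}\in\mathcal{C}_{X_K}$ embeds into some $E'\otimes_kK$ with $E'\in\mathcal{C}_X\subseteq\mathcal{C}'$. This is condition (4)(b) for $\mathcal{C}'$.
\item So the first arrow is an isomorphism, hence so is the second, and Lemma~\ref{isofield} finishes.
\end{itemize}
Repaired this way, your route never uses $[K:k]<\infty$.

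It is, however, much heavier than necessary, and you had already written down the paper's ingredients. Apply the exact functor $p_*$ to $E_2^\vee\otimes_kK\twoheadrightarrow E\otimes_kK\hookrightarrow E_1\otimes_kK$ and use $p_*(F\otimes_kK)\cong F^{\oplus[K:k]}$. This exhibits $E^{\oplus[K:k]}$ as the image of a morphism $(E_2^\vee)^{\oplus[K:k]}\to E_1^{\oplus[K:k]}$ in $\mathcal{C}_X$. Hence $E^{\oplus[K:k]}$, and with it its direct summand $E$, lies in $\mathcal{C}_X$.

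\textbf{Part (3).} Your argument is correct and differs from the paper's. The paper uses separability to split $K\otimes_kK\cong K\times L_1\times\cdots\times L_n$, which makes $\mathcal{E}$ a direct summand of $p^*p_*\mathcal{E}=(p_*\mathcal{E})\otimes_kK$. You instead dualize the counit $p^*p_*\mathcal{E}^\vee\twoheadrightarrow\mathcal{E}^\vee$.

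Your closing remark, that separability is needed for the dual of the counit to be injective, is mistaken. For every finite $K/k$ the projection $p$ is finite and flat. So the counit is a surjection of vector bundles, hence locally split with locally free kernel, and its dual is an injection with locally free cokernel. Your argument therefore proves (3) for all finite extensions, purely inseparable ones included. In particular it gives the implication (1)$\Rightarrow$(2) of the conjecture at the end of the paper.
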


\begin{proof}
    (1) For any $\mathcal{F}\in\overline{\mathcal{C}}_{X_K}$, consider the filtration
    $$0\hookrightarrow \mathcal{F}_1\hookrightarrow \cdots\hookrightarrow \mathcal{F}_n=\mathcal{F},$$
    such that $\mathcal{F}_{i+1}/\mathcal{F}_i\in\mathcal{C}_{X_K}$ for any $i$. Applying the exact functor $p_*$, we obtain
    $$0\hookrightarrow p_*\mathcal{F}_1\hookrightarrow \cdots\hookrightarrow p_*\mathcal{F}_n=p_*\mathcal{F},$$
    then $p_*\mathcal{F}_{i+1}/p_*\mathcal{F}_{i}\cong p_*(\mathcal{F}_{i+1}/\mathcal{F}_i)\in\mathcal{C}_X$ for any $i$, i.e. $p_*\mathcal{F}\in\overline{\mathcal{C}}_X$.

    (2) For any $E\in\Vect(X)$, if $E\otimes_k K\in\mathcal{C}_{X_K}$, by the isomorphism $\pi(\mathcal{C}_{X_K},x_K)\cong \pi(\mathcal{C}_X,x)_K$ and [Theorem~\ref{fieldgeneral}, (4)] there exist $E_1,E_2\in\mathcal{C}_X$ such that
    $$E_1\otimes_k K\twoheadrightarrow E\otimes_k K\hookrightarrow E_2\otimes_k K.$$
    For any $F\in\Vect(X)$, by projection formula, we have
    $$p_*(F\otimes_k K)\cong F \otimes_{\mathcal{O}_X}(p_* \mathcal{O}_{X_K})\cong F \otimes_{\mathcal{O}_X}(\mathcal{O}_{X}\otimes_k K)\cong F \otimes_{\mathcal{O}_X}\mathcal{O}_{X}^{\oplus [K:k]}\cong \oplus_{[K:k]} F.$$
    Applying the exact functor $p_*$ to the above exacte sequence, we obtain
    $$\oplus_{[K:k]} E_1\in\mathcal{C}_X\twoheadrightarrow \oplus_{[K:k]} E\hookrightarrow \oplus_{[K:k]} E_i\in\mathcal{C}_X.$$
    It follows that $\oplus_{[K:k]} E\in\mathcal{C}_X$. Hence $E\in\mathcal{C}_X$.

    (3) Since $K/k$ is finite separable, we may assume $K=k[\alpha]\cong \frac{k[x]}{\langle f(x)\rangle}$ and $f(x)=(x-\alpha)f_1(x)\cdots f_n(x)$ in $K[x]$ where $f_i(x)$ are distinct monic irreducible polynomials in $K[x]$. It follows that $$K\otimes_k K\cong \frac{k[x]}{\langle f(x)\rangle_{k[x]}}\otimes_k K\cong \frac{K[x]}{\langle f(x)\rangle_{K[x]}}\cong \frac{K[x]}{\langle x-\alpha\rangle_{K[x]}}\times\frac{K[x]}{\langle f_1(x)\rangle_{K[x]}}\times\cdots \times\frac{K[x]}{\langle f_n(x)\rangle_{K[x]}}\cong K\times L_1\times\cdots\times L_n,$$
    where $L_i$ is a finite extension of $K$ for any $i$. Consider the Cartesian diagram 
    \[\begin{tikzcd}
            (X_K)\times_X (X_K)\arrow[r,"P_2"]\arrow[d,"P_1"]& X_K\arrow[d,"p"]\arrow[r]&\Spec K\arrow[d]\\
            X_K\arrow[r,"p"]& X\arrow[r]& \Spec k
        \end{tikzcd}\]
    It follows that for any $\mathcal{E}\in\mathcal{C}_{X_K}$, we have
    $$p^*p_*\mathcal{E} \cong {P_2}_*P_1^* \mathcal{E}\cong {P_2}_*(\mathcal{E}\otimes_k K)\cong {P_2}_*(\mathcal{E}\otimes_K (K\otimes_k K))\cong{P_2}_*(\mathcal{E}\otimes_K(K\times L_1\times\cdots\times L_n))\cong \mathcal{E}\oplus(\bigoplus_i \mathcal{E}\otimes_k L_i).$$
    So we obtain $\mathcal{E}\hookrightarrow p^*p_*\mathcal{E}=(p_*\mathcal{E})\otimes_k K\in\mathcal{C}_{X_K}$ since $p_*\mathcal{E}\in\mathcal{C}_X$. Then by [Theorem~\ref{fieldgeneral}, (4)], the natural homomorphism $\pi(\mathcal{C}_{X_K},x_K)\rightarrow \pi(\mathcal{C}_X,x)_K$ is an isomorphism.
\end{proof}

\begin{Theorem}\label{fieldfinitegalois}
    Let $k$ be a field, $K/k$ a finite Galois field extension, $X$ a connected scheme proper over $k$, $x_K\in X_K(K)$ lying over $x\in X(k)$, $p:X_K\rightarrow X$ the projection, $\mathcal{C}_X$ and $\mathcal{C}_{X_K}$ the Tannakian categories whose objects consist of vector bundles on $X$ and $X_K$ respectively. Suppose for any $E\in\mathcal{C}_X$, $E\otimes_k K\in\mathcal{C}_{X_K}$, then the following conditions are equivalent:
    \begin{enumerate}
        \item For any $\mathcal{E}\in\mathcal{C}_{X_K}$, $p_*\mathcal{E}\in\mathcal{C}_X$.
        \item The natural homomorphism $\pi(\mathcal{C}_{X_K},x_K)\rightarrow \pi(\mathcal{C}_X,x)_K$ is an isomorphism.
        \item \begin{enumerate}
            \item For any $E\in\Vect(X)$, $E\in \mathcal{C}_X$ iff $E\otimes_k K\in\mathcal{C}_{X_K}$;
            \item For any $\mathcal{E}\in\mathcal{C}_{X_K}$ and any automorphism $\tilde{g}:X_K\rightarrow X_K$ induced by $g\in\Gal(K/k)$, $\tilde{g}^*\mathcal{E}\in\mathcal{C}_{X_K}$.
        \end{enumerate}
    \end{enumerate}
\end{Theorem}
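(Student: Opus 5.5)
We prove the cycle $(1)\Rightarrow(2)\Rightarrow(3)\Rightarrow(1)$, relying mainly on Proposition~\ref{pushisofinitesep} and on the Galois decomposition of $p^*p_*$.

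\textbf{Step $(1)\Rightarrow(2)$.} A finite Galois extension is in particular finite separable. Hence Proposition~\ref{pushisofinitesep}~(3) applies directly and gives that $\pi(\mathcal{C}_{X_K},x_K)\rightarrow \pi(\mathcal{C}_X,x)_K$ is an isomorphism.

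\textbf{Step $(2)\Rightarrow(3)$.} Condition (a) is exactly Proposition~\ref{pushisofinitesep}~(2). For (b), take $\mathcal{E}\in\mathcal{C}_{X_K}$ and $g\in\Gal(K/k)$.
\begin{enumerate}
\item By [Theorem~\ref{fieldgeneral}, (4)], there are $E_1,E_2\in\mathcal{C}_X$ with $E_1\otimes_k K\twoheadrightarrow\mathcal{E}\hookrightarrow E_2\otimes_k K$. We may take $E_1:=E_2'^\vee$, where $E_2'\in\mathcal{C}_X$ satisfies $\mathcal{E}^\vee\hookrightarrow E_2'\otimes_k K$.
\item Since $\tilde g$ is an automorphism of $X_K$ over $X$, we have $p\circ\tilde g=p$. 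Therefore $\tilde g^*(E\otimes_k K)\cong E\otimes_k K$ canonically for every $E\in\mathcal{C}_X$.
\item Pulling back along $\tilde g$ gives $E_1\otimes_k K\twoheadrightarrow\tilde g^*\mathcal{E}\hookrightarrow E_2\otimes_k K$. This map is a morphism between objects of $\mathcal{C}_{X_K}$, so its image lies in $\mathcal{C}_{X_K}$ because $\mathcal{C}_{X_K}$ is abelian, exactly as in the proof of [Theorem~\ref{fieldgeneral}, (2)]. Hence $\tilde g^*\mathcal{E}\in\mathcal{C}_{X_K}$.
\end{enumerate}

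\textbf{Step $(3)\Rightarrow(1)$.} Take $\mathcal{E}\in\mathcal{C}_{X_K}$. By (a) it suffices to show $(p_*\mathcal{E})\otimes_k K\in\mathcal{C}_{X_K}$. For Galois $K/k$ we have $K\otimes_k K\cong\prod_{g\in\Gal(K/k)}K$. Using the base change square from the proof of Proposition~\ref{pushisofinitesep}~(3), we get
\[
p^*p_*\mathcal{E}\cong {P_2}_*P_1^*\mathcal{E}\cong\bigoplus_{g\in\Gal(K/k)}\tilde g^*\mathcal{E}.
\]
By (b) each summand $\tilde g^*\mathcal{E}$ lies in $\mathcal{C}_{X_K}$, so the direct sum does too. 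Thus $p_*\mathcal{E}\otimes_k K\in\mathcal{C}_{X_K}$, and (a) gives $p_*\mathcal{E}\in\mathcal{C}_X$.

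The main obstacle is the identification in Step $(3)\Rightarrow(1)$. One must check that $X_K\times_X X_K\cong X_K\times_k\Spec K\cong\coprod_{g}X_K$, that $P_1$ restricts to the identity on each component, and that $P_2$ restricts to $\tilde g$ (or $\tilde g^{-1}$, depending on conventions) on the component indexed by $g$. Since the set of pullbacks $\{\tilde g^*\mathcal{E}\}_{g}$ is the same under either convention, the ambiguity does not affect the conclusion.
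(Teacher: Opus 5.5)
Your proposal is correct and follows essentially the same route as the paper: $(1)\Rightarrow(2)$ via Proposition~\ref{pushisofinitesep}~(3), and $(2)\Rightarrow(3)$ via Proposition~\ref{pushisofinitesep}~(2) together with pulling back the sandwich $E_1\otimes_k K\twoheadrightarrow\mathcal{E}\hookrightarrow E_2\otimes_k K$ along $\tilde g$. Likewise $(3)\Rightarrow(1)$ goes through flat base change and the decomposition $p^*p_*\mathcal{E}\cong\bigoplus_{g}\tilde g^*\mathcal{E}$, and your extra justifications ($p\circ\tilde g=p$, and the irrelevance of the $\tilde g$ versus $\tilde g^{-1}$ convention) fill in details the paper leaves implicit.
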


\begin{proof}
    $(1)\Rightarrow (2)$ It follows by Proposition~\ref{pushisofinitesep}.

    $(2)\Rightarrow (3)$ The condition (a) follows by Proposition~\ref{pushisofinitesep}. Now we verify the condition (b). By the isomorphism $\pi(\mathcal{C}_{X_K},x_K)\cong \pi(\mathcal{C}_X,x)_K$ and [Theorem~\ref{fieldgeneral}, [4]], we have for any $\mathcal{E}\in\mathcal{C}_{X_K}$, there exist $E,F\in\mathcal{C}_X$ such that $F\otimes_k K\twoheadrightarrow\mathcal{E}\hookrightarrow E\otimes_k K$, which implies that $F\otimes_k K\twoheadrightarrow \tilde{g}^*\mathcal{E}\hookrightarrow E\otimes_k K$, so that $\tilde{g}^*\mathcal{E}\in \mathcal{C}_{X_K}$.

    $(3)\Rightarrow (1)$ Consider the following diagram
    \[
        \begin{tikzcd}
            (X_K)\times \Gal(K/k)\arrow[r,"\cong"]\arrow[rr,bend left,"p_1"]\arrow[rd,"\mu"]& (X_K)\times_X (X_K)\arrow[r,"P_2"]\arrow[d,"P_1"]& X_K\arrow[d,"p"]\arrow[r]&\Spec K\arrow[d]\\
            &X_K\arrow[r,"p"]& X\arrow[r]&\Spec k
        \end{tikzcd}
    \]
    where $\mu$ is the action of $\Gal(K/k)$ on $X_K$. By cohomology and flat base change, for any $\mathcal{E}\in\mathcal{C}_{X_K}$, we have $$p^*p_*\mathcal{E}\cong {P_2}_*P_1^*\mathcal{E}\cong {p_1}_*\mu^* \mathcal{E}.$$
    We also have ${p_1}_*\mu^* \mathcal{E}=\bigoplus_{g\in\Gal(K/k)}\tilde{g}^*\mathcal{E}$, which implies that $p^*p_*\mathcal{E}\in\mathcal{C}_{X_K}$. Note that $p^*p_*\mathcal{E}=(p_*\mathcal{E})\otimes_k K$, it follows that $p_*\mathcal{E}\in\mathcal{C}_X$.
\end{proof}

\subsection{Base change of Tannaka group schemes under algebraically closed extension}

\begin{Theorem}\label{fieldalgclosed}
    Let $K/k$ be an extension of algebraically closed fields, $X$ a connected scheme proper over $k$, $x_K\in X_K(K)$ lying over $x\in X(k)$, $\mathcal{C}_X$ and $\mathcal{C}_{X_K}$ the Tannakian categories whose objects consist of vector bundles on $X$ and $X_K$ respectively. If for any $E\in\mathcal{C}_X$, $E\otimes_k K\in\mathcal{C}_{X_K}$. Then
    \begin{enumerate}
        \item The natural homomorphism $\pi(\mathcal{C}_{X_K},x_K)\rightarrow \pi(\mathcal{C}_X,x)_K$ is faithfully flat.
        \item If the natural homomorphism $\pi(\mathcal{C}_{X_K},x_K)\rightarrow \pi(\mathcal{C}_X,x)_K$ is an isomorphism, then for any irreducible object $\mathcal{E}\in\mathcal{C}_{X_K}$, there exists $E\in\mathcal{C}_X$ such that $\mathcal{E}\cong E\otimes_k K$.
        \item If $\mathcal{C}_X$ and $\mathcal{C}_{X_K}$ are saturated, then the following conditions are equivalent: 
        \begin{enumerate}
            \item The natural homomorphism $\pi(\mathcal{C}_{X_K},x_K)\rightarrow \pi(\mathcal{C}_X,x)_K$ is an isomorphism.
            \item For any irreducible object $\mathcal{E}\in\mathcal{C}_{X_K}$, there exists $E\in\mathcal{C}_X$ such that $\mathcal{E}\cong E\otimes_k K$.
        \end{enumerate} 
    \end{enumerate}
\end{Theorem}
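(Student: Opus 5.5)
Throughout, we work with the criteria of Theorem~\ref{fieldgeneral}.

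For (1) we verify condition (e) of Theorem~\ref{fieldgeneral}(3). By the proof of Proposition~\ref{equivsurjective}, it suffices to treat rank-one subobjects, using $\mathcal{E}^\vee\cong\wedge^{r-1}\mathcal{E}\otimes(\det\mathcal{E})^\vee$, or alternatively Lemma~\ref{Observable}(2). Equivalently, we use criterion (4) of Lemma~\ref{Observablesurjective}: for every $W\in\Rep_k^f(\pi(\mathcal{C}_X,x))$, every $K$-point of $\mathbb{P}(W_K)$ fixed by $\pi(\mathcal{C}_{X_K},x_K)$ should be fixed by $\pi(\mathcal{C}_X,x)_K$. Geometrically, a rank-one subobject $L\hookrightarrow E\otimes_k K$ in $\mathcal{C}_{X_K}$ is a $K$-point of a scheme of sub-line-bundles of $E$. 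Concretely, $\Hom(L,E\otimes K)$ being nonzero forces $L$ to be a line bundle with $L|_x\hookrightarrow E|_x\otimes K$.

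The plan is a spreading-out and specialization argument. The line subbundle $L\subseteq E_K$ is defined over a finitely generated $k$-subalgebra $A\subseteq K$; that is, there is a family $\mathcal{L}\subseteq E_A$ over $\Spec A$. Since $k$ is algebraically closed, $\Spec A$ has $k$-points $t$, and each specializes to a line subbundle $L_t\subseteq E$ on $X$. Consider the locus of $\Spec A$ over which $\mathcal{L}_t$ is a subobject in $\mathcal{C}_X$. Its image in $\mathbb{P}(E|_x)$ lies in the closed subset $\mathbb{P}(W)^{G}$ with $G=\pi(\mathcal{C}_X,x)$, since that locus is a closed $G$-stable condition. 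The aim is to show the whole family lands in $\mathbb{P}(W)^G$. Then the generic point, which is the point $L|_{x_K}$, is $G_K$-fixed, because $\mathbb{P}(W)^G\times_k K=\mathbb{P}(W_K)^{G_K}$ and $k$-points are dense. The main obstacle is showing that $k$-specializations of an object of $\mathcal{C}_{X_K}$ lie in $\mathcal{C}_X$, since $\mathcal{C}_X$ is abstract. Here I would instead use Theorem~\ref{fieldgeneral}(1): $L$ with $L^\vee$ is in the image of $\eta_K^{P_K}$ iff $\Hom(L,E_K)$ and $\Hom(E'_K,L)$ behave. I would replace specialization by the fixed-point criterion on the Tannakian side. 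Points of $\mathbb{P}(W_K)$ fixed by the image $H\subseteq G_K$ of $\pi(\mathcal{C}_{X_K},x_K)$ come from $H$, and the image of an algebraically closed base change should be defined over $k$ by an Aut$(K/k)$-invariance argument. A closed subgroup of $G_K$ stable under all automorphisms of $K/k$ descends to $k$ since $K^{\mathrm{Aut}(K/k)}=k$. Its $k$-form $H_0$ gives a Tannakian subcategory of $\mathcal{C}_X$ with the same $k$-points, and this forces $H_0=G$.

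For (2), take an irreducible $\mathcal{E}$. By Theorem~\ref{fieldgeneral}(4) we have $\mathcal{E}\hookrightarrow E\otimes_k K$, and $\mathcal{E}$ corresponds to an irreducible $G_K$-representation $V$. Over algebraically closed $k$, irreducible representations of $G_K$ are base changes of irreducible $G$-representations: $V\subseteq W_K$ appears in the Jordan–H\"older factors of $W_K$, which are the $W_i\otimes K$ since $\End(W_i)=k$ gives absolute irreducibility. Applying $\eta$ gives $\mathcal{E}\cong E_i\otimes_k K$.

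For (3), (a)$\Rightarrow$(b) is (2). For (b)$\Rightarrow$(a), we show by induction on Jordan–H\"older length in the saturated $\mathcal{C}_{X_K}$ that each $\mathcal{E}$ embeds in some $E\otimes K$. The irreducible case is given by hypothesis. For extensions, Proposition~\ref{subquotient} and the argument of Proposition~\ref{saturationisomorphism} exhibit $\mathcal{E}$ as a subquotient of some $F\otimes K$. Faithful flatness from (1), via Theorem~\ref{fieldgeneral}(3), then upgrades this to a subobject. Condition (4)(b) holds, so the map is an isomorphism.
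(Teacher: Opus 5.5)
\textbf{The gap is in (1), and no argument can fill it at the stated generality.} Your spreading-out plan stops exactly where you say it does: nothing forces the $k$-specializations of a subobject of $E\otimes_kK$ in $\mathcal{C}_{X_K}$ to be subobjects in $\mathcal{C}_X$. The replacement argument does not close this, for two reasons.
First, the image $H\subseteq G_K$ of $\pi(\mathcal{C}_{X_K},x_K)$ depends on $\mathcal{C}_{X_K}$. That is an arbitrary Tannakian category containing the $E\otimes_kK$, and nothing makes it stable under $\sigma^*$ for $\sigma\in\mathrm{Aut}(K/k)$. So $H$ need not descend.
Second, even if $H=(H_0)_K$, nothing ``forces $H_0=G$''. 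The only constraint you have is full faithfulness of $\eta_K^{P_K}$ (Theorem~\ref{fieldgeneral}(1)), i.e.\ $V^{H}=V^{G_K}$ for every $V$. This says only that $H$ is an epimorphic subgroup of $G_K$, and proper epimorphic subgroups exist: a Borel $B\subset\mathrm{SL}_2$ is one, because $\mathrm{SL}_2/B$ is proper.

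In fact (1) is false as stated, for every algebraically closed $K\supseteq k$ (including $K=k$, where your descent step is vacuous). Here is a counterexample.
\begin{itemize}
\item Let $X$ be a smooth projective curve of genus $g\ge 2$ over $k$ of characteristic $0$, and $L\in\mathrm{Pic}^0(X)$ non-torsion.
\item Take a non-split extension $0\to L\to E\to L^{-1}\to 0$; it exists since $h^1(L^2)=g-1>0$. Then $E$ is semistable of degree $0$, hence numerically flat.
\item The category $\langle E\rangle_{\mathcal{C}^{NF}(X)}$ has Tannaka group the upper-triangular $B\subset\mathrm{SL}_2$. Indeed, its group stabilizes $L|_x$, has trivial determinant, surjects onto $\mathbb{G}_m$ because $L$ is non-torsion, and is not a torus because $E$ is non-split.
\item Since $\mathrm{SL}_2/B$ is proper, $\Rep_k^f(\mathrm{SL}_2)\to\Rep_k^f(B)\to\mathcal{C}^{NF}(X)$ is a fully faithful exact tensor functor. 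Its essential image $\mathcal{C}_X$ is a Tannakian category of vector bundles with group $\mathrm{SL}_2$, and it contains $E$.
\item Put $\mathcal{C}_{X_K}=\mathcal{C}^{NF}(X_K)$; the hypothesis of the theorem holds.
\end{itemize}
Now $L_K\hookrightarrow E_K$ is a subobject in $\mathcal{C}_{X_K}$. But every rank-one object in the image of $\eta_K^{P_K}$ is $\cong\mathcal{O}_{X_K}$, since $\mathrm{SL}_2$ has no nontrivial characters, and $L_K\not\cong\mathcal{O}_{X_K}$. So the image is not closed under subobjects, and by Theorem~\ref{Thm1}(1) the map is not faithfully flat.

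The paper proves (1) by a different route: it takes a Jordan--H\"older filtration of $\wedge^rE$ and locates $\det\mathcal{E}$ among its factors. That route rests on the claim that $(E_{i+1}/E_i)\otimes_kK$ is irreducible in $\mathcal{C}_{X_K}$ because its endomorphism ring is $K$. But $\End=K$ only gives irreducibility inside the image of $\eta_K^{P_K}$. In the example above, $\End(E_K)=K$ and yet $E_K$ is reducible in $\mathcal{C}_{X_K}$.

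So (1) needs extra input relating $\mathcal{C}_X$ and $\mathcal{C}_{X_K}$. For instance, when both are $\mathcal{C}^{NF}$, a numerically flat sub-line-bundle of $E_K$ specializes to numerically flat sub-line-bundles of $E$. In that case your density-of-$k$-points argument does go through.

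Your (2) is correct and is the paper's argument. In (3), the implication (b)$\Rightarrow$(a) should not appeal to (1); get faithful flatness from (b) directly:
\begin{itemize}
\item Let $\mathcal{E}\hookrightarrow E\otimes_kK$ in $\mathcal{C}_{X_K}$ have rank $r$. Then $\det\mathcal{E}$ is a rank-one object, hence irreducible.
\item By (b), $\det\mathcal{E}\cong M\otimes_kK$ for some $M\in\mathcal{C}_X$.
\item Hence $\mathcal{E}^\vee\cong\wedge^{r-1}\mathcal{E}\otimes(\det\mathcal{E})^\vee\hookrightarrow(\wedge^{r-1}E\otimes M^\vee)\otimes_kK$.
\end{itemize}
This is condition (3)(e) of Theorem~\ref{fieldgeneral}, so the map is faithfully flat. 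With that, your upgrade from ``subquotient of some $F\otimes_kK$'' to ``subobject'' in the Jordan--H\"older induction goes through.
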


\begin{proof}
    Note that for a Tannakian category $\mathcal{C}$ and its corresponding Tannaka group scheme $G$ over an algebraically closed field $k$, $W\in\Rep_k^f(G)$ and $E$ its corresponding object in $\mathcal{C}$, $E\in\mathcal{C}$ is irreducible iff $\End_{\mathcal{C}}(E)\cong \End_{G}(W)\cong k$ by standard results in representation theory.
    
    (1) Let $E\in \mathcal{C}_{X}$, $\mathcal{E}\in\mathcal{C}_{X_K}$ of rank $r$ and $\mathcal{E}\hookrightarrow E\otimes_k K\in\mathcal{C}_{X_K}$. Note that $$\wedge ^{r-1}\mathcal{E}\hookrightarrow \wedge ^{r-1}E\otimes_k K, \det \mathcal{E}\cong \wedge ^{r}\mathcal{E}\hookrightarrow \wedge ^{r}E\otimes_k K.$$ Since $\det \mathcal{E}$ is of rank $1$, so irreducible in $\mathcal{C}_{X_K}$. Consider the Jordan H\"older filtration of $\wedge^{r}E$ in $\mathcal{C}_X$:
    $$0\hookrightarrow E_1\hookrightarrow \cdots\hookrightarrow E_n=\wedge^{r}E,$$
    such that $E_{i+1}/E_i$ is irreducible in $\mathcal{C}_X$ for any $i$. Tensoring with $K$, we obtain a filtration of $\wedge^{r}E\otimes_k K$:
    $$0\hookrightarrow E_1\otimes_k K\hookrightarrow \cdots\hookrightarrow E_n\otimes_k K=\wedge^{r}E\otimes_k K,$$
    and $$(E_{i+1}\otimes_k K)/(E_i\otimes_k K)\cong (E_{i+1}/E_i)\otimes_k K$$ for any $i$. Since $k$ is algebraically closed, we have $$\End_{\mathcal{O}_X}(E_{i+1}/E_i)\cong k, \End_{\mathcal{O}_{X_K}}((E_{i+1}/E_i)\otimes_k K)\cong K,$$ which implies that $(E_{i+1}/E_i)\otimes_k K$ is irreducible in $\mathcal{C}_{X_K}$ because $K$ is algebraically closed. Since $\det \mathcal{E}\hookrightarrow \wedge^{r}E\otimes_k K$ is irreducible, there exists $(E_{i_0+1}/E_{i_0})\otimes_k K\cong \det \mathcal{E}$. Then 
    $$\mathcal{E}^\vee\cong \wedge ^{r-1}\mathcal{E}\otimes_{\mathcal{O}_{X_K}} (\det \mathcal{E})^\vee\hookrightarrow ((\wedge ^{r-1}E)\otimes_{\mathcal{O}_X}(E_{i_0+1}/E_{i_0})^{\vee})\otimes_k K.$$
    Hence the natural homomorphism $\pi(\mathcal{C}_{X_K},x_K)\rightarrow \pi(\mathcal{C}_X,x)_K$ is faithfully flat according to [Theorem~\ref{fieldgeneral}, (3)].
    
    (2) Suppose the natural homomorphism $\pi(\mathcal{C}_{X_K},x_K)\rightarrow \pi(\mathcal{C}_X,x)_K$ is an isomorphism. Let $\mathcal{E}\in\mathcal{C}_{X_K}$ be an irreducible object, by [Theorem~\ref{fieldgeneral}, (4)] there exists $E\in\mathcal{C}_X$ such that $\mathcal{E}\hookrightarrow E\otimes_k K$. Consider the Jordan H\"older filtration of $E$ in $\mathcal{C}_X$:
    $$0\hookrightarrow E_1\hookrightarrow \cdots \hookrightarrow E_n=E,$$
    and denote $E^i:=E_{i+1}/E_i$ for $0\leq i\leq n-1$. Tensoring with $K$, we obtain a filtration of $E\otimes_k K$:
    $$0\hookrightarrow E_1\otimes_k K\hookrightarrow \cdots \hookrightarrow E_n\otimes_k K=E\otimes_k K.$$
    Since $k$ is algebraically closed and $E^i$ is irreducible, we have $\Hom_{\mathcal{O}_X}(E^i,E^i)\cong k$. Note that 
    $$\Hom_{\mathcal{O}_{X_K}}(E^i\otimes_k K,E^i\otimes_k K)\cong \Hom_{\mathcal{O}_X}(E^i,E^i)\otimes_k K\cong K.$$
    So $E^i\otimes_k K$ is irreducible in $\mathcal{C}_{X_K}$ since $K$ is algebraically closed. Since $\mathcal{E}\hookrightarrow E\otimes_k K$ is irreducible in $\mathcal{C}_{X_K}$, it follows that $\mathcal{E}$ is a factor in the Jordan H\"older filtration of $E\otimes_k K$ in $\mathcal{C}_{X_K}$, i.e. there exists $E^i$ such that $$\mathcal{E}\cong E^i\otimes_k K.$$

    (3) $(a)\Rightarrow (b)$ It follows by (2).
    
    $(b)\Rightarrow (a)$ For any $\mathcal{E}\in \mathcal{C}_{X_K}$, consider the Jordan H\"older filtration of $\mathcal{E}$ in $\mathcal{C}_{X_K}$:
    $$0\hookrightarrow \mathcal{E}_1\hookrightarrow \cdots\hookrightarrow \mathcal{E}_n=\mathcal{E},$$
    where each $\mathcal{E}^i=\mathcal{E}_{i+1}/\mathcal{E}_i$ is irreducible in $\mathcal{C}_{X_K}$. Then we have the following exact sequence
    $$0\rightarrow \mathcal{E}_1\rightarrow \mathcal{E}_2\rightarrow \mathcal{E}_2/\mathcal{E}_1\rightarrow 0.$$
    By assumption, there exists $E_1,E''_2\in\mathcal{C}_X$ such that $\mathcal{E}_1\cong E_1\otimes_k K$ and $\mathcal{E}_2/\mathcal{E}_1\cong E''_2\otimes_k K$. Applying Proposition~\ref{subquotient}, $\mathcal{E}_2$ is a subquotient of $E'_2\otimes_k K$ for some $E'_2\in\mathcal{C}_X$. Then by (1) and [Theorem~\ref{fieldgeneral}, (3)], we have $\mathcal{E}_2$ is a subobject of $E_2\otimes_k K$ for some $E_2\in\mathcal{C}_X$. Then we consider the following exact sequence
    $$0\rightarrow \mathcal{E}_{2}\rightarrow \mathcal{E}_3\rightarrow \mathcal{E}_3/\mathcal{E}_{2}\rightarrow 0.$$
    By assumption, there exist $E''_3\in\mathcal{C}_{X}$ such that $\mathcal{E}_3/\mathcal{E}_{2}\cong E''_3\otimes_k K$. Consider the following pushout diagram
    \[
        \begin{tikzcd}
            0\arrow[r]&\mathcal{E}_{2}\arrow[r]\arrow[d,hook]&\mathcal{E}_3\arrow[r]\arrow[d,hook]&\mathcal{E}_3/\mathcal{E}_{2}\arrow[r]\arrow[d,"\cong"]&0\\
            0\arrow[r]&E_2\otimes_k K\arrow[r]&\mathcal{E}'_3\arrow[ul, phantom, "\lrcorner", very near start]\arrow[r]&E''_3\otimes_k K\arrow[r]&0
        \end{tikzcd}
    \]
    Applying Proposition~\ref{subquotient}, $\mathcal{E}'_3$ is a subquotient of $E'_3\otimes_k K$ for some $E'_3\in\mathcal{C}_X$. Then by (1) and [Theorem~\ref{fieldgeneral}, (3)], $\mathcal{E}_3$ is a subobject of $E_3\otimes_k K$ for some $E_3\in\mathcal{C}_X$. Repeating this method, for any $i$, there exists $E_i\in\mathcal{C}_X$ such that $\mathcal{E}_i\hookrightarrow E_i\otimes_k K$. In particular, $\mathcal{E}\hookrightarrow E\otimes_k K$ for some $E\in\mathcal{C}_X$. According to [Theorem~\ref{fieldgeneral}, (4)], the natural homomorphism $\pi(\mathcal{C}_{X_K},x_K)\rightarrow \pi(\mathcal{C}_X,x)_K$ is an isomorphism.
\end{proof}

\begin{Proposition}\label{algclosednotiso}
    Let $K/k$ be an extension of algebraically closed fields, $X$ a connected scheme proper over $k$, $x_K\in X_K(K)$ lying over $x\in X(k)$, $\mathcal{C}'_X$, $\mathcal{C}'_{X_K}$ the Tannakian categories over $X$ and $X_K$ respectively, $\mathcal{C}_X\subseteq \mathcal{C}'_X$ and $\mathcal{C}_{X_K}\subseteq \mathcal{C}'_{X_K}$ the Tannakian subcategories. Suppose the following conditions:
    \begin{enumerate}
        \item For any $E'\in\mathcal{C}'_X$, $E'\otimes_k K\in\mathcal{C}'_{X_K}$.
        \item For any $E\in\Vect(X)$, $E\in\mathcal{C}_X$ iff $E\otimes_k K\in\mathcal{C}_{X_K}$.
        \item The natural homomorphism $\pi(\mathcal{C}_{X_K},x_K)\rightarrow \pi(\mathcal{C}_X,x)_K$ is not an isomorphism.
    \end{enumerate}
    Then the natural homomorphism $\pi(\mathcal{C}'_{X_K},x_K)\rightarrow\pi(\mathcal{C}'_X,x)_K$ is not an isomorphism.
\end{Proposition}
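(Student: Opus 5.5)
We argue by contradiction, in the same spirit as Proposition~\ref{iffdescent}. Suppose that $\pi(\mathcal{C}'_{X_K},x_K)\rightarrow\pi(\mathcal{C}'_X,x)_K$ is an isomorphism. By [Theorem~\ref{fieldgeneral}, (4)], every $\mathcal{F}\in\mathcal{C}'_{X_K}$ then admits $F_1,F_2\in\mathcal{C}'_X$ with
$$F_1\otimes_k K\twoheadrightarrow\mathcal{F}\hookrightarrow F_2\otimes_k K.$$
The goal is to deduce condition (b) of [Theorem~\ref{fieldgeneral}, (4)] for the pair $(\mathcal{C}_X,\mathcal{C}_{X_K})$. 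That would make $\pi(\mathcal{C}_{X_K},x_K)\rightarrow\pi(\mathcal{C}_X,x)_K$ an isomorphism, contradicting hypothesis (3). Note that hypothesis (2) already guarantees $E\otimes_k K\in\mathcal{C}_{X_K}$ for every $E\in\mathcal{C}_X$, so Theorem~\ref{fieldgeneral} applies to the small pair. Hypothesis (1) makes it apply to the large pair.

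\textbf{Step 1.} Fix $\mathcal{E}\in\mathcal{C}_{X_K}\subseteq\mathcal{C}'_{X_K}$. By the isomorphism for the large categories there is some $F\in\mathcal{C}'_X$ with $\mathcal{E}\hookrightarrow F\otimes_k K$. We want to replace $F$ by an object of $\mathcal{C}_X$. The natural candidate is the image of a morphism into $F\otimes_k K$ that is already defined over $k$.

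\textbf{Step 2.} Use the surjection $F_1\otimes_k K\twoheadrightarrow\mathcal{E}\hookrightarrow F_2\otimes_k K$, whose composite is a morphism $f$ in $\Hom(F_1,F_2)\otimes_k K$. Write $f=\sum_j f_j\otimes\lambda_j$ with $f_j\in\Hom_{\mathcal{O}_X}(F_1,F_2)$ and the $\lambda_j$ linearly independent over $k$. Each image $\Ima(f_j)\subseteq F_2$ lies in $\mathcal{C}'_X$. We then need a single object of $\mathcal{C}'_X$ whose base change contains $\mathcal{E}=\Ima(f)$ and which lies in $\mathcal{C}_X$.

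\textbf{Step 3.} The key device is to characterize $\mathcal{C}_X$ inside $\mathcal{C}'_X$ through base change via hypothesis (2). The cleanest route is to consider the subobject
$$G:=\sum_j\Ima(f_j)\subseteq F_2.$$
Then $\mathcal{E}\subseteq G\otimes_k K$, since $\Ima(f)\subseteq\sum_j\Ima(f_j)\otimes K$. The claim to prove is that $G\in\mathcal{C}_X$, or at least that $\mathcal{E}$ embeds into the base change of some object of $\mathcal{C}_X$. To prove it, pass to $\mathcal{C}'_X\simeq\Rep_k^f(\pi(\mathcal{C}'_X,x))$. There $\mathcal{C}_X$ corresponds to the representations of a quotient $H$ of $G':=\pi(\mathcal{C}'_X,x)$. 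On the $K$-side, $\mathcal{C}_{X_K}$ corresponds to representations of a quotient $H_K'$ of $G'_K$. Hypothesis (2) says that a $G'$-representation factors through $H$ if and only if its base change factors through $H'_K$. Hence the kernels satisfy $\ker(G'_K\to H'_K)$ and $\ker(G'\to H)_K$ acting trivially on the same base-changed objects. Since base-changed representations $W\otimes_k K$ detect subgroups by Lemma~\ref{basechangeembedding}, this should force $H'_K=H_K$, that is, $\pi(\mathcal{C}_{X_K},x_K)\cong\pi(\mathcal{C}_X,x)_K$. This is the desired contradiction.

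\textbf{Main obstacle.} The hardest step is precisely this identification of kernels. The condition that a representation is ``trivial on $N$'' is linear in matrix coefficients. We would try to show that the coordinate ring of $H'_K$ inside $K[G'_K]=k[G']\otimes_k K$ is spanned by coefficients of subobjects of base-changed objects, and that those lying in $K[H'_K]$ are $k$-rational combinations. To get $k$-rationality we would use the algebraic closedness of $k$ and $K$, through $\End=k$ for irreducibles, exactly as in the proof of Theorem~\ref{fieldalgclosed}. Combined with Lemma~\ref{NoetherDeuring}, this should give $K[H'_K]=k[H]\otimes_k K$.
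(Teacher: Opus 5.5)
Your Step~3 is where the argument breaks, and the gap cannot be closed. Hypothesis (2) only says which \emph{base-changed} bundles $E\otimes_kK$ lie in $\mathcal{C}_{X_K}$, and that does not determine the quotient $H'$ of $G'_K$.

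Write $N=\ker(G'\to H)$ and $N'=\ker(G'_K\to H')$. Then (2) says precisely that $N'\subseteq(\ker\rho_W)_K$ iff $N\subseteq\ker\rho_W$ for every $W\in\Rep_k^f(G')$, where $\rho_W\colon G'\to\mathrm{GL}(W)$. So $N_K$ is only the smallest normal subgroup defined over $k$ that contains $N'$; nothing forces $N'$ itself to be defined over $k$. Lemma~\ref{basechangeembedding} does not help, because the subrepresentations $V\subseteq W\otimes_kK$ that detect $N'$ are not themselves base-changed. Nor does ``$\End=k$ for irreducibles'' give rationality, since the non-rationality can sit entirely in extension classes between rational irreducible objects.

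In fact the statement as written is false. Take the following data.
\begin{enumerate}
\item $k=\overline{\mathbb{Q}}\subset K=\mathbb{C}$, and $X$ a smooth projective curve of genus $g\geq 2$.
\item $\mathcal{C}'_X=\mathcal{C}^{uni}(X)$ and $\mathcal{C}'_{X_K}=\mathcal{C}^{uni}(X_K)$. The big map is then an isomorphism by Proposition~\ref{generaluni}, and (1) is clear.
\item $\mathcal{C}_X$ is the category of trivial bundles.
\item $\mathcal{C}_{X_K}=\langle\mathcal{E}_c\rangle_{\mathcal{C}^{uni}(X_K)}$, where $\mathcal{E}_c$ is the extension of $\mathcal{O}_{X_K}$ by $\mathcal{O}_{X_K}$ with class $c=d_1\otimes1+d_2\otimes\lambda$. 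Here $d_1,d_2\in H^1(X,\mathcal{O}_X)$ are $k$-linearly independent and $\lambda\in K\setminus k$.
\end{enumerate}
Then $\pi(\mathcal{C}_{X_K},x_K)\cong\mathbb{G}_{a,K}$ while $\pi(\mathcal{C}_X,x)$ is trivial, so (3) holds.

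It remains to check (2). Since $\langle\mathcal{E}_c\rangle\simeq\Rep_K^f(\mathbb{G}_{a,K})$ and we are in characteristic~$0$, the extensions of $\mathcal{O}_{X_K}$ by $\mathcal{O}_{X_K}$ lying in $\langle\mathcal{E}_c\rangle$ are exactly those with class in $K\cdot c$. Suppose $E\otimes_kK\in\mathcal{C}_{X_K}$. Then $E$ is unipotent by Proposition~\ref{generaluni}(1). If $E$ were nontrivial, it would have a subquotient which is a non-split extension of $\mathcal{O}_X$ by $\mathcal{O}_X$, with class $0\neq\psi\in H^1(X,\mathcal{O}_X)$. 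Its base change lies in $\langle\mathcal{E}_c\rangle$, forcing $\psi\otimes1\in K\cdot c$, hence $\lambda\in k$, a contradiction. So (2) holds as well.

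This example also shows the goal of your Step~1 is unattainable: $\mathcal{E}_c$ embeds in no $E\otimes_kK$ with $E$ trivial. Likewise, your $G=\sum_j\Ima(f_j)$ from Step~2 is a nontrivial unipotent bundle, not an object of $\mathcal{C}_X$.

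The paper argues with irreducible objects instead. If the big map were an isomorphism, every irreducible object of $\mathcal{C}'_{X_K}$ would be isomorphic to some $E'\otimes_kK$ with $E'\in\mathcal{C}'_X$; this direction only needs Theorem~\ref{fieldalgclosed}(2). Hypothesis (3) is converted, via Theorem~\ref{fieldalgclosed}(3), into an irreducible $\mathcal{E}_0\in\mathcal{C}_{X_K}$ not isomorphic to any $E\otimes_kK$ with $E\in\mathcal{C}_X$. This $\mathcal{E}_0$ stays irreducible in $\mathcal{C}'_{X_K}$, so $\mathcal{E}_0\cong E_0\otimes_kK$ with $E_0\in\mathcal{C}'_X$. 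Then (2) puts $E_0$ in $\mathcal{C}_X$, a contradiction.

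The conversion step uses Theorem~\ref{fieldalgclosed}(3), which requires $\mathcal{C}_X$ and $\mathcal{C}_{X_K}$ to be saturated. That hypothesis is missing from the statement, and the example above, where neither category is saturated, shows it cannot be dropped. With saturation added, or with (3) replaced by the existence of a non-descending irreducible $\mathcal{E}_0$, this short argument is the correct proof. Your kernel-identification route uses nothing beyond (1)--(3), so no refinement of it can succeed.
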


\begin{proof}
    Suppose the natural homomorphism $\pi(\mathcal{C}'_{X_K},x_K)\rightarrow\pi(\mathcal{C}'_X,x)_K$ is an isomorphism, then for any irreducible object $\mathcal{E}'\in\mathcal{C}'_{X_K}$, there exists $E'\in\mathcal{C}'_{X}$ such that $\mathcal{E}'\cong E'\otimes_k K$ by [Theorem~\ref{fieldalgclosed}, (3)]. Since $$\pi(\mathcal{C}_{X_K},x_K)\not \cong \pi(\mathcal{C}_X,x)_K$$ and by [Theorem~\ref{fieldalgclosed}, (3)], there exists irreducible object $\mathcal{E}_0\in \mathcal{C}_{X_K}\subseteq \mathcal{C}'_{X_K}$ such that $\mathcal{E}_0\not\cong E\otimes_k K$ for any $E\in\mathcal{C}_{X}$. Note that $\mathcal{E}_0$ being irreducible in $\mathcal{C}_{X_K}$ is equivalent to irreducible in $\mathcal{C}'_{X_K}$. If $\mathcal{E}_0\cong E_0\otimes_k K$ for some $E_0\in\mathcal{C}'_X$, then $E_0\in\mathcal{C}_X$ by condition (2), which yields a contradiction.
\end{proof}

\section{Base change of fundamental group schemes}

\subsection{Base change of S-fundamental group scheme}

\begin{Definition}
Let $k$ be a field, $X$ a proper scheme over $k$, $E\in\Vect(X)$. Let $\mathbb{P}(E) := \operatorname{Proj} \operatorname{Sym} E$ with natural bundle map $\pi:\mathbb{P}(E)\to X$. If $\mathcal{O}_{\mathbb{P}(E)}(1))$ is nef over $\mathbb{P}(E)$, then we say that $E$ is \textit{nef} on $X$.
Moreover, if both $E$ and $E^{\vee}$ are nef, then we say that $E$ is a \textit{numerically flat vector bundle} on $X$.
\end{Definition}

\begin{Definition}
Let $k$ be a field, $X$ a proper scheme over $k$, $E\in\Vect(X)$. If for any $k$-morphisms $f:C\to X$ from smooth connected projective curves $C$ over $k$ into $X$, the pullback $f^*E$ is a semistable vector bundle of degree $0$, then we say that $E$ is a \textit{Nori semistable vector bundle} on $X$.
\end{Definition}

\begin{Remark}
    Let $k$ be a field, $X$ a proper scheme over $k$, $E\in\Vect(X)$. According to \cite[Theorem 2.13]{FuLa22}, $E$ is numerically flat iff $E$ is Nori semistable.
\end{Remark}

\begin{Definition}
    Let $k$ be a field, $X$ a geometrically reduced connected scheme proper over $k$, $x\in X(k)$. The full subcategory $\mathcal{C}^{NF}(X)$ of $\Vect(X)$ whose objects consist of numerically flat bundles on $X$ is a Tannakian category with fibre functor $|_x$. The Tannaka group scheme $\pi(\mathcal{C}^{NF}(X),x)$ is called the \textit{S-fundamental group scheme}, denoted by $\pi^S(X,x)$.
\end{Definition}

\begin{Lemma}[{\cite[Lemma~1.4.3]{CLM22}}]\label{pullbacknef}
    Let $X$ be a proper algebraic space over $k$, $f:X\rightarrow Y$ a proper morphism of algebraic spaces, $E\in\Vect(X)$. Then
    \begin{enumerate}
        \item If $E$ is nef, then $f^*E$ is nef.
        \item If $f$ is surjective, then $E$ is nef iff $f^*E$ is nef.
    \end{enumerate} 
\end{Lemma}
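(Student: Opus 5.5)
The statement to prove is Lemma~\ref{pullbacknef}: nefness is preserved under pullback along a proper morphism, and it descends along a surjective proper morphism. The plan is to pass to the tautological line bundle and reduce everything to nefness of line bundles, tested on curves. Write $\pi:\mathbb{P}(E)\to Y$ for the projective bundle of $E$ (here $E$ is a vector bundle on the target $Y$, so that $f^*E$ lives on $X$). Then $\mathbb{P}(f^*E)\cong \mathbb{P}(E)\times_Y X$, and we let
$$F:\mathbb{P}(f^*E)\to\mathbb{P}(E)$$
be the induced map, which satisfies $F^*\mathcal{O}_{\mathbb{P}(E)}(1)\cong\mathcal{O}_{\mathbb{P}(f^*E)}(1)$. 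Because $F$ is a base change of $f$, it is proper. If $f$ is surjective, $F$ is surjective as well, since surjectivity is stable under base change.

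Both parts therefore reduce to the same statement for a line bundle $L$ on a proper algebraic space $Z$ and a proper morphism $F:Z'\to Z$:
\begin{enumerate}
\item if $L$ is nef, then $F^*L$ is nef;
\item if moreover $F$ is surjective and $F^*L$ is nef, then $L$ is nef.
\end{enumerate}
We use nefness in the form $\deg(L|_C)\ge 0$ for every integral proper curve $C\subseteq Z$.

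For (1), take an integral curve $C'\subseteq Z'$. If $F(C')$ is a point, then $F^*L|_{C'}$ is trivial and has degree $0$. Otherwise $C=F(C')$ is an integral curve and $C'\to C$ is finite, so the projection formula gives
$$\deg(F^*L|_{C'})=\deg(C'/C)\cdot\deg(L|_C)\ge 0.$$

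For (2), given an integral curve $C\subseteq Z$, we need an integral curve $C'\subseteq Z'$ that maps finitely and surjectively onto $C$. We take the preimage $F^{-1}(C)$, which surjects onto $C$. We choose an irreducible component dominating $C$ and cut it down by general hypersurface sections, or by a Chow-lemma-type reduction to the projective case, until we reach a curve still dominating $C$. The same degree formula then gives
$$\deg(L|_C)=\deg(F^*L|_{C'})/\deg(C'/C)\ge 0.$$

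The main obstacle is producing this dominating curve in the setting of algebraic spaces rather than projective schemes. First we would reduce to schemes, by replacing the spaces with a proper surjective scheme cover via Chow's lemma. Part (1) applied to that cover transfers the problem to schemes. On a scheme, the dominating curve can be found by taking a closed point over the generic point of $C$ in the fibre and using the standard lemma that a surjective proper morphism onto a curve admits a multisection curve. The remaining steps are formal.
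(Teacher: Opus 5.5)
Your proof is correct. The paper gives no argument of its own beyond citing \cite[Lemma~1.4.3]{CLM22}, and what you wrote is the standard proof of this fact: pass to $\mathcal{O}_{\mathbb{P}(E)}(1)$ via $\mathbb{P}(f^*E)\cong\mathbb{P}(E)\times_Y X$, use $\deg(F^*L|_{C'})=\deg(C'/C)\cdot\deg(L|_C)$ for pullback, and for descent take the closure of a closed point of the generic fibre of $F^{-1}(C)\to C$, which is a curve finite and surjective over $C$.

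You were right to put $E$ on the target of $f$, since the printed statement has the variance garbled. The Chow's lemma detour is optional: that closure is a one-dimensional separated algebraic space of finite type over $k$, hence already a scheme.
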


\begin{Lemma}[{\cite[Lemma~1.4.4]{CLM22}}]\label{basefieldnef}
    Let $X$ be a proper algebraic space over $k$, $E\in\Vect(X)$. Then $E$ is nef iff for every field extension $k\subseteq k'$, the pullback $E\otimes_k k'$ on $X_{k'}$ is nef.
\end{Lemma}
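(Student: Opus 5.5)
Since $\mathbb{P}(E\otimes_k k')\cong \mathbb{P}(E)\times_{\Spec k}\Spec k'$ and $\mathcal{O}_{\mathbb{P}(E\otimes_k k')}(1)$ is the pullback of $\mathcal{O}_{\mathbb{P}(E)}(1)$, the plan is to reduce to the corresponding statement for a line bundle. Set $Y:=\mathbb{P}(E)$ and $L:=\mathcal{O}_{Y}(1)$; then $Y$ is proper over $k$. The claim to prove is that $L$ is nef on $Y$ iff $L_{k'}$ is nef on $Y_{k'}$ for every extension $k'/k$. The implication ``$\Leftarrow$'' is trivial: take $k'=k$. So the work is in ``$\Rightarrow$'', i.e. showing that nefness is preserved under an arbitrary ground field extension.

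\emph{Finite extensions.} First treat $k'/k$ finite. Then $p:Y_{k'}\to Y$ is finite, hence proper, and surjective. So Lemma~\ref{pullbacknef} applied to $p$ gives that $p^*L=L_{k'}$ is nef. This can also be seen directly from the projection formula. For an integral curve $C\subseteq Y_{k'}$ we have $\deg_{k}(L_{k'}|_C)=\deg_k\big(L|_{p_*[C]}\big)$. The cycle $p_*[C]$ is a nonnegative multiple of an integral curve of $Y$, or zero, so this degree is $\ge 0$. Moreover $\deg_{k'}$ and $\deg_k$ differ by the positive factor $[k':k]$, so $\deg_{k'}(L_{k'}|_C)\ge 0$ as well.

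\emph{Arbitrary extensions.} Let $C\subseteq Y_{k'}$ be an integral closed curve; we must show $\deg_{k'}(L_{k'}|_C)\ge0$. The steps are as follows.
\begin{enumerate}
\item \emph{Spread out.} $C$ is cut out by finitely many equations, so it is defined over a finitely generated $k$-subalgebra $A\subseteq k'$. That is, there is a closed subscheme $\mathcal{C}\subseteq Y_A$ with $\mathcal{C}\times_{A}k'=C$. After replacing $A$ by a localization $A_f$, generic flatness lets us assume $\mathcal{C}\to \Spec A$ is flat and proper, with all fibres of dimension $1$.
\item \emph{Constancy of degree.} For $n\in\mathbb{Z}$, the Euler characteristic $\chi(\mathcal{C}_s, L^{\otimes n}|_{\mathcal{C}_s})$ is locally constant in $s\in\Spec A$, by flatness and properness. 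Shrinking $\Spec A$ to be connected, the leading coefficient of this polynomial in $n$, which is the degree of $L$ on the fibre, is the same at the generic point and at any point $s$.
\item \emph{Specialize.} Choose a closed point $s\in\Spec A$. Its residue field $\kappa(s)$ is finite over $k$ by the Nullstellensatz. The fibre $\mathcal{C}_s\subseteq Y_{\kappa(s)}$ is a $1$-dimensional proper scheme, and its $1$-cycle has nonnegative coefficients. By the finite case, $L_{\kappa(s)}$ is nef, so $\deg L|_{\mathcal{C}_s}=\sum m_i\deg L|_{C_i}\ge 0$. Hence $\deg_{k'}(L_{k'}|_C)\ge0$.
\end{enumerate}

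The main obstacle is step 2. One must make sure that ``degree'' is measured compatibly over the generic fibre (over $\mathrm{Frac}(A)$, then base changed to $k'$, which does not change the Hilbert polynomial) and over the special fibre (over $\kappa(s)$). One must also check that the degree of $L$ on a possibly non-reduced $1$-dimensional fibre equals the multiplicity-weighted sum of the degrees on its components. I would handle this by working throughout with Hilbert polynomials relative to the respective base fields, and by using the standard additivity of $\chi$ along a filtration of $\mathcal{O}_{\mathcal{C}_s}$ whose graded pieces are supported on the components of $\mathcal{C}_s$ or on points.
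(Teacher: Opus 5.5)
The paper gives no proof of this lemma; it is quoted from \cite[Lemma~1.4.4]{CLM22}, so there is no in-paper argument to compare against. Your proposal is a correct, self-contained proof along standard lines:
\begin{itemize}
\item reduce to $L=\mathcal{O}_{\mathbb{P}(E)}(1)$ via $\mathbb{P}(E\otimes_k k')\cong\mathbb{P}(E)_{k'}$;
\item treat finite $k'/k$ with the finite surjective projection, via Lemma~\ref{pullbacknef} or the projection formula, keeping track of the factor $[k':k]$ between $\deg_k$ and $\deg_{k'}$;
\item treat arbitrary $k'$ by spreading an integral curve $C\subseteq Y_{k'}$ out over a finitely generated $A\subseteq k'$, then specializing to a closed point with finite residue field, using local constancy of $\chi(\mathcal{C}_s,L^{\otimes n})$ in a flat proper family.
\end{itemize}
Another common route works only for projective $X$: characterize nefness of $L$ by ampleness of $L^{\otimes m}\otimes H$ for an ample $H$ and all $m\ge 1$, and use that ampleness survives field extension. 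Your argument needs no ample class, so it covers the proper setting in which the statement is made.

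A few points to tighten.
\begin{itemize}
\item \textbf{Fibre dimension.} Generic flatness alone does not give that the fibres of $\mathcal{C}\to\Spec A$ have dimension $1$. What you actually need is $\dim\mathcal{C}_s\le 1$ at your chosen closed point $s$; otherwise the coefficient of $n$ in $\chi(\mathcal{C}_s,L^{\otimes n})$ need not equal $\sum m_i\deg(L|_{C_i})$. You can get this from Chevalley constructibility of fibre dimension, after a further shrinking. Alternatively, it follows directly from flatness:
\begin{itemize}
\item going-down forces every irreducible component of $\mathcal{C}$ to dominate $\Spec A$, so each component has dimension at most $\dim A+1$;
\item at closed points $x$ over closed $s$, this gives $\dim\mathcal{O}_{\mathcal{C}_s,x}=\dim\mathcal{O}_{\mathcal{C},x}-\dim\mathcal{O}_{A,s}\le (\dim A+1)-\dim A=1$.
\end{itemize}
\item \textbf{Wording.} Say ``coefficient of $n$'' rather than ``leading coefficient'', since the degree may be $0$.
\item \textbf{Algebraic spaces.} For the full algebraic-space generality, the spreading-out step uses the usual limit results for the finitely presented closed subspace $C\subseteq Y_{k'}=\varprojlim Y_A$. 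In the paper's applications $X$ is a scheme, so this is immaterial there.
\end{itemize}
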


\begin{Proposition}\label{generalS}
    Let $k$ be a field, $K/k$ a field extension, $X$ a geometrically reduced connected scheme proper over $k$, $x_K\in X_K(K)$ lying over $x\in X(k)$, $E\in\Vect(X)$. Then 
    \begin{enumerate}
        \item $E\in\mathcal{C}^{NF}(X)$ iff $E\otimes_k K\in\mathcal{C}^{NF}(X_K)$.
        \item Let $E,F\in\mathcal{C}^{NF}(X)$, if $E\otimes_k K\cong F\otimes_k K$, then $E\cong F$.
        \item There exists a natural homomorphism $\pi^S(X_K,x_K)\rightarrow \pi^S(X,x)_K$.
    \end{enumerate}
\end{Proposition}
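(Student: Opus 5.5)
For (1), I will work directly from the definition of numerical flatness. By definition, $E\in\mathcal{C}^{NF}(X)$ means that both $E$ and $E^\vee$ are nef. Since $(E\otimes_k K)^\vee\cong E^\vee\otimes_k K$, it suffices to show that $E$ is nef iff $E\otimes_k K$ is nef, and the same for $E^\vee$. The forward direction is Lemma~\ref{pullbacknef}(1) applied to the proper morphism $p:X_K\to X$; it is also part of Lemma~\ref{basefieldnef}. For the converse, $p$ is surjective because it is the base change of the surjection $\Spec K\to\Spec k$, so Lemma~\ref{pullbacknef}(2) gives that $E$ is nef as soon as $p^*E=E\otimes_k K$ is. If one worries that $p$ is not of finite type when $K/k$ is transcendental, I would instead reduce to Lemma~\ref{basefieldnef}. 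For any extension $k'/k$, choose a common extension $L$ of $K$ and $k'$. Nefness of $E\otimes_k K$ passes to $E\otimes_k L$ by Lemma~\ref{basefieldnef} over $K$. Then $E\otimes_k k'$ is nef, since its pullback to $X_L$ is nef and $X_L\to X_{k'}$ is surjective; again this can be handled by the same lemma over $k'$. The only point needing care is that $X_K$ is still geometrically reduced, connected and proper, so that $\mathcal{C}^{NF}(X_K)$ makes sense. This holds because geometric reducedness and properness are stable under base change, and connectedness follows from the existence of the rational point $x$ together with geometric reducedness; I will take this as standard.

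For (2), I will pass through Tannakian duality and invoke Lemma~\ref{NoetherDeuring}. Write $E=\eta_k^P(W_1)$ and $F=\eta_k^P(W_2)$ with $W_i\in\Rep_k^f(\pi^S(X,x))$. By Theorem~\ref{fieldgeneral}(1), $\eta_K^{P_K}$ is fully faithful and $\eta_K^{P_K}(W_i\otimes_k K)\cong\eta_k^P(W_i)\otimes_k K$. An isomorphism $E\otimes_k K\cong F\otimes_k K$ therefore lifts to an isomorphism $W_1\otimes_k K\cong W_2\otimes_k K$ of $\pi^S(X,x)_K$-representations. Lemma~\ref{NoetherDeuring} then gives $W_1\cong W_2$, and hence $E\cong F$. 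An alternative, more elementary route would be to use $\Hom(E,F)\otimes_k K\cong\Hom(E_K,F_K)$ together with Noether--Deuring for modules over a finite-dimensional algebra. The Tannakian route is cleaner given what is already available.

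For (3), part (1) shows that $E\otimes_k K\in\mathcal{C}^{NF}(X_K)$ for every $E\in\mathcal{C}^{NF}(X)$. Theorem~\ref{fieldgeneral}(2) then produces the natural homomorphism $\pi^S(X_K,x_K)\to\pi^S(X,x)_K$.

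The main obstacle I expect is in (1): checking that the cited lemmas apply to the possibly non-finite-type morphism $X_K\to X$. As described above, I plan to handle this by passing through Lemma~\ref{basefieldnef} and common field extensions.
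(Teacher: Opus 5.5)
\textbf{Parts (2) and (3).} These follow the paper's route. The paper simply cites Lemma~\ref{NoetherDeuring} for (2). Your use of Theorem~\ref{fieldgeneral}(1) is exactly the step that citation leaves implicit: full faithfulness of $\eta_K^{P_K}$ together with $\eta_K^{P_K}(W\otimes_k K)\cong\eta_k^P(W)\otimes_k K$ turns an isomorphism of bundles into an isomorphism of $\pi^S(X,x)_K$-representations. There is no circularity, since Theorem~\ref{fieldgeneral} does not depend on this proposition. Your remark that connectedness of $X_K$ follows from the rational point is also correct.

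\textbf{The converse in (1) does not work as written.} Lemma~\ref{pullbacknef} is about proper morphisms of proper algebraic spaces over $k$. When $[K:k]=\infty$, the projection $p\colon X_K\to X$ is not of finite type, hence not proper, so that lemma does not apply. For the forward direction this is harmless, because Lemma~\ref{basefieldnef} covers it.

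Your fallback does not repair the converse. Read Lemma~\ref{basefieldnef} as ``$E$ is nef iff $E\otimes_k k'$ is nef for all $k'$''. Then its right-hand side includes $k'=k$, which is the very statement you are trying to prove. For a general $k'$, you reduce to descending nefness from $X_L$ to $X_{k'}$ along the surjection $X_L\to X_{k'}$, which is again not of finite type. That is the same problem for another pair of fields, so the argument is circular.

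What is actually needed is descent of nefness along a single field extension, and this is how the paper's one-line proof uses Lemma~\ref{basefieldnef}: for a fixed extension $k\subseteq k'$, $E$ is nef iff $E\otimes_k k'$ is nef. Applied with $k'=K$ to $E$ and $E^\vee$, this gives (1) at once, and the detour through common extensions $L$ is unnecessary.

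If you prefer not to rely on that reading, the descent is elementary. Take an integral curve $C\subset\mathbb{P}(E)$. Flat base change preserves Euler characteristics, so
\[
\deg(\mathcal{O}(1)|_C)=\chi(\mathcal{O}(1)|_{C_K})-\chi(\mathcal{O}_{C_K})=\deg(\mathcal{O}(1)|_{C_K}).
\]
The right-hand side is a combination, with positive multiplicities, of the degrees of $\mathcal{O}_{\mathbb{P}(E\otimes_k K)}(1)$ on the one-dimensional irreducible components of $C_K\subset\mathbb{P}(E)_K=\mathbb{P}(E\otimes_k K)$. It is therefore $\geq 0$ whenever $E\otimes_k K$ is nef.
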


\begin{proof}
    (1) It follows immediately by Lemma~\ref{basefieldnef}.

    (2) It follows immediately by Lemma~\ref{NoetherDeuring}.

    (3) It follows by (1) and [Theorem~\ref{fieldgeneral}, (2)].
\end{proof}

\begin{Proposition}\label{separableS}
    Let $k$ be a field, $K/k$ a separable extension, $X$ a geometrically reduced connected scheme proper over $k$, $x_K\in X_K(K)$ lying over $x\in X(k)$, $p:X_K\rightarrow X$ the projection. Then 
    \begin{enumerate}
        \item Let $K/k$ be finite Galois, then for any $\mathcal{E}\in\mathcal{C}^{NF}(X_K)$ and any automorphism $\tilde{g}:X_K\rightarrow X_K$ induced by $g\in\Gal(K/k)$, we have $\tilde{g}^*\mathcal{E}\in\mathcal{C}^{NF}(X_K)$.
        \item Let $K/k$ be finite separable, then for any $\mathcal{E}\in\mathcal{C}^{NF}(X_K)$, we have $p_*\mathcal{E}\in\mathcal{C}^{NF}(X)$.
        \item The natural homomorphism $\pi^S(X_K,x_K)\rightarrow \pi^S(X,x)_K$ is an isomorphism.
        \item Let $k$ be a perfect field, then the natural homomorphism $\pi^S(X_{\bar{k}},x_{\bar{k}})\rightarrow \pi^S(X,x)_{\bar{k}}$ is an isomorphism.
    \end{enumerate}
\end{Proposition}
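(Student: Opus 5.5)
Parts (1) and (2) will be verified directly from the definition of numerical flatness (nefness of $E$ and $E^\vee$), and then fed into Theorem~\ref{fieldfinitegalois}. The separable case is reduced to the finite Galois case by a limit argument.

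For (1), let $g\in\Gal(K/k)$. The induced automorphism $\tilde{g}:X_K\to X_K$ is an isomorphism of schemes, though not $K$-linear. Since $\tilde{g}$ is a proper surjective morphism, Lemma~\ref{pullbacknef} gives that $\tilde g^*\mathcal{E}$ and $\tilde g^*\mathcal{E}^\vee=(\tilde g^*\mathcal{E})^\vee$ are nef whenever $\mathcal{E}$ and $\mathcal{E}^\vee$ are. Alternatively, we can use the Nori semistability criterion: composing a curve $f:C\to X_K$ with $\tilde g^{-1}$ gives a curve over $K$ after twisting the structure map by $g$, and degree and semistability are unaffected by this twist.

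For (2), let $p:X_K\to X$ be finite, flat and surjective. First pass to the Galois closure $L$ of $K/k$ with projection $q:X_L\to X_K$. Then $p_*\mathcal{E}$ is a direct summand of $p_*q_*q^*\mathcal{E}$, because for separable $L/K$ the trace splits $\mathcal{O}_{X_K}\to q_*\mathcal{O}_{X_L}$. Also $q^*\mathcal{E}$ is numerically flat by Proposition~\ref{generalS}(1), and direct summands of numerically flat bundles are numerically flat. So it suffices to treat the Galois case. By Proposition~\ref{generalS}(1), it is enough to show $(p_*\mathcal{E})\otimes_k K=p^*p_*\mathcal{E}$ is numerically flat. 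As in the proof of Theorem~\ref{fieldfinitegalois},
\[p^*p_*\mathcal{E}\cong\textstyle\bigoplus_{g\in\Gal(K/k)}\tilde g^*\mathcal{E},\]
and this is numerically flat by (1), since $\mathcal{C}^{NF}(X_K)$ is closed under finite direct sums.

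For (3), first take $K/k$ finite Galois. Proposition~\ref{generalS}(1) gives condition (3)(a) of Theorem~\ref{fieldfinitegalois}, and part (1) gives condition (3)(b), so the homomorphism is an isomorphism. For $K/k$ finite separable, use Proposition~\ref{pushisofinitesep}(3) together with (2).

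For a general separable extension we need a limit argument, and I expect this to be the main obstacle. By Theorem~\ref{fieldgeneral}(4), it suffices to show that every $\mathcal{E}\in\mathcal{C}^{NF}(X_K)$ embeds into some $E\otimes_kK$ with $E\in\mathcal{C}^{NF}(X)$. The plan is to descend $\mathcal{E}$ to $X_A$ for a finitely generated smooth $k$-algebra $A\subset K$; this is possible since $K/k$ is separable, so $K$ is a filtered union of such $A$. Next, specialize at a closed point whose residue field $k'$ is finite separable over $k$; such points are dense because $A$ is smooth. The idea is to relate $\mathcal{E}$ to the spread-out family and use the finite separable case at $k'$. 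The difficulty is producing an actual embedding over $K$ from data at one specialization.

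A cleaner route is to use condition (e) of Theorem~\ref{fieldgeneral}(3) to get faithful flatness, which is a statement about rank one subobjects, and then prove the closed-immersion part via Lemma~\ref{isofield}-type descent. Alternatively, one can write $K$ as a purely transcendental extension followed by a separable algebraic one. The algebraic part is a filtered colimit of finite separable extensions, and isomorphisms pass to the limit. For the purely transcendental part, numerically flat bundles on $X_{k(t)}$ extend to an open subset of $\mathbb{A}^1_k\times X$, and one needs to show they are constant, i.e. come from $X$. This rigidity is where I expect the real work to lie.

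For (4), $\bar k/k$ is separable because $k$ is perfect, so (4) is a special case of (3).
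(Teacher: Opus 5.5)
Your treatment of (1), (2), the finite Galois and finite separable cases of (3), and (4) matches the paper's argument. You use Lemma~\ref{pullbacknef} for the Galois twists, reduce the finite separable case to the Galois closure, and then apply Theorem~\ref{fieldfinitegalois} and Proposition~\ref{pushisofinitesep}(3). The splitting you need in (2) is simpler than a trace argument, since $q_*q^*\mathcal{E}\cong\mathcal{E}\otimes_K L\cong\mathcal{E}^{\oplus[L:K]}$.

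The genuine gap is the infinite case of (3). You leave it open, and the route you single out cannot be completed, because the ``rigidity'' you need for the purely transcendental part is false. In characteristic $0$ every field extension is separable. Proposition~\ref{containSnobasechange} shows that for an extension $K/k$ of algebraically closed fields of characteristic $0$ and a curve $X$ of genus $g\ge 2$, the map $\pi^S(X_K,x_K)\to\pi^S(X,x)_K$ is \emph{not} an isomorphism. The reason is that stable degree-$0$ bundles on $X_K$ coming from $K$-points of $\mathcal{M}^s_X(2,0)$ that are not $k$-points do not descend to $X$. So no spreading-out or specialization argument over a transcendental $K$ can produce the embeddings $\mathcal{E}\hookrightarrow F\otimes_k K$ you are after. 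Statement (3) is only correct with ``separable'' read as ``separable algebraic''. That is how the paper's proof uses it: its first step descends $\mathcal{E}$ to a finite separable subextension of $K$, which requires $K/k$ to be algebraic.

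In the algebraic case the missing step is short and needs no limit of group schemes. Since $K$ is the filtered union of its finite separable subextensions and $X$ is of finite type, any $\mathcal{E}\in\mathcal{C}^{NF}(X_K)$ is isomorphic to $E\otimes_L K$ for some finite separable $L/k$ inside $K$ and some $E\in\Vect(X_L)$. Then:
\begin{enumerate}
\item By Proposition~\ref{generalS}(1), applied to $K/L$, we have $E\in\mathcal{C}^{NF}(X_L)$.
\item The finite separable case together with Theorem~\ref{fieldgeneral}(4) gives $F\in\mathcal{C}^{NF}(X)$ with $E\hookrightarrow F\otimes_k L$.
\item Base changing gives $\mathcal{E}\hookrightarrow F\otimes_k K$, and Theorem~\ref{fieldgeneral}(4) then yields the isomorphism.
\end{enumerate}
Your remark that ``isomorphisms pass to the limit'' gestures at this. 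The actual point is that criterion (4)(b) of Theorem~\ref{fieldgeneral} is a condition on individual objects, so it transfers directly from $L$ to $K$. Part (4) then follows, since $\bar k/k$ is algebraic.
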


\begin{proof}
    (1) It follows immediately by Lemma~\ref{pullbacknef}.
    
    (2) If $K/k$ is finite Galois, then $p_*\mathcal{E}\in\mathcal{C}^{NF}(X)$ by (1), [Proposition~\ref{generalS}, (1)] and Theorem~\ref{fieldfinitegalois}.

    If $K/k$ is finite separable, then there is a finite Galois extension $M/K/k$ with a commutative diagram
    \[
        \begin{tikzcd}
            X_M \arrow[rd,"p_M" swap]\arrow[r,"p_M^K"]& X_K\arrow[d,"p"]\\
            &X
        \end{tikzcd}
    \]
    For any $\mathcal{E}\in\mathcal{C}^{NF}(X_K)$, we have ${p_M}_*({p_M^K}^*\mathcal{E})\cong p_*{p_M^K}_*({p_M^K}^*\mathcal{E})\cong p_*({p_M^K}_*{p_M^K}^*\mathcal{E})$.
    Since both $M/K$ and $M/k$ are finite Galois and ${p_M^K}^*\mathcal{E}\in\mathcal{C}^{NF}(X_M)$ by [Proposition~\ref{generalS}, (1)], we have ${p_M}_*{p_M^K}^*\mathcal{E}\in\mathcal{C}^{NF}(X)$ and ${p_M^K}_*{p_M^K}^*\mathcal{E}\cong \mathcal{E}\otimes_K M\cong \mathcal{E}^{\oplus[M:K]}\in\mathcal{C}^{NF}(X_K)$. Then
    $p_*{p_M^K}_*{p_M^K}^*\mathcal{E}\cong p_*(\mathcal{E}^{\oplus [M:K]})\cong (p_*\mathcal{E})^{\oplus [M:K]}\in\mathcal{C}^{NF}(X).$
    It follows that $p_*\mathcal{E}\in\mathcal{C}^{NF}(X)$.
    
    (3) If $K/k$ is finite separable, then the natural homomorphism $\pi^S(X_K,x_K)\rightarrow \pi^S(X,x)_K$ is an isomorphism by (2) and Proposition~\ref{pushisofinitesep}.

    If $K/k$ is separable. For any $\mathcal{E}\in \mathcal{C}^{NF}(X_K)$, since $\mathcal{E}$ is a vector bundle, there exists finite separable extension $L/k$ contained in $K$ and $E\in\Vect(X_L)$ such that $\mathcal{E}\cong E\otimes_L K$. By [Proposition~\ref{generalS}, (1)], we have $E\in\mathcal{C}^{NF}(X_L)$. Since $L/k$ is finite separable, we have $\pi^S(X_L,x_L)\cong \pi^S(X,x)_L$, then
    $$E\in\mathcal{C}^{NF}(X_L)=\Rep_L^f(\pi^S(X,x)_L).$$
    By [Theorem~\ref{fieldgeneral}, (4)], there exists $F\in\mathcal{C}^{NF}(X)$ such that $E\hookrightarrow F\otimes_k L$ and thus 
    $$\mathcal{E}\cong E\otimes_L K\hookrightarrow (F\otimes_k L)\otimes_L K\cong F\otimes_k K.$$
    Hence by [Theorem~\ref{fieldgeneral}, (4)], the natural homomorphism $\pi^S(X_K,x_K)\rightarrow \pi^S(X,x)_K$ is an isomorphism.

    (4) Since $k$ is perfect, $\bar{k}$ is a separable extension of $k$, then it follows by (3).
\end{proof}

\subsection{Base change of Nori's fundamental group scheme}
\begin{Definition}
    Let $k$ be a field, $X$ a geometrically reduced connected scheme proper over $k$, $E\in\Vect(X)$. For a polynomial $f(t)=\sum\limits_{i=0}^m n_i t^i$ with $n_i\in\mathbb{Z}_{\geq 0}$ for any $i$, define
    $$f(E):=\bigoplus_{i=0}^m\bigoplus_{j=1}^{n_i}E^{\otimes i}.$$
    If there exist distinct $f(t),g(t)\in\mathbb{N}[t]$ such that $f(E)\cong g(E)$, then $E$ is said to be \textit{finite}. A vector bundle $F$ on $X$ is said to be \textit{essentially finite} if there exists finite bundle $E$ and Nori semistable subbundles $W_1\hookrightarrow W_2\hookrightarrow E$ such that $F\cong W_2/W_1$.
\end{Definition}

\begin{Definition}
    Let $k$ be a field, $X$ a geometrically reduced connected scheme proper over $k$, $x\in X(k)$. The full subcategory $\mathcal{C}^N(X)$ of $\Vect(X)$ whose objects consist of essentially finite bundles on $X$ is a Tannakian category with fibre functor $|_x$. The Tannaka group scheme $\pi(\mathcal{C}^{N}(X),x)$ is called the \textit{Nori fundamental group scheme}, denoted by $\pi^N(X,x)$.
\end{Definition}

\begin{Remark}
    Let $k$ be a field, $X$ a geometrically reduced connected scheme proper over $k$, $x\in X(k)$. The set of finite bundles on $X$ is closed under taking tensor products, direct sums and duality by \cite[Lemma~3.2]{Nor76}. And we have sequence of Tannakian subcategories $$\mathcal{C}^{N}(X)\subseteq\mathcal{C}^{NF}(X).$$
\end{Remark}

\begin{Lemma}[{\cite[Chapter II, Proposition~5]{Nor82}}]
    Let $k$ be a field, $K/k$ a separable extension, $X$ a reduced connected scheme over $k$, $x_K\in X_K(K)$ lying over $x\in X(k)$. Then the natural homomorphism $\pi^N(X_K,x_K)\rightarrow \pi^N(X,x)_K$ is an isomorphism.
\end{Lemma}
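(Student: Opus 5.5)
The statement is Nori's separable base change theorem. I will deduce it from the criterion of Theorem~\ref{fieldgeneral}, following the same route used for the S-fundamental group in Proposition~\ref{separableS}. There are three steps: show that essentially finite bundles are stable under base change, settle finite Galois extensions via Theorem~\ref{fieldfinitegalois}, and then pass to arbitrary separable extensions by a limit argument. Throughout I assume the setting of the paper: $X$ is proper and geometrically reduced, so that $\mathcal{C}^N$ makes sense after base change.

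\textbf{Step 1: stability under base change.} I first show that $E\in\mathcal{C}^N(X)$ implies $E\otimes_k K\in\mathcal{C}^N(X_K)$. Finiteness is preserved, since an isomorphism $f(E)\cong g(E)$ pulls back to $f(E_K)\cong g(E_K)$. Nori semistability, which is the same as numerical flatness, is preserved by Proposition~\ref{generalS}~(1). Hence a presentation $W_2/W_1$ of $E$ pulls back to a presentation of $E\otimes_k K$, and Theorem~\ref{fieldgeneral}~(2) then gives the natural homomorphism.

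I also need the converse: $E\otimes_k K\in\mathcal{C}^N(X_K)$ implies $E\in\mathcal{C}^N(X)$. I would get this from Proposition~\ref{iffdescent}, taking $\mathcal{C}'=\mathcal{C}^{NF}$. Its hypothesis is available because the S-fundamental group already satisfies base change for separable $K$ (Proposition~\ref{separableS}), and Proposition~\ref{generalS} gives the two-sided criterion for $\mathcal{C}^{NF}$. However, Proposition~\ref{iffdescent} also presupposes the isomorphism for $\mathcal{C}^N$, so I will avoid it and instead verify condition (1) of Theorem~\ref{fieldfinitegalois} directly.

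\textbf{Step 2: finite Galois $K/k$.} Take $\mathcal{E}\in\mathcal{C}^N(X_K)$; I must show $p_*\mathcal{E}\in\mathcal{C}^N(X)$. The functor $p_*$ is exact and satisfies $p^*p_*\mathcal{E}\cong\bigoplus_g\tilde g^*\mathcal{E}$. Each $\tilde g^*\mathcal{E}$ is essentially finite, because $\tilde g$ is an automorphism over $X$ and therefore preserves finiteness and semistability. So $p^*p_*\mathcal{E}$ is essentially finite, and by Proposition~\ref{separableS}~(2) $p_*\mathcal{E}$ is numerically flat.

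The real content is descending essential finiteness along $p$, and this is the main obstacle. My first attempt is as follows. Write $\mathcal{E}$ as a subquotient of a finite bundle $\mathcal{F}$ on $X_K$. The norm-type bundle $p_*\mathcal{F}$ is finite, since $p_*$ commutes with direct sums and satisfies
\[
p_*(\mathcal{A}\otimes\mathcal{B})\hookrightarrow p_*\mathcal{A}\otimes p_*\mathcal{B}.
\]
Here I would use Nori's characterization of finite bundles as those for which the set of indecomposable components of all tensor powers is finite. Then $p_*\mathcal{E}$ is a subquotient of $p_*\mathcal{F}$ by numerically flat subbundles, which are semistable by Step 1. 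This shows $p_*\mathcal{E}\in\mathcal{C}^N(X)$, and Theorem~\ref{fieldfinitegalois} then yields the isomorphism. The same argument extends to finite separable $K$ by passing through a Galois closure, exactly as in Proposition~\ref{separableS}~(2).

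\textbf{Step 3: general separable $K/k$.} Let $\mathcal{E}\in\mathcal{C}^N(X_K)$. Both $\mathcal{E}$ and the finite bundle and subbundles exhibiting it are defined over some subfield $L\subseteq K$ finitely generated over $k$. Since $K/k$ is separable, $L/k$ is separable, so $L$ is a finite separable extension of a purely transcendental extension $k(t_1,\dots,t_n)$.

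The purely transcendental part needs a specialization argument. Spread the data out over an open subset of $\mathbb{A}^n_k$. Because $H^0(X,\mathcal{O}_X)$ behaves well under flat base change, $\Hom$ spaces commute with the field extension, and a $k$-rational or separable point can be chosen if $k$ is infinite; the finite-field case is treated separately. This gives $\mathcal{E}\hookrightarrow F\otimes_k K$ for some $F\in\mathcal{C}^N(X)$. Concluding by Theorem~\ref{fieldgeneral}~(4) proves the isomorphism. I expect this descent step, together with Step 2's descent of finiteness, to carry the real difficulty; this is where Nori's original argument would be followed most closely.
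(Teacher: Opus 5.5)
\textbf{Step 3 has a genuine gap: the purely transcendental part cannot be carried out, because the statement is false there.} Every extension of an algebraically closed field (indeed of any perfect field) is separable. For such extensions in characteristic $p$, the examples of Mehta--Subramanian and Pauly recalled in the introduction (see also Corollary~\ref{5.47}) show that $\pi^N(X_K,x_K)\to\pi^N(X,x)_K$ need not be an isomorphism.

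Concretely, there are stable Frobenius-trivial bundles $\mathcal{E}$ on $X_K$ not of the form $E\otimes_k K$. These are irreducible objects of $\mathcal{C}^N(X_K)$. By the Jordan--H\"older argument of Theorem~\ref{fieldalgclosed}~(2), no embedding $\mathcal{E}\hookrightarrow F\otimes_k K$ with $F\in\mathcal{C}^N(X)$ can exist. Such an $\mathcal{E}$, together with its presentation, is already defined over a finitely generated separable $L/k$ of positive transcendence degree. That is exactly your spread-out situation, and there the family over $U\subseteq\mathbb{A}^n_k$ is not isotrivial. Nothing forces $\mathcal{E}$ to embed into the base change of its fibre at a $k$-point. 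The compatibility of $\Hom$ with field extension only gives full faithfulness of $-\otimes_k K$; it never produces the missing objects.

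The correct scope is separable \emph{algebraic} extensions, and in that case the first sentence of your Step~3 already finishes the proof. Descend $\mathcal{E}$, the finite bundle $\mathcal{F}$, the isomorphism $f(\mathcal{F})\cong g(\mathcal{F})$ and the subbundles to a finite separable $L\subseteq K$. The finite case then gives $\mathcal{E}_L\hookrightarrow F\otimes_k L$; base change to $K$ and apply Theorem~\ref{fieldgeneral}~(4). This is the paper's argument in Proposition~\ref{separableN}~(4). Its step ``there is a finite separable $L\subseteq K$ with $\mathcal{E}\cong E\otimes_L K$'' is likewise valid only when $K/k$ is algebraic.

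\textbf{For finite Galois $K/k$, your route differs from the paper's.} The paper first proves that essential finiteness descends along $K/k$ (Proposition~\ref{separableN}~(1)). It compares the groups of $\langle E\rangle$ and $\langle E\otimes_k K\rangle$ through the isomorphism for $\pi^S$, and uses Nori's criterion that $E$ is essentially finite iff this group is finite. Combined with Galois stability, it then applies (3)$\Rightarrow$(1) of Theorem~\ref{fieldfinitegalois}. You instead verify condition (1) directly by pushing forward a presentation. This avoids Nori's finiteness criterion and is a good alternative.

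However, your reason for $p_*\mathcal{F}$ being finite does not work as written. The injection $p_*(\mathcal{A}\otimes\mathcal{B})\hookrightarrow p_*\mathcal{A}\otimes p_*\mathcal{B}$ only makes $p_*(\mathcal{F}^{\otimes m})$ a summand of $(p_*\mathcal{F})^{\otimes m}$. It says nothing about the other summands, which are what you need to control. Two fixes are available:
\begin{itemize}
\item Note that $(p_*\mathcal{F})\otimes_k K\cong\bigoplus_g\tilde g^*\mathcal{F}$ is finite, and descend finiteness by Proposition~\ref{generalN}~(1).
\item Equivalently, the projection formula gives $(p_*\mathcal{F})^{\otimes m}\cong p_*\bigl(\mathcal{F}\otimes(\bigoplus_g\tilde g^*\mathcal{F})^{\otimes(m-1)}\bigr)$, which bounds the indecomposable summands.
\end{itemize}
Also, numerical flatness of $p_*\mathcal{W}_i$ comes from Proposition~\ref{separableS}~(2), not from Step~1.
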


We give some properties of essentially finite bundles associated to the base change and reprove the base change in the case of proper schemes.

\begin{Proposition}\label{generalN}
    Let $k$ be a field, $K/k$ a field extension, $X$ a geometrically reduced connected scheme proper over $k$, $x_K\in X_K(K)$ lying over $x\in X(k)$, $E\in\Vect(X)$. Then 
    \begin{enumerate}
        \item $E$ is finite on $X$ iff $E\otimes_k K$ is finite on $X_K$.
        \item If $E\in\mathcal{C}^N(X)$, then $E\otimes_k K\in\mathcal{C}^N(X_K)$. 
        \item There exists a natural homomorphism $\pi^N(X_K,x_K)\rightarrow \pi^N(X,x)_K$.
    \end{enumerate}
\end{Proposition}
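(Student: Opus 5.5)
For (1), I will use Lemma~\ref{NoetherDeuring}-type descent, but at the level of vector bundles rather than representations. The direction ``$E$ finite $\Rightarrow E\otimes_k K$ finite'' is immediate: pullback commutes with $\otimes$ and $\oplus$, so $f(E)\cong g(E)$ gives $f(E\otimes_k K)\cong g(E\otimes_k K)$.

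For the converse, suppose $f(E\otimes_k K)\cong g(E\otimes_k K)$, i.e. $f(E)\otimes_k K\cong g(E)\otimes_k K$. I then need to descend this isomorphism of vector bundles to an isomorphism $f(E)\cong g(E)$ on $X$. The plan is to rerun the Noether--Deuring argument. Since $X$ is proper, $\Hom_{\mathcal{O}_{X_K}}(A\otimes_k K,B\otimes_k K)\cong \Hom_{\mathcal{O}_X}(A,B)\otimes_k K$ for vector bundles $A,B$ on $X$. An isomorphism over $K$ therefore gives $\phi=\sum_i\lambda_i\phi_i$ and $\psi=\sum_j\mu_j\psi_j$ with $\phi_i\in\Hom(A,B)$, $\psi_j\in\Hom(B,A)$ and $\psi\phi=\mathrm{id}$, $\phi\psi=\mathrm{id}$.

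I would then consider the finite-dimensional $k$-algebras $\End(A)$ and $\End(B)$ and the polynomial $\det$ condition. This is exactly Noether--Deuring for modules over the finite-dimensional $k$-algebra $\End(A\oplus B)$: $A$ and $B$ correspond to the projective modules $e\End(A\oplus B)$ and $e'\End(A\oplus B)$. Since $A\cong B$ iff these modules are isomorphic, and module isomorphism descends along $K/k$ by \cite[Theorem~2.2]{BeRe19}, I get $f(E)\cong g(E)$. The main technical point is to phrase the bundles as modules over this algebra; this is the step where care is needed, especially for infinite $k$ versus finite $k$, which Noether--Deuring handles.

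For (2), let $E\in\mathcal{C}^N(X)$, so $E\cong W_2/W_1$ with $W_1\hookrightarrow W_2\hookrightarrow F$, where $F$ is finite and the $W_i$ are Nori semistable (numerically flat). Pulling back to $X_K$ gives $W_1\otimes_k K\hookrightarrow W_2\otimes_k K\hookrightarrow F\otimes_k K$, which are still subbundles since $p$ is flat. Here $F\otimes_k K$ is finite by (1), the $W_i\otimes_k K$ are numerically flat by [Proposition~\ref{generalS}, (1)], and $E\otimes_k K\cong (W_2\otimes_k K)/(W_1\otimes_k K)$. Hence $E\otimes_k K\in\mathcal{C}^N(X_K)$. $X_K$ is still geometrically reduced, connected and proper, so $\mathcal{C}^N(X_K)$ makes sense.

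For (3), given (2), [Theorem~\ref{fieldgeneral}, (2)] directly gives the natural homomorphism $\pi^N(X_K,x_K)\to\pi^N(X,x)_K$.

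The only nontrivial step is the descent in (1).
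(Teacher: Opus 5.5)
Your proposal is correct. Parts (2) and (3) are essentially the paper's argument verbatim; both only need the easy direction of (1).

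The real difference is in the converse of (1).

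\emph{The paper's route.} It stays inside its Tannakian framework. Since $f(E)\otimes_k K\cong f(E\otimes_k K)$ is finite, it is (implicitly) numerically flat, so Proposition~\ref{generalS}(1), i.e.\ Lemma~\ref{basefieldnef}, gives $f(E),g(E)\in\mathcal{C}^{NF}(X)$. The paper then concludes with Proposition~\ref{generalS}(2). That is Lemma~\ref{NoetherDeuring} applied to $\pi^S(X,x)$-representations, and it tacitly uses the full faithfulness of $\eta_K^{P_K}$ from Theorem~\ref{fieldgeneral}(1) to turn a bundle isomorphism over $K$ into an isomorphism of $\pi^S(X,x)_K$-representations.

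\emph{Your route.} You apply Noether--Deuring directly to the finite-dimensional $k$-algebra $R=\End(A\oplus B)$. The projectivization $\Hom(A\oplus B,-)$ is fully faithful on $\mathrm{add}(A\oplus B)$ and sends $A,B$ to $eR,e'R$. Flat base change gives $R_K\cong R\otimes_k K$ and $(eR)\otimes_k K\cong (e\otimes 1)R_K$. So $A_K\cong B_K$ forces $eR\otimes_k K\cong e'R\otimes_k K$, hence $eR\cong e'R$, hence $A\cong B$.

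\emph{What each buys.} Your argument is more elementary and strictly more general. It shows that isomorphism of any two vector bundles (indeed coherent sheaves) on a proper $k$-scheme descends along an arbitrary field extension. It needs neither the fact that finite bundles are numerically flat, which the paper uses implicitly, nor any Tannakian input. The paper's route instead recycles Proposition~\ref{generalS}(2), which it reuses for Frobenius-finite, Frobenius-trivial and \'etale trivializable bundles.

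\emph{Two points for your write-up.} Spell out the identification $\Hom_R(eR,e'R)\cong e'Re\cong\Hom_{\mathcal{O}_X}(A,B)$, compatibly with composition, both over $k$ and after $\otimes_k K$; this is exactly what gives ``$A\cong B$ iff $eR\cong e'R$''. You can also drop the leftover remark about a determinant condition, which plays no role in the module-theoretic argument.
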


\begin{proof}
    (1) If $E$ is finite on $X$, then there exists distinct $f,g\in \mathbb{N}(t)$ such that $\tilde{f}(E)\cong \tilde{g}(E)$. Then $$\tilde{f}(E\otimes_k K)\cong \tilde{f}(E)\otimes_k K\cong\tilde{g}(E)\otimes_k K\cong \tilde{g}(E\otimes_k K),$$
    i.e. $E\otimes_k K$ is finite on $X_K$.

        Conversely, if $E\otimes_k K$ is finite on $X_K$, then there exist distinct $f,g\in \mathbb{N}[t]$ such that
    $$f(E)\otimes_k K\cong f(E\otimes_k K)\cong g(E\otimes_k K)\cong g(E)\otimes_k K.$$
    Then we have $f(E),g(E)\in\mathcal{C}^{NF}(X)$ by [Proposition~\ref{generalS}, (1)]. Applying [Proposition~\ref{generalS}, (2)], we have $f(E)\cong g(E)$, so $E$ is finite on $X$.
    
    (2) For any $E\in\mathcal{C}^N(X)$, we have $E\cong F_1/F_2$, where $F_2\hookrightarrow F_1\hookrightarrow F$, $F_2,F_1\in\mathcal{C}^{NF}(X)$ and $F$ is a finite bundle on $X$. Then $F_1\otimes_k K,F_2\otimes_k K\in\mathcal{C}^{NF}(X)$, $F\otimes_k K$ is finite by (1) and [Proposition~\ref{generalS}, (1)]. It follows that $E\otimes_k K\cong (F_1/F_2)\otimes_k K\cong (F_1\otimes_k K)/(F_2\otimes_k K)\in\mathcal{C}^N(X_K)$.

    (3) It follows by (2) and [Theorem~\ref{fieldgeneral}, (2)].
\end{proof}

\begin{Proposition}\label{separableN}
    Let $k$ be a field, $K/k$ a separable extension, $X$ a geometrically reduced connected scheme proper over $k$, $x_K\in X_K(K)$ lying over $x\in X(k)$, $p:X_K\rightarrow X$ the projection, $E\in\Vect(X)$. Then 
    \begin{enumerate}
        \item $E\in \mathcal{C}^N(X)$ iff $E\otimes_k K\in \mathcal{C}^N(X_K)$.
        \item Let $K/k$ be finite Galois, then for any $\mathcal{E}\in\mathcal{C}^{N}(X_K)$ and any automorphism $\tilde{g}:X_K\rightarrow X_K$ induced by $g\in\Gal(K/k)$, we have $\tilde{g}^*\mathcal{E}\in\mathcal{C}^{N}(X_K)$.
        \item Let $K/k$ be a finite separable extension, then for any $\mathcal{E}\in\mathcal{C}^{N}(X_K)$, we have $p_*\mathcal{E}\in\mathcal{C}^{N}(X)$.
        \item The natural homomorphism $\pi^N(X_K,x_K)\rightarrow \pi^N(X,x)_K$ is an isomorphism.
        \item Let $k$ be a perfect field, then the natural homomorphism $\pi^N(X_{\bar{k}},x_{\bar{k}})\rightarrow \pi^N(X,x)_{\bar{k}}$ is an isomorphism.
    \end{enumerate}
\end{Proposition}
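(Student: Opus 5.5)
The plan is to mirror the proof of Proposition~\ref{separableS}, using the Galois machinery of Theorem~\ref{fieldfinitegalois} and Proposition~\ref{pushisofinitesep}. The characterisation of essentially finite bundles via finite bundles is already compatible with base change by Proposition~\ref{generalN}.

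I would order the items as (2), (3), (4), (1), (5), because (1) is easiest to deduce once the isomorphism in (4) is known.

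For (2), let $\mathcal{E}\cong W_2/W_1$ with $W_1\hookrightarrow W_2\hookrightarrow F$, where $F$ is finite and $W_i\in\mathcal{C}^{NF}(X_K)$. The automorphism $\tilde{g}$ is a $k$-isomorphism of schemes. Pulling back along it preserves the isomorphism $f(F)\cong g(F)$, so $\tilde{g}^*F$ is finite. It also preserves numerical flatness by [Proposition~\ref{separableS}, (1)], and pullback is exact. Hence $\tilde{g}^*\mathcal{E}\cong\tilde{g}^*W_2/\tilde{g}^*W_1\in\mathcal{C}^N(X_K)$.

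For (3) in the Galois case, Theorem~\ref{fieldfinitegalois} requires condition (3)(a), namely $E\in\mathcal{C}^N(X)$ iff $E\otimes_kK\in\mathcal{C}^N(X_K)$. This is circular with item (1), so I would instead prove (3) directly. Take $\mathcal{E}\cong W_2/W_1\subseteq F$ as above. Since $p$ is finite flat of degree $[K:k]$, $p_*$ is exact. From $f(F)\cong g(F)$ I want finiteness of $p_*F$, which seems hard directly. The alternative is to use Nori's characterisation: $F$ is finite iff it is trivialised by a finite étale cover (at least for the Galois closure in the separable setting). Then $p_*F$ is again trivialised after pullback along $X_K\times_X Y\to X$, hence finite. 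Also $p_*W_i$ is numerically flat by [Proposition~\ref{separableS}, (2)]. Therefore $p_*\mathcal{E}\cong p_*W_2/p_*W_1\subseteq p_*F$ is essentially finite.

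For a general finite separable extension, I would pass to a Galois closure $M/K/k$ exactly as in the proof of [Proposition~\ref{separableS}, (2)]. This step, establishing finiteness of $p_*F$ for a finite bundle $F$, is the main obstacle. A safer route avoids it: use $p^*p_*\mathcal{E}\cong\bigoplus_g\tilde{g}^*\mathcal{E}\in\mathcal{C}^N(X_K)$ (from (2) and the diagram in Theorem~\ref{fieldfinitegalois}), and then prove the descent statement ``$E\otimes_kK\in\mathcal{C}^N(X_K)\Rightarrow E\in\mathcal{C}^N(X)$'' directly. For this descent I would argue that $E$ is numerically flat by Proposition~\ref{generalS}, and that the Tannaka group of $\langle E\rangle$ becomes finite after base change, hence is finite. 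Finiteness of a group scheme descends along field extensions by Lemma~\ref{isofield}-type arguments on coordinate rings. A numerically flat bundle with finite monodromy group is essentially finite.

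With (3) in hand, (4) for finite separable $K$ follows from [Proposition~\ref{pushisofinitesep}, (3)]. For arbitrary separable $K$, I would copy the limit argument of [Proposition~\ref{separableS}, (3)]. Each $\mathcal{E}\in\mathcal{C}^N(X_K)$ descends to some $E\in\Vect(X_L)$ with $L/k$ finite separable, $L\subseteq K$. Using descent along $K/L$, i.e.\ (1) for the extension $K/L$ (via the monodromy-finiteness argument above), we get $E\in\mathcal{C}^N(X_L)$. Then [Theorem~\ref{fieldgeneral}, (4)] over $L$ gives an embedding $E\hookrightarrow F\otimes_kL$, which base changes to $\mathcal{E}\hookrightarrow F\otimes_kK$, and [Theorem~\ref{fieldgeneral}, (4)] gives the isomorphism.

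Then (1) follows from (4) by the argument of [Proposition~\ref{pushisofinitesep}, (2)] adapted to infinite $K$. [Theorem~\ref{fieldgeneral}, (4)] gives $E_1\otimes_kK\twoheadrightarrow E\otimes_kK\hookrightarrow E_2\otimes_kK$ with $E_i\in\mathcal{C}^N(X)$. By full faithfulness (Theorem~\ref{fieldgeneral}(1)) these maps descend to $k$ after choosing a $k$-form, so $E$ is a subquotient of objects of $\mathcal{C}^N(X)$. Finally, (5) is immediate from (4), since $\bar k/k$ is separable when $k$ is perfect.
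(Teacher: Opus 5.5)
There is a genuine gap in the descent step. Your overall architecture matches the paper's: prove the descent ``$E\otimes_kK\in\mathcal{C}^N(X_K)\Rightarrow E\in\mathcal{C}^N(X)$'' first, then get (3) from (2) and $p^*p_*\mathcal{E}\cong\bigoplus_g\tilde g^*\mathcal{E}$, then (4) from [Proposition~\ref{pushisofinitesep}, (3)] plus the limit argument. But (1), (3) and the general case of (4) all depend on that descent step. Put $H=\pi(\langle E\rangle_{\mathcal{C}^{NF}(X)},x)$ and $G=\pi(\langle E\otimes_kK\rangle_{\mathcal{C}^{NF}(X_K)},x_K)$.

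Nori's criterion makes $G$ finite, and [Theorem~\ref{fieldgeneral}, (2)] gives a homomorphism $G\to H_K$. This map is always a closed immersion, since every object of $\langle E\otimes_kK\rangle$ is a subquotient of some $F\otimes_kK$ with $F\in\langle E\rangle$. However, finiteness of $G$ tells you nothing about $H_K$ unless $G\to H_K$ is also faithfully flat, i.e.\ unless the observability condition of [Theorem~\ref{fieldgeneral}, (3)] holds. Your phrase ``the Tannaka group of $\langle E\rangle$ becomes finite after base change'' simply assumes this; only the passage from $H_K$ finite to $H$ finite is the easy coordinate-ring argument. Note also that nothing in your descent argument uses that $K/k$ is separable.

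The paper supplies exactly this step from the isomorphism $\pi^S(X_K,x_K)\cong\pi^S(X,x)_K$ of [Proposition~\ref{separableS}, (3)]. Form the square with the quotient maps $\pi^S(X_K,x_K)\twoheadrightarrow G$ and $\pi^S(X,x)_K\twoheadrightarrow H_K$. The composite $\pi^S(X_K,x_K)\to H_K$ is faithfully flat and factors through $G$, so $G\to H_K$ is faithfully flat. Hence $H_K$, and therefore $H$, is finite, and $E\in\mathcal{C}^N(X)$.

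The ``main obstacle'' you sidestepped can actually be overcome with tools already in the paper, giving a descent-free proof of (3). For $K/k$ finite Galois and $\mathcal{F}$ finite on $X_K$, the bundle $(p_*\mathcal{F})\otimes_kK\cong\bigoplus_g\tilde g^*\mathcal{F}$ is finite, by your argument for (2) and closure of finite bundles under direct sums. So $p_*\mathcal{F}$ is finite by [Proposition~\ref{generalN}, (1)]. Combined with exactness of $p_*$ and [Proposition~\ref{separableS}, (2)], this gives $p_*\mathcal{E}\in\mathcal{C}^N(X)$ for finite Galois $K$. The finite separable case follows via the Galois closure, as in the paper, and then (4) and (1) for finite separable $K$ follow from Proposition~\ref{pushisofinitesep}.

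The \'etale characterisation of finite bundles you first tried is false in characteristic $p$. A nontrivial line bundle $L$ with $L^{\otimes p}\cong\mathcal{O}_X$ (e.g.\ on an ordinary elliptic curve) is finite and Frobenius trivial, hence not \'etale trivialisable.

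Two smaller points:
\begin{enumerate}
\item In deriving (1) from (4), the injection $E\otimes_kK\hookrightarrow E_2\otimes_kK$ lies in $\Hom(E,E_2)\otimes_kK$ and need not descend to $k$. The repair is to write it as $\sum_j\psi_j\otimes\mu_j$; then $(\psi_j)_j:E\to E_2^{\oplus m}$ is injective, which suffices because $E$ is numerically flat.
\item The limit step in (4), descending $\mathcal{E}$ to a finite separable $L\subseteq K$, is only justified when $K/k$ is algebraic. The paper's proof has the same restriction.
\end{enumerate}
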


\begin{proof}
    (1) If $E\in\mathcal{C}^{N}(X)$, then $E\otimes_k K\in\mathcal{C}^{N}(X_K)$ by [Proposition~\ref{generalN}, (2)].
    
    Conversely, if $E\otimes_k K\in\mathcal{C}^N(X_K)$, by Proposition~\ref{generalS} we have $E\in\mathcal{C}^{NF}(X)$. Consider the subcategory $\langle E\rangle_{\mathcal{C}^{NF}(X)}$ of $\mathcal{C}^{NF}(X)$ and the subcategory $\langle E\otimes_k K\rangle_{\mathcal{C}^{NF}(X_K)}$ of $\mathcal{C}^{NF}(X_K)$. Then for any $F\in\langle E\rangle_{\mathcal{C}^{NF}(X)}$, we have $F\otimes_k K\in\langle E\otimes_k K\rangle_{\mathcal{C}^{NF}(X_K)}$ by [Proposition~\ref{generalS}, (1)]. By [Theorem~\ref{fieldgeneral}, (2)] there is a natural homomorphism
    $$\pi(\langle E\otimes_k K\rangle_{\mathcal{C}^{NF}(X_K)},x_K)\rightarrow \pi(\langle E\rangle_{\mathcal{C}^{NF}(X)},x)_K,$$
    and a commutative diagram by [Proposition~\ref{separableS}, (4)]
    \[
        \begin{tikzcd}
            \pi^S(X_K,x_K)\arrow[r,"\cong"]\arrow[d,two heads]& \pi^S(X,x)_K\arrow[d,two heads]\\
            \pi(\langle E\otimes_k K\rangle_{\mathcal{C}^{NF}(X_K)},x_K)\arrow[r]&\pi(\langle E\rangle_{\mathcal{C}^{NF}(X)},x)_K
        \end{tikzcd}
    \]
    Then the natural homomorphism $$\pi(\langle E\otimes_k K\rangle_{\mathcal{C}^{NF}(X_K)},x_K)\rightarrow \pi(\langle E\rangle_{\mathcal{C}^{NF}(X)},x)_K$$ is faithfully flat. Since $E\otimes_k K\in\mathcal{C}^N(X_K)$, we have $\pi(\langle E\otimes_k K\rangle_{\mathcal{C}^{NF}(X_K)},x_K)$ is finite by \cite[Lemma~3.9]{Nor76}. So $\pi(\langle E\rangle_{\mathcal{C}^{NF}(X)},x)$ is finite and $E\in\mathcal{C}^N(X)$ by \cite[Proposition~3.8]{Nor76}.

    (2) Let $g\in \Gal(K/k)$, $\tilde{g}:X_K\rightarrow X_K$ an automorphism induced by $g$. If $\mathcal{E}$ is a finite bundle on $X_K$, then there exist distinct $f(t),h(t)\in\mathbb{N}[t]$ such that $f(\mathcal{E})\cong h(\mathcal{E})$. Then
    $$f(\tilde{g}^*\mathcal{E})\cong \tilde{g}^*f(\mathcal{E})\cong \tilde{g}^*h(\mathcal{E})\cong h(\tilde{g}^*\mathcal{E}),$$
    i.e. $\tilde{g}^*\mathcal{E}$ is a finite bundle on $X_K$. For any $\mathcal{E}\in\mathcal{C}^N(X_K)$, there exists a finite bundle $\mathcal{F}$ on $X_K$ such that $\mathcal{E}$ is a subquotient of $\mathcal{F}$ in $\mathcal{C}^{NF}(X_K)$. By [Proposition~\ref{separableS}, (1)], $\tilde{g}^*\mathcal{E}$ is a subquotient of $\tilde{g}^*\mathcal{F}$ in $\mathcal{C}^{NF}(X_K)$, so $\tilde{g}^*\mathcal{E}\in\mathcal{C}^{N}(X_K)$.
    
    (3) If $K/k$ is finite Galois, then $p_*\mathcal{E}\in\mathcal{C}^N(X)$ by (1), (2) and Theorem~\ref{fieldfinitegalois}.
    
    If $K/k$ is finite separable. There exists a finite Galois extension $M/K/k$ such that we have the following commutative diagram
    \[
        \begin{tikzcd}
            X_M \arrow[rd,"p_M" swap]\arrow[r,"p_M^K"]& X_K\arrow[d,"p"]\\
            &X
        \end{tikzcd}
    \]
    For any $\mathcal{E}\in\mathcal{C}^{N}(X_K)$, we have $${p_M}_*({p_M^K}^*\mathcal{E})\cong p_*{p_M^K}_*({p_M^K}^*\mathcal{E})\cong p_*({p_M^K}_*{p_M^K}^*\mathcal{E}).$$
    Since both $M/K$ and $M/k$ are finite Galois and ${p_M^K}^*\mathcal{E}\in\mathcal{C}^{N}(X_M)$ by [Proposition~\ref{generalN}, (2)], we have ${p_M}_*{p_M^K}^*\mathcal{E}\in\mathcal{C}^{N}(X)$ and ${p_M^K}_*{p_M^K}^*\mathcal{E}\cong \mathcal{E}\otimes_K M\cong \mathcal{E}^{\oplus[M:K]}\in\mathcal{C}^{N}(X_K)$. Then
    $$p_*{p_M^K}_*{p_M^K}^*\mathcal{E}\cong p_*(\mathcal{E}^{\oplus [M:K]})\cong (p_*\mathcal{E})^{\oplus [M:K]}\in\mathcal{C}^{N}(X).$$
    It follows that $p_*\mathcal{E}\in\mathcal{C}^N(X)$.

    (4) If $K/k$ is finite separable, then the natural homomorphism $\pi^{N}(X_K,x_K)\rightarrow \pi^{N}(X,x)_K$ is an isomorphism by (3) and Proposition~\ref{pushisofinitesep}.

    If $K/k$ is separable. For any $\mathcal{E}\in \mathcal{C}^{N}(X_K)$, since $\mathcal{E}$ is a vector bundle, there exists finite separable extension $L/k$ contained in $K$ and $E\in\Vect(X_L)$ such that $\mathcal{E}\cong E\otimes_L K$. By [Proposition~\ref{separableN}, (1)], we have $E\in\mathcal{C}^{N}(X_L)$. Since $L/k$ is finite separable, we have $\pi^{N}(X_L,x_L)\cong \pi^{N}(X,x)_L$, then
    $$E\in\mathcal{C}^{N}(X_L)=\Rep_L^f(\pi^{N}(X,x)_L).$$
    By [Theorem~\ref{fieldgeneral}, (4)], there exists $F\in\mathcal{C}^{N}(X)$ such that $E\hookrightarrow F\otimes_k L$ and thus 
    $$\mathcal{E}\cong E\otimes_L K\hookrightarrow (F\otimes_k L)\otimes_L K\cong F\otimes_k K.$$
    Hence by [Theorem~\ref{fieldgeneral}, (4)], the natural homomorphism $\pi^{N}(X_K,x_K)\rightarrow \pi^{N}(X,x)_K$ is an isomorphism.

    (5) Since $k$ is perfect, $\bar{k}$ is a separable extension of $k$, then it follows by (4).
\end{proof}

\subsection{Base change of F-fundamental group scheme}
\begin{Definition}
    Let $k$ be a field of characteristic $p>0$, $X$ a geometrically reduced connected scheme proper over $k$, $F_X:X\rightarrow X$ the absolute Frobenius morphism, $E\in\Vect(X)$. For a polynomial $f(t)=\sum\limits_{i=0}^mn_it^i$ with non-negative integer coefficients, define $$\tilde{f}(E):=\bigoplus\limits_{i=1}^{m}((F_X^i)^*E)^{\oplus n_i}.$$
    If there exists two distinct non-negative integer coefficient polynomials $f$, $g$ such that $\tilde{f}(E)\cong \tilde{g}(E)$, then $E$ is said to be \textit{Frobenius finite}. Denote the set of all Frobenius finite bundles on $X$ by $FF(X)$.
\end{Definition}

\begin{Definition}
    Let $k$ be a field of characteristic $p>0$, $X$ a geometrically reduced connected scheme proper over $k$, $x\in X(k)$. The Tannakian subcategory $\langle FF(X)\rangle_{\mathcal{C}^{NF}(X)}$ of $\mathcal{C}^{NF}(X)$ is denoted by $\mathcal{C}^F(X)$. The Tannaka group scheme $\pi(\mathcal{C}^F(X),x)$ is called the \textit{F-fundamental group scheme}, denoted by $\pi^F(X,x)$.
\end{Definition}

\begin{Remark}
    Let $k$ be a field of characteristic $p>0$, $X$ a geometrically reduced connected scheme proper over $k$, $F_X:X\rightarrow X$ the absolute Frobenius morphism. Amrutiya \& Biswas\cite[Lemma~3.5]{AmBi10} showed that if $E\in FF(X)$, then $E\in\mathcal{C}^{N}(X)$ over algebraically closed field. One can show that this conclusion holds over any the base field of characteristic $p>0$. Moreover, we have a sequence of Tannakian subcategories
    $$\mathcal{C}^F(X)\subseteq \mathcal{C}^{NF}(X).$$
\end{Remark}

\begin{Lemma}\label{FFequiv}
    Let $k$ be a field of characteristic $p>0$, $X$ a connected scheme proper over $k$, $F_X:X\rightarrow X$ the absolute Frobenius morphism, $E\in\Vect(X)$. Then the following conditions are equivalent:
    \begin{enumerate}
        \item $E\in FF(X)$.
        \item The set of all indecomposable components of $\{F_X^{i*}E\}_{i\geq 1}$ is finite.
        \item There exist integers $m>n>0$ such that $F_X^{m*}E\cong F_X^{n*}E$.
    \end{enumerate}
\end{Lemma}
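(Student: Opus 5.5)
The plan is to prove the cycle $(3)\Rightarrow(1)\Rightarrow(2)\Rightarrow(3)$, using Krull--Schmidt for vector bundles on the proper scheme $X$. Krull--Schmidt holds because $\End(E)$ is a finite dimensional $k$-algebra, since $X$ is proper. It gives unique decomposition of any $E\in\Vect(X)$ into indecomposables up to isomorphism and permutation. Throughout we write $F^{i*}$ for $(F_X^i)^*$ and use that $F_X^*$ commutes with direct sums and tensor products.

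The implication $(3)\Rightarrow(1)$ is immediate. If $F^{m*}E\cong F^{n*}E$ with $m>n>0$, take $f(t)=t^m$ and $g(t)=t^n$. These are distinct polynomials with $\tilde f(E)\cong\tilde g(E)$.

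For $(1)\Rightarrow(2)$, suppose $\tilde f(E)\cong\tilde g(E)$ with $f\neq g$. Write $\mathcal{S}_i$ for the multiset of indecomposable summands of $F^{i*}E$. Krull--Schmidt turns the isomorphism into an equality of multisets $\sum_i n_i\mathcal{S}_i=\sum_i m_i\mathcal{S}_i$. Let $d$ be the largest degree with $n_d\neq m_d$, and say $n_d>m_d$. Cancelling the equal top parts and applying $F^{j*}$ for $j\ge 0$, we get that every indecomposable summand of $F^{(d+j)*}E$ appears among the summands of the finitely many bundles $F^{i*}E$ with $i<d+j$ and $i$ in the support of $f$ or $g$ shifted by $j$. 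This alone is not yet finiteness, so I would run an induction on $N$. The claim is that the set $\mathcal{I}_N$ of indecomposable components of $F^{1*}E,\dots,F^{N*}E$ stabilises once $N\ge d$.

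The inductive step needs care, because applying $F^*$ to an indecomposable need not give an indecomposable. So instead of tracking indecomposables I would compare multiplicities through ranks. Consider the finitely generated monoid generated by the isomorphism classes of $F^{i*}E$ for $1\le i\le d$, graded by rank. From the relation, each $F^{(d+j)*}E$ is forced to decompose into summands lying in the finite set $\mathcal{I}_{d}$, and the rank bound $\rk F^{i*}E=\rk E$ bounds the number of such summands. Making this cancellation and induction precise, including handling the case where the lowest index $i=0$ appears, is the main obstacle. My first attempt would be a strong induction on $j$ using the identity obtained from the relation after pulling back by $F^{j*}$.

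For $(2)\Rightarrow(3)$, each $F^{i*}E$ has rank $r=\rk E$ and is a direct sum of indecomposables taken from a finite set $S$. There are only finitely many isomorphism classes of bundles of rank $r$ built from $S$, since the multiplicities are bounded by $r$. By pigeonhole there are $m>n\ge1$ with $F^{m*}E\cong F^{n*}E$, which is $(3)$.
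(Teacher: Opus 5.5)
Your $(3)\Rightarrow(1)$ and $(2)\Rightarrow(3)$ are correct and match the paper's arguments. The latter is the same pigeonhole over the finitely many bundles of rank $\rk E$ built from a finite set of indecomposables. The gap is $(1)\Rightarrow(2)$, the only substantive implication, which the paper takes from Amrutiya--Biswas (Proposition~2.1). You call it ``the main obstacle'' and leave it as a sketch. Its central sentence, that each $F^{(d+j)*}E$ ``is forced to decompose into summands lying in the finite set $\mathcal{I}_d$'', is the conclusion restated, not derived. The monoid/rank detour does not supply the argument, so as written the lemma is not proved.

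The missing step is a short induction, and your own first observation already gives its input. For a bundle $V$, let $\mathrm{Ind}(V)$ be the set of isomorphism classes of its indecomposable summands. Apply $F^{j*}$ to the whole isomorphism $\tilde f(E)\cong\tilde g(E)$, and only afterwards cancel by Krull--Schmidt. This gives
\[(F^{(d+j)*}E)^{\oplus(n_d-m_d)}\oplus\bigoplus_{i<d}(F^{(i+j)*}E)^{\oplus n_i}\cong\bigoplus_{i<d}(F^{(i+j)*}E)^{\oplus m_i},\]
hence $\mathrm{Ind}(F^{n*}E)\subseteq\bigcup_{i<n}\mathrm{Ind}(F^{i*}E)$ for every $n\ge d$. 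Strong induction on $n$ then gives $\mathrm{Ind}(F^{n*}E)\subseteq\bigcup_{i<d}\mathrm{Ind}(F^{i*}E)$ for all $n\ge0$, which is a finite set. So, contrary to your remark, this \emph{is} finiteness. Whether $F_X^*$ preserves indecomposability never matters, because you never pull back an individual summand.

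The index $i=0$ needs no special handling either, provided $\tilde f$ includes the term $F^{0*}E=E$, as in Amrutiya--Biswas. Then $d\ge1$ automatically for $E\neq0$, and $i=0$ only adds $\mathrm{Ind}(E)$ to the bound. The paper's definition literally starts the sum at $i=1$, which makes constant terms invisible. Then any two polynomials differing only in their constant term witness (1), and $(1)\Rightarrow(2)$ would be false, so the $i=0$ term must be included.
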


\begin{proof}
    $(1)\Rightarrow (2)$ It follows by the proof in \cite[Proposition~2.1]{AmBi10}.

    $(2)\Rightarrow (3)$ Since $F_X^{i*}E$ is of the same rank for any $i\geq 1$ and the set of all indecomposable components of $\{F_X^{i*}E\}_{i\geq 1}$ is finite, the number of direct sum decomposition of $\{F_X^{i*}E\}_{i\geq 1}$ is finite. Then there exist integers $m>n>0$ such that $F_X^{m*}E\cong F_X^{n*}E$.

    $(3)\Rightarrow (1)$ Obviously.
\end{proof}

\begin{Lemma}\label{tensorFF}
    Let $k$ be a field of characteristic $p>0$, $X$ a connected scheme proper over $k$, $F_X:X\rightarrow X$ the absolute Frobenius morphism. Then for any $E,F\in FF(X)$, $E\oplus F, E\otimes_{\mathcal{O}_X}F\in FF(X)$.
\end{Lemma}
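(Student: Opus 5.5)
The plan is to use the characterization in Lemma~\ref{FFequiv}(3): a bundle $E$ lies in $FF(X)$ if and only if there exist integers $m>n>0$ with $F_X^{m*}E\cong F_X^{n*}E$. This avoids working directly with the polynomial definition.

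The one preliminary observation is a periodicity statement. Suppose $F_X^{m*}E\cong F_X^{n*}E$ with $m>n$, and set $d=m-n$. Pulling back by $F_X^{j*}$ gives $F_X^{(n+j+d)*}E\cong F_X^{(n+j)*}E$ for every $j\ge 0$. Iterating, we get
\[F_X^{(a+td)*}E\cong F_X^{a*}E \quad\text{for all } a\ge n \text{ and all } t\ge 0.\]
The same holds for $F$ with some data $m'>n'$, $d'=m'-n'$.

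Now fix $N=\max(n,n')$ and $D=dd'$, so $D$ is a common multiple of both periods. For the direct sum, pullback commutes with sums, so
\[F_X^{(N+D)*}(E\oplus F)\cong F_X^{(N+D)*}E\oplus F_X^{(N+D)*}F\cong F_X^{N*}E\oplus F_X^{N*}F\cong F_X^{N*}(E\oplus F).\]
For the tensor product, pullback is a tensor functor, $F_X^*(A\otimes B)\cong F_X^*A\otimes F_X^*B$. The identical computation therefore gives $F_X^{(N+D)*}(E\otimes F)\cong F_X^{N*}(E\otimes F)$. Since $N+D>N>0$, Lemma~\ref{FFequiv}(3) yields $E\oplus F,\ E\otimes_{\mathcal{O}_X}F\in FF(X)$.

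There is no real obstacle; the only point needing care is aligning the two periods by passing to a common stabilization index $N$ and a common period $D$. Routing through Lemma~\ref{FFequiv} is what makes this clean. A direct approach via the polynomials $\tilde f,\tilde g$ would face the problem that Krull–Schmidt cancellation is not obviously available over an arbitrary base field for a proper $X$, whereas the equivalence (1)$\Leftrightarrow$(3) already absorbs that issue.
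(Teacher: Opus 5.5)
Your argument is correct and is essentially the paper's: both reduce via Lemma~\ref{FFequiv}(3) to exhibiting one pair of Frobenius exponents that works simultaneously for $E$ and $F$, with common period $(m_1-n_1)(m_2-n_2)$. Your choice $N=\max(n,n')$, $D=dd'$ is simply cleaner bookkeeping than the paper's pair $2(m_1n_2+n_1m_2)$ and $(m_1+n_1)(m_2+n_2)$; your closing aside is unnecessary and slightly off, since Krull--Schmidt does hold for coherent sheaves on a proper scheme over any field (endomorphism rings are finite-dimensional $k$-algebras), but nothing in your proof depends on it.
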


\begin{proof}
    By Lemma~\ref{FFequiv}, for any $E,F\in FF(X)$, there exist integers $m_1>n_1>0$ and $m_2>n_2>0$ such that $F_X^{m_1 *}E\cong F_X^{n_1 *}E$ and $F_X^{m_2 *}F\cong F_X^{n_2 *}F$. Note that 
    $$\begin{aligned}
    F_X^{m_1n_2+n_1m_2 *}E\cong F_X^{m_1n_2*}(F_X^{n_1m_2 *}E)\cong F_X^{m_1n_2*}(F_X^{m_1m_2 *}E)\cong F_X^{m_1(n_2+m_2) *}E\cong F_X^{n_1(n_2+m_2) *}E.
    \end{aligned}$$
    It follows that
    $$\begin{aligned}
        F_X^{2(m_1n_2+n_1m_2) *}E\cong F_X^{(m_1n_2+n_1m_2) *}(F_X^{m_1(n_2+m_2) *}E)\cong F_X^{m_1(n_2+m_2) *}(F_X^{n_1(n_2+m_2) *}E)\cong F_X^{(m_1+n_1)(m_2+n_2) *}E.
    \end{aligned}$$
    Similarly, we obtain
    $$\begin{aligned}
        F_X^{2(m_1n_2+n_1m_2) *}F\cong F_X^{(m_1+n_1)(m_2+n_2) *}F.
    \end{aligned}$$
    Then we have
    $$\begin{aligned}
        F_X^{2(m_1n_2+n_1m_2) *}(E\oplus F)\cong F_X^{(m_1+n_1)(m_2+n_2) *}(E\oplus F).
    \end{aligned}$$
    and
    $$\begin{aligned}
        F_X^{2(m_1n_2+n_1m_2) *}(E\otimes_{\mathcal{O}_X}F)\cong F_X^{(m_1+n_1)(m_2+n_2) *}(E\otimes_{\mathcal{O}_X}F),
    \end{aligned}$$
    Since $(m_1+n_1)(m_2+n_2)-2(m_1n_2+n_1m_2)=(m_1-n_1)(m_2-n_2)>0$, we have $E\oplus F, E\otimes_{\mathcal{O}_X}F\in FF(X)$.
\end{proof}

\begin{Remark}
    Let $k$ be a field of characteristic $p>0$, $X$ a connected scheme proper over $k$. Then $FF(X)$ is closed under taking tensor products, direct sums and duality. Moreover, for any $E\in\mathcal{C}^F(X)$, we have $E\cong F_1/F_2$, where $F_2\hookrightarrow F_1\hookrightarrow F$, $F_2,F_1\in\mathcal{C}^{NF}(X)$ and $F\in FF(X)$.
\end{Remark}

\begin{Proposition}\label{generalF}
    Let $k$ be a field of characteristic $p>0$, $K/k$ a field extension, $X$ a geometrically reduced connected scheme proper over $k$, $x_K\in X_K(K)$ lying over $x\in X(k)$, $F_X:X\rightarrow X$ the absolute Frobenius morphism, $E\in\Vect(X)$. Then 
    \begin{enumerate}
        \item $E\in FF(X)$ iff $E\otimes_k K\in FF(X_K)$
        \item If $E\in\mathcal{C}^F(X)$, then $E\otimes_k K\in\mathcal{C}^{F}(X)$.
        \item There exists a natural homomorphism $\pi^F(X_K,x_K)\rightarrow \pi^F(X,x)_K$.
    \end{enumerate}
\end{Proposition}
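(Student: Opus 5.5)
The plan is to follow the pattern of Proposition~\ref{generalN}, with Frobenius pullbacks in place of tensor powers. The key input is the compatibility of absolute Frobenius with base change. Writing $p:X_K\to X$ for the projection, we have $F_X\circ p = p\circ F_{X_K}$ as morphisms of schemes, since absolute Frobenius commutes with every morphism. Hence $F_{X_K}^{i*}(E\otimes_k K)=F_{X_K}^{i*}p^*E\cong p^*F_X^{i*}E=(F_X^{i*}E)\otimes_k K$ for all $i\ge 0$. Consequently $\tilde f(E\otimes_k K)\cong \tilde f(E)\otimes_k K$ for every $f\in\mathbb{N}[t]$.

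For (1), the forward direction is immediate from this compatibility. Conversely, suppose $E\otimes_k K\in FF(X_K)$. By Lemma~\ref{FFequiv} there are $m>n>0$ with $F_{X_K}^{m*}(E\otimes_k K)\cong F_{X_K}^{n*}(E\otimes_k K)$, that is, $(F_X^{m*}E)\otimes_k K\cong (F_X^{n*}E)\otimes_k K$. To descend this isomorphism I will use [Proposition~\ref{generalS}, (2)]. This requires $F_X^{m*}E$ and $F_X^{n*}E$ to lie in $\mathcal{C}^{NF}(X)$, which is the main obstacle.

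The Remark after the definition of $\mathcal{C}^F$ gives $FF(X_K)\subseteq\mathcal{C}^{N}(X_K)\subseteq\mathcal{C}^{NF}(X_K)$. Hence $E\otimes_k K$ is numerically flat, and so is each Frobenius pullback of it, because numerical flatness is preserved under pullback by Lemma~\ref{pullbacknef} applied to $E$ and $E^\vee$. Thus $(F_X^{i*}E)\otimes_k K\in\mathcal{C}^{NF}(X_K)$, and [Proposition~\ref{generalS}, (1)] gives $F_X^{i*}E\in\mathcal{C}^{NF}(X)$. Now [Proposition~\ref{generalS}, (2)] yields $F_X^{m*}E\cong F_X^{n*}E$, so $E\in FF(X)$ by Lemma~\ref{FFequiv}.

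If one prefers to avoid the Remark, a fallback is the Noether--Deuring theorem in the Krull--Schmidt form for vector bundles on a proper scheme. This descends isomorphisms of arbitrary vector bundles along $K/k$, using $\Hom$ compatibility with flat base change. I would mention it as an alternative justification.

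For (2), take $E\in\mathcal{C}^F(X)$. By the Remark following Lemma~\ref{tensorFF}, $E\cong F_1/F_2$ with $F_2\hookrightarrow F_1\hookrightarrow F$, where $F_1,F_2\in\mathcal{C}^{NF}(X)$ and $F\in FF(X)$. Base changing gives $F_2\otimes_kK\hookrightarrow F_1\otimes_kK\hookrightarrow F\otimes_kK$, and these stay injective because the quotients are vector bundles and $p$ is flat. By [Proposition~\ref{generalS}, (1)] the first two terms are numerically flat, and by (1) the last lies in $FF(X_K)$. Their quotient $E\otimes_kK\cong(F_1\otimes_kK)/(F_2\otimes_kK)$ is therefore a subquotient in $\mathcal{C}^{NF}(X_K)$ of an object of $FF(X_K)$, so it lies in $\langle FF(X_K)\rangle_{\mathcal{C}^{NF}(X_K)}=\mathcal{C}^F(X_K)$. (The statement's target $\mathcal{C}^F(X)$ is evidently meant to be $\mathcal{C}^F(X_K)$.)

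Part (3) then follows directly from (2) and [Theorem~\ref{fieldgeneral}, (2)].
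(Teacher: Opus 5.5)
Your proposal is correct and is essentially the paper's proof: the same Frobenius--base-change compatibility for the forward direction of (1), Lemma~\ref{FFequiv} together with [Proposition~\ref{generalS}, (1)--(2)] for the converse, the same presentation $F_2\hookrightarrow F_1\hookrightarrow F$ for (2), and Theorem~\ref{fieldgeneral}(2) for (3). You also justify why $F_X^{i*}E\in\mathcal{C}^{NF}(X)$, which the paper only asserts; for that step it is cleaner to note that Frobenius pullbacks of a Frobenius finite bundle are again Frobenius finite, hence numerically flat by the Remark, because the absolute Frobenius of $X_K$ is not proper when $[K:K^p]=\infty$ and so Lemma~\ref{pullbacknef} does not literally apply.
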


\begin{proof}
    (1) If $E\in FF(X)$, then there exists distinct $f,g\in \mathbb{N}(t)$ such that $\tilde{f}(E)\cong \tilde{g}(E)$. Then $$\tilde{f}(E\otimes_k K)\cong \tilde{f}(E)\otimes_k K\cong\tilde{g}(E)\otimes_k K\cong \tilde{g}(E\otimes_k K),$$
    i.e. $E\otimes_k K\in FF(X_K)$.

        Conversely, if $E\otimes_k K\in FF(X_K)$, then there exist integers $m>n>0$ such that
    $$(F_{X}^{m*}E)\otimes_k K\cong F_{X_K}^{m*}(E\otimes_k K)\cong F_{X_K}^{n*}(E\otimes_k K)\cong (F_{X}^{n*}E)\otimes_k K.$$
    then $F_{X}^{m*}E,F_{X}^{n*}E\in\mathcal{C}^{NF}(X)$ by [Proposition~\ref{generalS}, (1)]. By [Proposition~\ref{generalS}, (2)], we have $F_X^{m*}E\cong F_X^{n*}E$, so $E\in FF(X)$.
    
    (2) For any $E\in\mathcal{C}^F(X)$, we have $E\cong F_1/F_2$, where $F_2\hookrightarrow F_1\hookrightarrow F$, $F_2,F_1\in\mathcal{C}^{NF}(X)$ and $F\in FF(X)$. Then $F_1\otimes_k K,F_2\otimes_k K\in\mathcal{C}^{NF}(X)$, $F\otimes_k K\in FF(X_K)$ by (1) and [Proposition~\ref{generalS}, (1)]. It follows that $E\otimes_k K\cong (F_1/F_2)\otimes_k K\cong (F_1\otimes_k K)/(F_2\otimes_k K)\in\mathcal{C}^F(X_K)$.
    
    (3) It follows by (2) and [Theomre~\ref{fieldgeneral}, (2)].
\end{proof}

\begin{Lemma}[{\cite[Lemma~13.2]{Mil80}}]\label{Cartesian}
    Let $k$ be a field of characteristic $p>0$, $f:Y\rightarrow X$ an \'etale morphism of schemes over $k$, $F_X:X\rightarrow X$ and $F_Y:Y\rightarrow Y$ the absolute Frobenius morphisms. Then the following commutative diagram is Cartesian
    \[
        \begin{tikzcd}
            Y\arrow[r,"F_Y"]\arrow[d,"f"]&Y\arrow[d,"f"]\\
            X\arrow[r,"F_Y"]&X
        \end{tikzcd}
    \]
\end{Lemma}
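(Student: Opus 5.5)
The plan is to compare $Y$ with the fibre product $Y' := X\times_{F_X,X,f} Y$ via the relative Frobenius. Since $f\circ F_Y = F_X\circ f$ (absolute Frobenius is functorial for all morphisms of $\mathbb{F}_p$-schemes), the pair $(f, F_Y)$ induces a canonical $X$-morphism
\[
F_{Y/X}: Y\rightarrow Y', \qquad \mathrm{pr}_2\circ F_{Y/X} = F_Y,\quad \mathrm{pr}_1\circ F_{Y/X}=f .
\]
The square in the statement is Cartesian exactly when $F_{Y/X}$ is an isomorphism. So it suffices to show that $F_{Y/X}$ is both étale and a universal homeomorphism. Note that the bottom arrow of the square should be $F_X$.

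\textbf{Étaleness.} The projection $\mathrm{pr}_1:Y'\rightarrow X$ is étale, being a base change of the étale morphism $f$ along $F_X$. Also $\mathrm{pr}_1\circ F_{Y/X}=f$ is étale. A morphism between two schemes étale over a common base is itself étale, so $F_{Y/X}$ is étale.

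\textbf{Universal homeomorphism.} Absolute Frobenius on any $\mathbb{F}_p$-scheme is a universal homeomorphism: it is the identity on the underlying space, integral, and radicial. In particular $F_Y$ is a universal homeomorphism. The projection $\mathrm{pr}_2:Y'\rightarrow Y$ is a base change of $F_X$, hence also a universal homeomorphism. From $\mathrm{pr}_2\circ F_{Y/X}=F_Y$ it follows that $F_{Y/X}$ is a universal homeomorphism: it is surjective and universally injective by the cancellation property for these notions, and it is integral because it is a homeomorphism that is separated over $Y$ and composes with an integral map to an integral map. If needed, integrality can be checked affine-locally, where $F_{Y/X}$ corresponds to $A\otimes_{R,\mathrm{Frob}} R\rightarrow A$, $a\otimes r\mapsto a^p r$, whose image contains $A^p$.

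\textbf{Conclusion.} An étale radicial morphism is an open immersion (EGA IV, 17.9.1). A surjective open immersion is an isomorphism. Hence $F_{Y/X}$ is an isomorphism and the square is Cartesian.

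The main obstacle is the step that $F_{Y/X}$ is a universal homeomorphism, specifically the radicial/universally injective part; the rest is formal. If the cancellation argument feels insufficient, I would fall back on a local computation. Locally $f$ is standard étale, $A=(R[t]/(g))_h$ with $g'$ invertible, and then $A\otimes_{R,\mathrm{Frob}}R\rightarrow A$ is an isomorphism. This holds because $t^p$ satisfies the Frobenius-twisted polynomial $g^{(p)}$. Since $g^{(p)}$ is separable, $R[t^p]/(g^{(p)})$ is a subalgebra of $A$ that is étale and of the same rank over $R$ in the finite case, so the two agree. Equivalently, one can show $t\in R[t^p]$ via Hensel/Newton iteration using the invertibility of $g'$.
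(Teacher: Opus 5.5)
Your proof is correct and is the standard argument: $F_{Y/X}\colon Y\to X\times_{F_X,X,f}Y$ is \'etale because it is a morphism between \'etale $X$-schemes, and it is radicial and surjective by cancellation against the universal homeomorphisms $F_Y$ and $\mathrm{pr}_2$, so it is an isomorphism by EGA IV 17.9.1. The paper gives no proof of its own and only cites Milne; you are also right that the bottom arrow should be $F_X$, and your integrality discussion and the local fallback computation are not needed for the conclusion.
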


\begin{Proposition}\label{separableF}
    Let $k$ be a field of characteristic $p>0$, $K/k$ a finite separable field extension, $X$ a geometrically reduced connected regular scheme proper over $k$, $x_K\in X_K(K)$ lying over $x\in X(k)$, $F_X:X\rightarrow X$ the absolute Frobenius morphism, $p:X_K\rightarrow X$ the projection, $E\in\Vect(X)$. Then
    \begin{enumerate}
        \item $E\in\mathcal{C}^F(X) \text{ iff } E\otimes_k K\in\mathcal{C}^F(X_K)$.
        \item Let $K/k$ be finite Galois, then for any $\mathcal{E}\in\mathcal{C}^{F}(X_K)$ and any automorphism $\tilde{g}:X_K\rightarrow X_K$ induced by $g\in\Gal(K/k)$, we have $\tilde{g}^*\mathcal{E}\in\mathcal{C}^{F}(X_K)$.
        \item For any $\mathcal{E}\in FF(X_K)($resp. $\mathcal{C}^F(X_K))$, we have $p_*\mathcal{E}\in FF(X)($resp. $\mathcal{C}^F(X))$.
        \item The natural homomorphism $\pi^F(X_K,x_K)\rightarrow \pi^F(X,x)_K$ is an isomorphism.
    \end{enumerate}
\end{Proposition}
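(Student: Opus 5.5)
The plan follows the template used for Proposition~\ref{separableS} and Proposition~\ref{separableN}. The core is (3); statements (2), (1) and (4) are then read off from Theorem~\ref{fieldfinitegalois}, Proposition~\ref{pushisofinitesep} and Proposition~\ref{generalF}.

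\emph{Step 1: (3) for $FF$.} Since $K/k$ is finite separable, $p:X_K\to X$ is finite étale. By Lemma~\ref{Cartesian} the square formed by $F_{X_K}$, $F_X$ and $p$ is Cartesian. As $p$ is finite and hence affine, flat base change gives $F_X^{*}p_*\mathcal{E}\cong p_*F_{X_K}^{*}\mathcal{E}$, and iterating,
\[F_X^{m*}p_*\mathcal{E}\cong p_*F_{X_K}^{m*}\mathcal{E}\quad\text{for all } m.\]
If $\mathcal{E}\in FF(X_K)$, Lemma~\ref{FFequiv} supplies $m>n>0$ with $F_{X_K}^{m*}\mathcal{E}\cong F_{X_K}^{n*}\mathcal{E}$. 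Applying $p_*$ gives $F_X^{m*}p_*\mathcal{E}\cong F_X^{n*}p_*\mathcal{E}$, so $p_*\mathcal{E}\in FF(X)$.

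\emph{Step 2: (3) for $\mathcal{C}^F$.} For $\mathcal{E}\in\mathcal{C}^F(X_K)$, write $\mathcal{E}\cong\mathcal{F}_1/\mathcal{F}_2$ with $\mathcal{F}_2\hookrightarrow\mathcal{F}_1\hookrightarrow\mathcal{F}$, where $\mathcal{F}_i\in\mathcal{C}^{NF}(X_K)$ and $\mathcal{F}\in FF(X_K)$. Since $p_*$ is exact, $p_*\mathcal{E}\cong p_*\mathcal{F}_1/p_*\mathcal{F}_2$ with $p_*\mathcal{F}_2\hookrightarrow p_*\mathcal{F}_1\hookrightarrow p_*\mathcal{F}$. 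Here $p_*\mathcal{F}\in FF(X)$ by Step 1, and $p_*\mathcal{F}_i\in\mathcal{C}^{NF}(X)$ by [Proposition~\ref{separableS}, (2)]. Hence $p_*\mathcal{E}\in\mathcal{C}^F(X)$.

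The delicate point is that $\mathcal{C}^F(X)$ is defined as $\langle FF(X)\rangle_{\mathcal{C}^{NF}(X)}$, and the subquotient must be taken inside that category. This is handled by the fact that $\mathcal{C}^{NF}(X)$ is abelian and contains $p_*\mathcal{F}_i$ and $p_*\mathcal{F}$. Regularity of $X$ is used exactly here: I expect it is needed so that Lemma~\ref{Cartesian}, together with the comparison of absolute Frobenius pullback and base change on $X_K$, behaves well. If the étale Cartesian square suffices, regularity is merely harmless.

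\emph{Step 3: (4).} Combine (3) with [Proposition~\ref{pushisofinitesep}, (3)]; the hypothesis $E\otimes_kK\in\mathcal{C}^F(X_K)$ for $E\in\mathcal{C}^F(X)$ is [Proposition~\ref{generalF}, (2)].

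\emph{Step 4: (1).} The forward direction is Proposition~\ref{generalF}. The converse follows from (4) and [Proposition~\ref{pushisofinitesep}, (2)].

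\emph{Step 5: (2).} For $K/k$ Galois, (4) and the implication $(2)\Rightarrow(3)$ of Theorem~\ref{fieldfinitegalois} give (2). Alternatively, argue directly: $\tilde g$ commutes with $F_{X_K}$, so $\tilde g^*$ preserves $FF(X_K)$. It also preserves $\mathcal{C}^{NF}(X_K)$ by [Proposition~\ref{separableS}, (1)], hence preserves subquotients. This direct argument avoids any circularity.

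The main obstacle I anticipate is Step 1: the Frobenius/pushforward base change $F_X^{*}p_*\cong p_*F_{X_K}^{*}$ needs the Cartesian square of Lemma~\ref{Cartesian} for the étale map $p$. I would verify it carefully there, noting that $p$ is étale because $K/k$ is separable.
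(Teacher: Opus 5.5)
Your proposal is correct and follows the paper's proof essentially step for step. Part (3) goes through the Cartesian Frobenius square of Lemma~\ref{Cartesian} and base change, followed by the subquotient argument using [Proposition~\ref{separableS}, (2)]. Parts (4), (1) and (2) are then read off from Proposition~\ref{pushisofinitesep} and Theorem~\ref{fieldfinitegalois}.

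The one real difference is how you justify $F_X^*p_*\mathcal{E}\cong p_*F_{X_K}^*\mathcal{E}$. The paper uses regularity only at this point, to make $F_X$ flat (Kunz's theorem), and then applies flat base change. Your observation that $p$ is finite, hence affine, already gives this isomorphism for an arbitrary base change. So your guess about where regularity enters is off: Lemma~\ref{Cartesian} needs only that $p$ is \'etale, and on your route the regularity hypothesis is never used.
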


\begin{proof}
    (3) If $\mathcal{F}\in FF(X_K)$, there exist integers $m>n>0$ such that $F_{X_K}^{m*}\mathcal{F}\cong F_{X_K}^{n*}\mathcal{F}$ by Lemma~\ref{FFequiv}. Since $K/k$ is finite separable, $p:X_K\rightarrow X$ is finite \'etale. Consider the following Cartesian diagram by Lemma~\ref{Cartesian}
\[
    \begin{tikzcd}
        X_K\arrow[r,"F_{X_K}"]\arrow[d,"p"]& X_K\arrow[d,"p"]\\
        X\arrow[r,"F_X"]&X
    \end{tikzcd}
\]
Since $X$ is regular, we have $F_X$ is flat. Then by cohomology and flat base change, we obtain $$F^*_{X}p_*\mathcal{F}\cong p_*F^*_{X_K}\mathcal{F}.$$
So for any $n\geq 1$, we have
$$\begin{aligned}
    {F^n_X}^*p_*\mathcal{F}&\cong {F^{n-1}_X}^*(F_X^*p_*\mathcal{F})\cong {F^{n-1}_X}^*(p_*F^*_{X_K}\mathcal{F})\cong {F^{n-2}_X}^*(F_X^*p_*F^*_{X_K}\mathcal{F})\cong {F^{n-2}_X}^*(p_*{F_{X_K}^{2*}}\mathcal{F})\cong \cdots\cong p_*{F_{X_K}^{n*}}\mathcal{F}.
\end{aligned}$$
Then we have $F_{X}^{m*}p_*\mathcal{F}\cong p_*F_{X_K}^{m*}\mathcal{F}\cong p_*F_{X_K}^{n*}\mathcal{F}\cong F_{X}^{n*}p_*\mathcal{F}$, i.e. $p_*\mathcal{F}\in FF(X)$.
    
    If $\mathcal{E}\in\mathcal{C}^F(X_K)$, then $\mathcal{E}\cong \mathcal{F}_1/\mathcal{F}_2$, where $\mathcal{F}_2\hookrightarrow \mathcal{F}_1\hookrightarrow \mathcal{F}$, $\mathcal{F}_2,\mathcal{F}_1\in\mathcal{C}^{NF}(X_K)$ and $\mathcal{F}\in FF(X_K)$. Applying the exact functor $p_*$, we obtain $p_*\mathcal{F}_2\hookrightarrow p_*\mathcal{F}_1\hookrightarrow p_*\mathcal{F}$. Since $p_*\mathcal{F}_i\in\mathcal{C}^{NF}(X)$ by [Proposition~\ref{separableS}, (2)] and $p_*\mathcal{F}\in FF(X)$, then we have $p_*\mathcal{E}\cong p_*(\mathcal{F}_1/\mathcal{F}_2)\cong p_*\mathcal{F}_1/p_*\mathcal{F}_2$. It follows that $p_*\mathcal{E}\in\mathcal{C}^F(X)$.

    (4) It follows immediately by (3) and [Proposition~\ref{pushisofinitesep}, (3)].

    (1) It follows immediately by (4) and [Proposition~\ref{pushisofinitesep}, (2)].

    (2) It follows immediately by (4) and Theorem~\ref{fieldfinitegalois}.
\end{proof}

\subsection{Base change of local fundamental group scheme}

\begin{Definition}
    Let $k$ be a field of characteristic $p>0$, $X$ a geometrically reduced connected scheme proper over $k$, $F_X:F\rightarrow F$ the absolute Frobenius morphism, $x\in X(k)$. A vector bundle $E$ of rank $r$ on $X$ is said to be \textit{Frobenius trivial}, if there exists positive integer $n$ such that $F_X^{n*} E\cong \mathcal{O}_X^{\oplus r}$. The full subcategory $\mathcal{C}^{Loc}(X)$ of $\Vect(X)$ whose objects consist of Frobenius trivial bundles on $X$ is a Tannakian category with the fibre functor $|_x$. The Tannaka group scheme $\pi(\mathcal{C}^{Loc}(X),x)$ is called the \textit{local fundamental group scheme}, denoted by $\pi^{Loc}(X,x)$.
\end{Definition}

\begin{Remark}
    Let $k$ be a field of characteristic $p>0$, $X$ a geometrically reduced connected scheme proper over $k$. Then we have a sequence of Tannakian subcategories
    $$\mathcal{C}^{Loc}(X)\subseteq\mathcal{C}^{N}(X)\subseteq\mathcal{C}^{S}(X),\quad\mathcal{C}^{Loc}(X)\subseteq\mathcal{C}^{F}(X)\subseteq\mathcal{C}^{S}(X).$$
\end{Remark}

\begin{Proposition}\label{generalloc}
    Let $k$ be a field of characteristic $p>0$, $K/k$ a field extension, $X$ a geometrically reduced connected scheme proper over $k$, $x_K\in X_K(K)$ lying over $x\in X(k)$, $F_X:X\rightarrow X$ the absolute Frobenius morphism, $E\in\Vect(X)$. Then
    \begin{enumerate}
        \item $E\in\mathcal{C}^{Loc}(X)$ iff $E\otimes_k K\in\mathcal{C}^{Loc}(X_K)$.
        \item There exists a natural homomorphism $\pi^{Loc}(X_K,x_K)\rightarrow \pi^{Loc}(X,x)_K$.
    \end{enumerate}
\end{Proposition}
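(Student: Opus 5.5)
The plan is to prove (1) directly from the compatibility of absolute Frobenius with flat base change, together with the descent statements already established for numerically flat bundles. Then (2) will be a formal consequence of (1) and [Theorem~\ref{fieldgeneral}, (2)].

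The first step is to record that the projection $p:X_K\to X$ satisfies $F_X\circ p=p\circ F_{X_K}$. This holds because absolute Frobenius commutes with every morphism of $\mathbb{F}_p$-schemes. Consequently, for every $n\geq 1$,
$$F_{X_K}^{n*}(E\otimes_k K)=F_{X_K}^{n*}p^*E\cong p^*F_X^{n*}E=(F_X^{n*}E)\otimes_k K.$$
This is the same identity already used in the proof of [Proposition~\ref{generalF}, (1)].

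For the forward direction, suppose $F_X^{n*}E\cong\mathcal{O}_X^{\oplus r}$. Pulling back along $p$ gives $F_{X_K}^{n*}(E\otimes_k K)\cong\mathcal{O}_{X_K}^{\oplus r}$, so $E\otimes_k K\in\mathcal{C}^{Loc}(X_K)$.

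For the converse, suppose $(F_X^{n*}E)\otimes_k K\cong\mathcal{O}_X^{\oplus r}\otimes_k K$. I want to descend this isomorphism, and the natural tool is [Proposition~\ref{generalS}, (2)] (Noether--Deuring), which requires both bundles to lie in $\mathcal{C}^{NF}(X)$. Certainly $\mathcal{O}_X^{\oplus r}\in\mathcal{C}^{NF}(X)$. To see that $F_X^{n*}E$ is numerically flat, I will use [Proposition~\ref{generalS}, (1)]: since $(F_X^{n*}E)\otimes_k K$ is trivial, it is numerically flat on $X_K$, and hence $F_X^{n*}E\in\mathcal{C}^{NF}(X)$. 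Applying [Proposition~\ref{generalS}, (2)] then gives $F_X^{n*}E\cong\mathcal{O}_X^{\oplus r}$, that is, $E\in\mathcal{C}^{Loc}(X)$.

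An alternative at this point would be to argue directly from $\Hom(\mathcal{O}_X^{\oplus r},F_X^{n*}E)\otimes_k K\cong\Hom_{X_K}(\ldots)$: for an infinite field the locus of isomorphisms is a nonempty Zariski-open subset, so it has a $k$-point. However, that argument needs extra care when $k$ is finite, so I will rely on the Noether--Deuring route, which is what the paper set up for this purpose.

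For (2), part (1) gives in particular that $E\otimes_k K\in\mathcal{C}^{Loc}(X_K)$ for every $E\in\mathcal{C}^{Loc}(X)$. [Theorem~\ref{fieldgeneral}, (2)] then supplies the functor $\eta_K^{P_K}:\Rep_K^f(\pi^{Loc}(X,x)_K)\to\mathcal{C}^{Loc}(X_K)$, and with it the homomorphism $\pi^{Loc}(X_K,x_K)\to\pi^{Loc}(X,x)_K$.

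The only genuine obstacle is the descent of triviality in the converse of (1). I will handle it by reducing to numerically flat bundles, so that [Proposition~\ref{generalS}, (1)] and [Proposition~\ref{generalS}, (2)] apply. The remaining verification is that the Frobenius pullback commutes with $\otimes_k K$ as isomorphism classes of bundles; this holds because the identification is an isomorphism of sheaves and does not require any $K$-linearity of $F$.
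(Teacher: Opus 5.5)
Your proposal is correct and follows the paper's proof. Both use the compatibility $F_{X_K}^{n*}(E\otimes_k K)\cong (F_X^{n*}E)\otimes_k K$ for the forward direction, descend the trivialization with [Proposition~\ref{generalS}, (2)] for the converse, and deduce (2) from (1) and [Theorem~\ref{fieldgeneral}, (2)]. The one difference is that you use [Proposition~\ref{generalS}, (1)] to check that $F_X^{n*}E\in\mathcal{C}^{NF}(X)$, which spells out a hypothesis the paper's proof leaves implicit.
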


\begin{proof}
    (1) If $F_X^{n*}E\cong \mathcal{O}_X^{\oplus r}$, then $F_{X_K}^{n*}(E\otimes_k K)\cong (F_X^{n*}E)\otimes_k K\cong \mathcal{O}_{X_K}^{\oplus r}$, so that $E\otimes_k K\in\mathcal{C}^{Loc}(X_K)$.

    Conversely, if $E\otimes_k K\in\mathcal{C}^{Loc}(X_K)$, there exists $n\in\mathbb{N}_+$, s.t. $(F_X^{n*}E)\otimes_k K\cong F_{X_K}^{n*}(E\otimes_k K)\cong \mathcal{O}_X^{\oplus r}\otimes_k K$. Then $F_X^{n*}E\cong \mathcal{O}_X^{\oplus r}$ by [Proposition~\ref{generalS}, (2)], so that $E\in\mathcal{C}^{Loc}(X)$. 

    (2) It follows by (1) and [Theorem~\ref{fieldgeneral}, (2)].
\end{proof}

\begin{Proposition}\label{separableloc}
    Let $k$ be a field of characteristic $p>0$, $K/k$ a separable field extension, $X$ a geometrically reduced connected scheme proper over $k$, $x_K\in X_K(K)$ lying over $x\in X(k)$, $F_X:X\rightarrow X$ the absolute Frobenius morphism, $p:X_K\rightarrow X$ the projection. Then
    \begin{enumerate}
        \item Let $K/k$ be finite Galois, then for any $\mathcal{E}\in\mathcal{C}^{Loc}(X_K)$ and any automorphism $\tilde{g}:X_K\rightarrow X_K$ induced by $g\in\Gal(K/k)$, we have $\tilde{g}^*\mathcal{E}\in\mathcal{C}^{Loc}(X_K)$.
        \item Let $K/k$ be finite separable, then for any $\mathcal{E}\in\mathcal{C}^{Loc}(X_K)$, $p_*\mathcal{E}\in\mathcal{C}^{Loc}(X)$.
        \item The natural homomorphism $\pi^{Loc}(X_K,x_K)\rightarrow \pi^{Loc}(X,x)_K$ is an isomorphism.
        \item Let $k$ be a perfect field, the natural homomorphism $\pi^{Loc}(X_{\bar{k}},x_{\bar{k}})\rightarrow \pi^{Loc}(X,x)_{\bar{k}}$ is an isomorphism.
    \end{enumerate}
\end{Proposition}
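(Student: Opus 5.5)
The plan follows the pattern already used for the S- and Nori fundamental group schemes in Proposition~\ref{separableS} and Proposition~\ref{separableN}: prove (1) and (2) directly, then obtain (3) from Proposition~\ref{pushisofinitesep} and a limit argument, and (4) as a special case.

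\emph{Step (1).} Let $g\in\Gal(K/k)$ and let $\tilde g$ be the induced automorphism of $X_K$. This automorphism commutes with the absolute Frobenius, since $F_{X_K}$ commutes with every morphism of schemes. Hence if $F_{X_K}^{n*}\mathcal{E}\cong\mathcal{O}_{X_K}^{\oplus r}$, then
\[F_{X_K}^{n*}\tilde g^*\mathcal{E}\cong\tilde g^*F_{X_K}^{n*}\mathcal{E}\cong\mathcal{O}_{X_K}^{\oplus r},\]
so $\tilde g^*\mathcal{E}\in\mathcal{C}^{Loc}(X_K)$.

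\emph{Step (2).} For $K/k$ finite Galois, we already have the hypothesis and condition (3)(a) of Theorem~\ref{fieldfinitegalois}, from [Proposition~\ref{generalloc}, (1)]. Condition (3)(b) is Step (1). The implication $(3)\Rightarrow(1)$ of Theorem~\ref{fieldfinitegalois} then gives $p_*\mathcal{E}\in\mathcal{C}^{Loc}(X)$.

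For $K/k$ finite separable, choose a finite Galois extension $M\supseteq K$ of $k$. Write $p_M=p\circ p_M^K$. Then
\[p_*\big({p_M^K}_*{p_M^K}^*\mathcal{E}\big)\cong p_*\big(\mathcal{E}^{\oplus[M:K]}\big)\cong(p_*\mathcal{E})^{\oplus[M:K]}.\]
The left-hand side equals ${p_M}_*{p_M^K}^*\mathcal{E}$, which lies in $\mathcal{C}^{Loc}(X)$ by the Galois case. This uses ${p_M^K}^*\mathcal{E}\in\mathcal{C}^{Loc}(X_M)$, which holds by [Proposition~\ref{generalloc}, (1)] applied to $M/K$. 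Since $\mathcal{C}^{Loc}(X)$ is abelian, a direct summand of an object is again an object, so $p_*\mathcal{E}\in\mathcal{C}^{Loc}(X)$.

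\emph{Step (3).} For finite separable $K/k$ the isomorphism follows from Step (2) and [Proposition~\ref{pushisofinitesep}, (3)].

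For arbitrary separable $K/k$, take $\mathcal{E}\in\mathcal{C}^{Loc}(X_K)$. Since $\mathcal{E}$ is of finite presentation, it descends to a vector bundle $E$ on $X_L$ for some finitely generated subextension $L\subseteq K$, with $\mathcal{E}\cong E\otimes_LK$. By [Proposition~\ref{generalloc}, (1)], $E\in\mathcal{C}^{Loc}(X_L)$. Granting the reduction to finite separable $L$, the finite case gives $\pi^{Loc}(X_L,x_L)\cong\pi^{Loc}(X,x)_L$. Then [Theorem~\ref{fieldgeneral}, (4)] provides $F\in\mathcal{C}^{Loc}(X)$ with $E\hookrightarrow F\otimes_kL$. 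Tensoring with $K$ gives $\mathcal{E}\hookrightarrow F\otimes_kK$, and [Theorem~\ref{fieldgeneral}, (4)] concludes.

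\emph{Main obstacle.} The key step is choosing $L$ finite separable over $k$. A finitely generated separable extension is only separably generated, not algebraic, so "finite separable" cannot be assumed outright. Following the paper's earlier arguments, I would first try to descend inside the relative separable algebraic closure of $k$ in $K$. If that fails, I would instead spread $E$ and its Frobenius trivialization over a smooth $k$-variety $T$ with function field $L$, and specialize at a separable closed point $t\in T$. Specialization keeps Frobenius triviality, and the finite residue field $k(t)$ is separable over $k$. The finite case then applies to $E_t$, and the resulting embedding into some $F\otimes_k k(t)$ must be lifted back to the generic point of $T$. This lifting is the delicate part.

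\emph{Step (4).} When $k$ is perfect, $\bar k/k$ is separable, so (4) follows from (3).
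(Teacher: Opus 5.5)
Your Steps (1) and (2), the finite separable case of (3), the descent followed by two applications of Theorem~\ref{fieldgeneral}(4) in the general case, and the deduction of (4) all match the paper's proof almost line by line. The only divergence is the obstacle you flag, and there your diagnosis is off in both directions.

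First, for the statement the paper intends, there is nothing to grant. The paper simply asserts that $\mathcal{E}$ is defined over some finite separable $L\subseteq K$. This is immediate when $K/k$ is algebraic. The sheaf $\mathcal{E}$ descends to $X_A$ for some finitely generated $k$-subalgebra $A\subseteq K$. If $K/k$ is algebraic, $A$ is a field, finite over $k$, and separable because $K/k$ is. That is also the only reading under which the statement is true. The paper's own Lemma~\ref{Locnobasechange} and Corollary~\ref{5.47} concern extensions of algebraically closed fields, and such extensions are separable, so ``separable'' must mean ``separable algebraic''. The algebraic case is also all that (4) needs.

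Second, if you admit transcendental separable extensions, the gap cannot be closed by any method, so both of your fallbacks must fail. Take $k$ algebraically closed, so that every extension of $k$ is separable. Take $X$ with $|S(X,r_0,t_0)|=\infty$ as in Lemma~\ref{Locnobasechange}, for instance Pauly's curves. That lemma then says the base change map is not an isomorphism for algebraically closed $K\supsetneq k$, so (3) is false there.

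Concretely, let $\mathcal{E}_\eta$ be the generic member, over $K=k(T)$, of a non-isotrivial family of stable bundles in $S(X,r_0,t_0)$; it is Frobenius-trivial on $X_K$. Suppose $\mathcal{E}_\eta\hookrightarrow F\otimes_k K$ with $F\in\mathcal{C}^{Loc}(X)$. Over $\bar K$, the stable bundle $\mathcal{E}_\eta\otimes_K\bar K$ would be isomorphic to $F_i\otimes_k\bar K$ for some stable Jordan--H\"older factor $F_i$ of $F$; these factors stay stable because $k=\bar k$. This contradicts non-isotriviality.

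So the embedding $E_t\hookrightarrow F\otimes_k k(t)$ at a closed point does not merely lift delicately to the generic point; it cannot lift at all. Descent to the relative algebraic closure of $k$ in $K$, which is $k$ itself here, fails for the same reason. The correct repair is to restrict (3) to algebraic separable $K/k$, not to look for a specialization argument.
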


\begin{proof}
    (1) Let $g\in \Gal(K/k)$, $\tilde{g}:X_K\rightarrow X_K$ an automorphism induced by $g$. Consider the following commutative diagram
    \[
        \begin{tikzcd}
            X_K\arrow[r,"F_{X_K}"]\arrow[d,"g"]& X_K\arrow[d,"g"]\\
            X_K\arrow[r,"F_{X_K}"]&X_K
        \end{tikzcd}
    \]
    Then since there exists positive integer $n$ such that $F_{X_K}^{n*}\mathcal{E}\cong \mathcal{O}_{X_K}^{\oplus r}$, we have 
    $$F_{X_K}^{n*}\tilde{g}^*\mathcal{E}\cong \tilde{g}^*F_{X_K}^{n*}\mathcal{E}\cong \tilde{g}^*\mathcal{O}_{X_K}^{\oplus r}\cong \mathcal{O}_{X_K}^{\oplus r},$$
    i.e. $\tilde{g}^*\mathcal{E}\in\mathcal{C}^{Loc}(X)$.

    (2) If $K/k$ is finite Galois, then $p_*\mathcal{E}\in\mathcal{C}^{Loc}(X)$ by (1), [Proposition~\ref{generalloc}, (1)] and Theorem~\ref{fieldfinitegalois}.

    If $K/k$ is finite separable. There exists a finite Galois extension $M/K/k$ such that we have the following commutative diagram
    \[
        \begin{tikzcd}
            X_M \arrow[rd,"p_M" swap]\arrow[r,"p_M^K"]& X_K\arrow[d,"p"]\\
            &X
        \end{tikzcd}
    \]
    For any $\mathcal{E}\in\mathcal{C}^{Loc}(X_K)$, we have ${p_M}_*({p_M^K}^*\mathcal{E})\cong p_*{p_M^K}_*({p_M^K}^*\mathcal{E})\cong p_*({p_M^K}_*{p_M^K}^*\mathcal{E})$.
    Since both $M/K$ and $M/k$ are finite Galois and ${p_M^K}^*\mathcal{E}\in\mathcal{C}^{Loc}(X_M)$ by [Proposition~\ref{generalloc}, (1)], we have ${p_M}_*{p_M^K}^*\mathcal{E}\in\mathcal{C}^{Loc}(X)$ and ${p_M^K}_*{p_M^K}^*\mathcal{E}\cong \mathcal{E}\otimes_K M\cong \mathcal{E}^{\oplus[M:K]}\in\mathcal{C}^{Loc}(X_K)$. Then
    $$p_*{p_M^K}_*{p_M^K}^*\mathcal{E}\cong p_*(\mathcal{E}^{\oplus [M:K]})\cong (p_*\mathcal{E})^{\oplus [M:K]}\in\mathcal{C}^{Loc}(X).$$
    It follows that $p_*\mathcal{E}\in\mathcal{C}^{Loc}(X)$.
    
    (3) If $K/k$ is finite separable, then the natural homomorphism $\pi^{Loc}(X_K,x_K)\rightarrow \pi^{Loc}(X,x)_K$ is an isomorphism by (2) and Proposition~\ref{pushisofinitesep}.

    If $K/k$ is separable. For any $\mathcal{E}\in \mathcal{C}^{Loc}(X_K)$, since $\mathcal{E}$ is a vector bundle, there exists finite separable extension $L/k$ contained in $K$ and $E\in\Vect(X_L)$ such that $\mathcal{E}\cong E\otimes_L K$, by [Proposition~\ref{generalloc}, (1)], we have $E\in\mathcal{C}^{Loc}(X_L)$. Since $L/k$ is finite separable, we have $\pi^{Loc}(X_L,x_L)\cong \pi^{Loc}(X,x)_L$, then
    $$E\in\mathcal{C}^{Loc}(X_L)=\Rep_L^f(\pi^{Loc}(X,x)_L).$$
    By [Theorem~\ref{fieldgeneral}, (4)], there exists $F\in\mathcal{C}^{Loc}(X)$ such that $E\hookrightarrow F\otimes_k L$ and thus 
    $$\mathcal{E}\cong E\otimes_L K\hookrightarrow (F\otimes_k L)\otimes_L K\cong F\otimes_k K.$$
    Hence by [Theorem~\ref{fieldgeneral}, (4)], the natural homomorphism $\pi^{Loc}(X_K,x_K)\rightarrow \pi^{Loc}(X,x)_K$ is an isomorphism.

    (4) Since $k$ is perfect, $\bar{k}$ is a separable extension of $k$, then it follows by (3).
\end{proof}

\subsection{Base change of \'etale fundamental group scheme}

\begin{Definition}
    Let $k$ be a field, $X$ a geometrically reduced connected scheme proper over $k$, $x\in X(k)$. A vector bundle $E$ of rank $r$ on $X$ is said to be \textit{\'etale trivializable}, if there exists a finite \'etale covering $\phi: P\rightarrow X$ such that $\phi^* E\cong \mathcal{O}_P^{\oplus r}$. The full subcategory $\mathcal{C}^{\acute{e}t}(X)$ of $\Vect(X)$ whose objects consist of \'etale trivializable bundles on $X$ is a Tannakian category with fibre functor $|_x$. The Tannaka group scheme $\pi(\mathcal{C}^{\acute{e}t}(X),x)$ is called the \textit{\'etale fundamental group scheme}, denoted by $\pi^{\acute{e}t}(X,x)$.
\end{Definition}

If $k$ is algebraically closed, then we have $\pi^{\acute{e}t}_1(X,x)\cong \pi^{\acute{e}t}(X,x)(k)$.

\begin{Remark}
    Let $k$ be a field, $X$ a geometrically reduced connected scheme proper over $k$, $x\in X(k)$. We have a sequence of Tannakian subcategories 
    $$\mathcal{C}^{\acute{e}t}(X)\subseteq\mathcal{C}^{N}(X)\subseteq\mathcal{C}^{S}(X).$$
    If $k$ is of characteristic $p>0$, then we have $\mathcal{C}^{\acute{e}t}(X)\cap \mathcal{C}^{Loc}(X)=\{\text{trivial vector bundles on }X\}$.
\end{Remark}

\begin{Lemma}[{\cite[Lemma~5.6.2]{Sza09}}]\label{descentgaloiscovering}
    Let $k$ be a field, $k^{\sep}/k$ the separable closure in the algebraic closure $\bar{k}/k$, $X$ a geometrically reduced connected scheme proper over $k$, $\pi:P\rightarrow X_{k^{\sep}}$ a finite \'etale covering. Then there exists a finite separable extension $L/k$ in $k^{\sep}$ and a finite \'etale covering $\pi_L:Q\rightarrow X_L$ such that $P\cong Q_K$.
\end{Lemma}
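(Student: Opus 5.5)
The statement to prove is the descent lemma for finite étale coverings: a finite étale covering $\pi:P\rightarrow X_{k^{\sep}}$ descends to a finite étale covering $\pi_L:Q\rightarrow X_L$ over some finite separable subextension $L/k$ of $k^{\sep}$, with $P\cong Q\times_{X_L}X_{k^{\sep}}$ (the paper's ``$Q_K$'' should be read with $K=k^{\sep}$). The plan is a standard limit argument à la EGA IV, \S8. Write $k^{\sep}=\varinjlim L$ over the filtered system of finite separable subextensions $L/k$. Then $X_{k^{\sep}}=\varprojlim X_L$ is a limit of quasi-compact quasi-separated schemes. The transition maps $X_{L'}\rightarrow X_L$ are finite, hence affine. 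All the $X_L$ are of finite type over $k$, since $X$ is proper.

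The first step is to descend $P$ as a scheme with a finite morphism. The morphism $\pi$ is finite, so $P=\Spec_{X_{k^{\sep}}}\mathcal{A}$ for a finitely presented, locally free $\mathcal{O}_{X_{k^{\sep}}}$-algebra $\mathcal{A}$; local freeness holds because finite étale maps are flat and finitely presented. By EGA IV, Théorème 8.5.2, the category of finitely presented quasi-coherent sheaves on $X_{k^{\sep}}$ is the colimit of the corresponding categories on the $X_L$. So there exist $L$ and a finitely presented sheaf $\mathcal{A}_L$ on $X_L$ with $\mathcal{A}_L\otimes_L k^{\sep}\cong\mathcal{A}$. The multiplication, unit, associativity and commutativity are finitely many morphisms and identities between finitely presented sheaves. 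After enlarging $L$, they descend as well, so $\mathcal{A}_L$ becomes an $\mathcal{O}_{X_L}$-algebra. We then set $Q:=\Spec_{X_L}\mathcal{A}_L$; by construction $Q\times_{X_L}X_{k^{\sep}}\cong P$.

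The second step is to ensure that $Q\rightarrow X_L$ is finite étale after possibly enlarging $L$. By EGA IV, Théorème 11.2.6 and Proposition 8.10.5, the properties flat, finite, unramified and étale descend along such limits. Concretely, $\mathcal{A}_L$ is locally free at some finite stage: there is a coherent locus where it fails to be locally free, and that locus becomes empty at a finite stage by quasi-compactness. The same argument handles the unramified condition, since $\Omega_{Q/X_L}$ is coherent and its support pulls back to empty in the limit, hence is empty at some finite stage.

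Alternatively, one can avoid the full limit machinery by exploiting that $L\rightarrow k^{\sep}$ is faithfully flat. In that approach, (non-)vanishing of the coherent sheaves $\Omega_{Q/X_L}$ and of the non-flat locus can be checked after the faithfully flat base change $X_{k^{\sep}}\rightarrow X_L$, once we know $Q_{k^{\sep}}\cong P$. In fact flatness and unramifiedness then follow directly by fpqc descent of these properties, with no further enlargement of $L$. So the only genuine enlargement is in the first step.

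The main obstacle is that first step: descending the algebra structure rather than just the sheaf. I would handle it by presenting $\mathcal{A}$ on a finite affine open cover of $X$ together with the gluing data. Each coefficient lies in $\mathcal{O}(U)\otimes_k k^{\sep}$ and therefore in $\mathcal{O}(U)\otimes_k L$ for some finite $L$. Since only finitely many coefficients and relations are involved, a single finite separable $L$ suffices, and this gives the required $Q$. Connectedness or geometric reducedness of $X$ is not needed for this argument.
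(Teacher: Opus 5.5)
Your argument is correct. The paper gives no proof of its own here; it only cites \cite[Lemma~5.6.2]{Sza09}, and your proof is the standard EGA~IV \S 8 finite-presentation/limit argument behind that reference: you descend the finitely presented algebra $\pi_*\mathcal{O}_P$, with its structure maps and gluing data, to some finite $L$, then get flatness and unramifiedness of $Q\to X_L$ by faithfully flat descent along $X_{k^{\sep}}\to X_L$. You are right to read $Q_K$ as $Q\times_{X_L}X_{k^{\sep}}$. The one slip is bibliographic: the limit statement for unramified and \'etale morphisms is EGA~IV~17.7.8, not 8.10.5, but your faithfully flat descent argument already makes that citation unnecessary.
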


The following statements may be well-known, but the author is unaware of a suitable reference. Therefore, for convenience, we include the proofs here.

\begin{Proposition}\label{generalet}
    Let $k$ be a field, $K/k$ a field extension, $X$ a geometrically reduced connected scheme proper over $k$, $x_K\in X_K(K)$ lying over $x\in X(k)$. Then
    \begin{enumerate}
        \item If $E\in\mathcal{C}^{\acute{e}t}(X)$, then $E\otimes_k K\in\mathcal{C}^{\acute{e}t}(X_K)$.
        \item There exists a natural homomorphism $\pi^{\acute{e}t}(X_K,x_K)\rightarrow \pi^{\acute{e}t}(X,x)_K.$
    \end{enumerate}
\end{Proposition}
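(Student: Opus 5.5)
For part (1), I would base change the trivializing étale cover. Let $E\in\mathcal{C}^{\acute{e}t}(X)$ have rank $r$, and let $\phi:P\rightarrow X$ be a finite étale covering with $\phi^*E\cong\mathcal{O}_P^{\oplus r}$. Base change along $\Spec K\rightarrow\Spec k$ gives $\phi_K:P_K\rightarrow X_K$. Finite étale morphisms are stable under base change, so $\phi_K$ is again finite étale. It is surjective because $\phi$ is. By commutativity of pullbacks along the Cartesian square, we get
\[\phi_K^*(E\otimes_k K)\cong(\phi^*E)\otimes_k K\cong \mathcal{O}_{P}^{\oplus r}\otimes_k K\cong\mathcal{O}_{P_K}^{\oplus r}.\]
Hence $E\otimes_k K$ is trivialized by a finite étale covering of $X_K$, so $E\otimes_k K\in\mathcal{C}^{\acute{e}t}(X_K)$.

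The main point to check is the meaning of ``covering'' in the definition of $\mathcal{C}^{\acute{e}t}$. If a covering is required to be connected, then $P_K$ may fail to be connected, since $P$ need not be geometrically connected. In that case I would replace $P_K$ by a connected component $P'$ of $P_K$. Since $P_K\rightarrow X_K$ is finite étale, each connected component is open and closed, so $P'\rightarrow X_K$ is still finite étale. It is also surjective, because $X_K$ is connected and the image is open and closed; here connectedness of $X_K$ follows from the standing hypotheses ($X$ geometrically reduced, connected and proper, with a rational point). The restriction of a trivial bundle to $P'$ is trivial, so $P'$ trivializes $E\otimes_k K$. If connectedness is not required, no extra work is needed. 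Either way, $\phi_K$ is a legitimate trivializing cover.

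Part (2) is then immediate. Part (1) verifies exactly the hypothesis ``for any $E\in\mathcal{C}^{\acute{e}t}(X)$, $E\otimes_k K\in\mathcal{C}^{\acute{e}t}(X_K)$'' of [Theorem~\ref{fieldgeneral}, (2)], applied with $\mathcal{C}_X=\mathcal{C}^{\acute{e}t}(X)$ and $\mathcal{C}_{X_K}=\mathcal{C}^{\acute{e}t}(X_K)$. That theorem yields the natural homomorphism $\pi^{\acute{e}t}(X_K,x_K)\rightarrow\pi^{\acute{e}t}(X,x)_K$.
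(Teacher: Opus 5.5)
Your proposal is correct and matches the paper's proof: base change the trivializing finite \'etale cover $P\to X$ to $P_K\to X_K$, use $\phi_K^*(E\otimes_k K)\cong(\phi^*E)\otimes_k K\cong\mathcal{O}_{P_K}^{\oplus r}$, and then apply Theorem~\ref{fieldgeneral}~(2) for part (2). Your remark about passing to a connected component of $P_K$ is a harmless extra safeguard that the paper does not address; it is valid because $X_K$ is connected, since $X$ is connected with a $k$-rational point. You also correctly write $\mathcal{O}_{P_K}^{\oplus r}$, where the paper has the typo $\mathcal{O}_{X_K}^{\oplus n}$.
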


\begin{proof}
    (1) Since $E\in\mathcal{C}^{\acute{e}t}(X)$, there exists a finite \'etale covering $\pi:P\rightarrow X$ such that $\pi^*E\cong \mathcal{O}_P^{\oplus n}$. It follows that $\pi_K:P_K\rightarrow X_K$ is also a finite \'etale covering and 
    $(\pi_K)^*(E\otimes_k K)\cong \pi^* E\otimes_k K\cong \mathcal{O}_{X_K}^{\oplus n},$
    i.e. $E\otimes_k K\in\mathcal{C}^{\acute{e}t}(X_K)$.

    (2) It follows by (1) and Theorem~\ref{fieldgeneral}.
\end{proof}

\begin{Proposition}\label{separableet}
    Let $k$ be a field, $K/k$ a separable extension, $X$ a geometrically reduced connected scheme proper over $k$, $p:X_K\rightarrow X$ the projection, $E\in \Vect(X)$. Then
    \begin{enumerate}
        \item $E\in\mathcal{C}^{\acute{e}t}(X)$ iff $E\otimes_k K\in\mathcal{C}^{\acute{e}t}(X_K)$.
        \item Let $K/k$ be finite Galois, then for any $\mathcal{E}\in\mathcal{C}^{\acute{e}t}(X_K)$ and any automorphism $\tilde{g}:X_K\rightarrow X_K$ induced by $g\in\Gal(K/k)$, we have $\tilde{g}^*\mathcal{E}\in\mathcal{C}^{\acute{e}t}(X_K)$.
        \item If $K/k$ is finite separable and $\mathcal{E}\in\mathcal{C}^{\acute{e}t}(X_K)$, then $p_*\mathcal{E}\in\mathcal{C}^{\acute{e}t}(X)$.
        \item The natural homomorphism $\pi^{\acute{e}t}(X_K,x_K)\rightarrow \pi^{\acute{e}t}(X,x)_K$ is an isomorphism.
        \item Let $k$ be a perfect field, then the natural homomorphism $\pi^{\acute{e}t}(X_{\bar{k}},x_{\bar{k}})\rightarrow \pi^{\acute{e}t}(X,x)_{\bar{k}}$ is an isomorphism.
    \end{enumerate}
\end{Proposition}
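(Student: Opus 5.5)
The plan follows the template already used for $\mathcal{C}^{NF}$, $\mathcal{C}^N$ and $\mathcal{C}^{Loc}$. I prove (2), then the Galois case of (3), then (1) and (4), then the general finite separable case of (3), and finally (5).

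\textbf{Step (2).} Let $\mathcal{E}\in\mathcal{C}^{\acute{e}t}(X_K)$ be trivialized by a finite \'etale covering $\psi:Q\to X_K$. Then $\tilde{g}^*\mathcal{E}$ is trivialized by the pullback covering $Q\times_{X_K,\tilde{g}}X_K\to X_K$. Since $\tilde{g}$ is an automorphism, this pullback is still finite \'etale. Hence $\tilde{g}^*\mathcal{E}\in\mathcal{C}^{\acute{e}t}(X_K)$.

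\textbf{Step (3), $K/k$ finite Galois.} Write $p^*p_*\mathcal{E}\cong\bigoplus_{g}\tilde{g}^*\mathcal{E}$, as in the proof of Theorem~\ref{fieldfinitegalois}. This bundle lies in $\mathcal{C}^{\acute{e}t}(X_K)$ by Step (2). We can also argue directly. The composite $Q\xrightarrow{\psi}X_K\xrightarrow{p}X$ is finite \'etale, since $p$ is finite \'etale when $K/k$ is finite separable. Then $p_*\mathcal{E}$ is trivialized after pulling back to a Galois closure $R\to X$ of $p\circ\psi$. The reason is that $R\times_X X_K$ splits as a disjoint union of copies of $R$ mapping to $X_K$ through conjugates of $\psi$. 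So $(p_*\mathcal{E})|_R$ is a sum of pullbacks of the bundles $\tilde{g}^*\mathcal{E}$ to $R$, and each of these is trivial once $R$ dominates the conjugate coverings. I expect this direct argument to be cleaner than going through Theorem~\ref{fieldfinitegalois}, because it avoids needing (1) first. It also works for any finite separable $K/k$.

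\textbf{Steps (1) and (4).} The forward direction of (1) is Proposition~\ref{generalet}. By Step (3) and [Proposition~\ref{pushisofinitesep}, (3)], the homomorphism $\pi^{\acute{e}t}(X_K,x_K)\to\pi^{\acute{e}t}(X,x)_K$ is an isomorphism for finite separable $K$. [Proposition~\ref{pushisofinitesep}, (2)] then gives (1) for finite separable $K$.

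For arbitrary separable $K$, I use the same limit argument as in Proposition~\ref{separableloc}. Let $\mathcal{E}\in\mathcal{C}^{\acute{e}t}(X_K)$ be trivialized by $\psi:Q\to X_K$. By a standard limit argument, or by Lemma~\ref{descentgaloiscovering} when $K\subseteq k^{\sep}$, both $\psi$ and $\mathcal{E}$ descend to some subextension $L\subseteq K$ that is finitely generated over $k$. The trivialization also descends, possibly after enlarging $L$.

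The main obstacle is this descent step. $L$ need not be algebraic over $k$, so it need not be finite separable. I plan to follow the paper's convention in the S, N and Loc cases, where a finite separable $L$ is taken. Concretely, I would use that $L$ is separably generated over $k$: specialize the transcendental parameters to a $k^{\sep}$-point, which is possible because $X$ is proper and the data are of finite type. This reduces the problem to a finite separable $L$ and an $E\in\mathcal{C}^{\acute{e}t}(X_L)$ with $\mathcal{E}\cong E\otimes_L K$.

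Once the reduction is done, (4) for finite $L$ and [Theorem~\ref{fieldgeneral}, (4)] give $F\in\mathcal{C}^{\acute{e}t}(X)$ with $E\hookrightarrow F\otimes_k L$. Therefore $\mathcal{E}\hookrightarrow F\otimes_k K$, and [Theorem~\ref{fieldgeneral}, (4)] yields (4) in general. The converse of (1) in general then follows from (4) in the same manner as [Proposition~\ref{pushisofinitesep}, (2)]. Alternatively, it follows from Proposition~\ref{iffdescent} applied with $\mathcal{C}'=\mathcal{C}^{NF}$, using Proposition~\ref{generalS} and Proposition~\ref{separableS}.

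\textbf{Step (5).} When $k$ is perfect, $\bar{k}/k$ is separable, so (5) is a special case of (4).
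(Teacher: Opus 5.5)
For finite separable $K$ your argument is correct, and it is organised differently from the paper's. The paper proves (1) first: it descends the trivialising cover of $E\otimes_k K$ to some $X_L$ with $L/k$ finite separable (Lemma~\ref{descentgaloiscovering}) and then applies Noether--Deuring (Proposition~\ref{generalS}(2)). It then gets (3) from (1), (2) and Theorem~\ref{fieldfinitegalois}, using a Galois closure for non-Galois $K$, and gets (4) from Proposition~\ref{pushisofinitesep}. You prove (3) directly and deduce (4) and then (1) from Proposition~\ref{pushisofinitesep}(3),(2), so you never descend covers.

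Your direct argument needs one detail made explicit. Take $R$ connected. Then $H^0(R,\mathcal{O}_R)$ is Galois over $H^0(X,\mathcal{O}_X)=k$ (this uses the rational point), so it contains the normal closure of $K$. Hence $R\times_k K$ is a disjoint union of copies of $R$ whose maps to $X_K$ are $r\circ\gamma$ with $\gamma\in\mathrm{Aut}(R/X)$. Each summand $\gamma^*r^*\mathcal{E}$ is then trivial. Say this rather than ``conjugates of $\psi$'', which has no meaning when $K/k$ is not Galois.

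The genuine gap is the transcendental case. Specialising the parameters of $L$ at a point with residue field $L_0$ gives some bundle $E_0$ on $X_{L_0}$. But nothing relates $E_0\otimes K$ to $\mathcal{E}$; specialisation changes the bundle, and $L_0$ need not even embed in $K$.

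Worse, the reduction you aim for is false. Let $k$ be algebraically closed of characteristic $p$, so $k$ has no proper finite separable extensions. Let $\rho:R\to X$ be a connected Galois \'etale cover with group $G=\langle\sigma_1,\sigma_2\rangle\cong(\mathbb{Z}/p)^2$ (e.g.\ $X$ a product of two ordinary elliptic curves), and take $K=k(t)$. Let $\mathcal{E}_t$ be the bundle on $X_K$ built from $R_K$ and the representation $V_t$ given by $\sigma_1\mapsto\left(\begin{smallmatrix}1&1\\0&1\end{smallmatrix}\right)$, $\sigma_2\mapsto\left(\begin{smallmatrix}1&t\\0&1\end{smallmatrix}\right)$. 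Then $\mathcal{E}_t\in\mathcal{C}^{\acute{e}t}(X_K)$, but $\mathcal{E}_t\not\cong E\otimes_k K$ for every $E\in\Vect(X)$. Such an $E$ would be trivialised by $R$, hence come from some $W\in\Rep_k(G)$ with $W\otimes_k K\cong V_t$. However, the point of $\mathbb{P}^1$ given by $(\sigma_1-1,\sigma_2-1)\colon V/V^G\to V^G$ lies in $\mathbb{P}^1(k)$ for $W\otimes_k K$ but equals $[1:t]$ for $V_t$.

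The paper's proof makes the same move (``since $\mathcal{E}$ is a vector bundle, there exists a finite separable $L/k$ contained in $K$\ldots''), which is valid only for algebraic $K$. That is evidently the intended reading: extensions of algebraically closed fields are separable, so otherwise Proposition~\ref{containSnobasechange} would contradict Proposition~\ref{separableS}(3).

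For algebraic $K$ your limit argument already gives a finite separable $L\subseteq K$, so drop the specialisation. Also do the converse of (1) for infinite algebraic $K$ by descending the trivialising cover to such an $L$, since the argument of Proposition~\ref{pushisofinitesep}(2) uses $[K:k]<\infty$.

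If you also want transcendental $K$ (where (4) does hold for $\pi^{\acute{e}t}$), descend the cover rather than the bundle:
\begin{itemize}
\item[(i)] First, $\mathcal{E}\hookrightarrow\psi_*\psi^*\mathcal{E}\cong(\psi_*\mathcal{O}_Q)^{\oplus n}$.
\item[(ii)] By invariance of the geometric \'etale fundamental group of a proper scheme (Lemma~\ref{etalegroupalgclosed} with Lemma~\ref{descentgaloiscovering}), there exist a finite separable $L/k$, a finite \'etale $R_L\to X_L$, and a finite separable $K'/K$ containing $L$ with $Q_{K'}\cong R_L\times_L K'$ over $X_{K'}$.
\item[(iii)] Pushing forward along $X_{K'}\to X_K$ makes $\psi_*\mathcal{O}_Q$ a direct summand of $(F\otimes_k K)^{\oplus m}$, where $F\in\mathcal{C}^{\acute{e}t}(X)$ is the pushforward of $\mathcal{O}_{R_L}$ to $X$.
\item[(iv)] Theorem~\ref{fieldgeneral}(4) then gives the isomorphism.
\end{itemize}
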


\begin{proof}
    (1) If $E\in \mathcal{C}^{\acute{e}t}(X)$, then $E\otimes_k K\in\mathcal{C}^{\acute{e}t}(X_K)$ follows by Proposition~\ref{generalet}.
    
    Conversely, if $E\otimes_k K\in\mathcal{C}^{\acute{e}t}(X_K)$, then there exists a finite \'etale covering $\pi_K:P\rightarrow X_K$ such that ${\pi_K}^*(E\otimes_k K)\cong \mathcal{O}_{P}^{\oplus n}$. But Lemma~\ref{descentgaloiscovering} implies that there exists a finite separable extension $L/k$ contained in $K$ and a finite \'etale covering $\pi_L:Q\rightarrow X_L$ such that $P\cong Q_K$. Consider the commutative diagram
    \[
        \begin{tikzcd}
            P\arrow[r,"\pi_K"]\arrow[d]& X_K\arrow[d]\\
            Q\arrow[r,"\pi_L"]&X_L
        \end{tikzcd}
    \]
    Then we have
    $$\pi_L^*(E\otimes_k L)\otimes_L K\cong \pi_K^*(E\otimes_k K)\cong \mathcal{O}_P^{\oplus n}\cong (\mathcal{O}_Q^{\oplus n})\otimes_L K\in\mathcal{C}^{NF}(P),$$ 
    and $\pi_L^*(E\otimes_k L)\in \mathcal{C}^{NF}(Q)$ by [Proposition~\ref{generalS}, (1)]. So $\pi_L^*(E\otimes_k L)\cong \mathcal{O}_Q^{\oplus n}$ by [Proposition~\ref{generalS}, (2)]. The composition $\pi:Q\xrightarrow{\pi_L}X_L\xrightarrow{p_L} X$ is finite \'etale and $\pi^*E\cong \pi_L^*p_L^* E\cong \pi_L^*(E\otimes_k L)\cong  \mathcal{O}_{Q}^{\oplus n}$. Hence $E\in\mathcal{C}^{\acute{e}t}(X)$. 

    (2) Let $g\in \Gal(K/k)$, $\tilde{g}:X_K\rightarrow X_K$ an automorphism induced by $g$. There exists finite \'etale covering $\pi_K:P\rightarrow X_K$ such that $\pi_K^*\mathcal{E}\cong \mathcal{O}_{P}^{\oplus n}$. Consider the following Cartesian diagram
    \[
        \begin{tikzcd}
            P\times_{X_K} X_K\arrow[r,"\pi"]\arrow[d,"\bar{g}"]&X_K\arrow[d,"\tilde{g}"]\\
            P\arrow[r,"\pi_K"]&X_K
        \end{tikzcd}
    \]
    Then $\pi:P\times_{X_K} X_K\rightarrow X_K$ is a finite \'etale covering and $\pi^* \tilde{g}^*\mathcal{E}\cong \bar{g}^*\pi_K^*\mathcal{E}\cong \mathcal{O}_{P\times_{X_K} X_K}^{\oplus n}$, hence $\tilde{g}^*\mathcal{E}\in\mathcal{C}^{\acute{e}t}(X_K)$.

    (3) If $K/k$ is finite Galois, then $p_*\mathcal{E}\in\mathcal{C}^{\acute{e}t}(X)$ by (1), (2) and Theorem~\ref{fieldfinitegalois}.

    If $K/k$ is finite separable. There exists a finite Galois extension $M/K/k$ such that we have the following commutative diagram
    \[
        \begin{tikzcd}
            X_M \arrow[rd,"p_M" swap]\arrow[r,"p_M^K"]& X_K\arrow[d,"p"]\\
            &X
        \end{tikzcd}
    \]
    For any $\mathcal{E}\in\mathcal{C}^{\acute{e}t}(X_K)$, we have ${p_M}_*({p_M^K}^*\mathcal{E})\cong p_*{p_M^K}_*({p_M^K}^*\mathcal{E})\cong p_*({p_M^K}_*{p_M^K}^*\mathcal{E})$.
    Since both $M/K$ and $M/k$ are finite Galois and ${p_M^K}^*\mathcal{E}\in\mathcal{C}^{\acute{e}t}(X_M)$ by [Proposition~\ref{generalet}, (1)], we have ${p_M}_*{p_M^K}^*\mathcal{E}\in\mathcal{C}^{\acute{e}t}(X)$ and ${p_M^K}_*{p_M^K}^*\mathcal{E}\cong \mathcal{E}\otimes_K M\cong \mathcal{E}^{\oplus[M:K]}\in\mathcal{C}^{\acute{e}t}(X_K)$. Then
    $$p_*{p_M^K}_*{p_M^K}^*\mathcal{E}\cong p_*(\mathcal{E}^{\oplus [M:K]})\cong (p_*\mathcal{E})^{\oplus [M:K]}\in\mathcal{C}^{\acute{e}t}(X).$$
    It follows that $p_*\mathcal{E}\in\mathcal{C}^{\acute{e}t}(X)$.
    
    (3) If $K/k$ is finite separable, then the natural homomorphism $\pi^{\acute{e}t}(X_K,x_K)\rightarrow \pi^{\acute{e}t}(X,x)_K$ is an isomorphism by (2) and Proposition~\ref{pushisofinitesep}.

    If $K/k$ is separable. For any $\mathcal{E}\in \mathcal{C}^{\acute{e}t}(X_K)$, since $\mathcal{E}$ is a vector bundle, there is finite separable extension $L/k$ contained in $K$ and $E\in\Vect(X_L)$ such that $\mathcal{E}\cong E\otimes_L K$, by [Proposition~\ref{generalet}, (1)], we have $E\in\mathcal{C}^{\acute{e}t}(X_L)$. Since $L/k$ is finite separable, we have $\pi^{\acute{e}t}(X_L,x_L)\cong \pi^{\acute{e}t}(X,x)_L$, then $E\in\mathcal{C}^{\acute{e}t}(X_L)=\Rep_L^f(\pi^{\acute{e}t}(X,x)_L).$
    By [Theorem~\ref{fieldgeneral}, (4)], there exists $F\in\mathcal{C}^{\acute{e}t}(X)$ such that $E\hookrightarrow F\otimes_k L$ and thus 
    $$\mathcal{E}\cong E\otimes_L K\hookrightarrow (F\otimes_k L)\otimes_L K\cong F\otimes_k K.$$
    Hence by [Theorem~\ref{fieldgeneral}, (4)], the natural homomorphism $\pi^{\acute{e}t}(X_K,x_K)\rightarrow \pi^{\acute{e}t}(X,x)_K$ is an isomorphism.

    (5) Since $k$ is perfect, $\bar{k}$ is a separable extension of $k$, then it follows by (4).
\end{proof}

\subsection{Base change of unipotent fundamental group scheme}

\begin{Definition}
    Let $k$ be a field, $X$ a geometrically reduced connected scheme proper over $k$, $x\in X(k)$. A vector bundle $E$ on $X$ is said to be \textit{unipotent} if there exists a filtration
    $0\hookrightarrow E_1\hookrightarrow \cdots\hookrightarrow E_n=E,$
    such that $E_{i+1}/E_i\cong\mathcal{O}_X$ for any $i$. The full subcategory $\mathcal{C}^{uni}(X)$ of $\Vect(X)$ whose objects consist of unipotent bundles on $X$ is a Tannakian category with the fibre functor $|_x$. The Tannaka group scheme $\pi(\mathcal{C}^{uni}(X),x)$ is called the \textit{unipotent fundamental group scheme}, denoted by $\pi^{uni}(X,x)$.
\end{Definition}

\begin{Remark}
    Let $k$ be a field, $X$ a geometrically reduced connected scheme proper over $k$, $x\in X(k)$. By definition, $\mathcal{C}^{uni}(X)$ is a saturated category and we have a sequence of Tannakian subcategories 
    $$\mathcal{C}^{uni}(X)\subseteq\mathcal{C}^{S}(X).$$
    If $k$ is of characteristic $p>0$, then we have a sequence of Tannakian subcategories
    $$\mathcal{C}^{uni}(X)\subseteq\mathcal{C}^{N}(X)\subseteq\mathcal{C}^{S}(X).$$
\end{Remark}

Nori\cite{Nor82} showed that for arbitrary extension of fields, base change of unipotent fundamental group scheme is an isomorphism. We reprove this isomorphism using the Tannakian duality.

\begin{Proposition}\label{generaluni}
     Let $k$ be a field, $K/k$ a field extension, $X$ a geometrically reduced connected scheme proper over $k$, $p:X_K\rightarrow X$ the projection, $E\in\Vect(X)$. Then 
     \begin{enumerate}
         \item $E\in\mathcal{C}^{uni}(X)$ iff $E\otimes_k K\in\mathcal{C}^{uni}(X_K)$.
         \item The natural homomorphism $\pi^{uni}(X_K,x_K)\rightarrow \pi^{uni}(X,x)_K$ is an isomorphism.
         \item Let $K/k$ be finite separable, then for any $\mathcal{E}\in\mathcal{C}^{uni}(X_K)$, we have $p_*\mathcal{E}\in \mathcal{C}^{uni}(X)$.
     \end{enumerate}
\end{Proposition}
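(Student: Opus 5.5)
The plan has three parts. Part (1) comes from Lemma~\ref{NoetherDeuring} and an $\Ext^1$ argument. Part (2) uses the criterion of [Theorem~\ref{fieldgeneral}, (4)], proving by induction that every unipotent bundle on $X_K$ embeds into some $E\otimes_k K$ with $E$ unipotent. Part (3) is immediate from the projection formula.

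For (1), the direction ``$E\in\mathcal{C}^{uni}(X)\Rightarrow E\otimes_k K\in\mathcal{C}^{uni}(X_K)$'' is clear: pull back the filtration by $p$, and note that $\mathcal{O}_X\otimes_k K=\mathcal{O}_{X_K}$. For the converse, first observe that $E\otimes_k K$ unipotent implies $E\otimes_k K$ numerically flat, so $E\in\mathcal{C}^{NF}(X)$ by [Proposition~\ref{generalS}, (1)]. I then argue by induction on the rank. The bottom piece of the filtration gives $\mathcal{O}_{X_K}\hookrightarrow E\otimes_k K$, so $H^0(X_K,E\otimes_k K)=H^0(X,E)\otimes_k K\neq 0$. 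Choose $0\neq s\in H^0(X,E)$, giving $\mathcal{O}_X\to E$ in $\mathcal{C}^{NF}(X)$. Its image is a rank-one subobject of $E$ that is a quotient of $\mathcal{O}_X$, hence equal to $\mathcal{O}_X$, since a nonzero map between degree-$0$ semistable line bundles is an isomorphism in the abelian category. Then $E/\mathcal{O}_X\in\mathcal{C}^{NF}(X)$, and $(E/\mathcal{O}_X)\otimes_k K=E\otimes_k K/\mathcal{O}_{X_K}$. This last quotient lies in the saturated category $\mathcal{C}^{uni}(X_K)$, because subquotients of unipotent objects in $\mathcal{C}^{NF}$ are unipotent by the Jordan--H\"older theorem, all factors being $\mathcal{O}$. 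The inductive hypothesis then applies.

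For (2), I verify condition (4)(b) of Theorem~\ref{fieldgeneral}: every $\mathcal{E}\in\mathcal{C}^{uni}(X_K)$ embeds into some $E\otimes_k K$ with $E\in\mathcal{C}^{uni}(X)$. The induction runs on the length $n$ of the filtration. Write $0\to\mathcal{E}_{n-1}\to\mathcal{E}\to\mathcal{O}_{X_K}\to0$ and assume $\mathcal{E}_{n-1}\hookrightarrow E'\otimes_k K$ with $E'$ unipotent. Push out along this inclusion to get an extension $\mathcal{E}'$ of $\mathcal{O}_{X_K}$ by $E'\otimes_k K$; then $\mathcal{E}\hookrightarrow\mathcal{E}'$ by the snake lemma. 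Next use $\Ext^1_{X_K}(\mathcal{O},E'\otimes K)=H^1(X,E')\otimes_k K$ and write the class as $\sum\sigma_i\otimes\lambda_i$. Exactly as in the proof of Proposition~\ref{subquotient}, each $\sigma_i$ gives a unipotent extension $F_i$ of $\mathcal{O}_X$ by $E'$. Then $\mathcal{E}'$ is the pullback along the diagonal $\mathcal{O}_{X_K}\to\mathcal{O}_{X_K}^{\oplus n}$ of a pushout $U_K$ of $\bigoplus\mathcal{F}_i$, where $\mathcal{F}_i\cong F_i\otimes K$ after rescaling.

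The main obstacle is that this produces only a subquotient of $(\bigoplus F_i)\otimes K$, not a subobject. I would avoid the pushout along the sum map $p$ by instead pulling back along the diagonal directly into $\bigoplus_i\mathcal{F}_i$. Concretely, $\mathcal{E}'$ should embed into the pullback of $\bigoplus\mathcal{F}_i\to\mathcal{O}^{\oplus n}$ along the diagonal, with a correction from the kernel of $p$. Alternatively, since $\mathcal{C}^{uni}$ is saturated and the subquotient lies in the image of $\eta_K^{P_K}$ by its definition as a subquotient, I would invoke [Theorem~\ref{fieldgeneral}, (3)]. That requires faithful flatness, which I would get from Lemma~\ref{Observable} via (3)(e). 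The check is that the dual of a unipotent subobject of $E\otimes K$ is unipotent with the same argument applied inductively. Isomorphism then follows from (4).

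For (3), finite separability is not needed. By the projection formula, $p_*\mathcal{O}_{X_K}\cong\mathcal{O}_X^{\oplus[K:k]}$, and $p_*$ is exact. Applying $p_*$ to the filtration of $\mathcal{E}$ therefore gives a filtration of $p_*\mathcal{E}$ with graded pieces $\mathcal{O}_X^{\oplus[K:k]}$. Refining this filtration shows that $p_*\mathcal{E}$ is unipotent.
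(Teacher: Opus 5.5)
Parts (1) and (3) of your proposal are sound. In (1) you peel off the copy of $\mathcal{O}_X$ given by a global section, where the paper uses a Jordan--H\"older filtration in $\mathcal{C}^{NF}(X)$; both work. Your opening mention of Lemma~\ref{NoetherDeuring} and an $\Ext^1$ argument is never actually used. Your (3) is correct and simpler than the paper's. The paper deduces it from (2), Theorem~\ref{fieldfinitegalois} and a Galois closure. You use only exactness of $p_*$ and $p_*\mathcal{O}_{X_K}\cong\mathcal{O}_X^{\oplus[K:k]}$, so you need neither (2) nor separability.

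The genuine gap is in (2), exactly at the obstacle you identify, and neither remedy closes it.
\begin{itemize}
\item Your first remedy is not an argument. The pullback $Q$ of $\bigoplus_i\mathcal{F}_i\to\mathcal{O}_{X_K}^{\oplus n}$ along the diagonal is an extension of $\mathcal{O}_{X_K}$ by $(E'\otimes_k K)^{\oplus n}$. Your $\mathcal{E}'$ is its pushout along the sum map, i.e.\ the \emph{quotient} $Q/\ker(p)$. Nothing makes $\mathcal{E}'$ a subobject of $Q$, and the ``correction from the kernel of $p$'' is never specified.
\item Your second remedy is circular. That a subquotient of a base change lies in the image of $\eta_K^{P_K}$ is precisely condition (3)(c) of Theorem~\ref{fieldgeneral}, which is equivalent to faithful flatness; it does not hold ``by definition.''
\item To obtain faithful flatness from (3)(e), you must show: for $\mathcal{E}\hookrightarrow E\otimes_k K$, the dual embeds as $\mathcal{E}^\vee\hookrightarrow E''\otimes_k K$ with $E''\in\mathcal{C}^{uni}(X)$. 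That $\mathcal{E}^\vee$ is unipotent is automatic and beside the point. Since $\mathcal{E}^\vee$ is a quotient of $E^\vee\otimes_k K$, ``the same argument applied inductively'' is the stuck quotient-to-subobject step again.
\end{itemize}

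There are two ways to supply the missing idea.
\begin{itemize}
\item The paper's determinant trick. A unipotent $\mathcal{E}$ of rank $r$ has $\det\mathcal{E}\cong\mathcal{O}_{X_K}$, so $\mathcal{E}^\vee\cong\wedge^{r-1}\mathcal{E}\otimes(\det\mathcal{E})^\vee\cong\wedge^{r-1}\mathcal{E}\hookrightarrow(\wedge^{r-1}E)\otimes_k K$. This verifies (3)(e) and hence faithful flatness. After that, Proposition~\ref{subquotient} together with (3)(c)--(d) turns your subquotient into a subobject at each inductive step.
\item A repair of your own construction that avoids faithful flatness. Do not rescale the $F_i$ and do not use the diagonal.
  \begin{itemize}
  \item Push out $\bigoplus_i F_i$ along the sum map $E'^{\oplus n}\to E'$ already over $k$. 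This gives a unipotent $U$ on $X$, an extension of $\mathcal{O}_X^{\oplus n}$ by $E'$ with class $(\sigma_1,\dots,\sigma_n)$.
  \item Then $\mathcal{E}'$ is the pullback of $U\otimes_k K$ along $(\lambda_1,\dots,\lambda_n)^T:\mathcal{O}_{X_K}\to\mathcal{O}_{X_K}^{\oplus n}$. For $\sigma\neq 0$ this is a split injection; for $\sigma=0$, $\mathcal{E}'\cong(E'\oplus\mathcal{O}_X)\otimes_k K$ directly.
  \item Hence $\mathcal{E}\hookrightarrow\mathcal{E}'\hookrightarrow U\otimes_k K$, with locally free cokernels $\coker(\mathcal{E}_{n-1}\hookrightarrow E'\otimes_k K)$ and $\mathcal{O}_{X_K}^{\oplus n-1}$. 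This is condition (4)(b) of Theorem~\ref{fieldgeneral} outright.
  \end{itemize}
\end{itemize}
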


\begin{proof}
    (1) If $E\in\mathcal{C}^{uni}(X)$, then $E\otimes_k K\in\mathcal{C}^{uni}(X_K)$ is obvious.

    Conversely, we will use induction on the rank of $E$. If $E\otimes_k K\in\mathcal{C}^{uni}(X_K)$, then $E\in\mathcal{C}^{NF}(X)$ by [Proposition~\ref{generalS}, (1)]. Since $\mathcal{O}_{X_K}\hookrightarrow E\otimes_k K$, we have $H^0(X_K,E\otimes_k K)\neq 0$. Then by $H^0(X_K,E\otimes_k K)\cong H^0(X,E)\otimes_k K$, we obtain $H^0(X,E)\neq 0$ and an exact sequence in $\mathcal{C}^{NF}(X)$:
    $$0\rightarrow \mathcal{O}_X\rightarrow E\rightarrow E/\mathcal{O}_X\rightarrow 0.$$
    
    If $\rk E=1$, then $E\otimes_k K\cong \mathcal{O}_X\otimes_k K$ and by [Proposition~\ref{generalS}, (2)], we have $E\cong \mathcal{O}_X$.
    
    If $\rk E=2$, consider the exact sequence in $\mathcal{C}^{NF}(X)$:
    $$0\rightarrow \mathcal{O}_X\rightarrow E\rightarrow E/\mathcal{O}_X\rightarrow 0.$$
    Tensoring with $K$, we obtain an exact sequence in $\mathcal{C}^{uni}(X_K)$:
    $$0\rightarrow \mathcal{O}_{X_K}\rightarrow E\otimes_k K\rightarrow (E/\mathcal{O}_X)\otimes_k K\rightarrow 0.$$
    So $(E/\mathcal{O}_X)\otimes_k K\cong \mathcal{O}_{X}\otimes_k K$. It follows that $E/\mathcal{O}_X\cong \mathcal{O}_X$ by [Proposition~\ref{generalS}, (2)], so that $E\in\mathcal{C}^{uni}(X)$.
    
    Let $\rk E=n>2$ and suppose for any vector bundle $F$ of rank less than $n$, $F\otimes_k K\in\mathcal{C}^{uni}(X_K)$ implies $F\in\mathcal{C}^{uni}(X)$. Consider the Jordan H\"older filtration of $E$ in $\mathcal{C}^{NF}(X)$:
    $$0\hookrightarrow E_1\hookrightarrow \cdots\hookrightarrow E_m=E,$$
    where $E_{i+1}/E_i$ is irreducible in $\mathcal{C}^{NF}(X)$ for any $i$. Consider the exact sequence
    $$0\rightarrow E_{m-1}\rightarrow E\rightarrow E/E_{m-1}\rightarrow 0.$$
    Tensoring with $K$, we obtain an exact sequence
    $$0\rightarrow E_{m-1}\otimes_k K\rightarrow E\otimes_k K\rightarrow E/E_{m-1}\otimes_k K\rightarrow 0,$$
    where $E_{m-1}\otimes_k K\hookrightarrow E\otimes_k K$ in $\mathcal{C}^{NF}(X_K)$. So $E_{m-1}\otimes_k K\in\mathcal{C}^{uni}(X_K)$ and $E/E_{m-1}\otimes_k K\in\mathcal{C}^{uni}(X_K)$. Then by hypothesis induction, $E_{m-1},E/E_{m-1}\in\mathcal{C}^{uni}(X)$. So $E\in\mathcal{C}^{uni}(X)$ since $\mathcal{C}^{uni}(X)$ is saturated.
    
    (2) For any $E\in \mathcal{C}^{uni}(X)$ and any subobject $\mathcal{E}'\hookrightarrow E\otimes_k K\in\mathcal{C}^{uni}(X_K)$ of rank $r$. We have
    $$\begin{aligned}
        {\mathcal{E}'}^\vee&\cong \wedge^{r-1}\mathcal{E}'\otimes_{\mathcal{O}_{X_K}} (\det \mathcal{E}')^\vee\\
        &\cong \wedge^{r-1}\mathcal{E}'\otimes_{\mathcal{O}_{X_K}} \mathcal{O}_{X_K}\hookrightarrow \wedge^{r-1}E\otimes_k K.
    \end{aligned}$$
    Then the natural homomorphism $\pi^{uni}(X_K,x_K)\rightarrow \pi^{uni}(X,x)_K$ is faithfully flat by [Theorem~\ref{fieldgeneral}, (3)].
    
    For any $\mathcal{E}\in\mathcal{C}^{uni}(X_K)$, consider the Jordan H\"older filtration of $\mathcal{E}$ in $\mathcal{C}^{uni}(X_K)$:
    $$0\hookrightarrow \mathcal{E}_1\hookrightarrow \cdots\hookrightarrow\mathcal{E}_n=\mathcal{E},$$
    where $\mathcal{E}_{i+1}/\mathcal{E}_i\cong\mathcal{O}_{X_K}$ for any $i$. Then we have the following exact sequence
    $$0\rightarrow \mathcal{O}_{X}\otimes_k K\rightarrow \mathcal{E}_2\rightarrow \mathcal{O}_{X}\otimes_k K\rightarrow 0.$$
    According to Proposition~\ref{subquotient}, $\mathcal{E}_2$ is a subquotient of $E'_2\otimes_k K$ for some $E'_2\in \mathcal{C}^{uni}(X)$. By [Theorem~\ref{fieldgeneral}, (3)], $\mathcal{E}_2\hookrightarrow E_2\otimes_k K$ in $\mathcal{C}^{uni}(X_K)$ for some $E_2\in\mathcal{C}^{uni}(X)$. Consider the following exact sequence
    $$0\rightarrow \mathcal{E}_{2}\rightarrow \mathcal{E}_3\rightarrow \mathcal{O}_{X_K}\rightarrow 0,$$
    we have the following pushout diagram
    \[
        \begin{tikzcd}
            0\arrow[r]&\mathcal{E}_{2}\arrow[r]\arrow[d,hook]&\mathcal{E}_3\arrow[r]\arrow[d,hook]&\mathcal{O}_{X_K}\arrow[r]\arrow[d,"\cong"]&0\\
            0\arrow[r]&E_2\otimes_k K\arrow[r]&\mathcal{E}'\arrow[ul, phantom, "\lrcorner", very near start]\arrow[r]&\mathcal{O}_X\otimes_k K\arrow[r]&0
        \end{tikzcd}
    \]
    According to [Theorem~\ref{fieldgeneral}, (3)] and Proposition~\ref{subquotient}, $\mathcal{E}_3$ is a subobject of $E_3\otimes_k K$ for some $E_3\in \mathcal{C}^{uni}(X)$. Repeating this method, we have $\mathcal{E}$ is a subobject of $E\otimes_k K$ for some $E\in\mathcal{C}^{uni}(X)$. Hence the natural homomorphism $\pi^{uni}(X_K,x_K)\rightarrow \pi^{uni}(X,x)_K$ is an isomorphism according to [Theorem~\ref{fieldgeneral}, (4)].

    (3) If $K/k$ is finite Galois, then $p_*\mathcal{E}\in\mathcal{C}^{uni}(X)$ by (2) and Theorem~\ref{fieldfinitegalois}.
    
    If $K/k$ is finite separable. There exists a finite Galois extension $M/K/k$ such that we have the following commutative diagram
    \[
        \begin{tikzcd}
            X_M \arrow[rd,"p_M" swap]\arrow[r,"p_M^K"]& X_K\arrow[d,"p"]\\
            &X
        \end{tikzcd}
    \]
    For any $\mathcal{E}\in\mathcal{C}^{uni}(X_K)$, we have ${p_M}_*({p_M^K}^*\mathcal{E})\cong p_*{p_M^K}_*({p_M^K}^*\mathcal{E})\cong p_*({p_M^K}_*{p_M^K}^*\mathcal{E})$.
    Since both $M/K$ and $M/k$ are finite Galois and ${p_M^K}^*\mathcal{E}\in\mathcal{C}^{uni}(X_M)$ by [Proposition~\ref{generalet}, (1)], we have ${p_M}_*{p_M^K}^*\mathcal{E}\in\mathcal{C}^{uni}(X)$ and ${p_M^K}_*{p_M^K}^*\mathcal{E}\cong \mathcal{E}\otimes_K M\cong \mathcal{E}^{\oplus[M:K]}\in\mathcal{C}^{uni}(X_K)$. Then
    $$p_*{p_M^K}_*{p_M^K}^*\mathcal{E}\cong p_*(\mathcal{E}^{\oplus [M:K]})\cong (p_*\mathcal{E})^{\oplus [M:K]}\in\mathcal{C}^{uni}(X).$$
    It follows that $p_*\mathcal{E}\in\mathcal{C}^{uni}(X)$.
\end{proof}

\subsection{Base change of saturation of fundamental group schemes}

\begin{Definition}
    Let $k$ be a field, $X$ a geometrically reduced connected scheme proper over $k$, $x\in X(k)$, $E$ a vector bundle on $X$ of rank $r$. 
We have the following fundamental group schemes:
\begin{itemize}
        \item $\pi^{EN}(X,x):=\pi(\overline{\mathcal{C}^{N}(X)},x)$, called the \textit{extended Nori fundamental group scheme}.
        \item $\pi^{EF}(X,x):=\pi(\overline{\mathcal{C}^{F}(X)},x)$, called the \textit{extended F-fundamental group scheme}.
        \item $\pi^{ELoc}(X,x):=\pi(\overline{\mathcal{C}^{Loc}(X)},x)$, called the \textit{extended local fundamental group scheme}.
        \item $\pi^{E\acute{e}t}(X,x):=\pi(\overline{\mathcal{C}^{\acute{e}t}(X)},x)$, called the \textit{extended \'etale fundamental group scheme}.
    \end{itemize}
\end{Definition}

\begin{Remark}
    Let $k$ be a field, $X$ a geometrically reduced connected scheme proper over $k$, $x\in X(k)$. If $k$ is of characteristic $0$, we have a sequence of Tannakian subcategories 
    $\mathcal{C}^{uni}(X)\subseteq\overline{\mathcal{C}^{N}(X)}\subseteq\mathcal{C}^{S}(X)$. 
    If $k$ is of characteristic $p>0$, then we have sequences of Tannakian subcategories
    \[\begin{aligned}
    \begin{tikzcd}[sep=small]
        \overline{\mathcal{C}^{uni}(X)}\arrow[r,hook]&\overline{\mathcal{C}^{\acute{e}t}(X)}\arrow[r,hook]&\overline{\mathcal{C}^{N}(X)}\arrow[r,hook]&\overline{\mathcal{C}^S(X)}\\
        \mathcal{C}^{uni}(X)\arrow[r,hook]\arrow[u,equal]&\mathcal{C}^{\acute{e}t}(X)\arrow[u,hook]\arrow[r,hook]&\mathcal{C}^{N}(X)\arrow[r,hook]\arrow[u,hook]&\mathcal{C}^S(X)\arrow[u,equal]
    \end{tikzcd}&\quad&
        \begin{tikzcd}[sep=small]
        \overline{\mathcal{C}^{uni}(X)}\arrow[r,hook]&\overline{\mathcal{C}^{Loc}(X)}\arrow[r,hook]&\overline{\mathcal{C}^{F}(X)}\arrow[r,hook]&\overline{\mathcal{C}^S(X)}\\
        \mathcal{C}^{uni}(X)\arrow[r,hook]\arrow[u,equal]&\mathcal{C}^{Loc}(X)\arrow[r,hook]\arrow[u,hook]&\mathcal{C}^{F}(X)\arrow[r,hook]\arrow[u,hook]&\mathcal{C}^S(X)\arrow[u,equal]
        \end{tikzcd}
    \end{aligned}
    \]
\end{Remark}

\begin{Proposition}\label{generalsaturaed}
    Let $k$ be a field, $K/k$ a field extension, $X$ a geometrically reduced connected scheme proper over $k$, $x_K\in X_K(K)$ lying over $x\in X(k)$, $*\in \{EN,EF,ELoc,E\acute{e}t\}$. Then 
    \begin{enumerate}
        \item If $E\in\mathcal{C}^{*}(X)$, then $E\otimes_k K\in\mathcal{C}^{*}(X_K)$.
        \item There exists a natural homomorphism $\pi^{*}(X_K,x_K)\rightarrow \pi^{*}(X,x)_K$.
    \end{enumerate}
\end{Proposition}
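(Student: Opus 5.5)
The plan is to reduce everything to the base-change statements already proved for the underlying (non-saturated) categories, and then pass to saturations with Proposition~\ref{saturatedtensor}. For $*\in\{EN,EF,ELoc,E\acute{e}t\}$ the category $\mathcal{C}^*(X)$ is by definition $\overline{\mathcal{C}_X}$ for $\mathcal{C}_X$ equal to $\mathcal{C}^N(X)$, $\mathcal{C}^F(X)$, $\mathcal{C}^{Loc}(X)$ or $\mathcal{C}^{\acute{e}t}(X)$. Likewise $\mathcal{C}^*(X_K)=\overline{\mathcal{C}_{X_K}}$, where $\mathcal{C}_{X_K}$ is the corresponding category on $X_K$. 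Since $X$ is geometrically reduced, connected and proper over $k$, the scheme $X_K$ is again geometrically reduced, connected and proper over $K$. So all these categories on $X_K$ are Tannakian with fibre functor $|_{x_K}$.

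First, I will check that the underlying categories are stable under base change, that is, $E\in\mathcal{C}_X$ implies $E\otimes_k K\in\mathcal{C}_{X_K}$:
\begin{itemize}
\item For Nori, this is [Proposition~\ref{generalN}, (2)].
\item For F, this is [Proposition~\ref{generalF}, (2)]. The target there is misprinted: its proof gives $E\otimes_k K\in\mathcal{C}^F(X_K)$.
\item For local, this is the forward direction of [Proposition~\ref{generalloc}, (1)].
\item For \'etale, this is [Proposition~\ref{generalet}, (1)].
\end{itemize}
Each of these holds for an arbitrary extension $K/k$, which is exactly the generality needed here.

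Second, applying Proposition~\ref{saturatedtensor} to each pair $(\mathcal{C}_X,\mathcal{C}_{X_K})$ gives that $F\in\overline{\mathcal{C}_X}$ implies $F\otimes_k K\in\overline{\mathcal{C}_{X_K}}$. The reason is that a filtration of $F$ with graded pieces in $\mathcal{C}_X$ base changes, by flatness of $K/k$, to a filtration of $F\otimes_k K$ whose graded pieces $(F_{i+1}/F_i)\otimes_k K$ lie in $\mathcal{C}_{X_K}$. This proves (1).

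For (2), Proposition~\ref{saturatedtannakian} shows that $\overline{\mathcal{C}_X}$ and $\overline{\mathcal{C}_{X_K}}$ are neutral Tannakian categories of vector bundles with fibre functors $|_x$ and $|_{x_K}$. Part (1) is precisely the hypothesis of [Theorem~\ref{fieldgeneral}, (2)] for the pair $(\overline{\mathcal{C}_X},\overline{\mathcal{C}_{X_K}})$. That theorem yields the functor $\eta_K^{P_K}:\Rep_K^f(\pi^*(X,x)_K)\to\mathcal{C}^*(X_K)$ and hence the natural homomorphism $\pi^*(X_K,x_K)\to\pi^*(X,x)_K$. Equivalently, one can read this off from the last clause of Proposition~\ref{saturatedtensor}.

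There is no real obstacle. The only points needing care are that the saturated categories are genuinely Tannakian, which is Proposition~\ref{saturatedtannakian}, and that the four base-change inclusions for the unsaturated categories hold for arbitrary, not necessarily separable, extensions. I have checked that each cited statement is proved without any separability assumption.
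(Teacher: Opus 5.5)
Your proposal is correct and is essentially the paper's own proof: part (1) feeds the base-change inclusions for $\mathcal{C}^N$, $\mathcal{C}^F$, $\mathcal{C}^{Loc}$ and $\mathcal{C}^{\acute{e}t}$ (Propositions~\ref{generalN}, \ref{generalF}, \ref{generalloc}, \ref{generalet}) into Proposition~\ref{saturatedtensor}, and part (2) then follows from (1) and [Theorem~\ref{fieldgeneral}, (2)]. Your extra remarks are accurate and only make explicit what the paper leaves implicit: the misprinted target in [Proposition~\ref{generalF}, (2)], and the appeal to Proposition~\ref{saturatedtannakian} for the Tannakian structure on the saturations.
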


\begin{proof}
    (1) By Proposition~\ref{saturatedtensor}, Proposition~\ref{generalN}, Proposition~\ref{generalF}, Proposition~\ref{generalloc} and Proposition~\ref{generalet}.
    
    (2) It follows by (1) and [Theorem~\ref{fieldgeneral}, (2)]. 
\end{proof}

\begin{Proposition}\label{separablesaturated}
Let $k$ be a field, $K/k$ a separable extension, $X$ a geometrically reduced connected scheme proper over $k$, $x_K\in X_K(K)$ lying over $x\in X(k)$, $p:X_K\rightarrow X$ the projection, $E\in\Vect(X)$. Then 
    \begin{enumerate}
    \item Let $*\in\{EN,ELoc, E\acute{e}t\}$, then we have
    \begin{enumerate}
        \item The natural homomorphism $\pi^{*}(X_K,x_K)\rightarrow \pi^{*}(X,x)_K$ is an isomorphism.
        \item $E\in\mathcal{C}^{*}(X)$ iff $E\otimes_k K\in\mathcal{C}^{*}(X_K)$. 
        \item Let $K/k$ be finite Galois, then for any $\mathcal{E}\in\mathcal{C}^{*}(X_K)$ and any automorphism $\tilde{g}:X_K\rightarrow X_K$ induced by $g\in\Gal(K/k)$, we have $\tilde{g}^*\mathcal{E}\in\mathcal{C}^{*}(X_K)$.
        \item Let $K/k$ be a finite separable extension, then for any $\mathcal{E}\in\mathcal{C}^{*}(X_K)$, we have $p_*\mathcal{E}\in\mathcal{C}^{*}(X)$.
    \end{enumerate}
    \item Let $K/k$ be a finite separable extension and $X$ be a regular scheme, then we have
    \begin{enumerate}
        \item The natural homomorphism $\pi^{ELoc}(X_K,x_K)\rightarrow \pi^{ELoc}(X,x)_K$ is an isomorphism.
        \item $E\in\mathcal{C}^{EF}(X)$ iff $E\otimes_k K\in\mathcal{C}^{EF}(X_K)$. 
        \item Let $K/k$ be finite Galois, then for any $\mathcal{E}\in\mathcal{C}^{ELoc}(X_K)$ and any automorphism $\tilde{g}:X_K\rightarrow X_K$ induced by $g\in\Gal(K/k)$, we have $\tilde{g}^*\mathcal{E}\in\mathcal{C}^{ELoc}(X_K)$.
        \item For any $\mathcal{E}\in\mathcal{C}^{ELoc}(X_K)$, we have $p_*\mathcal{E}\in\mathcal{C}^{ELoc}(X)$.
    \end{enumerate}
    \end{enumerate}
\end{Proposition}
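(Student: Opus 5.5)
The plan is to deduce everything from the non-extended results already proved and the general machinery for saturations in Section~4. The saturation of $\mathcal{C}^*(X)$ is denoted $\overline{\mathcal{C}^*(X)}$.

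For part (1)(a), fix $*\in\{N,Loc,\acute{e}t\}$. By Proposition~\ref{separableN}, Proposition~\ref{separableloc} and Proposition~\ref{separableet}, the map $\pi^*(X_K,x_K)\to\pi^*(X,x)_K$ is an isomorphism. By Proposition~\ref{generalN}, Proposition~\ref{generalloc} and Proposition~\ref{generalet}, base change preserves $\mathcal{C}^*$. Proposition~\ref{saturationisomorphism} then gives the isomorphism $\pi^{E*}(X_K,x_K)\cong\pi^{E*}(X,x)_K$ directly.

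For part (1)(d), first take $K/k$ finite Galois. Pass through Theorem~\ref{fieldfinitegalois}, applied to the pair of categories $\overline{\mathcal{C}^*(X)}$, $\overline{\mathcal{C}^*(X_K)}$; the base-change hypothesis holds by Proposition~\ref{saturatedtensor}. The isomorphism from (a) then yields, via $(2)\Rightarrow(1)$ and $(2)\Rightarrow(3)$ of that theorem, both (b) and (c) for finite Galois $K$, as well as $p_*\mathcal{E}\in\mathcal{C}^*(X)$. More directly, Proposition~\ref{pushisofinitesep}(1) applied to the non-saturated categories gives (d) for finite separable $K$: it only needs $p_*$ of $\mathcal{C}^*(X_K)$ to land in $\mathcal{C}^*(X)$, which was proved in the earlier subsections.

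For part (1)(b) with arbitrary separable $K$, I would use Proposition~\ref{iffdescent} with $\mathcal{C}'=\mathcal{C}^{NF}$. Its hypotheses are:
\begin{enumerate}
\item the isomorphism for the saturated subcategory, which is (a);
\item the iff-descent and faithful flatness for $\mathcal{C}^{NF}$, which hold by Proposition~\ref{generalS} and Proposition~\ref{separableS}.
\end{enumerate}
One also needs $\overline{\mathcal{C}^*}\subseteq\mathcal{C}^{NF}$. This holds because numerically flat bundles are closed under extensions and $\mathcal{C}^{NF}$ is saturated. Alternatively, (b) follows from (a) by Proposition~\ref{pushisofinitesep}(2) in the finite case. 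Part (1)(c) is the general observation from Theorem~\ref{fieldfinitegalois}: if $F\otimes_kK\twoheadrightarrow\mathcal{E}\hookrightarrow E\otimes_kK$, then $\tilde g^*$ preserves this shape.

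For part (2), with $X$ regular and $K/k$ finite separable, repeat the same pattern. For (2)(a), apply Proposition~\ref{saturationisomorphism} to $\mathcal{C}^{Loc}$ using Proposition~\ref{separableloc}; this is already covered by (1). For (2)(b), Proposition~\ref{separableF}(4) gives the isomorphism for $\pi^F$. Proposition~\ref{saturationisomorphism} upgrades it to $\pi^{EF}$, and Proposition~\ref{pushisofinitesep}(2) then gives the iff. Parts (2)(c) and (2)(d) follow as in (1).

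The main obstacle I expect is (1)(b) for infinite separable $K$. Proposition~\ref{pushisofinitesep}(2) only treats finite extensions. The reduction through Proposition~\ref{iffdescent} requires checking that the saturations sit inside $\mathcal{C}^{NF}$ as Tannakian subcategories, and that the diagram chase there works with $\mathcal{C}'=\mathcal{C}^{NF}$. If that check fails, I would instead descend $E$ to a finite separable subextension $L$, using that $E\otimes_kK\in\overline{\mathcal{C}^*(X_K)}$ has its filtration defined over some finite $L$, and then invoke the finite case.
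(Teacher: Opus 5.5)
Your proposal is correct and follows essentially the paper's own route. Part (a) comes from Proposition~\ref{saturationisomorphism}, (b) from Proposition~\ref{iffdescent} with $\mathcal{C}'=\mathcal{C}^{NF}$, (c) from Theorem~\ref{fieldfinitegalois}, and (d) from Proposition~\ref{pushisofinitesep}(1); the only deviation is in (2)(b), where you use Proposition~\ref{pushisofinitesep}(2) (legitimate there since $K/k$ is finite) instead of invoking Proposition~\ref{iffdescent} again as the paper does.
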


\begin{proof}
    (1) (a) By Proposition~\ref{saturationisomorphism}, [Proposition~\ref{separableN}, (4)], [Proposition~\ref{separableloc}, (4)] and [Proposition~\ref{separableet}, (4)].

    (b) Consider the Tannakian subcategories $\mathcal{C}^{*}(X)\subseteq\mathcal{C}^{NF}(X)$ and $\mathcal{C}^{*}(X_K)\subseteq\mathcal{C}^{NF}(X_K)$, then it follows by (a), Proposition~\ref{iffdescent}, [Proposition~\ref{generalS}, (1)] and [Proposition~\ref{separableS}, (3)].

    (c) It follows by (a) and Theorem~\ref{fieldfinitegalois}.

    (d) By [Proposition~\ref{pushisofinitesep}, (1)] , [Proposition~\ref{separableN}, (3)], [Proposition~\ref{separableloc}, (2)] and [Proposition~\ref{separableet}, (3)].

    (2) (a) It follows by Proposition~\ref{saturationisomorphism} and [Proposition~\ref{separableF}, (4)].

    (b) It follows by (a), Proposition~\ref{iffdescent}, [Proposition~\ref{generalS}, (1)] and [Proposition~\ref{separableS}, (3)].

    (c) It follows by (a) and Theorem~\ref{fieldfinitegalois}.

    (d) It follows by [Proposition~\ref{pushisofinitesep}, (1)] and [Proposition~\ref{separableF}, (3)].
\end{proof}

\subsection{Base change of certain fundamental group schemes under algebraically closed extension}
\begin{Proposition}\label{algfaith}
    Let $K/k$ be an extension of algebraically closed fields, $X$ a reduced connected scheme proper over $k$, $*\in\{S,N,F,Loc,\acute{e}t,uni, EN,EF,ELoc,E\acute{e}t\}$. Then the natural homomorphism $\pi^{*}(X_K,x_K)\rightarrow \pi^{*}(X,x)_K$ is faithfully flat.
\end{Proposition}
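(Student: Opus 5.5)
The plan is to reduce everything to [Theorem~\ref{fieldalgclosed}, (1)]. That result states that for an extension $K/k$ of algebraically closed fields, faithful flatness of $\pi(\mathcal{C}_{X_K},x_K)\rightarrow \pi(\mathcal{C}_X,x)_K$ follows from one hypothesis alone: for every $E\in\mathcal{C}_X$ we have $E\otimes_k K\in\mathcal{C}_{X_K}$. Hence for each $*$ in the list it suffices to verify this base-change stability of the defining category $\mathcal{C}^*(X)$. A preliminary remark: since $k$ is algebraically closed, reduced implies geometrically reduced. So $X$ is a geometrically reduced connected scheme proper over $k$, and all the ``general'' propositions of Section~5 apply.

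Next I check stability case by case, citing the earlier results. For $*=S$ it is [Proposition~\ref{generalS}, (1)]. For $*=N$ it is [Proposition~\ref{generalN}, (2)], and for $*=F$ it is [Proposition~\ref{generalF}, (2)]. For $*=Loc$ I use [Proposition~\ref{generalloc}, (1)], and for $*=\acute{e}t$ I use [Proposition~\ref{generalet}, (1)]. For $*=uni$ I use [Proposition~\ref{generaluni}, (1)]; in fact [Proposition~\ref{generaluni}, (2)] already gives an isomorphism, which in particular is faithfully flat. For the extended categories $*\in\{EN,EF,ELoc,E\acute{e}t\}$, stability is [Proposition~\ref{generalsaturaed}, (1)], which itself comes from Proposition~\ref{saturatedtensor}.

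With stability in hand, [Theorem~\ref{fieldgeneral}, (2)] provides the natural homomorphism, and [Theorem~\ref{fieldalgclosed}, (1)] then gives faithful flatness in each case. For the saturated cases there is a cross-check: faithful flatness for $N,F,Loc,\acute{e}t$ transfers to $EN,EF,ELoc,E\acute{e}t$ by Proposition~\ref{equivsurjective}.

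The main obstacle, as I see it, is only the uniform justification that the earlier propositions, stated for geometrically reduced $X$, apply here; the reducedness remark above handles this. A second point is that [Theorem~\ref{fieldalgclosed}, (1)] relies on the Jordan--H\"older argument inside $\mathcal{C}_X$, and that argument requires irreducible objects to have endomorphism ring $k$. This holds because $k$ and $K$ are algebraically closed and the objects correspond to finite-dimensional representations of the Tannaka group, so no further input is needed.
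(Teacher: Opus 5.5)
Your reduction is exactly the paper's proof. You verify base-change stability via Propositions~\ref{generalS}, \ref{generalN}, \ref{generalF}, \ref{generalloc}, \ref{generalet} and \ref{generalsaturaed}, then invoke [Theorem~\ref{fieldalgclosed}, (1)]. The trouble is the step you wave through at the end.

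\textbf{The gap.} The Jordan--H\"older argument does not need $\End(Q)=k$ for irreducible $Q\in\mathcal{C}_X$; that is just Schur. It needs $Q\otimes_k K$ to remain \emph{irreducible in $\mathcal{C}_{X_K}$}, so that the nonzero map $\det\mathcal{E}\to Q\otimes_kK$ is an isomorphism. This is deduced from $\End(Q\otimes_kK)=K$, i.e.\ from the converse of Schur's lemma, which is false: a nonsplit extension of two non-isomorphic simple objects has only scalar endomorphisms. Subobjects of $Q\otimes_kK$ in $\mathcal{C}_{X_K}$ need not come from $\Rep^f_K(\pi(\mathcal{C}_X,x)_K)$, and that is precisely the point at issue.

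In fact [Theorem~\ref{fieldalgclosed}, (1)] fails at the stated generality. Take the following data:
\begin{itemize}
\item $X$ a curve of genus $g\ge 2$ in characteristic $0$;
\item $L$ a degree-$0$ line bundle of infinite order;
\item $B\subset SL_2$ the upper-triangular Borel;
\item $P$ the $B$-torsor given by $L$ and a nonzero class in $H^1(X,L^{\otimes 2})$.
\end{itemize}
Since $H^0(X,L^{\otimes a})=0$ for $a\neq0$ and the class is nonzero, $H^0(P,\mathcal{O}_P)=k$. Hence $\eta^P_k:\Rep^f_k(B)\to\Vect(X)$ is fully faithful and exact. Let $\mathcal{C}_X$ be the essential image of $\Rep^f_k(SL_2)$; it is still full, because $B$ is epimorphic in $SL_2$. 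Let $\mathcal{C}_{X_K}$ be the essential image of $\Rep^f_K(B_K)$ under $\eta^{P_K}_K$. All hypotheses hold, but the natural homomorphism is $B_K\hookrightarrow SL_{2,K}$, which is not faithfully flat. The standard bundle $E$ is irreducible in $\mathcal{C}_X$ with $\End(E\otimes_kK)=K$, yet $L\otimes_kK\subset E\otimes_kK$ in $\mathcal{C}_{X_K}$.

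\textbf{How to repair it for these categories.} The statement itself is still true, but you must argue differently.
\begin{itemize}
\item \emph{Reduction.} Full faithfulness of $\eta^{P_K}_K$ holds for every $K/k$ by [Theorem~\ref{fieldgeneral}, (1)]. So by Lemma~\ref{Observablesurjective} only observability is needed.
\item \emph{Cases $*\neq S$.} Every rank-one object $L$ of $\mathcal{C}^*(X_K)$ is torsion. For $*=uni$ it is $\mathcal{O}_{X_K}$. Otherwise it lies in $\mathcal{C}^N(X_K)$, because a rank-one object of a saturation already lies in the original category, and essentially finite line bundles are torsion. Thus $(L^{\otimes n})^\vee\cong\mathcal{O}_{X_K}$ for some $n>0$, and observability is automatic for any extension $K/k$.
\item \emph{Case $*=S$.} This is the only case that uses algebraic closedness. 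For $E\in\mathcal{C}^{NF}(X)$, a rank-one subobject $L\hookrightarrow E\otimes_kK$ in $\mathcal{C}^{NF}(X_K)$ is a $K$-point of the closed locus $Z\subseteq\mathrm{Pic}^\tau_{X/k}$ of classes $M$ with $\Hom(M,E)\neq0$. Every $k$-point of $Z$ is a rank-one subobject of $E$ in $\mathcal{C}^{NF}(X)$. It is therefore isomorphic to one of the finitely many rank-one Jordan--H\"older factors of $E$. Hence $Z$ is finite and $Z(K)=Z(k)$, so $L\cong M\otimes_kK$ and $L^\vee\cong M^\vee\otimes_kK$.
\end{itemize}
This descent argument is what should replace the Jordan--H\"older step.
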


\begin{proof}
    It follows by [Theorem~\ref{fieldalgclosed}, (1)], Proposition~\ref{generalS}, Proposition~\ref{generalN}, Proposition~\ref{generalF}, Proposition~\ref{generalet}, Proposition~\ref{generalloc} and Proposition~\ref{generalsaturaed}.
\end{proof}

\begin{Lemma}[{\cite[Lemma~3.1]{EHS07}}]\label{etalereduced}
    Let $k$ be a perfect field, $X$ a reduced connected scheme proper over $k$, $x_K\in X_K(K)$ lying over $x\in X(k)$. Then $\pi^{\acute{e}t}(X,x)$ is the largest quotient pro-finite group scheme of $\pi^N(X,x)$ which is reduced.
\end{Lemma}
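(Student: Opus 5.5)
The plan is to compare the two Tannakian categories directly. We use Nori's description of $\pi^N(X,x)$ as the projective limit $\varprojlim_i G_i$ of finite $k$-group schemes $G_i$ carrying pointed torsors $P_i\to X$, with $\mathcal{C}^N(X)=\Rep_k^f(\pi^N(X,x))$. Because $k$ is perfect, every finite $k$-group scheme $G$ has a reduced subscheme $G_{\mathrm{red}}$ which is a closed subgroup, and a maximal étale quotient $G\twoheadrightarrow\pi_0(G)$. A finite group scheme over a perfect field is reduced iff it is étale. Hence the largest reduced pro-finite quotient of $\pi^N(X,x)$ is
\[
\pi^N(X,x)^{\mathrm{red\text{-}quot}}:=\varprojlim_i \pi_0(G_i).
\]
This uses that any reduced finite quotient $\pi^N(X,x)\twoheadrightarrow H$ factors through some $G_i$ and therefore through $\pi_0(G_i)$. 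Its category of finite dimensional representations is a full subcategory of $\mathcal{C}^N(X)$ that is closed under subquotients, by Theorem~\ref{Thm1}. The whole statement therefore reduces to the equality of full subcategories
\[
\Rep_k^f\bigl(\varprojlim_i \pi_0(G_i)\bigr)=\mathcal{C}^{\acute{e}t}(X)\quad\text{inside }\mathcal{C}^N(X).
\]

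The inclusion "$\subseteq$" is the easy step. Let $V$ be a representation of a finite étale quotient $H=\pi_0(G_i)$, and let $E$ be the corresponding bundle. Then $E$ is the bundle associated to the $H$-torsor $Q=P_i\times^{G_i}H\to X$. Since $H$ is étale, $Q\to X$ is a finite étale covering, and the pullback of $E$ to $Q$ is $\mathcal{O}_Q\otimes_k V$, which is trivial. So $E\in\mathcal{C}^{\acute{e}t}(X)$.

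For "$\supseteq$", let $E$ be trivialized by a finite étale covering $\phi:Q\to X$; we may take $Q$ connected. I would pass to a Galois closure in the following way.
\begin{enumerate}
\item Consider the finite continuous $\pi_1^{\acute{e}t}(X,\bar x)$-set corresponding to $Q$. The image of $\pi_1^{\acute{e}t}(X,\bar x)$ in its permutation group is a finite quotient $\Gamma$.
\item The rational point $x\in X(k)$ gives a section $\Gal(\bar k/k)\to\pi_1^{\acute{e}t}(X,\bar x)$ of the fundamental exact sequence. This section makes $\Gal(\bar k/k)$ act on $\Gamma$ by conjugation, and so defines a finite étale $k$-group scheme $H$.
\item The associated Galois covering $R\to X$ dominates $Q$, carries a $k$-rational point over $x$, and is an $H$-torsor.
\end{enumerate}
Then $E$ is trivialized on $R$, and descent along the $H$-torsor identifies $E$ with the bundle attached to the $H$-representation $V=H^0(R,\phi_R^*E)$, which equals $E|_x$. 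Moreover, $(R,H)$ is a pointed finite torsor, so $\pi^N(X,x)\twoheadrightarrow H$ and this map factors through $\varprojlim_i\pi_0(G_i)$. Hence $E$ lies in $\Rep_k^f(\varprojlim_i\pi_0(G_i))$.

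The main obstacle is this last step: producing from an arbitrary trivializing étale cover a pointed torsor under a finite étale $k$-group scheme, using the rational point $x$ to descend the Galois closure from $\bar k$ to $k$. The key inputs are perfectness of $k$ (so that $k^{\sep}=\bar k$) and reducedness of $X$ (so that $H^0(R,\mathcal{O}_R)$ behaves and the descent of sections is valid). Once both inclusions hold, the two categories coincide. Tannakian duality then identifies $\pi^{\acute{e}t}(X,x)$ with $\varprojlim_i\pi_0(G_i)$, the largest reduced pro-finite quotient of $\pi^N(X,x)$.
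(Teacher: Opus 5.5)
The reduction to the equality $\Rep_k^f(\varprojlim_i\pi_0(G_i))=\mathcal{C}^{\acute{e}t}(X)$ inside $\mathcal{C}^N(X)$ is fine, and so is the inclusion ``$\subseteq$''. The paper gives no argument of its own here; it only quotes \cite[Lemma~3.1]{EHS07}.

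The gap is in step (3) of ``$\supseteq$'', the step you flagged, and as written that step is false. If $R$ has an $X$-morphism to $Q$ and a $k$-point over $x$, then $Q$ has a $k$-point over $x$. A connected trivializing cover need not have one; for example, take $Q=X_L$ for a nontrivial finite Galois extension $L/k$.

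Write $\rho:\pi_1^{\acute{e}t}(X,\bar x)\to\Gal(\bar k/k)$, let $\bar s:\Gal(\bar k/k)\to\Gamma$ be the image of the section, and let $\bar g$ be the image of $g$ in $\Gamma$. Your two candidate covers each have only one of the two properties:
\begin{itemize}
\item The Galois closure of $Q$ (the $\pi_1$-set $\Gamma$ with left multiplication) dominates $Q$. But it is a torsor under the \emph{constant} group $\Gamma$, and its fibre over $x$ has a $k$-point only when $\bar s$ is trivial.
\item The torsor under your twisted $H$ is the $\pi_1$-set $\Gamma$ with action $g\cdot\gamma=\bar g\,\gamma\,\bar s(\rho(g))^{-1}$. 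It has the rational point $1$ over $x$, but it maps to $Q$ only if $Q$ has a $k$-point over $x$.
\end{itemize}
There is a further problem with the second cover. The orbit of $1$ is only $\bar\Gamma:=\Ima(\pi_1^{\acute{e}t}(X_{\bar k},\bar x)\to\Gamma)$, so the torsor is disconnected whenever $\bar\Gamma\neq\Gamma$. In that case $H^0(R,\mathcal{O}_R)\neq k$ and $\dim H^0(R,\phi_R^*E)>r$, so your identification $V=E|_x$ also fails.

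What is missing is a descent from $\bar k$. Here is how the step can be repaired:
\begin{itemize}
\item Take $H$ to be the twist of $\bar\Gamma$, which is normal in $\Gamma$ and stable under conjugation by $\bar s$. Take $R$ to be the $H$-torsor given by the $\pi_1$-set $\bar\Gamma$ with the twisted action above.
\item This action is transitive, so $R$ is connected, reduced and proper, with the $k$-point $1$ over $x$. Hence $H^0(R,\mathcal{O}_R)=k$.
\item $R$ does not dominate $Q$, but $R_{\bar k}$ dominates $Q_{\bar k}$. On $\pi_1^{\acute{e}t}(X_{\bar k},\bar x)$ the twist disappears, and $\gamma\mapsto\gamma s_0$ is equivariant for any $s_0$ in the fibre of $Q$. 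Therefore $\phi_R^*E\otimes_k\bar k$ is trivial.
\item By flat base change, $H^0(R,\phi_R^*E)\otimes_k\bar k$ has dimension $r$. The evaluation map $\mathcal{O}_R\otimes_k H^0(R,\phi_R^*E)\to\phi_R^*E$ is an isomorphism, because it becomes one after $\otimes_k\bar k$.
\end{itemize}
Only at this point is $\phi_R^*E$ trivial $H$-equivariantly, with $V=H^0(R,\phi_R^*E)\cong E|_x$. From there your descent and the factorization through $\varprojlim_i\pi_0(G_i)$ go through.
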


\begin{Lemma}[{\cite[Lemma~5.6.7]{Sza09}}]\label{etalegroupalgclosed}
    Let $K/k$ be an extension of algebraically closed fields, $X$ a reduced connected scheme proper over $k$, $x_K\in X_K(K)$ lying over $x\in X(k)$. Then the natural map $\pi^{\acute{e}t}_1(X_K, x_K)\rightarrow \pi^{\acute{e}t}_1(X,x)$ is an isomorphism.
\end{Lemma}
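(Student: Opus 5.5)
This is the classical invariance of the étale fundamental group under extension of algebraically closed base fields (SGA1, Exp.~X, Cor.~1.8). The plan is to prove it through the Galois-category formalism: we show that the base change functor
$$\mathrm{FEt}(X)\rightarrow \mathrm{FEt}(X_K),\quad Y\mapsto Y_K,$$
between categories of finite étale coverings is an equivalence compatible with the fibre functors at $x$ and $x_K$. Such an equivalence induces an isomorphism $\pi^{\acute{e}t}_1(X_K,x_K)\rightarrow \pi^{\acute{e}t}_1(X,x)$. By the standard criteria, it suffices to prove two things: (i) for every connected finite étale $Y\rightarrow X$, the base change $Y_K$ is connected; (ii) every connected finite étale covering of $X_K$ is isomorphic to $Y_K$ for some finite étale $Y\rightarrow X$.

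For (i), let $Y$ be a connected finite étale covering of $X$. Then $Y$ is connected, reduced (since $X$ is reduced and $Y\to X$ is étale) and proper over the algebraically closed field $k$, so $H^0(Y,\mathcal{O}_Y)=k$. By flat base change,
$$H^0(Y_K,\mathcal{O}_{Y_K})=H^0(Y,\mathcal{O}_Y)\otimes_k K=K,$$
which has no nontrivial idempotents. Hence $Y_K$ is connected.

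For (ii), let $Y'\rightarrow X_K$ be a finite étale covering.
\begin{enumerate}
\item Spread out: by finite presentation there is a finitely generated $k$-subalgebra $A\subseteq K$, which we may take to be a normal domain with $S=\Spec A$ smooth, together with a finite étale $Y_A\rightarrow X_A:=X\times_k S$ satisfying $Y_A\times_S\Spec K\cong Y'$.
\item Specialize: since $k$ is algebraically closed, $S$ has a $k$-point $s$. Set $Y:=Y_A|_{X\times\{s\}}$, a finite étale covering of $X$.
\item Show that $Y_A$ and $Y\times_k S$ become isomorphic after base change along $\Spec K\rightarrow S$. For this we invoke the Künneth formula for proper $X$ and connected normal $S$ (SGA1, Exp.~X, Cor.~1.7):
$$\pi_1(X\times_k S)\cong \pi_1(X)\times\pi_1(S).$$
Consequently every finite étale covering of $X_A$ is, étale-locally on $S$, a covering pulled back from $X$ twisted by a $\pi_1(S)$-action.
\item Pull back along the geometric generic point $\Spec K\rightarrow S$. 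Because $K$ is algebraically closed, this point factors through every finite étale cover of $S$, so the $\pi_1(S)$-twist becomes trivial. This gives $Y'\cong Y_K$.
\end{enumerate}

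The main obstacle is step 3, the Künneth formula. Its proof rests on the proper base change theorem, applied in the form of the specialization theorem for proper $X_A\rightarrow S$ with geometrically connected fibres: $\pi_1$ of the geometric fibres surjects onto $\pi_1(X_A)$ and fits into an exact sequence with $\pi_1(S)$. One then splits this sequence using the section $S\rightarrow X_A$ determined by $x$. I would take both the specialization theorem and the exact sequence from SGA1, Exp.~X, rather than reprove them. Essential surjectivity, with full faithfulness following from (i), is where all the substance lies; the rest is formal Galois-category bookkeeping.
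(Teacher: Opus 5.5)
Your proposal is correct. The paper gives no argument of its own here; it simply cites Szamuely, i.e.\ Grothendieck's SGA~1, Exp.~X, Cor.~1.8. What you wrote is exactly the classical deduction of that corollary from the K\"unneth formula (Exp.~X, Cor.~1.7) by spreading out.

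Part (i) is complete as written. In (ii), the vague phrase ``twisted by a $\pi_1(S)$-action'' is easily made precise:
the $\pi_1(X)\times\pi_1(S)$-set attached to $Y_A$ is finite, so its $\pi_1(S)$-action factors through a finite quotient. By naturality of the K\"unneth isomorphism, there is then a connected finite \'etale $S'\to S$ with $Y_A\times_S S'\cong Y_0\times_k S'$ for some finite \'etale $Y_0\to X$. Restricting to a $k$-point of $S'$ above $s$ gives $Y_0\cong Y$. Lifting $\Spec K\to S$ to $S'$, which is possible because $K$ is algebraically closed, then gives $Y'\cong Y_K$.

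One correction to your description of the black box. In the homotopy sequence $\pi_1(X)\to\pi_1(X\times_k S)\to\pi_1(S)\to 1$ (fibre taken over the rational point $s$), the fibre group surjects onto the \emph{kernel} of $\pi_1(X\times_k S)\to\pi_1(S)$, not onto $\pi_1(X\times_k S)$. Also, the section $S\to X\times_k S$ coming from $x$ by itself only yields a semidirect product. It is the projection $X\times_k S\to X$ that makes $\pi_1(X)\to\pi_1(X\times_k S)$ injective and turns $(\mathrm{pr}_{1*},\mathrm{pr}_{2*})$ into an isomorphism onto $\pi_1(X)\times\pi_1(S)$. Since you only use the statement of Cor.~1.7, this does not affect your proof.
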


\begin{Proposition}\label{etalealgclosed}
    Let $K/k$ be an extension of algebraically closed fields, $X$ a reduced connected scheme proper over $k$, $x_K\in X_K(K)$ lying over $x\in X(k)$. Then
    \begin{enumerate}
        \item The natural homomorphism $\pi^{\acute{e}t}(X_K, x_K)\rightarrow \pi^{\acute{e}t}(X,x)_K$ is an isomorphism.
        \item For any irreducible object $\mathcal{E}\in\mathcal{C}^{\acute{e}t}(X_K)$, there exists $E\in\mathcal{C}^{\acute{e}t}(X)$ such that $\mathcal{E}\cong E\otimes_k K$.
    \end{enumerate}
\end{Proposition}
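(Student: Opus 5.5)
We first prove (1); (2) then follows from Theorem~\ref{fieldalgclosed}, part (2), because $\mathcal{C}^{\acute{e}t}(X)\otimes_k K\subseteq\mathcal{C}^{\acute{e}t}(X_K)$ holds by Proposition~\ref{generalet}. For (1), Proposition~\ref{algfaith} (that is, Theorem~\ref{fieldalgclosed}, part (1)) already shows that the natural homomorphism
$$f:\pi^{\acute{e}t}(X_K,x_K)\rightarrow \pi^{\acute{e}t}(X,x)_K$$
is faithfully flat. It remains to show that $f$ is a closed immersion, since a faithfully flat closed immersion is an isomorphism.

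Step 1: both sides are reduced profinite group schemes over an algebraically closed field. Since $K$ is algebraically closed, hence perfect, Lemma~\ref{etalereduced} says $\pi^{\acute{e}t}(X_K,x_K)$ is pro-finite and reduced. The group $\pi^{\acute{e}t}(X,x)$ is pro-finite étale over $k$, and base change preserves this, so $\pi^{\acute{e}t}(X,x)_K$ is pro-finite étale over $K$. A pro-finite étale group scheme $G$ over an algebraically closed field is the constant pro-finite group scheme attached to $G(K)$. Thus $f$ is a morphism of constant pro-finite group schemes, and it is determined by the continuous group homomorphism $f(K):\pi^{\acute{e}t}(X_K,x_K)(K)\to \pi^{\acute{e}t}(X,x)(k)$, where we use $\pi^{\acute{e}t}(X,x)_K(K)=\pi^{\acute{e}t}(X,x)(k)$ since $k$ is algebraically closed.

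Step 2: identify $f(K)$ with the classical base change map. By the remark after the definition of the étale fundamental group scheme, $\pi^{\acute{e}t}(X_K,x_K)(K)\cong\pi_1^{\acute{e}t}(X_K,x_K)$ and $\pi^{\acute{e}t}(X,x)(k)\cong\pi_1^{\acute{e}t}(X,x)$. Under these identifications we check that $f(K)$ is the map induced by pulling back finite étale covers. Concretely, a finite étale Galois cover $Q\to X$ with group $\Gamma$ gives the essentially finite bundle $\phi_*\mathcal{O}_Q$, which corresponds to the regular representation of $\Gamma$. Its pullback to $X_K$ is $\phi_{K*}\mathcal{O}_{Q_K}$, and the functor $\eta_K^{P_K}$ sends the base-changed representation to exactly this bundle. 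Both Tannakian descriptions are compatible with taking fibres at $x$ and $x_K$, which gives the identification. By Lemma~\ref{etalegroupalgclosed}, $f(K)$ is therefore an isomorphism of pro-finite groups, and so $f$ is an isomorphism of constant group schemes.

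Step 2 is the main obstacle: matching the Tannakian homomorphism with Grothendieck's map on finite quotients. One could avoid it as follows. Faithful flatness makes $f(K)$ surjective, so $f$ is an isomorphism once it is injective. Injectivity can be checked on each finite quotient $\Gamma$ with Theorem~\ref{Thm1}(2): every étale trivializable bundle on $X_K$ is trivialized by a Galois cover, and every Galois cover descends to $X$ by Lemma~\ref{etalegroupalgclosed}. Hence each object of $\mathcal{C}^{\acute{e}t}(X_K)$ is a subquotient of some $\phi_{K*}\mathcal{O}_{Q_K}=(\phi_*\mathcal{O}_Q)\otimes_k K$ with $\phi_*\mathcal{O}_Q\in\mathcal{C}^{\acute{e}t}(X)$. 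This route is the one I would actually write, since it only needs the criterion of Theorem~\ref{fieldgeneral}.
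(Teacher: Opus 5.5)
Your proposal is correct. Your first route (Steps 1--2) is essentially the paper's proof. The paper compares $K$-points via a square linking $\pi^{\acute{e}t}(X_K,x_K)(K)\to\pi^{\acute{e}t}(X,x)_K(K)\cong\pi^{\acute{e}t}(X,x)(k)$ with Grothendieck's map $\pi_1^{\acute{e}t}(X_K,x_K)\to\pi_1^{\acute{e}t}(X,x)$. It then applies Lemma~\ref{etalegroupalgclosed} and concludes because both group schemes are reduced and profinite. The paper simply asserts that this square commutes, which is exactly the compatibility you flag as the obstacle in Step 2.

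The route you say you would actually write is genuinely different, and it fits the paper's framework better. You use Lemma~\ref{etalegroupalgclosed} only in its categorical form: pullback $\mathrm{FEt}(X)\to\mathrm{FEt}(X_K)$ is an equivalence, so every Galois cover of $X_K$ is $Q_K$ for a Galois cover $\phi:Q\to X$ with group $\Gamma$. Descent along $\phi_K$, together with Lemma~\ref{regularrepresentation} applied to $\Gamma$, then embeds any $\mathcal{E}\in\mathcal{C}^{\acute{e}t}(X_K)$ into $((\phi_*\mathcal{O}_Q)\otimes_k K)^{\oplus n}$, where $\phi_*\mathcal{O}_Q\in\mathcal{C}^{\acute{e}t}(X)$. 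You should say ``subobject of a direct sum of copies'' here, not ``subquotient of $\phi_{K*}\mathcal{O}_{Q_K}$''.

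This route avoids the point-level compatibility check, and it also avoids the structure theory of reduced profinite group schemes and Lemma~\ref{etalereduced}. It even makes faithful flatness unnecessary. Since $\mathcal{E}^\vee$ is trivialized by the same cover, you have verified condition (b) of Theorem~\ref{fieldgeneral}(4) directly. That gives the isomorphism without Theorem~\ref{fieldalgclosed}(1) or Theorem~\ref{Thm1}(2).

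For part (2), Theorem~\ref{fieldalgclosed}(2) is the right reference. The paper cites part (3), whose saturation hypothesis need not hold for $\mathcal{C}^{\acute{e}t}$. Only its implication (a)$\Rightarrow$(b), which is just part (2), is actually used.
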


\begin{proof}
    (1) Note that $\pi^{\acute{e}t}(X_K,x_K)(K)\cong \pi^{\acute{e}t}_1(X_K,x_K)$ and $\pi^{\acute{e}t}(X,x)(k)\cong \pi^{\acute{e}t}_1(X,x)$. Since $\pi^{\acute{e}t}(X,x)$ is a profinite reduced group scheme, we have $\pi^{\acute{e}t}(X,x)(K)\cong \pi^{\acute{e}t}(X,x)(k)$. Then Lemma~\ref{etalegroupalgclosed} implies that we have the following commutative diagram
    \[
        \begin{tikzcd}
            \pi^{\acute{e}t}(X_K,x_K)(K)\arrow[d,"\cong"]\arrow[r,two heads]&\pi^{\acute{e}t}(X,x)_K(K)\arrow[r,"\cong"]&\pi^{\acute{e}t}(X,x)(K)\arrow[r,"\cong"]&\pi^{\acute{e}t}(X,x)(k)\arrow[d,"\cong"]\\
            \pi^{\acute{e}t}_1(X_K,x_K)\arrow[rrr,"\cong"]&&&\pi^{\acute{e}t}_1(X,x)
        \end{tikzcd}
    \]
    It follows that $\pi^{\acute{e}t}(X_K,x_K)(K)\cong \pi^{\acute{e}t}(X,x)_K(K)$. Since \'etale group scheme is a profinite reduced group scheme, we obtain $\pi^{\acute{e}t}(X_K,x_K)\cong \pi^{\acute{e}t}(X,x)_K$.
    
    (2) It follows by (1) and [Theorem~\ref{fieldalgclosed}, (3)].
\end{proof}

\begin{Corollary}\label{Eetalealgclosed}
    Let $K/k$ be an extension of algebraically closed fields, $X$ a reduced connected scheme proper over $k$, $x_K\in X_K(K)$ lying over $x\in X(k)$. Then the natural homomorphism $\pi^{E\acute{e}t}(X_K, x_K)\rightarrow \pi^{E\acute{e}t}(X,x)_K$ is an isomorphism.
\end{Corollary}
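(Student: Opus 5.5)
The claim follows from Proposition~\ref{saturationisomorphism} applied to $\mathcal{C}_X=\mathcal{C}^{\acute{e}t}(X)$ and $\mathcal{C}_{X_K}=\mathcal{C}^{\acute{e}t}(X_K)$. By definition $\pi^{E\acute{e}t}(X,x)=\pi(\overline{\mathcal{C}^{\acute{e}t}(X)},x)$, and likewise over $K$. So it suffices to check the two hypotheses of Proposition~\ref{saturationisomorphism}.

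The first hypothesis is that $E\otimes_k K\in\mathcal{C}^{\acute{e}t}(X_K)$ for every $E\in\mathcal{C}^{\acute{e}t}(X)$. This is [Proposition~\ref{generalet}, (1)]: the pullback of a trivializing finite \'etale cover is again a trivializing finite \'etale cover.

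The second hypothesis is that the natural homomorphism $\pi^{\acute{e}t}(X_K,x_K)\rightarrow \pi^{\acute{e}t}(X,x)_K$ is an isomorphism. This is exactly [Proposition~\ref{etalealgclosed}, (1)], which applies because $K/k$ is an extension of algebraically closed fields and $X$ is reduced, connected and proper over $k$.

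With both hypotheses verified, Proposition~\ref{saturationisomorphism} yields
\[
\pi(\overline{\mathcal{C}^{\acute{e}t}(X_K)},x_K)\xrightarrow{\ \cong\ }\pi(\overline{\mathcal{C}^{\acute{e}t}(X)},x)_K,
\]
which is the statement. The homomorphism here is the natural one from Proposition~\ref{generalsaturaed}, (2); it is well defined by Proposition~\ref{saturatedtensor}.

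The one step I would check carefully is that Proposition~\ref{saturationisomorphism} needs no saturation hypothesis on the original categories. Its proof only uses Proposition~\ref{subquotient} for the saturated categories $\overline{\mathcal{C}}_X$ and $\overline{\mathcal{C}}_{X_K}$, together with Proposition~\ref{equivsurjective}. So it applies directly to $\mathcal{C}^{\acute{e}t}$, even though that category is not saturated.

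If one wanted to avoid that proposition, there is a fallback via [Theorem~\ref{fieldalgclosed}, (3)], since the saturations are saturated. One would check that every irreducible object of $\overline{\mathcal{C}^{\acute{e}t}(X_K)}$ is an irreducible object of $\mathcal{C}^{\acute{e}t}(X_K)$; this follows from the Remark after the definition of saturation. By [Proposition~\ref{etalealgclosed}, (2)], such an object is then of the form $E\otimes_k K$ with $E\in\mathcal{C}^{\acute{e}t}(X)\subseteq\overline{\mathcal{C}^{\acute{e}t}(X)}$, which gives the isomorphism.
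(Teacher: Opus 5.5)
Your main argument is correct and is exactly the paper's proof: the corollary is deduced from Proposition~\ref{saturationisomorphism} applied to $\mathcal{C}^{\acute{e}t}$, with the base-change hypothesis supplied by Proposition~\ref{generalet} and the isomorphism hypothesis supplied by Proposition~\ref{etalealgclosed}. Your fallback through [Theorem~\ref{fieldalgclosed}, (3)] for the saturated categories is also valid but not needed; it uses the Remark on irreducible objects together with [Proposition~\ref{etalealgclosed}, (2)].
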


\begin{proof}
    It follows by Proposition~\ref{saturationisomorphism} and Proposition~\ref{etalealgclosed}.
\end{proof}

\begin{Lemma}[{\cite[Theorem~1.1]{BiDu07}}]\label{etaletrivialequiv}
    Let $k$ be an algebraically closed field of characteristic $p>0$, $X$ a smooth projective variety over $k$, $F_X:X\rightarrow X$ the absolute Frobenius morphism, $E\in\Vect(X)$. Then 
    \begin{enumerate}
        \item If there exists integer $n>0$ such that $F_X^{n*}E\cong E$, then $E\in\mathcal{C}^{\acute{e}t}(X)$.
        \item If $E$ is stable, then there exists integer $n>0$ such that $F_X^{n*}E\cong E$ iff $E\in\mathcal{C}^{\acute{e}t}(X)$.
    \end{enumerate}
\end{Lemma}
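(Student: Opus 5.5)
The plan is to prove (1) by a Lang--Steinberg type argument on the frame bundle of $E$, and then to obtain (2) from (1) together with the finiteness of the monodromy of \'etale trivializable bundles. Throughout, put $q=p^n$ and fix an isomorphism $\varphi:F_X^{n*}E\xrightarrow{\cong}E$.

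\emph{Step 1: the cover.} Let $\mathrm{Fr}(E)=\underline{\mathrm{Isom}}(\mathcal{O}_X^{\oplus r},E)$, a principal $GL_r$-bundle over $X$. A frame $e$ pulls back to a frame $F_X^{n*}e$ of $F_X^{n*}E$, so $\varphi$ defines a map $\Phi:\mathrm{Fr}(E)\to\mathrm{Fr}(E)$ over $X$, namely $e\mapsto\varphi\circ F_X^{n*}e$. Let $P\subseteq\mathrm{Fr}(E)$ be the closed subscheme of fixed points of $\Phi$.

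\emph{Step 2: local equations and \'etaleness.} On an open $U$ where $E|_U$ is trivial, write $\varphi$ as a matrix $A\in GL_r(\mathcal{O}(U))$. Then $P|_U$ is given by $\{g\in GL_r:\ A\,g^{(q)}=g\}$, where $g^{(q)}$ raises the entries of $g$ to the $q$-th power. Rewriting the equation as $g^{-1}A\,g^{(q)}=1$, its differential in $g$ is (up to translation) $-\,\mathrm{id}$, because $d(g^{(q)})=0$. Hence $P|_U\to U$ is unramified and flat, i.e.\ \'etale; it is also finite, since it is cut out by $r^2$ equations of the form $g_{ij}=(\text{polynomial in the }g^{q})$. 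Surjectivity on geometric points is Lang's theorem for $GL_r$ over $\bar{\kappa}(y)$ at each point $y$. Thus $P\to X$ is a finite \'etale $GL_r(\mathbb{F}_q)$-torsor.

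\emph{Step 3: trivialization.} The tautological frame gives $\mathcal{O}_P^{\oplus r}\xrightarrow{\cong}E|_P$, so $E\in\mathcal{C}^{\acute{e}t}(X)$. I expect the main obstacle to lie in Steps 1--2, in checking that these local descriptions glue and that $P$ is really finite \'etale and surjective. The cleanest route is to avoid coordinates: the sheaf of $\varphi$-invariant sections $\{s: \varphi(F_X^{n*}s)=s\}$ on $X_{\acute{e}t}$ is a locally constant sheaf of $\mathbb{F}_q$-vector spaces of rank $r$, and $E\cong \mathbb{V}\otimes_{\mathbb{F}_q}\mathcal{O}_X$ for this sheaf $\mathbb{V}$ (Katz's correspondence). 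Representability of $\mathbb{V}$ by a finite \'etale cover then gives the required $P$.

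\emph{Step 4: part (2).} The implication ``$F_X^{n*}E\cong E\Rightarrow E$ \'etale trivializable'' is (1). Conversely, suppose $E\in\mathcal{C}^{\acute{e}t}(X)$ is trivialized by a finite \'etale cover, which we may take to be Galois with group $G$. Then $E$ corresponds to a representation $\rho:G\to GL_r(k)$. Pulling back along the absolute Frobenius corresponds to twisting the matrix coefficients of $\rho$ by $\lambda\mapsto\lambda^p$. Since $G$ is finite, $\rho$ is conjugate to a representation with values in $GL_r(\mathbb{F}_{p^n})$ for some $n$, because its matrix entries generate a finitely generated subring of $k$. The $n$-fold twist of such a representation is isomorphic to the representation itself, so $F_X^{n*}E\cong E$. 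This argument does not actually use stability; stability only matters in the original statement, which does not assume the $\mathbb{F}_{p^n}$-rationality.
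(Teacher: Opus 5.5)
Your Steps 1--3 are the classical Lange--Stuhler argument and are fine; the paper gives no proof of this lemma and only cites \cite[Theorem~1.1]{BiDu07}. The genuine gap is in Step 4. You claim that every representation $\rho:G\to GL_r(k)$ of a finite group is conjugate to one with values in some $GL_r(\mathbb{F}_{p^n})$. This is false when $p$ divides $|G|$. Your justification, that the matrix entries generate a finitely generated subring, does not give it, because a finitely generated $\mathbb{F}_p$-algebra need not be finite.

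For a counterexample, take $G=(\mathbb{Z}/p)^2=\langle a,b\rangle$ and $\lambda\in k$. Let $V_\lambda$ be the representation $a\mapsto\left(\begin{smallmatrix}1&1\\0&1\end{smallmatrix}\right)$, $b\mapsto\left(\begin{smallmatrix}1&\lambda\\0&1\end{smallmatrix}\right)$. Since $\rho(b)-1=\lambda(\rho(a)-1)$, one checks that $V_\lambda\cong V_\mu$ iff $\lambda=\mu$, while the $p^n$-twist of $V_\lambda$ is $V_{\lambda^{p^n}}$. Suppose $X$ admits a connected Galois \'etale cover $Y\to X$ with group $(\mathbb{Z}/p)^2$, e.g.\ $X$ an ordinary curve of genus $\geq 2$. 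Your functor $\Rep_k(G)\to\Vect(X)$ is then fully faithful, because $H^0(Y,\mathcal{O}_Y)=k$. So for $\lambda\notin\bar{\mathbb{F}}_p$ the corresponding bundle $E_\lambda$, a non-split extension of $\mathcal{O}_X$ by $\mathcal{O}_X$, lies in $\mathcal{C}^{\acute{e}t}(X)$, but $F_X^{n*}E_\lambda\cong E_{\lambda^{p^n}}\not\cong E_\lambda$ for every $n>0$. Hence the converse in (2) genuinely needs a hypothesis, and your closing remark that stability is not used is wrong.

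Stability is exactly what repairs the argument. Subrepresentations of $\rho$ correspond to subbundles of $E$ lying in $\mathcal{C}^{\acute{e}t}(X)$. These have degree $0$, since their pullback to $Y$ is trivial, and a stable bundle of degree $0$ has no proper nonzero subbundle of degree $0$. So $\rho$ is irreducible. For irreducible $\rho$ the descent you want does hold. $G$ has a finite splitting field $\mathbb{F}\subset k$, and every simple $kG$-module is $k\otimes_{\mathbb{F}}S$ for a simple $\mathbb{F}G$-module $S$. In a basis of $S$, $\rho$ has entries in $\mathbb{F}$. With $|\mathbb{F}|=p^n$, the $p^n$-twist of $\rho$ equals $\rho$, and your twisting computation then gives $F_X^{n*}E\cong E$.

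There is also a minor slip in Step 2. The equations $g=A\,g^{(q)}$ express $g$ through $g^{(q)}$ and do not by themselves show integrality. Rewrite them as $g^{(q)}=A^{-1}g$, so each $g_{ij}$ is integral over $\mathcal{O}(U)$; on $P$ one also has $(\det g)^{-1}=\det A\,(\det g)^{q-2}$.
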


{\begin{Proposition}
    Let $k$ be an algebraically closed field of characteristic $p>0$, $X$ a smooth projective variety over $k$, $F_X:X\rightarrow X$ the absolute Frobenius morphism, $x\in X(k)$. Then there exists a natural faithfully flat homomorphism $\pi^{EF}(X,x)\twoheadrightarrow \pi^{E\acute{e}t}(X,x)$.
\end{Proposition}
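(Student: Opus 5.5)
To produce the homomorphism $\pi^{EF}(X,x)\twoheadrightarrow \pi^{E\acute{e}t}(X,x)$, I will show that $\overline{\mathcal{C}^{\acute{e}t}(X)}$ is a full Tannakian subcategory of $\overline{\mathcal{C}^{F}(X)}$ that is closed under taking subobjects. Theorem~\ref{Thm1}(1) then gives faithful flatness of the induced map of Tannaka groups. Both categories are full subcategories of $\Vect(X)$ with the same tensor structure and fibre functor $|_x$, so the inclusion is automatically a fully faithful exact tensor functor.

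\textbf{Step 1: $\mathcal{C}^{\acute{e}t}(X)\subseteq\mathcal{C}^F(X)$.} Let $E\in\mathcal{C}^{\acute{e}t}(X)$. I want to find $m>n>0$ with $F_X^{m*}E\cong F_X^{n*}E$, which by Lemma~\ref{FFequiv} means $E\in FF(X)$. Since $E$ is \'etale trivializable, it comes from a representation of a finite (\'etale, constant since $k=\bar k$) quotient $G$ of $\pi_1^{\acute{e}t}$. Because $G$ is constant and $F_X$ commutes with the Galois covering, Lemma~\ref{Cartesian} gives $F_X^*E\cong E^{(p)}$, the twist of the representation by the Frobenius of $k$. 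The $G$-representations of a fixed dimension over $\bar{\mathbb F}_p$ are defined over a finite field. Hence some power of Frobenius fixes the isomorphism class, i.e.\ $F_X^{n*}E\cong E$ for some $n>0$. Then $F_X^{2n*}E\cong F_X^{n*}E$, so $E\in FF(X)\subseteq\mathcal{C}^F(X)$. Alternatively, Lemma~\ref{etaletrivialequiv}(1) points in the converse direction, and one can cite the Lange--Stuhler description of \'etale trivial bundles as those with $F_X^{n*}E\cong E$.

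\textbf{Step 2: saturations are nested.} Any filtration with graded pieces in $\mathcal{C}^{\acute{e}t}(X)$ has graded pieces in $\mathcal{C}^F(X)$, so $\overline{\mathcal{C}^{\acute{e}t}(X)}\subseteq\overline{\mathcal{C}^F(X)}$. This gives the homomorphism $\pi^{EF}(X,x)\to\pi^{E\acute{e}t}(X,x)$.

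\textbf{Step 3: closure under subobjects.} Let $\mathcal{F}\hookrightarrow E$ in $\overline{\mathcal{C}^F(X)}$ with $E\in\overline{\mathcal{C}^{\acute{e}t}(X)}$; I need $\mathcal{F}\in\overline{\mathcal{C}^{\acute{e}t}(X)}$. Take a Jordan--H\"older filtration of $\mathcal{F}$ in $\overline{\mathcal{C}^F(X)}$. By Jordan--H\"older uniqueness in the abelian category $\overline{\mathcal{C}^F(X)}$, its irreducible factors are among the irreducible factors of $E$. The latter are subquotients of objects of $\mathcal{C}^{\acute{e}t}(X)$ taken inside $\mathcal{C}^F(X)\subseteq\mathcal{C}^{NF}(X)$. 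So it suffices to know that $\mathcal{C}^{\acute{e}t}(X)$ is closed under subquotients inside $\mathcal{C}^{NF}(X)$. This holds because an \'etale-trivial bundle becomes trivial on a finite \'etale cover $P$, and a degree-$0$ subsheaf of a numerically flat trivial bundle on the proper connected $P$ is trivial. Then every factor of $\mathcal{F}$ is \'etale trivializable, so $\mathcal{F}\in\overline{\mathcal{C}^{\acute{e}t}(X)}$.

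I expect Step 1 to be the main obstacle. Justifying $F_X^{n*}E\cong E$ for \'etale trivial bundles needs either the Lange--Stuhler theorem or an explicit identification of $F_X^*$ with Frobenius twisting of representations of the monodromy group. I would try the representation-theoretic argument first.
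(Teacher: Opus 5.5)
Your Step~1 rests on a false claim. It is not true that every \'etale trivializable bundle satisfies $F_X^{n*}E\cong E$. Your justification (``representations over $\bar{\mathbb{F}}_p$ are defined over a finite field'') only covers $k=\bar{\mathbb{F}}_p$. Here $k$ is an arbitrary algebraically closed field, and non-semisimple modular representations of a finite group need not be defined over any finite field.

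Concretely, suppose $\pi_1^{\acute{e}t}(X,x)$ surjects onto $(\mathbb{Z}/p)^2=\langle g_1,g_2\rangle$ (e.g.\ $X$ an ordinary abelian surface), and let $t\in k$ be transcendental over $\mathbb{F}_p$. The representation $g_1\mapsto\bigl(\begin{smallmatrix}1&1\\0&1\end{smallmatrix}\bigr)$, $g_2\mapsto\bigl(\begin{smallmatrix}1&t\\0&1\end{smallmatrix}\bigr)$ gives an \'etale trivializable bundle $E_t$ with $F_X^{n*}E_t\cong E_{t^{p^n}}$. Moreover $E_s\cong E_t$ only if $s=t$. Hence $F_X^{m*}E_t\not\cong F_X^{n*}E_t$ for all $m>n>0$, so $E_t\notin FF(X)$ by Lemma~\ref{FFequiv}.

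Your fallback misstates Lange--Stuhler in the same way. Frobenius-periodic bundles are exactly the \'etale trivializable bundles whose monodromy representation is defined over a finite field, which is in general a proper subclass. The converse direction holds only for stable bundles; that is Lemma~\ref{etaletrivialequiv}(2).

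The conclusion of Step~1 is nevertheless true, and there are two ways to repair the argument.

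\textbf{The paper's route.} Apply periodicity only to the irreducible graded pieces of a Jordan--H\"older filtration of $E\in\overline{\mathcal{C}^{\acute{e}t}(X)}$. Irreducible representations of a finite group over an algebraically closed field of characteristic $p$ are realized over a finite field, so these pieces lie in $FF(X)$. This gives $\overline{\mathcal{C}^{\acute{e}t}(X)}\subseteq\overline{\mathcal{C}^{F}(X)}$ directly, which is all your Step~2 needs. In Step~3 you must then also check that irreducible objects of $\mathcal{C}^{\acute{e}t}(X)$ stay irreducible in $\overline{\mathcal{C}^{F}(X)}$; your own subquotient-closure argument gives this.

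\textbf{Keeping your stronger inclusion.} Let $\phi:P\to X$ be a connected Galois \'etale cover trivializing $E$ of rank $r$. Then $E\hookrightarrow\phi_*\phi^*E\cong(\phi_*\mathcal{O}_P)^{\oplus r}$ in $\mathcal{C}^{NF}(X)$. By Lemma~\ref{Cartesian} and base change for the affine map $\phi$, $F_X^*\phi_*\mathcal{O}_P\cong\phi_*F_P^*\mathcal{O}_P\cong\phi_*\mathcal{O}_P$. So $\phi_*\mathcal{O}_P\in FF(X)$ and hence $E\in\mathcal{C}^F(X)$. The point is that the regular representation is defined over $\mathbb{F}_p$ even when $E$ is not.

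With either repair the rest of your argument stands. Your Step~3 is a genuine improvement over the paper. The paper proves only the inclusion of saturations and then asserts faithful flatness without checking stability under subobjects, which Theorem~\ref{Thm1}(1) requires.
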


\begin{proof}
    For any $E\in \overline{\mathcal{C}^{\acute{e}t}(X)}$, consider the Jordan H\"older filtration of $E$ in $\overline{\mathcal{C}^{\acute{e}t}(X)}$:
    $$0=E_0\hookrightarrow E_1\hookrightarrow\cdots\hookrightarrow E_n=E,$$
    where $E_{i+1}/E_i$ is irreducible in $\overline{\mathcal{C}^{\acute{e}t}(X)}$ and so that irreducible in $\mathcal{C}^{\acute{e}t}(X)$. Then by Lemma~\ref{etaletrivialequiv}, there exists $n_i> 0$ such that $F_X^{n_i*}(E_{i+1}/E_i)\cong E_{i+1}/E_i$. So $E_{i+1}/E_i\in\mathcal{C}^{F}(X)$ for any $i$. Then we have $E\in\overline{\mathcal{C}^{F}(X)}$. Hence there eixsts a natural faithfully flat homomorphism $\pi^{EF}(X,x)\twoheadrightarrow \pi^{E\acute{e}t}(X,x)$.
\end{proof}}

\begin{Corollary}
    Let $K/k$ be an extension of algebraically closed fields of characteristic $p>0$, $X$ a smooth projective variety over $k$, $F_X:X\rightarrow X$ the absolute Frobenius morphism, $\mathcal{E}\in\Vect(X_K)$ a stable vector bundle. If there exists integer $n>0$ such that $F^{n*}_{X_K}\mathcal{E}\cong \mathcal{E}$, then there exists $E\in\mathcal{C}^{\acute{e}t}(X)$ such that $\mathcal{E}\cong E\otimes_k K$ and $F_X^{n*}E\cong E$.
\end{Corollary}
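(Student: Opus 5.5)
The plan is to combine Lemma~\ref{etaletrivialequiv} with the étale base change results proved above, namely Proposition~\ref{etalealgclosed} and Proposition~\ref{separableet}, together with the descent statement Lemma~\ref{NoetherDeuring} in the form of [Proposition~\ref{generalS}, (2)].

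First I would show that $\mathcal{E}$ is étale trivializable and irreducible in $\mathcal{C}^{\acute{e}t}(X_K)$. Since $K$ is algebraically closed and $X_K$ is a smooth projective variety over $K$, Lemma~\ref{etaletrivialequiv}(1) applied to $\mathcal{E}$ gives $\mathcal{E}\in\mathcal{C}^{\acute{e}t}(X_K)$, because $F^{n*}_{X_K}\mathcal{E}\cong\mathcal{E}$. Étale trivializable bundles are numerically flat, so $\mathcal{E}$ has degree $0$ with respect to any polarization. A proper nonzero subobject $\mathcal{E}'\hookrightarrow\mathcal{E}$ in $\mathcal{C}^{\acute{e}t}(X_K)$ is again numerically flat, hence also of degree $0$. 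This would contradict stability, which requires strictly smaller slope for proper saturated subsheaves; note that $\mathcal{E}'$ is a subbundle with locally free quotient because $\mathcal{C}^{\acute{e}t}(X_K)$ is abelian inside $\Vect(X_K)$. Hence $\mathcal{E}$ is an irreducible object of $\mathcal{C}^{\acute{e}t}(X_K)$.

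Next, I would apply [Proposition~\ref{etalealgclosed}, (2)]. It yields $E\in\mathcal{C}^{\acute{e}t}(X)$ with $\mathcal{E}\cong E\otimes_k K$; the reducedness hypothesis holds since $X$ is smooth.

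It remains to descend the Frobenius periodicity. Using $F_{X_K}^{n*}(E\otimes_k K)\cong (F_X^{n*}E)\otimes_k K$, as in the proof of Proposition~\ref{generalloc}, we get
$$(F_X^{n*}E)\otimes_k K\cong F_{X_K}^{n*}\mathcal{E}\cong \mathcal{E}\cong E\otimes_k K.$$
Both $E$ and $F_X^{n*}E$ lie in $\mathcal{C}^{NF}(X)$: $E$ is étale trivializable, and the Frobenius pullback of an étale trivializable bundle is again étale trivializable, since it is trivialized on the Frobenius base change of the trivializing cover. Then [Proposition~\ref{generalS}, (2)] gives $F_X^{n*}E\cong E$.

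The main obstacle I expect is the irreducibility step. One must make sure that a subobject in the Tannakian category really contradicts slope stability, i.e.\ that it has degree exactly $0$ and rank strictly between $0$ and $\rk\mathcal{E}$. The degree-zero property follows from numerical flatness. Alternatively, if that route is awkward, one can use that stable bundles are simple, $\End(\mathcal{E})=K$, and combine this with the semisimplicity of $\mathcal{C}^{\acute{e}t}(X_K)$ over the algebraically closed field; the latter holds because $\pi^{\acute{e}t}$ is a constant profinite group, up to the characteristic issue that representations of finite groups of order divisible by $p$ need not be semisimple. For that reason I would rely on the slope argument.
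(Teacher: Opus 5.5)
Your proposal is correct and follows essentially the same route as the paper. Both arguments use Lemma~\ref{etaletrivialequiv} to place $\mathcal{E}$ in $\mathcal{C}^{\acute{e}t}(X_K)$, stability to make it irreducible there, Proposition~\ref{etalealgclosed} to descend it to some $E\in\mathcal{C}^{\acute{e}t}(X)$, and [Proposition~\ref{generalS}, (2)] to descend $(F_X^{n*}E)\otimes_k K\cong E\otimes_k K$ to $F_X^{n*}E\cong E$; your slope argument for irreducibility and your check that $F_X^{n*}E$ is numerically flat fill in details the paper asserts without proof.
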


\begin{proof}
    According to Lemma~\ref{etaletrivialequiv}, $F^{n*}_{X_K}\mathcal{E}\cong \mathcal{E}$ implies that $\mathcal{E}\in\mathcal{C}^{\acute{e}t}(X_K)$. Since $\mathcal{E}$ is stable, we have $\mathcal{E}$ is irreducible in $\mathcal{C}^{\acute{e}t}(X_K)$. By [Theorem~\ref{fieldalgclosed}, (3)] and Proposition~\ref{etalealgclosed}, there exists $E\in\mathcal{C}^{\acute{e}t}(X)$ such that $\mathcal{E}\cong E\otimes_k K$. Note that 
    $$\begin{aligned}
        (F_{X}^{n*}E)\otimes_k K\cong F_{X_K}^{n*}(E\otimes_k K)\cong F_{X_K}^{n*}\mathcal{E}\cong \mathcal{E}\cong E\otimes_k K,
    \end{aligned}$$
    i.e. $(F_{X}^{n*}E)\otimes_k K\cong E\otimes_k K$. Note that we have $F_{X}^{n*}E$ and $E\in\mathcal{C}^{NF}(X)$. Then we have $F_{X}^{n*}E\cong E$ according to [Proposition~\ref{generalS}, (2)].
\end{proof}

In general, the base change of fundamental group schemes behaves to be negative under algebrically closed field extension. A counterexample for the base change of Local fundamental group scheme is stated by Mehta \& Subramanian\cite{MeSu02}, saying that for an integral projective curve over an algebraically closed field $k$ with at least one cuspidal singularity, such base change could not hold since a Frobenius trivial line bundles on $X_K$ could not descend to a Frobenius trivial line bundle on $X$. Pauly\cite{Pau07} stated a smooth counterexample saying that there exists stable Frobenius trivial bundles of rank 2 on $X_K$ but not defined on $X$.

\begin{Lemma}[{\cite[Theorem]{MeSu08}}]\label{Locnobasechange}
    Let $K/k$ be an extension of algebraically closed fields of characteristic $p>0$, $X$ a smooth projective variety over $k$, $x_K\in X_K(K)$ lying over $x\in X(k)$. Denote by $S(X,r,t)$ the set of isomorphism classes of stable vector bundles $E$ on $X$ such that $\rk E = r$ and $F_X^{t *}E\cong \mathcal{O}_X^{\oplus r}$. If there exist positive integers $r_0,t_0$ such that $|S(X,r_0,t_0)|=\infty$, then the natural homomorphism $\pi^{Loc}(X_K,x_K)\rightarrow \pi^{Loc}(X,x)_K$ is not an isomorphism.
\end{Lemma}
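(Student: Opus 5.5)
The plan is to argue by contradiction using [Theorem~\ref{fieldalgclosed}, (2)]. Suppose the natural homomorphism $\pi^{Loc}(X_K,x_K)\rightarrow \pi^{Loc}(X,x)_K$ is an isomorphism; it exists by [Proposition~\ref{generalloc}, (2)]. Then every irreducible object $\mathcal{E}\in\mathcal{C}^{Loc}(X_K)$ is isomorphic to $E\otimes_k K$ for some $E\in\mathcal{C}^{Loc}(X)$. The aim is to produce an irreducible Frobenius trivial bundle on $X_K$ that is not of this form, using $|S(X,r_0,t_0)|=\infty$.

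First, I will check that a stable bundle $E$ with $F_X^{t*}E\cong\mathcal{O}_X^{\oplus r}$ is an irreducible object of $\mathcal{C}^{Loc}(X)$. Objects of $\mathcal{C}^{Loc}(X)$ are numerically flat, so they have degree $0$ with respect to any polarization. A proper subobject would be a degree $0$ subbundle, which contradicts stability. The same reasoning applies on $X_K$.

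Next, I will realize the infinite family as a positive-dimensional family. The bundles in $S(X,r_0,t_0)$ correspond to $k$-points of the fibre over the trivial bundle of the Frobenius pullback map $V^{-1}(\mathcal{O}^{\oplus r_0})$, where $V\colon M^s_X(r_0)\dashrightarrow M_X(r_0)$ is defined on the moduli of stable bundles with trivial Chern classes. This fibre is a locally closed subscheme $Z$ of finite type over $k$. Since $Z(k)$ is infinite, $Z$ has an irreducible component $Z_0$ of positive dimension. I then choose $K$-points of $Z$ that are not $k$-points. Because $K\supsetneq k$ are algebraically closed, $K$ has positive transcendence degree over $k$, and we may take a point lying over the generic point of a curve in $Z_0$. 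Using a (possibly étale-local, or after passing to a finite cover) universal family, this gives a stable bundle $\mathcal{E}$ on $X_K$ with $F_{X_K}^{t_0*}\mathcal{E}\cong\mathcal{O}_{X_K}^{\oplus r_0}$. Its isomorphism class, viewed as a $K$-point of the coarse moduli space, is not defined over $k$. This is the precise form of the statement that "$\mathcal{E}$ is not defined on $X$".

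Finally, if $\mathcal{E}\cong E\otimes_k K$, then $E$ is stable and Frobenius trivial, by [Proposition~\ref{generalloc}, (1)] together with the fact that stability is invariant under extension of algebraically closed fields. So $E$ would give a $k$-point of $Z$ whose base change is the moduli point of $\mathcal{E}$, a contradiction. Hence the homomorphism is not an isomorphism.

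The main obstacle is the middle step. It requires a moduli space, or at least a bounded family, of stable bundles with trivial Chern classes on the smooth projective variety $X$, and the claim that its $K$-points which are not $k$-points correspond to bundles not descending to $X$. I would first try Langer's boundedness together with the GIT coarse moduli of Gieseker stable sheaves. The key fact is that for stable bundles, isomorphism classes over an algebraically closed field correspond bijectively to points of the coarse space. I would rely on this rather than on the existence of a universal family.
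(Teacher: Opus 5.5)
The paper gives no proof of this statement; it is quoted from Mehta--Subramanian. Your proposal is therefore a genuinely different, self-contained route. You derive the claim from Theorem~\ref{fieldalgclosed}(2), which is proved earlier and independently of this lemma, so there is no circularity. You combine it with a moduli argument of the same kind the paper itself uses for Proposition~\ref{containSnobasechange}.

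Two of your choices are exactly right and worth keeping explicit.
\begin{itemize}
\item You must use part (2) rather than part (3), because $\mathcal{C}^{Loc}(X)$ is not saturated in general. On an ordinary elliptic curve, the nonsplit extension $U$ of $\mathcal{O}_X$ by $\mathcal{O}_X$ satisfies $F_X^*U\cong U$. So $U$ is an extension of Frobenius trivial bundles that is never Frobenius trivial.
\item You use $K\neq k$. The statement as printed omits this, but it is necessary: for $K=k$ the map is the identity. Taking a $K$-point over the generic point of a curve needs only $\mathrm{trdeg}_k K\geq 1$, which is exactly what $K\neq k$ gives.
\end{itemize}

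One step, however, fails as written. Closed points of $M_X(r_0)$ are S-equivalence classes. So the fibre of $V^{t_0}$ over $[\mathcal{O}^{\oplus r_0}]$ parametrizes stable $E$ for which $F_X^{t_0*}E$ is merely an iterated extension of copies of $\mathcal{O}_X$. (You wrote $V$; you need the $t_0$-fold iterate.) By the elliptic curve example above, such a bundle need not be trivial, nor become trivial after further Frobenius pullbacks.

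Consequently $S(X,r_0,t_0)$ is only a subset of $Z(k)$. The infinitely many Frobenius trivial points may lie in a proper closed subset of $Z_0$ or in another component. The bundle $\mathcal{E}$ attached to a generic $K$-point of a curve in $Z_0$ then need not lie in $\mathcal{C}^{Loc}(X_K)$, and Theorem~\ref{fieldalgclosed}(2) says nothing about it.

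The repair is to replace $Z$ by its subset $T$ where $h^0(F^{t_0*}E)\geq r_0$.
\begin{itemize}
\item $T$ is closed: semicontinuity on the Quot-type parameter scheme gives an invariant closed subset, whose image in the geometric quotient is closed. Its formation also commutes with the flat base change to $K$.
\item $T$ is exactly the Frobenius trivial locus. On $Z$ the bundle $G=F^{t_0*}E$ is unipotent, hence numerically flat. A numerically flat $G$ of rank $r$ with $h^0(G)\geq r$ is trivial. Indeed, take the map $\mathcal{O}^{\oplus r}\to G$ given by $r$ independent sections. Its kernel is a subobject of a trivial object of $\mathcal{C}^{NF}(X)$, hence trivial. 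It is then zero, since the map is injective on $H^0$. So the map is a monomorphism between objects of equal rank, i.e.\ an isomorphism.
\end{itemize}
Thus $S(X,r_0,t_0)$ injects into the closed points of $T$, and $T$ has a positive-dimensional component. Your argument then goes through verbatim with $T$ in place of $Z$.
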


\begin{Proposition}\label{containlocnobasechange}
    Let $K/k$ be an extension of algebraically closed fields of characteristic $p>0$, $X$ a reduced connected scheme proper over $k$, $x_K\in X_K(K)$ lying over $x\in X(k)$, $\mathcal{C}_X$, $\mathcal{C}_{X_K}$ the Tannakian categories over $X$ and $X_K$ respectively and for any $E\in\mathcal{C}_X$, $E\otimes_k K\in\mathcal{C}_{X_K}$. If $\mathcal{C}^{Loc}(X)\subseteq \mathcal{C}_X$ and $\mathcal{C}^{Loc}(X_K)\subseteq \mathcal{C}_{X_K}$ are Tannakian subcategories and the natural homomorphism $\pi^{Loc}(X_K,x_K)\rightarrow \pi^{Loc}(X,x)_K$ is not an isomorphism, then the natural homomorphism $\pi(\mathcal{C}_{X_K},x_K)\rightarrow \pi(\mathcal{C}_X,x)_K$ is not an isomorphism. 
\end{Proposition}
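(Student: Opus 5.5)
The plan is to apply Proposition~\ref{algclosednotiso} with $\mathcal{C}'_X:=\mathcal{C}_X$, $\mathcal{C}'_{X_K}:=\mathcal{C}_{X_K}$ and the subcategories $\mathcal{C}^{Loc}(X)\subseteq\mathcal{C}_X$ and $\mathcal{C}^{Loc}(X_K)\subseteq\mathcal{C}_{X_K}$. I will therefore verify its three hypotheses. Hypothesis (1) is exactly the standing assumption that $E\otimes_k K\in\mathcal{C}_{X_K}$ for every $E\in\mathcal{C}_X$. Hypothesis (3) is the assumption that $\pi^{Loc}(X_K,x_K)\rightarrow\pi^{Loc}(X,x)_K$ is not an isomorphism. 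Hypothesis (2), namely that $E\in\mathcal{C}^{Loc}(X)$ iff $E\otimes_k K\in\mathcal{C}^{Loc}(X_K)$ for any $E\in\Vect(X)$, is [Proposition~\ref{generalloc}, (1)].

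Proposition~\ref{generalloc} is stated for geometrically reduced $X$, whereas here $X$ is only reduced. Since $k$ is algebraically closed, hence perfect, reduced implies geometrically reduced, so the proposition applies.

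The main obstacle is that Proposition~\ref{algclosednotiso} invokes [Theorem~\ref{fieldalgclosed}, (3)], which assumes saturation. Two points in its proof need care.

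\begin{itemize}
\item \textbf{From $\mathcal{C}_X$ to $\mathcal{C}^{Loc}$.} If $\pi(\mathcal{C}_{X_K},x_K)\rightarrow\pi(\mathcal{C}_X,x)_K$ were an isomorphism, [Theorem~\ref{fieldalgclosed}, (2)] already gives, without saturation, that every irreducible object of $\mathcal{C}_{X_K}$ is of the form $E\otimes_k K$ with $E\in\mathcal{C}_X$. So this direction is safe.

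\item \textbf{Producing a non-descending irreducible Frobenius trivial bundle.} Here saturation is genuinely needed, and it is in fact available: $\mathcal{C}^{Loc}(X)$ is saturated. An extension of Frobenius trivial bundles $0\to A\to E\to B\to 0$ with $F^{n*}A$ and $F^{n*}B$ trivial becomes, after pulling back by $F^{n}$, an extension of trivial bundles. Its class lies in $H^1(X,\mathcal{O}_X)^{\oplus}$, and the Frobenius acts nilpotently on the part killed by some Frobenius power. I would instead argue directly: by Theorem~\ref{fieldgeneral}(4), failure of the isomorphism for Loc yields some $\mathcal{E}\in\mathcal{C}^{Loc}(X_K)$ not embedding into any $E\otimes_k K$. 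Taking a Jordan--H\"older filtration of $\mathcal{E}$ and running the inductive argument of the $(b)\Rightarrow(a)$ part of Theorem~\ref{fieldalgclosed}(3) shows that some irreducible factor does not descend. That argument uses Proposition~\ref{subquotient}, which needs saturation only to keep extensions in the category, and this holds for Frobenius trivial bundles as above (after a further Frobenius pullback killing the extension class). If that fails, the counterexample from Lemma~\ref{Locnobasechange} directly provides a stable, hence irreducible, Frobenius trivial bundle on $X_K$ not defined over $k$, which suffices.
\end{itemize}

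To conclude, let $\mathcal{E}_0$ be this irreducible object of $\mathcal{C}^{Loc}(X_K)$. It is irreducible in $\mathcal{C}_{X_K}$, since $\mathcal{C}^{Loc}(X_K)$ is closed under subobjects in $\mathcal{C}^{NF}(X_K)$, so a subobject in $\mathcal{C}_{X_K}$ of a Frobenius trivial bundle is Frobenius trivial. Suppose $\mathcal{E}_0\cong E_0\otimes_k K$ with $E_0\in\mathcal{C}_X$. Then hypothesis (2) gives $E_0\in\mathcal{C}^{Loc}(X)$, a contradiction. Hence $\pi(\mathcal{C}_{X_K},x_K)\rightarrow\pi(\mathcal{C}_X,x)_K$ is not an isomorphism.
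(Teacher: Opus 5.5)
\textbf{Gap: your justification for the saturation step is false.}

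Your overall plan is the paper's own proof, which is just Proposition~\ref{generalloc}(1) combined with Proposition~\ref{algclosednotiso}. Your check of hypotheses (1)--(3) is fine, including that reduced implies geometrically reduced over the perfect field $k$. You are also right that Proposition~\ref{algclosednotiso}, applied to the subcategories $\mathcal{C}^{Loc}$, tacitly uses the direction (b)$\Rightarrow$(a) of Theorem~\ref{fieldalgclosed}(3), and that direction assumes saturation. The paper's two-line proof does not address this.

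However, your repair rests on a false claim: $\mathcal{C}^{Loc}(X)$ is not saturated in general. On an ordinary elliptic curve, Frobenius acts bijectively on $H^1(X,\mathcal{O}_X)$. So the non-split self-extension $F_2$ of $\mathcal{O}_X$ satisfies $F_X^{*}F_2\cong F_2$ and is never Frobenius trivial; it lies in $\overline{\mathcal{C}^{Loc}(X)}$ but not in $\mathcal{C}^{Loc}(X)$. Your argument that Frobenius acts nilpotently ``on the part killed by some Frobenius power'' is circular, since nothing forces an arbitrary extension class into that part. The fallback through Lemma~\ref{Locnobasechange} is also unavailable. It needs $X$ smooth projective and $|S(X,r_0,t_0)|=\infty$, neither of which is assumed, and as stated it only gives non-isomorphism, not a non-descending stable bundle. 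So the key step, producing an irreducible $\mathcal{E}_0\in\mathcal{C}^{Loc}(X_K)$ not of the form $E\otimes_k K$, is unproved as written.

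\textbf{How to repair it.} The inductive argument needs something weaker than closure under extensions.
\begin{itemize}
\item For $E_1,E_2\in\mathcal{C}^{Loc}(X)$, an extension with class $\sigma\in\Ext^1(E_2,E_1)$ has Frobenius-trivial middle term iff $F^{N*}\sigma=0$ for some $N$. Indeed, once both ends are trivial, an extension of trivial bundles with trivial middle term splits, because $H^0(X,\mathcal{O}_X)=k$.
\item Hence these classes form $V=\ker(F^{N*})$ for $N\gg0$. This is the kernel of a $p^N$-semilinear map, so it is a $k$-subspace.
\item Since $k$ and $K$ are perfect, the corresponding subspace of $\Ext^1_{\mathcal{O}_{X_K}}(E_2\otimes_kK,E_1\otimes_kK)$ is exactly $V\otimes_kK$.
\item So in the Baer-sum argument of Proposition~\ref{subquotient} you may take all $\sigma_i\in V$. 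Then every $F_i$ is Frobenius trivial. The pushout and pullback are a cokernel and a kernel of morphisms in $\mathcal{C}^{Loc}(X_K)$, so they stay in $\mathcal{C}^{Loc}(X_K)$.
\item You also need faithful flatness of $\pi^{Loc}(X_K,x_K)\rightarrow\pi^{Loc}(X,x)_K$ to turn ``subquotient'' into ``subobject''. This follows directly from Theorem~\ref{fieldgeneral}(3)(e): a Frobenius-trivial line bundle $L$ satisfies $L^{\otimes p^n}\cong\mathcal{O}$, so $(\det\mathcal{E})^\vee\cong(\det\mathcal{E})^{\otimes(p^n-1)}$.
\end{itemize}
With these two inputs, the (b)$\Rightarrow$(a) argument of Theorem~\ref{fieldalgclosed}(3) works for $\mathcal{C}^{Loc}$, and the rest of your proof goes through.

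Alternatively, you can avoid irreducibles entirely. Suppose $\pi(\mathcal{C}_{X_K},x_K)\cong\pi(\mathcal{C}_X,x)_K$, and let $\mathcal{E}\hookrightarrow E\otimes_kK$ with $\mathcal{E}$ Frobenius trivial. The maximal Frobenius-trivial subobject of $E\otimes_kK$ is $\mathrm{Aut}(K/k)$-stable. Since $K^{\mathrm{Aut}(K/k)}=k$, it equals $M_0\otimes_kK$ for some subobject $M_0\subseteq E$. By Proposition~\ref{generalloc}(1), $M_0\in\mathcal{C}^{Loc}(X)$. Then Theorem~\ref{fieldgeneral}(4) gives the isomorphism for Loc, a contradiction.
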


\begin{proof}
    Note that for any $E\in\Vect(X)$, $E\in\mathcal{C}^{Loc}(X)$ iff $E\otimes_k K\in\mathcal{C}^{Loc}(X_K)$ by [Proposition~\ref{generalloc}, (1)]. According to Proposition~\ref{algclosednotiso}, we have that the natural homomorphism $\pi(\mathcal{C}_{X_K},x_K)\rightarrow \pi(\mathcal{C}_X,x)_K$ is not an isomorphism.
\end{proof}

\begin{Corollary}\label{5.47}
    Let $K/k$ be an extension of algebraically closed fields of characteristic $p>0$, $X$ a reduced connected scheme proper over $k$, $x_K\in X_K(K)$ lying over $x\in X(k)$. If the natural homomorphism $\pi^{Loc}(X_K,x_K)\rightarrow \pi^{Loc}(X,x)_K$ is not an isomorphism, then the natural homomorphism $\pi^{*}(X_K,x_K)\rightarrow \pi^{*}(X,x)_K$ is not an isomorphism, where $*\in \{S,N,F,EN,EF,ELoc\}$.
\end{Corollary}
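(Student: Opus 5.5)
The corollary should follow by applying Proposition~\ref{containlocnobasechange} to each of the Tannakian categories $\mathcal{C}^*(X)$ and $\mathcal{C}^*(X_K)$ for $*\in\{S,N,F,EN,EF,ELoc\}$. For each such $*$ I need to check two hypotheses of that proposition.

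\emph{Compatibility with base change.} For any $E\in\mathcal{C}^*(X)$ we need $E\otimes_k K\in\mathcal{C}^*(X_K)$.
\begin{itemize}
\item For $*=S$ this is [Proposition~\ref{generalS}, (1)].
\item For $*=N$ it is [Proposition~\ref{generalN}, (2)].
\item For $*=F$ it is [Proposition~\ref{generalF}, (2)].
\item For the saturated versions $*\in\{EN,EF,ELoc\}$ it is [Proposition~\ref{generalsaturaed}, (1)], which itself rests on Proposition~\ref{saturatedtensor}.
\end{itemize}

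\emph{Containment of the local category.} We need $\mathcal{C}^{Loc}(X)\subseteq\mathcal{C}^*(X)$ and $\mathcal{C}^{Loc}(X_K)\subseteq\mathcal{C}^*(X_K)$ as Tannakian subcategories. The remarks preceding the local and saturated subsections give the chains
$$\mathcal{C}^{Loc}\subseteq\mathcal{C}^N\subseteq\mathcal{C}^S,\qquad \mathcal{C}^{Loc}\subseteq\mathcal{C}^F\subseteq\mathcal{C}^S.$$
Every category sits in its saturation via the trivial filtration, and the saturation is Tannakian by Proposition~\ref{saturatedtannakian}. Hence
$$\mathcal{C}^{Loc}\subseteq\mathcal{C}^N\subseteq\overline{\mathcal{C}^N},\qquad \mathcal{C}^{Loc}\subseteq\mathcal{C}^F\subseteq\overline{\mathcal{C}^F},\qquad \mathcal{C}^{Loc}\subseteq\overline{\mathcal{C}^{Loc}}.$$
All of these are full subcategories closed under the tensor operations and compatible with the fibre functor $|_x$. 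The same inclusions hold on $X_K$, since $X_K$ is again reduced, connected and proper: $X$ is reduced over the algebraically closed field $k$, so it is geometrically reduced.

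\emph{Conclusion.} With both hypotheses verified and $\pi^{Loc}(X_K,x_K)\rightarrow\pi^{Loc}(X,x)_K$ not an isomorphism by assumption, Proposition~\ref{containlocnobasechange} shows that $\pi^*(X_K,x_K)\rightarrow\pi^*(X,x)_K$ is not an isomorphism for each $*$ in the list.

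The only step needing care is the hidden input of Proposition~\ref{containlocnobasechange}, namely the descent statement that $E\in\mathcal{C}^{Loc}(X)$ iff $E\otimes_k K\in\mathcal{C}^{Loc}(X_K)$. This is [Proposition~\ref{generalloc}, (1)] and is stated for geometrically reduced $X$, which holds here as noted above. I expect no further obstacle: the argument is a case-by-case verification of inclusions.
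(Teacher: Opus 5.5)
Your proposal is correct and is the argument the paper intends. Corollary~\ref{5.47} is stated without proof as an immediate consequence of Proposition~\ref{containlocnobasechange}, with base-change compatibility supplied by Propositions~\ref{generalS}, \ref{generalN}, \ref{generalF}, \ref{generalsaturaed} and the inclusions $\mathcal{C}^{Loc}\subseteq\mathcal{C}^*$ taken from the remarks, exactly as you check. One caveat concerns the black box you rely on, not your reasoning: the paper proves Proposition~\ref{containlocnobasechange} via Proposition~\ref{algclosednotiso} with $\mathcal{C}_X=\mathcal{C}^{Loc}(X)$, and that proof uses the direction (b)$\Rightarrow$(a) of Theorem~\ref{fieldalgclosed}(3), whose saturation hypothesis fails for $\mathcal{C}^{Loc}$ (e.g.\ the nonsplit extension of $\mathcal{O}$ by $\mathcal{O}$ on an ordinary elliptic curve is not Frobenius trivial), so the descent statement of Proposition~\ref{generalloc} is not really the only input needing care.
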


\begin{Lemma}[{\cite[Theorem~2.13]{FuLa22}}]\label{LanequivS}
Let $k$ be an algebraically closed field, $X$ a proper scheme over $k$, $E\in\Vect(X)$. Then the following conditions are equivalent:
\begin{enumerate}
  \item[(1)] $E$ is numerically flat.
  \item[(2)] $E$ is Nori semistable.
  \item[(3)] $E$ is strongly semistable and $c_j(\mathcal{E})$ is numerically trivial for any $j > 0$.
  \item[(4)] $E$ is strongly semistable and both $c_1(E)$ and $c_2(E)$ are numerically trivial.
  \item[(5)] $E$ is strongly semistable and for every coherent sheaf $F$ on $X$ we have 
        $\chi(X,E \otimes F) = r \cdot \chi(X,F)$.
\end{enumerate} 
\end{Lemma}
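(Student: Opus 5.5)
Since this is the cited result \cite[Theorem~2.13]{FuLa22}, I would not reprove it from scratch. Instead I would organize a proof around reductions to curves and surfaces, using Lemma~\ref{pullbacknef} and Lemma~\ref{basefieldnef} to move between $X$ and auxiliary varieties. The first step is to reduce to $X$ projective, irreducible and, where needed, smooth. By Chow's lemma together with de Jong's alterations, choose a proper surjective $f:Y\to X$ with $Y$ smooth projective. By Lemma~\ref{pullbacknef}(2), nefness of $E$ and of $E^\vee$ may be tested on $Y$. Nori semistability also pulls back along $f$ and descends, because every curve in $X$ is dominated by a curve in $Y$, and semistability of degree $0$ on curves is insensitive to finite covers. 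Numerical triviality of Chern classes, and the characteristic-class identity in (5), behave the same way via the projection formula. For (5) I would additionally use that the Grothendieck group of coherent sheaves is generated by pushforwards from such alterations.

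For $(1)\Leftrightarrow(2)$, the nef cone is detected by curves. So $\mathcal{O}_{\mathbb{P}(E)}(1)$ is nef if and only if, for every morphism $f:C\to X$ from a smooth projective curve, every quotient line bundle of $f^*E$ has degree $\ge 0$; this again uses normalization and Lemma~\ref{pullbacknef}. Applying the same criterion to $E^\vee$ shows that $E$ is numerically flat if and only if each $f^*E$ is semistable of degree $0$. In characteristic $p$, composing $f$ with powers of Frobenius of $C$ gives strong semistability of $f^*E$ for free.

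For $(2)\Rightarrow(3)$, strong semistability with respect to an ample $H$ follows from the Mehta--Ramanathan restriction theorem: a destabilizing subsheaf of some $F_X^{m*}E$ would restrict to a destabilizing subsheaf on a general complete-intersection curve. Numerical triviality of the $c_j$ comes from the splitting principle. On the full flag bundle $\rho:\mathrm{Fl}(E)\to X$, the bundle $\rho^*E$ is again numerically flat and carries a filtration whose line-bundle quotients $L_i$ are nef with nef duals, hence numerically trivial. Therefore $c_j(E)$, which pulls back to the elementary symmetric functions of the $c_1(L_i)$, is numerically trivial. The implication $(3)\Rightarrow(4)$ is immediate. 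For $(3)\Leftrightarrow(5)$, I would use Riemann--Roch in the form of the bivariant Chern character: the identity $\chi(E\otimes F)=r\chi(F)$ for all $F$ is equivalent to $\mathrm{ch}(E)-r$ being numerically trivial, and this is equivalent to all $c_j$ being numerically trivial by Newton's identities over $\mathbb{Q}$.

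The main obstacle is $(4)\Rightarrow(1)$: strong semistability together with $c_1\equiv 0$ and $c_2\equiv 0$ must force semistability of degree $0$ on every curve, not merely on general complete-intersection curves. I would first reduce to a smooth projective surface. Given any curve $C\to X$, with $X$ smooth after the reduction above, choose a smooth surface $S\subset X$ that is a complete intersection of very ample divisors through the image of $C$. By Langer's restriction theorem, $E|_S$ remains strongly semistable, and its discriminant $\Delta$ still vanishes numerically. On $S$, I would apply Langer's Bogomolov-type inequality to all Frobenius pullbacks. The equality case $\Delta=0$, with $c_1\equiv0$, then forces every Harder--Narasimhan or Jordan--H\"older piece of $F_S^{m*}E|_S$ restricted to an arbitrary curve to have slope $0$. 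This is the standard argument that such bundles admit filtrations by numerically flat pieces. Controlling this uniformly in $m$ and over every curve, not just general ones, is the delicate point, and it is there I would rely on Langer's boundedness of the family $\{F^{m*}E\}$.
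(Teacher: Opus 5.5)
The paper gives no argument for this lemma; it is quoted from \cite{FuLa22}, so your sketch has to stand on its own. Your $(1)\Leftrightarrow(2)$ via curves and your $(3)\Leftrightarrow(5)$ via singular Riemann--Roch are fine. Two other steps fail.

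First, the splitting-principle step in $(2)\Rightarrow(3)$ is false. The graded pieces $L_i$ of the universal flag of $\rho^*E$ on $\mathrm{Fl}(E)$ are not numerically trivial. Take $X$ a point and $E=k^{2}$: then $\mathrm{Fl}(E)=\mathbb{P}^1$ with pieces $\mathcal{O}(-1)$ and $\mathcal{O}(1)$, even though $\rho^*E$ is trivial. In general the last piece is the pullback of $\mathcal{O}_{\mathbb{P}(E)}(1)$, which has positive degree on curves in the fibres, so its dual is not nef. Numerical flatness passes to quotients only as nefness, e.g.\ $\mathcal{O}_{\mathbb{P}^1}^{\oplus 2}\twoheadrightarrow\mathcal{O}_{\mathbb{P}^1}(1)$. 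A correct argument runs as follows.
\begin{itemize}
\item $c_1(E)\equiv0$, since $\det E$ and $\det E^{\vee}$ are nef.
\item For a nef bundle, every Schur class has nonnegative degree on every $j$-dimensional subvariety $V$ (Fulton--Lazarsfeld, after Chow's lemma if $V$ is not projective).
\item $c_1^{\,j}=\sum_{|\mu|=j}f^{\mu}s_{\mu}$ with all $f^{\mu}>0$, so $\int_V c_1(E)^j=0$ forces $\int_V s_{\mu}(E)=0$ for every $\mu$, in particular $\int_V c_j(E)=0$.
\end{itemize}

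Second, $(4)\Rightarrow(1)$, which is the real content of the theorem, is not proved. Your reduction fails as stated: the image of $C\to X$ can have a point with $3$-dimensional tangent space, and then no smooth surface contains it. Moreover, the assertion that $\Delta=0$ ``forces every piece restricted to an arbitrary curve to have slope $0$'' is the statement itself.

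The boundedness you mention at the end does the job directly, with no surface. In characteristic $p$, all $F_X^{m*}E$ are semistable with rank $r$, $c_1\equiv0$ and $c_2\cdot H^{n-2}=0$; under (5) they even have the Hilbert polynomial of $\mathcal{O}_X^{\oplus r}$. By Langer's boundedness theorem they are therefore all quotients of some $\mathcal{O}_X(-N)^{\oplus M}$ with $N$ independent of $m$. For any $f\colon C\to X$ this gives $\mu_{\min}(F_C^{m*}f^*E)\ge -N\deg f^*H$. On the other hand, a quotient $Q$ of $f^*E$ with $\mu(Q)<0$ would produce quotients $F_C^{m*}Q$ of slope $p^m\mu(Q)\to-\infty$. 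Together with $\deg f^*E=0$, this is Nori semistability.

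This argument is intrinsically characteristic $p$. In characteristic $0$, where strong semistability is plain semistability, you need a separate input, for instance the Donaldson--Uhlenbeck--Yau/Simpson correspondence via the Lefschetz principle. Your sketch provides none.

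Finally, your opening reduction to a smooth alteration $Y\to X$ says nothing about strong semistability. The pullback $f^*H$ is only big and nef, the alteration may be inseparable, and on a merely proper $X$ ``strongly semistable'' is not even defined without a polarization.
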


And we also have a counterexample for base change of S-fundamental group scheme in characteristic 0.

\begin{Proposition}\label{containSnobasechange}
    Let $K/k$ be an extension of algebraically closed fields of characteristic 0, $X$ a smooth projective curve over $k$ with genus $g\geq 2$, $x_K\in X_K(K)$ lying over $x\in X(k)$. Then the natural homomorphism $\pi^S(X_K,x_K)\rightarrow \pi^S(X,x)_K$ is faithfully flat, but not an isomorphism.
\end{Proposition}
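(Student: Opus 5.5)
Faithful flatness of $\pi^S(X_K,x_K)\rightarrow \pi^S(X,x)_K$ is already recorded in Proposition~\ref{algfaith}: by [Proposition~\ref{generalS}, (1)] the hypothesis of Theorem~\ref{fieldalgclosed} holds, and part (1) of that theorem gives the claim. The content of the proof is therefore the failure of isomorphism. For this I will use [Theorem~\ref{fieldalgclosed}, (2)]: if the map were an isomorphism, then every irreducible object of $\mathcal{C}^{NF}(X_K)$ would be of the form $E\otimes_k K$ with $E\in\mathcal{C}^{NF}(X)$. So it suffices to find an irreducible numerically flat bundle on $X_K$ that does not descend to $X$.

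The natural candidates are line bundles of degree $0$. On a smooth projective curve, Lemma~\ref{LanequivS} (or a direct check) shows that every line bundle of degree $0$ is numerically flat, and any rank one object is irreducible in $\mathcal{C}^{NF}(X_K)$. Hence $\mathrm{Pic}^0(X_K)=J(K)$ lies inside the irreducible objects of $\mathcal{C}^{NF}(X_K)$, where $J$ is the Jacobian of $X$, an abelian variety over $k$ of dimension $g\geq 2>0$. Suppose $L\in\mathrm{Pic}^0(X_K)$ satisfies $L\cong E\otimes_k K$. Then $E$ is a line bundle of degree $0$ on $X$, so $L$ is the image of a point of $J(k)$ under $J(k)\hookrightarrow J(K)$.

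The main step is to produce a point of $J(K)$ not in $J(k)$; this is where the hypothesis $K\neq k$ is needed. Since $k$ is algebraically closed, $k$ is closed in $K$, and $J$ is a positive-dimensional variety. Thus $J(K)\supsetneq J(k)$: a $K$-point of $J$ defined over $k$ must map $\Spec K$ to a closed point, while the generic point of a curve in $J$ has a residue field that embeds in $K$ (for instance, take a transcendental element $t\in K$ and a point of $J$ over $k(t)$ not defined over $k$). Alternatively one can avoid Jacobians altogether: choose a $K$-point $y\in X(K)$ that is not a $k$-point and set $L=\mathcal{O}_{X_K}(y-x_K)$. If $L\cong E\otimes_k K$, then comparing with closed points and using injectivity of the Abel--Jacobi map $X\rightarrow J$ (valid because $g\geq 1$) gives a contradiction. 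This construction is the one I would actually write down. The point requiring care is the descent statement itself: a line bundle on $X_K$ that comes from $X$ corresponds to a point of $J$ that factors through $k$, and this follows from representability of $\mathrm{Pic}^0$ together with $\mathrm{Pic}(X)\hookrightarrow \mathrm{Pic}(X_K)$ (Lemma~\ref{NoetherDeuring} or [Proposition~\ref{generalS}, (2)]).

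Combining the two parts, $L$ is an irreducible object of $\mathcal{C}^{NF}(X_K)$ that is not isomorphic to any $E\otimes_k K$. By [Theorem~\ref{fieldalgclosed}, (2)], the map $\pi^S(X_K,x_K)\rightarrow \pi^S(X,x)_K$ is not an isomorphism. The condition $g\geq 2$ is more than the argument needs, since $g\geq 1$ already suffices, and characteristic $0$ is not used in this approach.
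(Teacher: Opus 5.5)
Your proof is correct, but it reaches non-isomorphism by a different and more elementary route than the paper.

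The paper works with the coarse moduli space $\mathcal{M}^s_X(2,0)$ of stable rank-$2$, degree-$0$ bundles. It uses characteristic $0$ and Lemma~\ref{LanequivS} to identify stable degree-$0$ bundles with irreducible numerically flat bundles, and it needs $g\geq 2$ so that this space is non-empty of dimension $4g-3>0$. It then takes a $K$-point not coming from a $k$-point, which tacitly uses that $K$-points of the coarse space over algebraically closed $K$ classify isomorphism classes.

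You instead use degree-$0$ line bundles, which are numerically flat and irreducible for trivial reasons. You also use the Jacobian, which is a fine moduli space, so your descent step reduces to representability of $\mathrm{Pic}^0$ plus injectivity of $\mathrm{Pic}(X)\to\mathrm{Pic}(X_K)$. As you note, this shows that characteristic $0$ and $g\geq 2$ are not needed for the failure of isomorphism; $g\geq 1$ suffices in any characteristic. What the paper's route offers in exchange is a non-descending irreducible object of rank $2$, so the failure is seen beyond the characters of the group. You also correctly invoke part (2) of Theorem~\ref{fieldalgclosed}, which needs no saturation hypothesis, whereas the paper cites part (3).

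One slip: your parenthetical example of ``a point of $J$ over $k(t)$ not defined over $k$'' does not exist. In fact $J(k(t))=J(k)$, since a rational map $\mathbb{P}^1\dashrightarrow J$ extends to a morphism and is constant. What you need is a finite extension of $k(t)$ inside $K$, e.g.\ an embedding $k(X)\hookrightarrow K$. Such an embedding exists because $K$ is algebraically closed and (implicitly, as in the paper) $K\neq k$. This is precisely what produces $y\in X(K)\setminus X(k)$ in your Abel--Jacobi construction. Nonconstancy of $X\to J$ then sends $y$ to a point of $J(K)$ lying over a non-closed point, hence not in $J(k)$.
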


\begin{proof}
    Since the characteristic of $k$ is 0, a vector bundle being strongly semistable is equivalent to being semistable. According to Lemma~\ref{LanequivS}, the stable numerically flat bundles on $X_K$ is equivalent to the stable vector bundles of degree 0 on $X_K$. We may consider the moduli space $\mathcal{M}^s_X(2,0)$, $\mathcal{M}^s_{X_K}(2,0)$ of isomorphic classes of stable vector bundles of rank 2 and of degree 0 on $X$ and $X_K$ respectively. Then we have $\mathcal{M}^s_{X_K}(2,0)\cong \mathcal{M}^s_X(2,0)\times_k K$. But $\dim_k(\mathcal{M}^s_{X}(2,0))=4(g-1)+1=4g-3>0$, it follows that there exists $K$-valued point over $\mathcal{M}^s_X(2,0)$ but not $k$-valued point. Hence there exists stable numerically flat bundle on $X_K$ corresponding to such point and cannot descend to a stable numerically flat bundle on $X$. Hence by [Theorem~\ref{fieldalgclosed}, (3)], the natural homomorphism $\pi^S(X_K,x_K)\rightarrow \pi^S(X,x)_K$ is not an isomorphism.
\end{proof}

\section{Conjecture}

For any algebraic extension $K/k$ of characteristic $p>0$, we have sequence of extension of fields 
$$K/k_{sep}/k,$$
where $k_{sep}$ is the separable closure of $k$ in $K$. Then $k_{sep}/k$ is a separable extension and $K/k_{sep}$ is a purely inseparable extension. If $K/k_{sep}$ is a finite purely inseparable extension, then there exists a sequence of elements $\alpha_1,\cdots,\alpha_n\in K$ such that we have a tower of field extensions
$$k_{sep}\subseteq k_{sep}[\alpha_1]\subseteq\cdots\subseteq k_{sep}[\alpha_1,\cdots,\alpha_{n-1}]\subseteq k_{sep}[\alpha_1,\cdots,\alpha_{n}]=K,$$
where each intermediate extension is of degree $p$.

We obtain some consequences of base change of fundamental group schemes under finite separable extensions, but we do not know whether the base change of fundamental group schemes under finite purely separable extensions holds. By the above argument, base change of fundamental group schemes under finite purely inseparable extensions can be implied by the purely inseparable extension of degree $p$. According to Proposition~\ref{pushisofinitesep} and Theorem~\ref{fieldfinitegalois}, we conjecture that:

\begin{Conjecture}
    Let $K/k$ be a purely inseparable extension of degree $p$, $X$ a connected scheme proper over $k$, $x_K\in X_K(K)$ lying over $x\in X(k)$, $\mathcal{C}_X$ and $\mathcal{C}_{X_K}$, the Tannakian categories over $X$ and $X_K$ respectively, $p:X_K\rightarrow X$ the natural projection. Suppose for any $E\in\mathcal{C}_X$, $E\otimes_k K\in\mathcal{C}_{X_K}$, then the following conditions are equivalent:
    \begin{enumerate}
        \item For any $\mathcal{E}\in\mathcal{C}_{X_K}$, $p_*\mathcal{E}\in\mathcal{C}_X$.
        \item The natural homomorphism $\pi(\mathcal{C}_{X_K},x_K)\rightarrow \pi(\mathcal{C}_X,x)_K$ is an isomorphism.
    \end{enumerate}
\end{Conjecture}

\section*{Acknowledgements}
We would like to thank Adrian Langer and Lei Zhang for their enlightening comments and helpful conversations. The authors are partially supported by Applied Basic Research Programs of Science and Technology Commission Foundation of Shanghai Municipality(22JC1402700), National Natural Science Foundation of China(Grant No. 12171352).

\end{document}